\newtheorem{theorem}{Theorem}
\newtheorem{corollary}[theorem]{Corollary}
\newtheorem{proposition}[theorem]{Proposition}
\newtheorem{lemma}[theorem]{Lemma}
\theoremstyle{definition}
\newtheorem{remark}{Remark}
\newtheorem{definition}{Definition}
\numberwithin{theorem}{section}
\numberwithin{remark}{section}
\numberwithin{equation}{section}
\newcommand{\R}{\mathbb{R}}
\newcommand{\pa}{\partial}
\newcommand{\C}{{\mathbb C}}
\newcommand{\cP}{{\mathcal P}}
\newcommand{\dist}{{\rm dist}}
\newcommand{\supp}{{\rm supp}}
\newcommand{\weak}{\rightharpoonup}
\newcommand{\eps}{\varepsilon}
\DeclareMathOperator{\rad}{rad}
\title[Normalized ground states for the NLSE with combined nonlinearities]{Normalized ground states for the NLS equation with combined nonlinearities}
\author[N. Soave]{Nicola Soave}\thanks{}
\address{Nicola Soave \newline \indent
Dipartimento di Matematica,  Politecnico di Milano,  \newline \indent
Via Edoardo Bonardi 9, 20133 Milano, Italy}
\email{nicola.soave@gmail.com; nicola.soave@polimi.it}
\keywords{Nonlinear Schr\"odinger equations; ground states; combined nonlinearities; normalized solutions; Pohozaev manifold.}
\subjclass[2010]{35Q55; 35J20}
\thanks{The author is partially supported by the ERC Advanced Grant 2013 n. 339958 ``Complex Patterns for Strongly Interacting Dynamical Systems - COMPAT'', by the PRIN-2015KB9WPT\texttt{\char`_}010 Grant: ``Variational methods, with applications to problems in mathematical physics and geometry", and by the INDAM-GNAMPA group.}
\begin{document}

\begin{abstract}
We study existence and properties of ground states for the nonlinear Schr\"odinger equation with combined power nonlinearities
\[
-\Delta u= \lambda u + \mu |u|^{q-2} u + |u|^{p-2} u \qquad \text{in $\R^N$, $N \ge 1$,}
\]
having prescribed mass
\[
\int_{\R^N} |u|^2 = a^2.
\]
Under different assumptions on $q<p$, $a>0$ and $\mu \in \R$ we prove several existence and stability/instability results. In particular, we consider cases when 
\[
2<q \le 2+ \frac{4}{N} \le p<2^*, \quad q \neq p,
\]
i.e. the two nonlinearities have different character with respect to the $L^2$-critical exponent. These cases present substantial differences with respect to purely subcritical or supercritical situations, which were already studied in the literature.

We also give new criteria for global existence and finite time blow-up in the associated dispersive equation.
\end{abstract}

\maketitle

\section{Introduction}

Starting from the seminal contribution by T. Tao, M. Visan and X. Zhang \cite{TaoVisZha},
the nonlinear Schr\"odinger equation with combined power nonlinearities
\begin{equation}\label{com nls}
i \psi_t + \Delta \psi + |\psi|^{p-2} \psi + \mu |\psi|^{q-2} \psi = 0 \qquad \text{in $\R^N$}
\end{equation}
attracted much attention. According to \cite{Caz, TaoVisZha}, the Cauchy problem for \eqref{com nls} is locally well posed, and the unique local solution has conservation of \emph{energy} 
\begin{equation}\label{def E}
E_\mu: H^1(\R^N, \C) \to \R, \quad E_\mu(u) = \int_{\R^N} \left( \frac{1}{2} |\nabla u|^2 -\frac{1}{p} |u|^{p} - \frac{\mu}{q} |u|^q \right)
\end{equation}
and of \emph{mass} 
\[
|u|_2^2 := \int_{\R^N} |u|^2.
\]
Global well-posedness, scattering, the occurrence of blow-up and more in general dynamical properties has been studied in \cite{TaoVisZha} and many papers \cite{AkaIbrKikNaw, ChMiZh, Feng, FukOht, GuZu, KiOhPoVi, LeCozMaRa, MiaXuZha, MiaZhaZhe, Zha} (see also the references therein). In this paper we study existence and properties of ground states with prescribed mass, with particular emphasis to the role played by the lower order term $\mu |\psi|^{q-2} \psi$ in comparison with the unperturbed case $\mu=0$, and to the relation between the different exponents $2<q<p<2^*$. Here and in what follows $2^*$ denotes the critical exponent for the Sobolev embedding $H^1(\R^N) \hookrightarrow L^p(\R^N)$ (that is, $2^* = 2N/(N-2)$ if $N \ge 3$, and $2^*=+\infty$ if $N=1,2$), and, since $q<p<2^*$, we always work in a subcritical framework. We point out that the critical case $p=2^*$ is of interest as well, but, requiring ad hoc techniques, is treated in the companion paper \cite{So}. 

To find stationary states, one makes the ansatz $\psi(t,x) = e^{-i \lambda t} u(x)$, where $\lambda \in \R$ is the chemical potential and $u: \R^N \to \C$ is a time-independent function. This ansatz yields
\begin{equation}\label{stat com}
-\Delta u  = \lambda u + |u|^{p-2} u + \mu |u|^{q-2} u \qquad \text{in $\R^N$}.
\end{equation}
A possible choice is then to fix $\lambda \in \R$, and to search for solutions to \eqref{stat com} as critical points of the \emph{action functional}
\[
\mathcal{A}(u):= \int_{\R^N} \left(\frac{1}{2} |\nabla u|^2 -\frac{\lambda}2 |u|^2 - \frac{\mu}{q} |u|^q - \frac{1}{p} |u|^p\right);
\]
in this case particular attention is devoted to \emph{least action solutions}, namely solutions minimizing $\mathcal{A}$ among all non-trivial solutions. 

Alternatively, one can search for solutions to \eqref{stat com} having prescribed mass, and in this case $\lambda \in \R$ is part of the unknown. This approach seems particularly meaningful from the physical point of view, since, in addition to being a conserved quantity for the time dependent equation \eqref{com nls}, the mass has often a clear physical meaning; for instance, it represents the power supply in nonlinear optics, or the total number of atoms in Bose-Einstein condensation, two main fields of application of the NLS. Moreover, this approach turns out to be useful also from the purely mathematical perspective, since it gives a better insight of the properties of the stationary solutions for \eqref{com nls}, such as stability or instability (this was already evident in the seminal contributions by H. Berestycki and T. Cazenave \cite{BerCaz}, and by T. Cazenave and P.-L. Lions \cite{CazLio}). For these reasons, here we focus on existence and properties of solutions to \eqref{stat com} with prescribed mass, a problem which was, up to now, essentially unexplored.

The existence of normalized stationary states can be formulated as the following problem: given $a>0$, $\mu \in \R$, and $2<q<p<2^*$, we aim to find $(\lambda,u) \in \R \times H^1(\R^N, \C)$ solving \eqref{stat com} together with the normalization condition 
\begin{equation}\label{norm}
|u|_2^2 = \int_{\R^N} |u|^2 = a^2.
\end{equation} 
Solutions can be obtained as critical points of the energy functional $E_\mu$ (defined in \eqref{def E}) under the constraint
\[
u \in S_a:= \left\{u \in H^1(\R^N,\C): \int_{\R^N} |u|^2 = a^2 \right\}.
\]
As $2<q<p<2^*$, it is standard that $E_\mu$ is of class $C^1$ in $H^1(\R^N,\C)$, and any critical point $u$ of $E_\mu|_{S_a}$ corresponds to a solution to \eqref{stat com} satisfying \eqref{norm}, with the parameter $\lambda \in \R$ appearing as Lagrange multiplier. We will be particularly interested in ground state solutions, defined as follows:

\begin{definition}
We say that $\tilde u$ is a \emph{ground state} of \eqref{stat com} on $S_a$ if it is a solution to \eqref{stat com} having minimal energy among all the solutions which belongs to $S_a$: 
\[
d E_\mu|_{S_a}(\tilde u) = 0 \quad \text{and} \quad E_\mu(\tilde u) = \inf\{E_\mu(u): \  d E_\mu|_{S_a}(u) = 0, \quad \text{and} \quad u \in S_a\}.
\]
The set of the ground states will be denoted by $Z_{a,\mu}$.
\end{definition}
If $E_\mu$ admits a global minimizer, then this definition naturally extends the notion of ground states from linear quantum mechanics; moreover, it allows to deal with cases when $E_\mu$ is unbounded from below on $S_a$. We also recall the notion of stability and instability we will be interested in:

\begin{definition}
$Z_{a,\mu}$ is \emph{orbitally stable} if for every $\eps>0$ there exists $\delta>0$ such that, for any $\psi_0 \in H$ with $\inf_{v \in Z_{a,\mu}} \|\psi_0 - v\|_H < \delta$, we have
\[
\inf_{v \in Z_{a,\mu}} \|\psi(t,\cdot) - v\|_H < \eps \qquad \forall t>0,
\]
where $\psi(t, \cdot)$ denotes the solution to \eqref{com nls} with initial datum $\psi_0$. \\
A standing wave $e^{-i \lambda t} u$ is \emph{strongly unstable} if for every $\eps>0$ there exists $\psi_0 \in H^1(\R^N,\C)$ such that $\|u-\psi_0\|_H <\eps$, and $\psi(t, \cdot)$ blows-up in finite time.
\end{definition}
We observe that the definition of stability implicitly requires that \eqref{com nls} has a unique global solution, at least for initial data $\psi_0$ sufficiently close to $Z_{a,\mu}$. 

\medskip

As we will see, existence and properties of ground states \eqref{stat com}-\eqref{norm} are strongly affected by further assumptions on the exponents and on the data. As far as we know, so far these issues were only studied assuming $2<q<p<2+4/N$, or $2+4/N<q<p<2^*$. It is well known that, when dealing with the Schr\"odinger equation, \emph{the $L^2$-critical exponent} 
\[
\bar p:= 2+4/N
\]
plays a special role. This is the threshold exponent for many dynamical properties such as global existence vs. blow-up, and the stability or instability of ground states. From the variational point of view, if the problem is purely $L^2$-subcritical, i.e. $2<q<p<\bar p$, then $E_\mu$ is bounded from below on $S_a$. Thus, for every $a, \mu >0$ a ground states can be found as global minimizers of $E_\mu|_{S_a}$, see \cite{Stu1} or \cite{Lions2,Shi}. Moreover, the set of ground states is orbitally stable \cite{CazLio, Shi}. In the purely $L^2$-supercritical case, i.e. $\bar p<q<p<2^*$, on the contrary, $E_\mu|_{S_a}$ is unbounded from below; however, exploiting the mountain pass lemma and a smart compactness argument, L. Jeanjean \cite{Jea} could show that a normalized ground state does exist for every $a, \mu>0$ also in this case. The associated standing wave is strongly unstable \cite{BerCaz, LeCoz}, due to the supercritical character of the equation. We point out that, in \cite{Jea, LeCoz, Lions2, Shi, Stu1}, more general nonlinearities are considered.

In what follows we carefully analyze the cases when the combined power nonlinearities in \eqref{com nls} are of mixed type, that is
\[
2<q \le \bar p \le p<2^*, \quad \text{with $p \neq q$ and $\mu \in \R$}.
\]
As we will see, the interplay between subcritical, critical and supercritical nonlinearities has a deep impact on the geometry of the functional and on the existence and properties of ground states. From some point of view, this can be considered as a kind of Brezis-Nirenberg problem in the context of normalized solutions: we have a homogeneous problem for which the structure of the ground states is known, and we analyze how the introduction of a lower order term modifies this structure. In this perspective, we think that it is natural to treat the coefficient $\mu$ in front of $|u|^{q-2} u$ as a parameter, fixing the coefficient of $|u|^{p-2} u$ in \eqref{stat com} to be $1$. Notice however that, by scaling, it is possible to reverse this choice when $\mu>0$. Notice also that, since the coefficient of the $|u|^{p-2}u$ is positive, we always consider a focusing ``leading" nonlinearity, while we allow both focusing ($\mu>0$) and defocusing ($\mu<0$) lower order term $|\psi|^{q-2} \psi$. 

It is worth to remark that, if we fix $\lambda$, then existence and variational characterization of least action solutions do not change for any choice $2<q < p <2^*$. Indeed, for every $\lambda<0$ and $\mu>0$ equation \eqref{stat com} has a least action solution (with positive action) which can be obtained minimizing $\mathcal{A}$ on the associated Nehari manifold, or by means of other variational principle (this is known since the classical paper \cite{BerLio} by H. Berestycki and P.-L. Lions). The number of positive real valued solutions, on the other hand, is affected by the choice of $q$ and $p$, see \cite{DaDelPiGue}. 

\medskip

For quite a long time the paper \cite{Jea} was the only one dealing with existence of normalized solutions in cases when the energy is unbounded from below on the $L^2$-constraint. More recently, however, problems of this type received much attention, see \cite{AcWe, BaDeV, BeJe, BeJeLu, BoCaGoJe, BuEsSe, JeLuWa} for normalized solutions to scalar equations in the whole space $\R^N$, \cite{BaSo, BaSo2, BaJe, BaJeSo, GoJe} for normalized solutions to systems in $\R^N$, and \cite{FibMer, NoTaVe1, NoTaVe2, NoTaVe3, PiVe} for normalized solutions to equations or systems in bounded domains\footnote{It is remarkable that, dealing with normalized solutions, problems in unbounded domains and in the whole space $\R^N$ have to be treated with completely different methods (this is often not the case if one fixes the Lagrange multiplier $\lambda$ in \eqref{stat com}, neglects the mass constraint, and works in a radial setting).}. Among the other contributions, we refer in particular to \cite{BaSo, BeJe, Jea}, which strongly inspired many techniques used here.

\subsection{Main results}

%
%
%


The first and simplest case to analyze is given by the choice $p=\bar p=2+4/N$, that is, the leading nonlinearity is $L^2$-critical and we have a $L^2$-subcritical lower order term. Denoting by $\bar a_N$ the critical mass for the $L^2$-critical Schr\"odinger equation (see Section \ref{sec: pre}), we have:

\begin{theorem}\label{thm: subcrit}
Let $N \ge 1$, $2<q<p=\bar p$. It results that:
\begin{itemize}
\item[$i$)] if $0<a<\bar a_N$, then:
\begin{itemize} 
\item[a)] for every $\mu>0$
\[
m(a,\mu):= \inf_{S_a} E_\mu < 0, 
\]
and the infimum is achieved by $\tilde u \in S_a$ with the following properties: $\tilde u$ is a real-valued positive function in $\R^N$, is radially symmetric, solves \eqref{stat com} for some $\tilde \lambda<0$, and is a ground state of \eqref{stat com}-\eqref{norm}.
\item[b)] for every $\mu<0$ it results 
\[
\inf_{S_a} E_\mu = 0, \quad \text{and problem \eqref{stat com}-\eqref{norm} has no solution at all}.
\]
\end{itemize}
\item[$ii$)] if $a = \bar a_N$, then:
\begin{itemize} 
\item[a)] for every $\mu>0$ it results 
\[
\inf_{S_a} E_\mu = -\infty.
\]
\item[b)] for every $\mu<0$ it results 
\[
\inf_{S_a} E_\mu=0, \quad \text{and problem \eqref{stat com}-\eqref{norm} has no solution at all}.
\]
\end{itemize}
\item[$iii$)] if $a > \bar a_N$, then for every $\mu \in \R$ it results 
\[
\inf_{S_a} E_\mu = -\infty.
\]
\end{itemize}
\end{theorem}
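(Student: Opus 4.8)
The plan is to reduce the whole statement to the sign of the $L^2$-critical energy $E_0$ (that is, $E_\mu$ with $\mu=0$) on $S_a$, to read off the three regimes $a<\bar a_N$, $a=\bar a_N$, $a>\bar a_N$ from two Gagliardo--Nirenberg inequalities, and to do the only serious work --- a compactness argument --- in case $(i)(a)$. The two inequalities are the sharp $L^2$-critical one, $|u|_{\bar p}^{\bar p}\le\frac{\bar p}{2}(|u|_2/\bar a_N)^{4/N}|\nabla u|_2^2$ (with equality exactly on the dilations of the optimizer, which is why $\inf_{S_{\bar a_N}}E_0=0$ is realised precisely there), and the subcritical one $|u|_q^q\le C_{N,q}|\nabla u|_2^{\delta_q}|u|_2^{q-\delta_q}$ with $\delta_q:=N(q-2)/2\in(0,2)$. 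I would book-keep through the mass-preserving scaling $(t\star u)(x):=t^{N/2}u(tx)$ and the fiber map $\psi_u(t):=E_\mu(t\star u)=t^2E_0(u)-\frac{\mu}{q}t^{\delta_q}|u|_q^q$ (using $\delta_{\bar p}=2$), together with the Pohozaev-type identity $P_\mu(u):=|\nabla u|_2^2-\frac2{\bar p}|u|_{\bar p}^{\bar p}-\mu\gamma_q|u|_q^q=0$, $\gamma_q:=N(q-2)/(2q)<1$, which every solution of \eqref{stat com} on $S_a$ satisfies since $t\star u\in S_a$ for all $t>0$, so $t=1$ is a critical point of $\psi_u$. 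The three regimes correspond to $E_0(u)\ge\frac12(1-(a/\bar a_N)^{4/N})|\nabla u|_2^2>0$ on $S_a\setminus\{0\}$, to $E_0\ge0$ on $S_a$, and to the existence of some $u\in S_a$ with $E_0(u)<0$, respectively.

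The items where $\inf_{S_a}E_\mu$ is $0$ or $-\infty$ are short. In $(ii)(a)$ pick $w\in S_{\bar a_N}$ with $E_0(w)=0$; as $\mu>0$, $\psi_w(t)=-\frac\mu q t^{\delta_q}|w|_q^q\to-\infty$ as $t\to+\infty$. In $(iii)$ pick $u\in S_a$ with $E_0(u)<0$; as $\delta_q<2$, $\psi_u(t)=t^2E_0(u)-\frac\mu q t^{\delta_q}|u|_q^q\to-\infty$ as $t\to+\infty$, whatever the sign of $\mu$. In the defocusing items $(i)(b)$ and $(ii)(b)$ (so $\mu<0$, $a\le\bar a_N$) one has $E_\mu=E_0+\frac{|\mu|}{q}|u|_q^q>0$ on $S_a$, while $\psi_u(t)\to0$ as $t\to0^+$; hence $\inf_{S_a}E_\mu=0$, not attained. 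And no solution exists at all: $P_\mu(u)=0$ with $\mu<0$ would force $|\nabla u|_2^2=\frac2{\bar p}|u|_{\bar p}^{\bar p}+\mu\gamma_q|u|_q^q<\frac2{\bar p}|u|_{\bar p}^{\bar p}\le(a/\bar a_N)^{4/N}|\nabla u|_2^2\le|\nabla u|_2^2$, impossible for $u\not\equiv0$.

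The heart of the matter is $(i)(a)$. By the two inequalities and $a<\bar a_N$, $E_\mu(u)\ge\frac12(1-(a/\bar a_N)^{4/N})|\nabla u|_2^2-\frac\mu q C_{N,q}a^{q-\delta_q}|\nabla u|_2^{\delta_q}$ on $S_a$, which (as $\delta_q<2$) is coercive and bounded below; thus $m(a,\mu)\in\R$, and $m(\rho,\mu)\le\psi_u(t)<0$ for every $\rho\in(0,\bar a_N)$ and $t$ small (again $\delta_q<2$ and $\mu>0$). Take a minimizing sequence for $m(a,\mu)$ and replace each term by its Schwarz symmetrization: since $\mu>0$ this does not increase $E_\mu$ (P\'olya--Szeg\H{o} lowers $|\nabla u|_2$ while the $L^q$ and $L^{\bar p}$ norms stay fixed) and keeps the sequence on $S_a$, so we may assume $u_n\ge0$, radial, non-increasing, bounded in $H^1$. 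Then $u_n\weak\tilde u$ in $H^1$ and, along a subsequence, $u_n\to\tilde u$ in $L^q\cap L^{\bar p}$, because the uniform decay $|u_n(x)|\le C_N|u_n|_2|x|^{-N/2}$ forced by monotonicity makes the radial embedding compact for every $N\ge1$. Weak lower semicontinuity gives $E_\mu(\tilde u)\le m(a,\mu)$, and $\tilde u\not\equiv0$ (otherwise $\liminf_n E_\mu(u_n)\ge0>m(a,\mu)$). Writing $b:=|\tilde u|_2\in(0,a]$, the decisive step is to rule out $b<a$: I would prove $m(a,\mu)\le(a/b)^2 m(b,\mu)$ for $0<b<a<\bar a_N$ --- given near-optimal $u\in S_b$, the competitor $\frac ab u\in S_a$ satisfies $E_\mu(\frac ab u)<\frac{a^2}{b^2}E_\mu(u)$ because $a/b>1$ and both nonlinear terms are homogeneous of degree $>2$ --- and since $m(b,\mu)<0$ this yields $m(a,\mu)<m(b,\mu)$, so $b<a$ would contradict $m(b,\mu)\le E_\mu(\tilde u)\le m(a,\mu)$. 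Hence $b=a$: $\tilde u\in S_a$, $E_\mu(\tilde u)=m(a,\mu)$, and $u_n\to\tilde u$ in $L^2$, hence in $H^1$.

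Finally one reads off the properties of $\tilde u$: as a constrained minimizer it solves \eqref{stat com} with a Lagrange multiplier $\tilde\lambda$, and combining the Nehari identity $|\nabla\tilde u|_2^2=\tilde\lambda a^2+|\tilde u|_{\bar p}^{\bar p}+\mu|\tilde u|_q^q$ with $P_\mu(\tilde u)=0$ gives $\tilde\lambda a^2=-(1-\frac2{\bar p})|\tilde u|_{\bar p}^{\bar p}-\mu(1-\gamma_q)|\tilde u|_q^q<0$, so $\tilde\lambda<0$; $\tilde u$ is radial and non-increasing by construction, non-negative and nontrivial, so (standard elliptic regularity, $\bar p<2^*$) it is a classical solution, and the strong maximum principle for $-\Delta\tilde u+|\tilde\lambda|\tilde u=\tilde u^{\bar p-1}+\mu\tilde u^{q-1}\ge0$ makes $\tilde u>0$. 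It is a ground state because $m(a,\mu)=\inf_{S_a}E_\mu$ is a fortiori $\le$ the energy of any solution of \eqref{stat com} on $S_a$. The one genuinely delicate step is the compactness in $(i)(a)$, i.e.\ excluding loss of mass at infinity; it is resolved by the inequality $m(a,\mu)\le(a/b)^2 m(b,\mu)$ together with $m<0$, and the symmetrization is precisely what makes the compact embedding and this inequality simultaneously available --- it is also the only point where $\mu>0$ enters the compactness argument and not merely the sign of $m$.
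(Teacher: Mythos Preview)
Your argument is correct. For the ``easy'' cases---($ii$)($a$), ($iii$), ($i$)($b$), ($ii$)($b$)---you do essentially what the paper does: read off the sign of $E_0$ on $S_a$ via the sharp $L^2$-critical Gagliardo--Nirenberg inequality, scale along the fiber $t\mapsto t\star u$, and use $P_\mu(u)=0$ to exclude solutions in the defocusing subcritical-mass regime. Your computation of $\tilde\lambda<0$ via the Nehari and Pohozaev identities is also equivalent to the paper's.

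The real difference is in case ($i$)($a$). The paper runs the full Lions concentration--compactness trichotomy (vanishing, dichotomy, compactness), ruling out dichotomy via the strict subadditivity $m(a,\mu)<m(a_1,\mu)+m(a_2,\mu)$, which in turn follows from the scaling inequality $m(\theta c,\mu)<\theta^2 m(c,\mu)$ for $\theta>1$. You instead symmetrize the minimizing sequence first, use the pointwise decay of radially non-increasing $L^2$ functions to get compactness of $H^1_{\mathrm{rad,dec}}\hookrightarrow L^r$ for $2<r<2^*$ (valid for all $N\ge1$), and then exclude loss of mass by the single monotonicity step $m(a,\mu)\le(a/b)^2 m(b,\mu)<m(b,\mu)$ (the non-strict version suffices because $m(b,\mu)<0$). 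This is a genuinely more elementary route, and the paper itself acknowledges in a remark that the symmetrization approach works for existence (it was used by Le~Coz--Martel--Rapha\"el). What the paper's concentration--compactness argument buys in addition is relative compactness in $H^1$, up to translation, of \emph{every} minimizing sequence (not only radially decreasing ones), which is exactly what is needed later for orbital stability of $Z_{a,\mu}$. Your proof establishes Theorem~\ref{thm: subcrit} as stated but would not directly feed into the stability proof.
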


Now, in case ($i$-$a$), the set $Z_{a,\mu}$ of ground states is not empty. 

\begin{theorem}\label{thm: Z subcr}
If $0<a<\bar a_N$ with $\mu>0$, then
\[
Z_{a,\mu} = \left\{ e^{i \theta} |u| \ \text{for some $\theta \in \R$ and $|u|>0$ in $\R^N$} \right\},
\]
and the set $Z_{a,\mu}$ is orbitally stable. Moreover, if $\tilde u_\mu \in Z_{a,\mu}$, then $|\nabla \tilde u_\mu|_2 \to 0$ as $\mu \to 0^+$.
\end{theorem}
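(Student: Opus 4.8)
The plan is to reduce everything to properties of the minimization level $m(a,\mu)=\inf_{S_a}E_\mu$. By Theorem~\ref{thm: subcrit}($i$-$a$) this level is finite, negative, and attained, every minimizer of $E_\mu|_{S_a}$ solves \eqref{stat com}, and conversely every solution lying in $S_a$ has energy $\ge m(a,\mu)$; hence the ground--state energy equals $m(a,\mu)$ and $Z_{a,\mu}$ is exactly the (nonempty) set of global minimizers of $E_\mu$ on $S_a$. To describe it, take $\tilde v\in Z_{a,\mu}$ and use the diamagnetic inequality $\bigl|\nabla|\tilde v|\bigr|\le|\nabla\tilde v|$: it gives $|\tilde v|\in S_a$ with $E_\mu(|\tilde v|)\le E_\mu(\tilde v)=m(a,\mu)$, so $|\tilde v|\in Z_{a,\mu}$ too and thus solves \eqref{stat com} for some multiplier $\lambda'$; multiplying that equation by $|\tilde v|$ and combining with $E_\mu(|\tilde v|)<0$, the inequalities $p,q>2$ together with $\mu>0$ force $\lambda'<0$, by the computation giving $\tilde\lambda<0$ in Theorem~\ref{thm: subcrit}. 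Then $-\Delta|\tilde v|+|\lambda'|\,|\tilde v|=\mu|\tilde v|^{q-1}+|\tilde v|^{p-1}\ge 0$ with $|\tilde v|\ge 0$, $|\tilde v|\not\equiv 0$, so by elliptic regularity and the strong maximum principle $|\tilde v|>0$ in $\R^N$. Writing $\tilde v=|\tilde v|e^{i\sigma}$ (legitimate since $\tilde v$ is continuous, nowhere vanishing, and $\R^N$ is simply connected), one has $|\nabla\tilde v|^2=\bigl|\nabla|\tilde v|\bigr|^2+|\tilde v|^2|\nabla\sigma|^2$, so $E_\mu(\tilde v)=E_\mu(|\tilde v|)$ forces $\int_{\R^N}|\tilde v|^2|\nabla\sigma|^2=0$, i.e.\ $\sigma\equiv\theta$ is constant; conversely $u\mapsto e^{i\theta}u$ is a symmetry of $(E_\mu,S_a)$, so $e^{i\theta}w\in Z_{a,\mu}$ whenever $w\in Z_{a,\mu}$. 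This yields the stated description of $Z_{a,\mu}$.

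For the orbital stability I would rely on the compactness of minimizing sequences followed by the Cazenave--Lions argument. Gagliardo--Nirenberg applied to the $L^2$-critical power $\bar p$ (together with the defining property of $\bar a_N$) and the strict inequality $q<\bar p$ give, for $u\in S_a$ with $a<\bar a_N$,
\[
E_\mu(u)\ \ge\ \delta\,|\nabla u|_2^2-C\mu\,|\nabla u|_2^{\gamma_q},\qquad \gamma_q:=\frac{N(q-2)}{2}\in(0,2),
\]
with $\delta=\delta(N,a)>0$ and $C=C(N,q,a)>0$ both independent of $\mu$; hence $E_\mu|_{S_a}$ is coercive in $|\nabla u|_2$ and any minimizing sequence $(u_n)$ for $m(a,\mu)$ is bounded in $H^1(\R^N,\C)$. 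I would then apply the concentration--compactness alternative to $(|u_n|^2)$: vanishing is impossible, for it would force $|u_n|_q,|u_n|_{\bar p}\to 0$ and so $\liminf E_\mu(u_n)\ge 0>m(a,\mu)$; dichotomy is impossible because of the strict subadditivity $m(a,\mu)<m(\alpha,\mu)+m(\beta,\mu)$ whenever $\alpha,\beta\in(0,a)$ with $\alpha^2+\beta^2=a^2$ (note $\alpha,\beta<\bar a_N$), which I would obtain from $m(\theta a,\mu)>\theta^2 m(a,\mu)$ for $\theta\in(0,1)$ — test with $\theta u$ and use $\theta^{\bar p},\theta^q<\theta^2$ together with $m(a,\mu)<0$ — applied with $\theta=\alpha/a$ and $\theta=\beta/a$. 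Hence $(u_n)$ concentrates: up to a subsequence and translations, $u_n\weak\tilde v$ in $H^1$ with $\tilde v\in S_a$ and $u_n\to\tilde v$ strongly in $L^q\cap L^{\bar p}$; weak lower semicontinuity then gives $E_\mu(\tilde v)\le\liminf E_\mu(u_n)=m(a,\mu)$, so $\tilde v\in Z_{a,\mu}$, and since moreover $|\nabla u_n|_2\to|\nabla\tilde v|_2$, in fact $u_n\to\tilde v$ strongly in $H^1$.

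Granting this, the stability follows in the standard way: the coercivity estimate and conservation of mass and energy give an a priori $H^1$-bound, hence global existence, for solutions of \eqref{com nls} with mass $<\bar a_N$ (the powers being $H^1$-subcritical), and if $Z_{a,\mu}$ were not orbitally stable there would exist $\psi_{0,n}\to v_0\in Z_{a,\mu}$ in $H^1$, times $t_n$, and $\eps_0>0$ with $\inf_{v\in Z_{a,\mu}}\|\psi_n(t_n)-v\|_H\ge\eps_0$ for the solutions $\psi_n$ with data $\psi_{0,n}$; normalizing $w_n:=a\,\psi_n(t_n)/|\psi_n(t_n)|_2\in S_a$, the conservation laws make $(w_n)$ a minimizing sequence for $m(a,\mu)$, whence $\inf_{v\in Z_{a,\mu}}\|w_n-v\|_H\to 0$ by the previous paragraph, a contradiction. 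Finally, if $\tilde u_\mu\in Z_{a,\mu}$, then $m(a,\mu)=E_\mu(\tilde u_\mu)<0$ and the displayed coercivity estimate give $\delta|\nabla\tilde u_\mu|_2^2<C\mu|\nabla\tilde u_\mu|_2^{\gamma_q}$, i.e.\ $|\nabla\tilde u_\mu|_2<(C\mu/\delta)^{1/(2-\gamma_q)}$, which tends to $0$ as $\mu\to 0^+$ since $\delta,C$ are independent of $\mu$ and $2-\gamma_q>0$.

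The only genuinely delicate point, I expect, is the compactness of minimizing sequences in the second paragraph — running the concentration--compactness trichotomy and, inside it, proving the strict subadditivity of $a\mapsto m(a,\mu)$; once every minimizing sequence is known to be relatively compact in $H^1$ up to translations, with limit in $Z_{a,\mu}$, the remaining steps (Cazenave--Lions and the Gagliardo--Nirenberg estimate for $|\nabla\tilde u_\mu|_2$) are routine.
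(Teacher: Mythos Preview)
Your proof is correct and follows essentially the same route as the paper: coercivity via Gagliardo--Nirenberg (the paper's estimate \eqref{691}), strict subadditivity plus concentration--compactness for the relative compactness of minimizing sequences (Lemma~\ref{lem: sub sub} and Proposition~\ref{prop: rel cpt sub}), the Cazenave--Lions scheme for orbital stability, and the diamagnetic-equality argument for the structure of $Z_{a,\mu}$. The only presentational difference is that the paper first establishes compactness for real-valued minimizing sequences and then passes to complex ones via the Hajaiej--Stuart reduction (which uses the continuity of $a\mapsto m(a,\mu)$, Lemma~\ref{lem: cont gsl}), whereas you run concentration--compactness directly for complex sequences and spell out the constant-phase argument yourself; both executions are standard.
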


Here and in the rest of the paper $|\cdot|_2$ denotes the standard $L^2$-norm. The fact that for $0<a<\bar a_N$ we have that $|\nabla \tilde u_\mu|_2 \to 0$ as $\mu \to 0^+$ reflects the non-existence of positive normalized solutions on $S_a$ for the homogeneous $L^2$-critical equation (we refer again to Section \ref{sec: pre}).

The simple proofs of Theorem \ref{thm: subcrit}-\ref{thm: Z subcr} relies on the Pohozaev identity, on the adaptation of the Lions' concentration-compactness principle \cite{Lions1,Lions2}, and on the classical Cazenave-Lions' stability argument \cite{CazLio}, further developed in \cite{HajStu}. It is an open question whether problem \eqref{stat com}-\eqref{norm} admits solution in cases ($ii$-a) and ($iii$).

We mention that the existence of a positive radial ground state in case ($i$)-$a$) for the choice $\mu=1$ was proved in \cite{LeCozMaRa}; however, we will not only prove existence of a ground state, but also the relative compactness of all the minimizing sequences for $m(a,\mu)$. This seems to be new, and is essential for the stability. We further refer to \cite{LeCozMaRa}, and also to \cite{Feng, GuZu} for a discussion of global existence and finite time blow-up in this framework.

\medskip

We focus now on the more interesting case when $\bar p<p<2^*$, that is the leading term is $L^2$-supercritical and Sobolev subcritical. The energy functional $E_\mu$ in now unbounded both from above and from below on $S_a$, independently on $\mu \in \R$ and on $2<q\le \bar p$; however, the geometry of $E_\mu$ is strongly affected both by the sign of $\mu$, and by the exact choice of $q$. We discuss at first the case $2<q<\bar p$ with $\mu>0$. We use the notation
\begin{equation}\label{def gamma_p}
\gamma_p:= \frac{N(p-2)}{2p},
\end{equation}
and we denote by $C_{N,p}$ the best constant in the Gagliardo-Nirenberg inequality $H^1 \hookrightarrow L^p$ (see \eqref{GN ineq}).

\begin{theorem}\label{thm: subcr-supcr}
Let $N \ge 1$, $2<q<\bar p<p<2^*$, and let $a,\mu>0$. Let us also suppose that
\begin{equation}\label{cond sub sup}
\left(\mu a^{(1-\gamma_q) q}\right)^{\gamma_p p - 2} \left(a^{(1-\gamma_p) p} \right)^{2-\gamma_q q} \\
< \left( \frac{p(2-\gamma_q q)}{2C_{N,p}^p (\gamma_p p -\gamma_q q)} \right)^{2-\gamma_q q}\left( \frac{ q(\gamma_p p-2)}{2C_{N,q}^q (\gamma_p p- \gamma_q q)} \right)^{\gamma_p p-2}.
\end{equation}
Then the following holds:
\begin{itemize}
\item[$i$)] $E_\mu|_{S_a}$ has a critical point $\tilde u$ at negative level $m(a,\mu)<0$ which is an interior local minimizer of $E_\mu$ on the set 
\[
A_k := \left\{ u \in S_a: |\nabla u|_2^2 <k\right\},
\]
for a suitable $k>0$ small enough. Moreover, $\tilde u$ is a ground state of \eqref{stat com} on $S_a$, and any other ground state is a local minimizer of $E_\mu$ on $A_k$.
\item[$ii$)] $E_\mu|_{S_a}$ has a second critical point of mountain pass type $\hat u$ at level $\sigma(a,\mu)>m(a,\mu)$.
\item[$iii$)] Both $\tilde u$ and $\hat u$ are real-valued positive functions in $\R^N$, are radially symmetric, and solve \eqref{stat com} for suitable $\tilde \lambda, \hat \lambda<0$. Moreover, $\tilde u$ is also radially decreasing.
\end{itemize}
\end{theorem}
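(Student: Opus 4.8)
The plan is to realise the two critical points via a local minimisation on $A_k$ and a mountain pass on $S_a$, both organised through the mass-preserving dilation $s\star u:=e^{Ns/2}u(e^s\,\cdot)$, for which
\[
E_\mu(s\star u)=\frac{e^{2s}}{2}|\nabla u|_2^2-\frac{e^{\gamma_p p s}}{p}|u|_p^p-\frac{\mu\, e^{\gamma_q q s}}{q}|u|_q^q=:\Psi_u(s),
\]
and through the Pohozaev identity: every solution of \eqref{stat com}--\eqref{norm} satisfies $P_\mu(u):=|\nabla u|_2^2-\gamma_p|u|_p^p-\mu\gamma_q|u|_q^q=0$, i.e. $\Psi_u'(0)=0$. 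The structural facts exploited throughout are $\gamma_q q<2<\gamma_p p$ (equivalent to $q<\bar p<p$) and $\gamma_q,\gamma_p<1$ (equivalent to $q,p<2^*$). First, using the Gagliardo--Nirenberg inequality one bounds, for $u\in S_a$, $E_\mu(u)\ge h(|\nabla u|_2)$ with
\[
h(t):=\frac{t^2}{2}-\frac{C_{N,p}^p a^{(1-\gamma_p)p}}{p}\,t^{\gamma_p p}-\frac{\mu\,C_{N,q}^q a^{(1-\gamma_q)q}}{q}\,t^{\gamma_q q}.
\]
Since $h(t)<0$ for $t$ small and $h(t)\to-\infty$ as $t\to+\infty$, I would check that condition \eqref{cond sub sup} is precisely what forces $\max_{t>0}h(t)>0$ (it arises by optimising in $t$); then $h$ has two zeros $0<R_0<R_1$, is negative on $(0,R_0)$ and positive on $(R_0,R_1)$. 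Fixing $k\in(R_0^2,R_1^2)$ this gives the geometry: $E_\mu>0$ on $\{u\in S_a:|\nabla u|_2^2=k\}$; $E_\mu(s\star u)\to0^-$ as $s\to-\infty$ (the term with smallest exponent $\gamma_q q$ dominates, with a minus sign) produces elements of $A_k$ with negative energy; and $E_\mu(s\star u)\to-\infty$ as $s\to+\infty$ (since $\gamma_p p>2$) produces a mountain-pass endpoint. In particular $m(a,\mu):=\inf_{A_k}E_\mu\in(-\infty,0)$.

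For $(i)$ I would apply the direct method to $m(a,\mu)$: take a minimising sequence, replace each element by its Schwarz rearrangement (which does not increase $E_\mu$, preserves the mass, and does not increase $|\nabla u|_2$, hence stays in $A_k$) and extract a weak limit $\tilde u$. For $N\ge2$ the compact embedding $H^1_{\rad}(\R^N)\embed L^p\cap L^q$ makes the nonlinear terms converge strongly, and weak lower semicontinuity together with $m(a,\mu)<0\le\inf\{E_\mu:R_0^2\le|\nabla u|_2^2\le k\}$ forces $\tilde u\ne0$ and $|\nabla\tilde u|_2^2<k$, i.e. $\tilde u$ is interior. The genuinely delicate point is the absence of loss of mass, $|\tilde u|_2=a$; this I would get from a strict subadditivity inequality for $m$ (of Cazenave--Lions / Lions concentration--compactness type: splitting the mass strictly lowers the infimum because low-gradient bumps carry strictly negative energy), which also covers $N=1$. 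Then $\tilde u$ is a constrained critical point, hence solves \eqref{stat com} with some $\tilde\lambda$; testing against $\tilde u$ and comparing with $P_\mu(\tilde u)=0$ yields $\tilde\lambda\,a^2=-(1-\gamma_p)|\tilde u|_p^p-\mu(1-\gamma_q)|\tilde u|_q^q<0$ since $\mu>0$ — and the same identity gives $\lambda<0$ for \emph{every} solution, in particular every ground state. Finally, analysing the fibre $\Psi_u$ on the set $\{P_\mu=0\}$ (under \eqref{cond sub sup}, $s=0$ is either a strict local minimum or a strict local maximum of $\Psi_u$; the minimum forces $|\nabla u|_2^2<k$ and $E_\mu(u)<0$, the maximum forces $E_\mu(u)>m(a,\mu)$) shows that the least energy among all solutions equals $m(a,\mu)$ and that every ground state lies in $A_k$ as a local minimiser.

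For $(ii)$, with $\tilde u\in A_k$ at level $m(a,\mu)<0$ and an endpoint $u_1\in S_a$, $|\nabla u_1|_2^2>k$, $E_\mu(u_1)<m(a,\mu)$, the minimax value $\sigma(a,\mu):=\inf_{\gamma\in\Gamma}\max_{t}E_\mu(\gamma(t))$ over paths joining $\tilde u$ to $u_1$ satisfies $\sigma(a,\mu)\ge\inf\{E_\mu:|\nabla u|_2^2=k\}>0>m(a,\mu)$, since every such path crosses $\{|\nabla u|_2^2=k\}$. To produce a \emph{bounded} Palais--Smale sequence I would perform this minimax on the augmented functional $(u,s)\mapsto E_\mu(s\star u)$ on $S_a\times\R$, following Jeanjean; this gives $(u_n)\subset S_a$ with $E_\mu(u_n)\to\sigma(a,\mu)$, $dE_\mu|_{S_a}(u_n)\to0$ and the extra condition $P_\mu(u_n)\to0$. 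Boundedness then follows from
\[
E_\mu(u_n)-\frac{1}{\gamma_p p}P_\mu(u_n)=\Big(\frac12-\frac{1}{\gamma_p p}\Big)|\nabla u_n|_2^2+\mu\Big(\frac{\gamma_q}{\gamma_p p}-\frac1q\Big)|u_n|_q^q,
\]
where the first coefficient is positive ($\gamma_p p>2$) and the second term, though negative, is dominated by $|\nabla u_n|_2^{\gamma_q q}$ with $\gamma_q q<2$ via Gagliardo--Nirenberg: a sublinear-versus-superlinear bound forcing $|\nabla u_n|_2$ bounded. Choosing the paths radial, for $N\ge2$ one extracts $u_n\weak\hat u$ solving \eqref{stat com} with $\hat\lambda=\lim\lambda_n$ (the Lagrange multipliers are bounded by the equation); $\hat u\ne0$, because otherwise $|u_n|_p,|u_n|_q\to0$ and $P_\mu(u_n)\to0$ would give $|\nabla u_n|_2\to0$, hence $E_\mu(u_n)\to0<\sigma(a,\mu)$. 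Then $\hat\lambda<0$ by the Nehari--Pohozaev identity above, and the sign of $\hat\lambda$ together with weak lower semicontinuity upgrades the convergence to strong in $H^1$, so $|\hat u|_2=a$ and $E_\mu(\hat u)=\sigma(a,\mu)$; for $N=1$ a concentration--compactness argument (vanishing excluded as above, dichotomy by strict subadditivity of $\sigma$) replaces the radial embedding.

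Item $(iii)$ is then short: replacing $u$ by $|u|$ does not increase $E_\mu$ and preserves the constraint, so both constructions can be run with nonnegative functions and, by the strong maximum principle applied to $-\Delta u+|\lambda|u=|u|^{p-2}u+\mu|u|^{q-2}u\ge0$, the solutions are strictly positive; working throughout in $H^1_{\rad}(\R^N)$ gives radial symmetry, and for $\tilde u$ the use of Schwarz rearrangement gives radial monotonicity. I expect the main obstacle to be compactness — excluding vanishing and dichotomy of the minimising, resp. Palais--Smale, sequences without a compact radial embedding when $N=1$, and producing a bounded Palais--Smale sequence at the mountain-pass level — resolved by the augmented-functional / Pohozaev device together with the strict subadditivity of the relevant energy levels.
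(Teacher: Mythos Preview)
Your treatment of the mountain-pass part (ii) is essentially the paper's: the augmented functional $\tilde E(s,u)=E_\mu(s\star u)$ yields a Palais--Smale sequence with the extra information $P_\mu(u_n)\to0$, boundedness follows from the identity you wrote, and radiality plus the sign of the limiting Lagrange multiplier give strong $H^1$-convergence. (For $N=1$ the paper does \emph{not} use concentration--compactness or any subadditivity of $\sigma$; it works with radially \emph{decreasing} paths---the symmetric decreasing rearrangement is continuous $H^1_+\to H^1_+$ only in dimension $1$, by Coron---and bounded radially decreasing sequences are compact in $L^r(\R)$, so Lemma~\ref{lem: conv PS subcr N=1} applies.)

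There is, however, a genuine gap in your argument for the local minimiser $\tilde u$. You propose to exclude loss of mass via strict subadditivity of $a\mapsto m(a,\mu)$, but here $m(a,\mu)=\inf_{A_{R_0}}E_\mu$ is a \emph{local} infimum over a set whose radius $R_0(a,\mu)$ depends on $a$. To compare $m(a,\mu)$ with $m(a_1,\mu)+m(a_2,\mu)$ one must produce, from minimisers $v\in S_{a_1}\cap A_{R_0(a_1,\mu)}$ and $w\in S_{a_2}\cap A_{R_0(a_2,\mu)}$, a competitor $u\in S_a$ that still lies in the admissible set for $m(a,\mu)$. Using Shibata's coupled rearrangement this needs something like $R_0^2(a_1,\mu)+R_0^2(a_2,\mu)<R_1^2(a,\mu)$, and the classical Lions scaling $u\mapsto\theta u$ runs into the same obstruction: $\theta u_n$ need not stay in the correct $A_k$. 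The paper does prove subadditivity (Lemmas~\ref{lem: subadd 1}--\ref{lem: subadd 2}), but only under the \emph{stronger} smallness assumption $0<\mu<\tilde\mu$ of Theorem~\ref{thm: Z supcr 1}, not under~\eqref{cond sub sup} alone; it is used there for orbital stability, not for existence.

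The paper's existence proof for $\tilde u$ avoids subadditivity entirely, and this is the idea you are missing. After Schwarz-symmetrising a minimising sequence $\{v_n\}\subset A_{R_0}$, it \emph{projects} each $v_n$ onto $\cP_+$ via the fibre map, setting $w_n:=s_{v_n}\star v_n$; by Lemma~\ref{lem: prop fiber sub-sup} and Corollary~\ref{cor: norm P+} this keeps $\{w_n\}\subset\cP_+\subset A_{R_0}$ with $E(w_n)\le E(v_n)$ and $P_\mu(w_n)=0$, and by Lemma~\ref{lem: loc inf su P} the sequence stays uniformly away from the boundary $\{|\nabla u|_2=R_0\}$. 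Ekeland's variational principle then yields a genuine Palais--Smale sequence $\{u_n\}$ with $\|u_n-w_n\|\to0$, hence $P_\mu(u_n)\to0$. At this point the \emph{same} compactness lemma used for the mountain pass (Lemma~\ref{lem: conv PS subcr} or~\ref{lem: conv PS subcr N=1}) applies and delivers strong $H^1$-convergence---including $|\tilde u|_2=a$---directly from the Pohozaev information and the strict negativity of the Lagrange multiplier. In short: rather than fighting mass loss at the minimising-sequence level, one first manufactures a PS sequence with $P_\mu\to0$, and the compactness machinery already built for (ii) does the rest.
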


Regarding the stability:

\begin{theorem}\label{thm: Z supcr 1}
Let $N \ge 1$, $2<q<\bar p<p<2^*$, and let $a>0$. There exists $\tilde \mu>0$ sufficiently small such that, if $0<\mu< \tilde \mu$, then
\[
Z_{a,\mu} = \left\{ e^{i \theta} |u| \ \text{for some $\theta \in \R$ and $|u|>0$ in $\R^N$} \right\},
\]
and the set $Z_{a,\mu}$ is orbitally stable. On the contrary, for every $\mu>0$ satisfying \eqref{cond sub sup} the solitary wave $\psi(t,x) = e^{-i \hat{\lambda} t} \hat u(x)$  is strongly unstable.
\end{theorem}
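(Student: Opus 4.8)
The plan is to treat the two assertions separately, relying respectively on a concentration–compactness/Cazenave–Lions argument (for the stability of $Z_{a,\mu}$ when $\mu$ is small) and on a Pohozaev-type virial argument (for the strong instability of $\hat u$).

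For the first part, I would start from Theorem \ref{thm: subcr-supcr}(i): every ground state is an interior local minimizer of $E_\mu$ on the set $A_k=\{u\in S_a:|\nabla u|_2^2<k\}$, and lies at the negative level $m(a,\mu)$, with all ground states trapped inside $A_k$. The key point is that, because $\hat u$ sits at the strictly higher mountain-pass level $\sigma(a,\mu)>m(a,\mu)$, the ground state set coincides with the set of global minimizers of $E_\mu$ restricted to $A_k$; in particular $m(a,\mu)=\inf_{A_k}E_\mu$. First I would establish that any minimizing sequence $(u_n)\subset A_k$ for $m(a,\mu)$ is, up to translations, relatively compact in $H^1(\R^N,\C)$: one shows that $u_n$ stays strictly inside $A_k$ (using $m(a,\mu)<0$ and that on the boundary $|\nabla u|_2^2=k$ the energy is bounded below away from $m(a,\mu)$, a consequence of the Gagliardo–Nirenberg estimates already used to build the local-minimum geometry), then applies the concentration–compactness principle to the (radial-free) problem, ruling out vanishing via $m(a,\mu)<0$ and ruling out dichotomy via the strict subadditivity $m(a,\mu)<m(\theta a,\mu)+m(\sqrt{1-\theta^2}\,a,\mu)$ for $\theta\in(0,1)$, which follows from the scaling behaviour of the $L^p$ (supercritical) and $L^q$ (subcritical) terms together with the constraint $|\nabla u|^2_2<k$. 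This gives compactness of all minimizing sequences, hence of the ground-state set, and then the classical Cazenave–Lions argument (as developed in \cite{CazLio, HajStu}) converts this into orbital stability of $Z_{a,\mu}$: if stability failed, one could extract from the flow a minimizing sequence converging in $H^1$ to a point at positive distance from $Z_{a,\mu}$, a contradiction. The description $Z_{a,\mu}=\{e^{i\theta}|u|\}$ with $|u|>0$ follows from Theorem \ref{thm: subcr-supcr}(iii) (real positive radial ground states) together with the observation that $|E_\mu(u)|=E_\mu(|u|)$ forces any complex minimizer to have constant phase and non-vanishing modulus. The threshold $\tilde\mu$ appears because, for the geometry of Theorem \ref{thm: subcr-supcr} and the needed estimates, one needs $\mu a^{(1-\gamma_q)q}$ below an explicit constant; shrinking $\mu$ if necessary guarantees \eqref{cond sub sup} and the strict subadditivity hold simultaneously.

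For the second part, the strategy is the standard one of Berestycki–Cazenave \cite{BerCaz} and Le Coz \cite{LeCoz}, adapted to the constrained mountain-pass critical point $\hat u$. I would consider the scaling $\hat u_s(x):=s^{N/2}\hat u(sx)$, which preserves the mass and along which $E_\mu(\hat u_s)=\tfrac{s^2}{2}|\nabla\hat u|_2^2-\tfrac{s^{\gamma_p p}}{p}|\hat u|_p^p-\mu\tfrac{s^{\gamma_q q}}{q}|\hat u|_q^q$. Since $\hat u$ is a critical point of $E_\mu|_{S_a}$, the function $s\mapsto E_\mu(\hat u_s)$ has a critical point at $s=1$, i.e. $\hat u$ lies on the Pohozaev manifold $Q(\hat u)=0$ where $Q(u)=|\nabla u|_2^2-\gamma_p|u|_p^p-\mu\gamma_q|u|_q^q$. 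Because $\hat u$ is of mountain-pass type and not the local minimizer, I expect $s=1$ to be a strict local maximum of $s\mapsto E_\mu(\hat u_s)$ (this is where the supercriticality $\gamma_p p>2$ is used, dominating the subcritical term $\gamma_q q<2$), so that $\tfrac{d^2}{ds^2}E_\mu(\hat u_s)|_{s=1}<0$; equivalently the localized virial quantity is negative on $\hat u$. One then invokes the standard localized-virial/convexity identity for solutions $\psi$ of \eqref{com nls}: defining $V(t)=\int |x|^2|\psi(t)|^2$ (or a suitable truncated version to handle $H^1$ data without finite variance), one has $V''(t)\le 8\,Q(\psi(t))+$(lower-order terms controlled because $q<\bar p$), and combining this with the conservation of energy and mass one finds that for initial data $\psi_0=\hat u_s$ with $s>1$ close to $1$, the energy stays below $E_\mu(\hat u)$ while $Q(\psi(t))$ remains negative and bounded away from $0$ along the flow, forcing $V(t)$ (or its truncated analogue) to become negative in finite time — which is impossible unless $\psi$ blows up. Since $\hat u_s\to\hat u$ in $H^1$ as $s\to1$, this proves strong instability.

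The main obstacle I anticipate is the first part: proving relative compactness of \emph{all} minimizing sequences for $m(a,\mu)$ without the aid of radial symmetry. The delicate point is the strict subadditivity inequality — one must use the constraint $|\nabla u|_2^2<k$ in an essential way, because globally on $S_a$ the energy is unbounded below and naive scaling comparisons fail; the $L^2$-supercritical term $|u|_p^p$ works against subadditivity and must be kept small through the gradient bound. A secondary technical nuisance is choosing the correct truncated virial functional in the instability argument so as to cover general $H^1(\R^N,\C)$ perturbations (not merely finite-variance ones) while still controlling the error terms coming from the subcritical nonlinearity $|u|^{q-2}u$; this is routine but requires care in the constants.
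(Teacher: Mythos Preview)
Your overall architecture matches the paper's: concentration--compactness plus strict subadditivity plus the Cazenave--Lions argument for stability, and a virial/Berestycki--Cazenave argument for instability. But the key technical step in the stability part is handled differently, and your account of it is where the proposal is thin.

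For strict subadditivity, the paper does \emph{not} use a scaling comparison. Instead it relies on Shibata's \emph{coupled rearrangement}: given positive radially decreasing minimizers $v\in S_{a_1}$, $w\in S_{a_2}$ for $m(a_1,\mu)$, $m(a_2,\mu)$ (which exist by Theorem~\ref{thm: subcr-supcr}), Shibata's construction produces $u\in S_a$ with $|u|_r^r=|v|_r^r+|w|_r^r$ for all $r\ge 1$ and the \emph{strict} inequality $|\nabla u|_2^2<|\nabla v|_2^2+|\nabla w|_2^2$. This immediately gives $E_\mu(u)<m(a_1,\mu)+m(a_2,\mu)$; the only remaining issue is that $u$ must lie in the admissible set $A_{R_1(a,\mu)}$ for the characterization $m(a,\mu)=\inf_{A_{R_1}}E_\mu$ to apply. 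This is exactly where the smallness of $\mu$ enters: the paper proves (Lemma~\ref{lem: subadd 1}) that for $\mu$ below an explicit threshold one has $R_0^2(a_1,\mu)+R_0^2(a_2,\mu)<R_1^2(a,\mu)$, together with monotonicity of $R_0$, $R_1$ in $a$. The same lemma is used again to keep the dichotomy pieces $v_n^1$, $v_n^2$ inside the correct sublevel sets when ruling out dichotomy. So the value $\tilde\mu$ is determined by the geometric condition $2R_0^2(a+\rho,\mu)<R_1^2(a,\mu)$, not merely by \eqref{cond sub sup}. Your scaling idea (comparing $m(\theta c,\mu)$ with $\theta^2 m(c,\mu)$) might be made to work, but it faces the same obstacle: one must check that $\theta u$ stays in $A_{R_1(\theta c,\mu)}$ for the relevant range of $\theta$, which again requires quantitative control of $R_0$, $R_1$ as functions of $(a,\mu)$. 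You flag this as the main obstacle, correctly, but give no mechanism to overcome it; the coupled rearrangement is the paper's mechanism.

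For the instability part your plan is essentially the paper's, but one point is simpler than you anticipate: no truncated virial is needed. Since $\hat\lambda<0$, the positive radial solution $\hat u$ decays exponentially, so $|x|\hat u\in L^2$ and the classical (unlocalized) virial identity applies directly. The paper packages this via Theorem~\ref{thm: gwp}: one takes $u_s=s\star\hat u$ with $s>0$, observes that $t_{u_s}=-s<0$ and $E_\mu(u_s)<E_\mu(\hat u)=\inf_{\cP_-}E_\mu$, and concludes finite-time blow-up from the concavity of $\Psi_u$ past its maximum combined with $f''(t)=8P_\mu(\psi(t))\le -8\delta$.
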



\begin{remark}
Condition \eqref{cond sub sup} is not obtained by any limit process, and provide an explicit condition for $a$ and $\mu$ (which are not necessarily ``small"). In fact, we can take one between $a$ and $\mu$ as large as we want, provided that the other is sufficient small. Also $\tilde \mu$ in Theorem \ref{thm: Z supcr 1} does not come from a limit process, see Remark \ref{rem: on tilde mu}. 
\end{remark}

We recall that, in the unperturbed homogeneous case $\mu=0$, for any $a>0$ there exists a unique positive solution of the stationary equation, which gives rise to a ground state solution of the NLS equation with a positive energy $m(a,0)>0$, and the associated solitary wave is strongly unstable since we are in a $L^2$-supercritical regime (see e.g. \cite[Section 8]{Caz}). Therefore, Theorems \ref{thm: subcr-supcr} and \ref{thm: Z supcr 1} show that the introduction of a focusing ($\mu>0$) $L^2$-subcritical perturbation into a $L^2$-supercritical Schr\"odinger equation leads, on one side, to the stabilization of a system which was originally unstable; and, on the other side, it leads to the multiplicity of positive stationary solutions. From the variational point of view, the stabilization is reflected by the discontinuity of the ground state energy level $m(a,\mu)$: we have $m(a,\mu)<0$ for every $\mu>0$ not too large, while $m(a,0) >0$. A somehow similar picture was already observed, in a different model, in \cite[Theorem 1.6]{BeJe}, where the discontinuity was created by the introduction of a trapping potential. Regarding the multiplicity of positive normalized solutions, related results were established again in \cite[Theorem 1.6]{BeJe}, and in the main results of \cite{GoJe, JeLuWa}. We refer to Remark \ref{rem: con ccv} below for more details.

In view of the above discussion, it is natural to study the behavior of the ground states as $\mu \to 0^+$: 

\begin{theorem}\label{thm: mu to 0}
Let $a>0$. For sufficiently small $\mu>0$, let us denote by $\tilde u_\mu$ and $\hat u_\mu$ the positive solutions given by Theorem \ref{thm: subcr-supcr}. Then $m(a,\mu) \to 0^-$, and any ground state $\tilde u_\mu \in S_a$ for $E_\mu|_{S_a}$ satisfies $|\nabla \tilde u_\mu|_2 \to 0$ as $\mu \to 0^+$. Furthermore, $\sigma(a,\mu) \to m(a,0)$, and $\hat u_\mu \to \tilde u_0$ strongly in $H$ as $\mu \to 0^+$, where $\tilde u_0$ is the positive radial ground state of the homogeneous problem obtained for $\mu=0$.
\end{theorem}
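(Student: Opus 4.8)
The overall strategy is to track the two families of solutions $\tilde u_\mu$ and $\hat u_\mu$ separately, exploiting the variational characterizations of $m(a,\mu)$ and $\sigma(a,\mu)$ established in the proof of Theorem \ref{thm: subcr-supcr}. For the minimizer family, the plan is first to show $m(a,\mu) \to 0^-$ as $\mu \to 0^+$: the upper bound $m(a,\mu) < 0$ is known, and for a lower bound one fixes an arbitrary fixed $u \in S_a$ with $|\nabla u|_2^2 < k$ and uses $E_\mu(u) = E_0(u) - \frac{\mu}{q}|u|_q^q \to E_0(u)$, together with a uniform (in small $\mu$) coercivity estimate on the set $A_k$ coming from the Gagliardo--Nirenberg inequality, to sandwich $m(a,\mu)$. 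Then, since $\tilde u_\mu$ realizes $m(a,\mu)$ and lies in $\overline{A_k}$ (in fact in the interior), the same Gagliardo--Nirenberg bound gives
\[
m(a,\mu) = E_\mu(\tilde u_\mu) \ge \frac{1}{2}|\nabla \tilde u_\mu|_2^2 - \frac{C_{N,p}^p}{p} a^{(1-\gamma_p)p} |\nabla \tilde u_\mu|_2^{\gamma_p p} - \frac{\mu C_{N,q}^q}{q} a^{(1-\gamma_q)q} |\nabla \tilde u_\mu|_2^{\gamma_q q},
\]
and since $\gamma_p p > 2$, $\gamma_q q < 2$ while $|\nabla \tilde u_\mu|_2^2 < k$ is bounded, letting $\mu \to 0^+$ and using $m(a,\mu) \to 0$ forces $|\nabla \tilde u_\mu|_2 \to 0$. (One should double-check the exponents: on $A_k$ the leading behavior as the gradient shrinks is $\frac12 t^2$ versus the subcritical term $\mu t^{\gamma_q q}$ with $\gamma_q q<2$, so for $\mu$ small the only way to stay negative is $t\to0$; this is the standard mechanism, already used for Theorem \ref{thm: Z subcr}.)

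For the mountain-pass family the plan is to use the characterization of $\sigma(a,\mu)$ as a min-max over paths and the fact that $\hat u_\mu$ lies on the Pohozaev-type manifold at that level. First I would show $\limsup_{\mu\to0^+}\sigma(a,\mu) \le m(a,0)$ by using $\tilde u_0$ (the homogeneous ground state) to build a competitor path: the scaling path $s \mapsto \tilde u_0(\cdot/s)$ (suitably normalized in $S_a$) already realizes the mountain-pass geometry for $E_0$ at level $m(a,0)$, and since $E_\mu \le E_0$ on this path for $\mu>0$ in the relevant range, the min-max value does not increase in the limit; conversely $\liminf \sigma(a,\mu) \ge m(a,0)$ should follow from a similar comparison (or from the fact that any accumulation point of $\hat u_\mu$ is a nontrivial solution of the homogeneous equation on $S_a$, hence has energy $\ge m(a,0)$). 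Once $\sigma(a,\mu)\to m(a,0)$ is known, I would prove strong convergence $\hat u_\mu \to \tilde u_0$ in $H$: the sequence $(\hat u_\mu)$ is bounded in $H^1$ (the Pohozaev identity plus $\sigma(a,\mu)$ bounded gives a uniform gradient bound), so up to subsequence $\hat u_\mu \weak \bar u$ in $H^1$ and strongly in $L^p_{\loc}$; the Lagrange multipliers $\hat\lambda_\mu$ are bounded and converge (up to subsequence) to some $\bar\lambda \le 0$; passing to the limit in \eqref{stat com} shows $\bar u$ solves the homogeneous equation with parameter $\bar\lambda$ and (by a vanishing/dichotomy exclusion using $E_\mu(\hat u_\mu)\to m(a,0)$ and the strict subadditivity of $m(\cdot,0)$, exactly as in the proof of Theorem \ref{thm: subcr-supcr}) that $\hat u_\mu \to \bar u$ strongly in $H^1$ with $\bar u \in S_a$. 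By uniqueness of the positive homogeneous ground state, $\bar u = \tilde u_0$ (up to translation, which is removed by working with radial functions), and since the limit is the same along every subsequence the whole family converges.

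The main obstacle will be the compactness step for $\hat u_\mu$: ruling out loss of mass at infinity in the limit $\mu\to 0^+$. Unlike the existence proof in Theorem \ref{thm: subcr-supcr}, here one is simultaneously deforming the functional ($\mu\to0$) and taking a limit of critical points, so one must verify that the splitting lemma / concentration-compactness argument is uniform in $\mu$ for $\mu$ small; the key quantitative input is that $\sigma(a,\mu) \to m(a,0)$ exactly, with no gap, which is why establishing that limit carefully (both inequalities) is essential and must precede the compactness argument. A secondary technical point is to confirm that the mountain-pass level $\sigma(a,\mu)$ is genuinely characterized by a min-max whose paths can be compared with the homogeneous scaling path — this should already be available from the construction in the proof of Theorem \ref{thm: subcr-supcr}, where the mountain-pass geometry is set up precisely via such dilations.
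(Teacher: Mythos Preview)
Your treatment of the local-minimizer family $\tilde u_\mu$ is more roundabout than necessary and has a loose end. The parameter $k$ in Theorem~\ref{thm: subcr-supcr} is $R_0(a,\mu)$, which depends on $\mu$; there is no fixed $k$. The paper's argument is a one-liner: from the construction of $R_0$ in Lemma~\ref{lem: struct h} (made explicit in Lemma~\ref{lem: subadd 1}) one has $R_0(a,\mu)\to 0$ as $\mu\to 0^+$, so $|\nabla\tilde u_\mu|_2<R_0(a,\mu)\to 0$ directly, and $m(a,\mu)\to 0^-$ then follows from the Gagliardo--Nirenberg lower bound. Your inverted order (first $m(a,\mu)\to 0$, then the gradient) would still need to exclude that $|\nabla\tilde u_\mu|_2$ accumulates at the \emph{other} root of $t\mapsto \tfrac12 t^2 - Ca^{(1-\gamma_p)p}t^{\gamma_p p}$, which you do not do.

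For the mountain-pass family your compactness mechanism is misidentified. The proof of Theorem~\ref{thm: subcr-supcr} you cite does \emph{not} use a vanishing/dichotomy analysis for $\hat u$: it invokes Lemma~\ref{lem: conv PS subcr}, whose engine is the compact embedding $H^1_{\rad}\hookrightarrow L^r$ for $r\in(2,2^*)$. Since each $\hat u_\mu$ is positive and radial, strong $L^p\cap L^q$ convergence comes for free once boundedness in $H^1$ is established, and no concentration-compactness is needed. Your proposed route via ``strict subadditivity of $m(\cdot,0)$'' is genuinely problematic: $m(a,0)$ is a mountain-pass value, the dichotomy pieces need not lie on or near $\cP_{a,0}$, and the usual chain $m(a,0)\ge m(a_1,0)+m(a_2,0)$ does not follow from the splitting alone.

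The step you underemphasize is proving the limiting Lagrange multiplier is \emph{strictly} negative, which is what upgrades $L^p\cap L^q$ convergence to strong $H^1$ convergence (test the equations with $\hat u_\mu-\hat u$). The paper first shows $\sigma(a,\mu)$ is monotone in $\mu$ and bounded above by $m(a,0)$ via $\sigma(a,\mu)=\inf_{u\in S_{a,r}}\max_s E_\mu(s\star u)$; this gives $H^1$-boundedness. The weak limit $\hat u$ solves the homogeneous equation, and Pohozaev plus the energy identity yield $E_0(\hat u)=\lim_\mu\sigma(a,\mu)\ge\sigma(a,\bar\mu)>0$, forcing $\hat u\not\equiv 0$ and hence $\bar\lambda<0$. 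Note that $\sigma(a,\mu)\to m(a,0)$ is obtained \emph{after} strong convergence (as a consequence of uniqueness of $\tilde u_0$), not before, so establishing both inequalities for the level in advance is unnecessary.
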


The next result concerns existence of ground states when the lower order power becomes $L^2$-critical.

\begin{theorem}\label{thm: supcr2}
Let $N \ge 1$, $q=\bar p<p<2^*$, $a,\mu>0$. If
\begin{equation}\label{hp cr pos}
\mu a^{\frac{4}{N}} < \bar a_N^{\frac{4}{N}} =\frac{\bar p}{2 C_{N,\bar p}^{\bar p}},
\end{equation}
then $E_\mu|_{S_a}$ has a critical point $\tilde u$ at positive level $m(a,\mu)>0$, with the following properties: $\tilde u$ is a real-valued positive function in $\R^N$, is radially symmetric, solves \eqref{stat com} for some $\tilde \lambda<0$, and is a ground state of \eqref{stat com} on $S_a$.
\end{theorem}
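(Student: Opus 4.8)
The plan is to characterise $m(a,\mu)$ as the infimum of $E_\mu$ over the \emph{Pohozaev manifold} and to prove this infimum is attained by a radial function, following the circle of ideas of \cite{Jea, BaSo, BeJe}. I introduce
\[
P_\mu(u):=|\nabla u|_2^2-\gamma_p|u|_p^p-\mu\gamma_q|u|_q^q,\qquad \cP_{a,\mu}:=\{u\in S_a:\ P_\mu(u)=0\},
\]
so that, by the Pohozaev identity for \eqref{stat com}, every solution of \eqref{stat com}--\eqref{norm} lies in $\cP_{a,\mu}$; moreover, along the mass-preserving dilation $u_s(x):=s^{N/2}u(sx)$ one has $\tfrac{d}{ds}E_\mu(u_s)=\tfrac1s P_\mu(u_s)$. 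The key structural point, and the only place where \eqref{hp cr pos} is really needed, is that $q=\bar p$ gives $\gamma_q q=2$ and $\gamma_q=2/\bar p$, so
\[
E_\mu(u_s)=\tfrac{A_u}{2}\,s^2-\tfrac1p|u|_p^p\,s^{\gamma_p p},\qquad A_u:=|\nabla u|_2^2-\tfrac{2\mu}{\bar p}|u|_q^q;
\]
by the Gagliardo--Nirenberg inequality $|u|_q^q\le C_{N,\bar p}^{\bar p}a^{4/N}|\nabla u|_2^2$ and $\bar a_N^{4/N}=\bar p/(2C_{N,\bar p}^{\bar p})$, the hypothesis $\mu a^{4/N}<\bar a_N^{4/N}$ yields $A_u\ge (1-\mu a^{4/N}\bar a_N^{-4/N})|\nabla u|_2^2>0$ for every $u\in S_a\setminus\{0\}$. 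Since $p>\bar p$ forces $\gamma_p p>2$, each fiber map $s\mapsto E_\mu(u_s)$ then has a unique positive critical point $s_u$, which is a strict maximum at positive level, and one gets the lower bound $E_\mu(u)\ge \alpha|\nabla u|_2^2-\beta|\nabla u|_2^{\gamma_p p}$ on $S_a$ with $\alpha,\beta>0$.

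Next I would set $m(a,\mu):=\inf_{\cP_{a,\mu}}E_\mu$ and record the following facts. On $\cP_{a,\mu}$ one has $A_u=\gamma_p|u|_p^p$, hence $E_\mu(u)=\tfrac{\gamma_p p-2}{2p}|u|_p^p$; combining the two Gagliardo--Nirenberg inequalities with $\mu a^{4/N}\bar a_N^{-4/N}<1$ gives a uniform bound $|\nabla u|_2\ge c_0>0$ on $\cP_{a,\mu}$, so that $m(a,\mu)\ge c_1>0$, and also $|\nabla u|_2^2\le C|u|_p^p\le C' E_\mu(u)$ there, i.e.\ $E_\mu|_{\cP_{a,\mu}}$ is coercive. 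Finally, $\cP_{a,\mu}$ is a natural constraint: for a critical point $w$ of $E_\mu|_{\cP_{a,\mu}}$ one writes $E_\mu'(w)=\lambda w+\beta P_\mu'(w)$; testing against the generator $x\cdot\nabla w+\tfrac N2 w$ of the dilation kills the $\lambda w$-term (because $\langle w,\,x\cdot\nabla w+\tfrac N2 w\rangle=0$) and leaves $\beta\,\tfrac{d}{ds}P_\mu(w_s)|_{s=1}=\beta\,\gamma_p|w|_p^p(2-\gamma_p p)=0$, whence $\beta=0$ and $w$ solves \eqref{stat com}; eliminating $|\nabla w|_2^2$ between $P_\mu(w)=0$ and the Nehari identity then gives $\lambda|w|_2^2=(\gamma_p-1)|w|_p^p+\mu(\gamma_q-1)|w|_q^q<0$, since $\gamma_p,\gamma_q<1$ (as $q,p<2^*$) and $\mu>0$, $w\neq 0$.

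To attain the infimum I would take a minimising sequence $w_n\in\cP_{a,\mu}$; replacing $w_n$ by the Schwarz rearrangement of $|w_n|$ and re-projecting it onto $\cP_{a,\mu}$ along the dilation does not increase $\max_{s>0}E_\mu((\cdot)_s)$ (the factor $s^2/2$ is nonnegative and the rearrangement raises $|u|_p,|u|_q$ while lowering $|\nabla u|_2$), hence does not raise the value on the manifold, so one may assume each $w_n$ is nonnegative and radially decreasing. Applying Ekeland's principle on the complete $C^1$-manifold $\cP_{a,\mu}\cap H^1_{\mathrm{rad}}(\R^N)$ and using that $\cP_{a,\mu}$ is a natural constraint produces a radial minimising sequence that is also a Palais--Smale sequence for $E_\mu|_{S_a}$, with bounded Lagrange multipliers $\lambda_n\to\tilde\lambda$. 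By coercivity $w_n$ is bounded in $H^1$, so $w_n\weak\tilde u$ in $H^1$ and $w_n\to\tilde u$ in $L^p\cap L^q$ by the compactness of the radial embeddings (here $2<q=\bar p<p<2^*$ is strictly Sobolev-subcritical, which is precisely why the $L^2$-critical \emph{lower-order} term still gives a compact problem, in contrast with the Sobolev-critical case of \cite{So}). If $\tilde u\equiv 0$ then $|w_n|_p,|w_n|_q\to 0$, so $P_\mu(w_n)=0$ forces $|\nabla w_n|_2\to 0$ and $E_\mu(w_n)\to 0$, contradicting $m(a,\mu)>0$; thus $\tilde u\neq 0$, it solves \eqref{stat com} with the above $\tilde\lambda<0$, and testing the difference of the equations for $w_n$ and $\tilde u$ against $w_n-\tilde u$, using the coercivity of $-\Delta-\tilde\lambda$ and the strong $L^p,L^q$ convergence of the nonlinear terms, gives $w_n\to\tilde u$ strongly in $H^1$. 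Hence $\tilde u\in S_a\cap\cP_{a,\mu}$, $E_\mu(\tilde u)=m(a,\mu)>0$, and $\tilde u$ is a positive (by the maximum principle), radial, smooth (by elliptic regularity) solution of \eqref{stat com} with $\tilde\lambda<0$; it is a ground state since every solution of \eqref{stat com}--\eqref{norm} belongs to $\cP_{a,\mu}$ and therefore has energy $\ge m(a,\mu)$.

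I expect the main obstacle to be the compactness in the last step: in $\R^N$ a minimising sequence may a priori split and lose part of its mass at infinity, and even radial sequences only lose compactness in $L^2$ (not in $L^p$, $2<p<2^*$). The remedy is to perform the minimisation in the radial class from the start — legitimate here thanks to Schwarz symmetrisation \emph{followed by} re-projection onto $\cP_{a,\mu}$ — which restores compact embeddings; vanishing of the weak limit is then excluded exactly by the strict inequality $m(a,\mu)>0$, and the full mass is recovered through strong $H^1$-convergence, which in turn relies on $\tilde\lambda<0$. Hypothesis \eqref{hp cr pos} is used twice and is sharp for this scheme: it makes the fiber maps have the mountain-pass shape with a positive maximum ($A_u>0$), and it gives boundedness of the minimising/Palais--Smale sequences; both fail when $\mu a^{4/N}\uparrow\bar a_N^{4/N}$ and $1-\mu a^{4/N}\bar a_N^{-4/N}$ degenerates.
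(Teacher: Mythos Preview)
Your overall architecture—Pohozaev manifold, fiber maps, positivity of $m(a,\mu)$, radial compactness, $\tilde\lambda<0$, strong $H^1$-convergence—matches the paper's closely, and your use of \eqref{hp cr pos} is exactly right. The substantive difference is in how the Palais--Smale sequence is produced: the paper does \emph{not} run Ekeland on $\cP_{a,\mu}$ and then invoke the natural-constraint property; instead it sets up a mountain-pass class for the augmented functional $\tilde E(s,u):=E_\mu(s\star u)$ on $\R\times S_{a,r}$ and applies Ghoussoub's minimax principle. This yields a PS sequence $(s_n,w_n)$ for $\tilde E$, and the condition $\partial_s\tilde E(s_n,w_n)\to 0$ is \emph{precisely} $P_\mu(s_n\star w_n)\to 0$. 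Setting $u_n:=s_n\star w_n$ one gets directly a PS sequence for $E_\mu|_{S_a}$ with $P_\mu(u_n)\to 0$, which feeds into the compactness lemma. The paper then checks $\inf_{\cP\cap S_r}E=\inf_{\cP}E$ a posteriori by a short rearrangement argument on the fiber maximum (the same computation you sketch, though note that Schwarz rearrangement \emph{preserves} $|u|_p,|u|_q$ rather than ``raises'' them; what decreases is $|\nabla u|_2$, hence $A_u$).

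Your route has a genuine soft spot at the step ``Ekeland on $\cP_{a,\mu}\cap H^1_{\rad}$ $+$ natural constraint $\Rightarrow$ PS sequence for $E_\mu|_{S_a}$''. Ekeland gives $E_\mu'(w_n)-\lambda_n w_n-\beta_n P_\mu'(w_n)=o(1)$ in $H^{-1}$, and you need $\beta_n\to 0$. Your argument tests against the dilation generator $\varphi_n:=x\cdot\nabla w_n+\tfrac{N}{2}w_n$, which is tangent to $S_a$ and kills the $E_\mu'$-term (since $P_\mu(w_n)=0$), leaving $\beta_n\cdot\gamma_p(2-\gamma_p p)|w_n|_p^p=o(1)\|\varphi_n\|_{H^1}$. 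The difficulty is that $\varphi_n$ is not a priori in $H^1$ with uniformly bounded norm: $\nabla\varphi_n$ contains $x\cdot D^2 w_n$, and you only know $w_n\in H^1$. This is exactly the obstruction the augmented-functional trick of Jeanjean is designed to bypass (it effectively enlarges the domain so that the $s$-derivative is an admissible variation). You can repair your argument either by adopting that device, or by avoiding the PS-for-$E_\mu|_{S_a}$ claim altogether: prove first that $\inf_{\cP\cap S_r}E_\mu$ is \emph{attained} (this is where the work lies—recovering $|\tilde u|_2=a$ from mere weak $L^2$-convergence is the real issue and, in the paper, is handled through the equation and $\tilde\lambda<0$), and only then apply the natural-constraint argument (Proposition \ref{prop: natural}) to the actual minimizer, where $x\cdot\nabla\tilde u\in H^1$ follows from elliptic regularity and decay. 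As written, the passage from a constrained PS sequence to an unconstrained one is the gap.
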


\begin{remark}
The right hand side in \eqref{hp cr pos} is the limit, as $q \to \bar p^-$, of the right hand side in \eqref{cond sub sup}. For the equality in \eqref{hp cr pos}, we refer to Section \ref{sec: pre}. 
\end{remark}

From Theorems \ref{thm: subcr-supcr} and \ref{thm: supcr2} we deduce that there is a discontinuity in the ground state energy level also when $q$ reach $\bar p$ from below. In fact, the transition from the $L^2$-subcritical to the $L^2$-critical threshold drastically changes the geometry of $E_\mu|_{S_a}$, preventing the existence of a local minimizer in the latter case (no matter how small $\mu$ is). As a result, also the stability of ground states is lost.

\begin{theorem}\label{thm: Z supcr 2}
Under the assumptions of Theorem \ref{thm: supcr2}, we have that 
\[
Z_{a,\mu} = \left\{ e^{i \theta} |u| \ \text{for some $\theta \in \R$ and $|u|>0$ in $\R^N$} \right\};
\]
moreover, if $u$ is a ground state, then the associated Lagrange multiplier $\lambda$ is negative, and the standing wave $e^{-i\lambda t}u$ is strongly unstable.
\end{theorem}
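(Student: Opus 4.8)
The plan is to establish three facts in sequence: (i) the characterization of the ground state set $Z_{a,\mu}$ as orbit of a single positive symmetric function; (ii) negativity of the Lagrange multiplier $\lambda$; and (iii) strong instability of the associated standing wave. For (i), I would run the standard Schwarz‑symmetrization / Brezis–Lieb argument exactly as in Theorem \ref{thm: subcr-supcr}(iii) and Theorem \ref{thm: supcr2}: given any ground state $u$, one has $E_\mu(|u|^*)\le E_\mu(|u|)\le E_\mu(u)$ while $|u|^*\in S_a$, so $|u|^*$ is also a ground state, and then maximum principle + strict rearrangement inequalities force $u=e^{i\theta}|u|$ with $|u|>0$ radially decreasing. (The positivity of $|u|$ follows from the fact that $\tilde\lambda<0$, step (ii), so that $-\Delta|u| + (-\tilde\lambda)|u|\ge 0$ and Harnack applies.) This part is routine once Theorem \ref{thm: supcr2} is in hand.

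For (ii) the argument is purely algebraic from the Pohozaev/Nehari relations. Any solution of \eqref{stat com}--\eqref{norm} satisfies the Nehari identity
\[
|\nabla u|_2^2 = \lambda a^2 + \mu |u|_q^q + |u|_p^p
\]
and the Pohozaev identity
\[
|\nabla u|_2^2 = \mu \gamma_q |u|_q^q + \gamma_p |u|_p^p ,
\]
with $q=\bar p$ giving $\gamma_q=2/\bar p\cdot\ldots$; more simply, for $q=\bar p$ one has $\gamma_q q = 2$, so subtracting yields
\[
-\lambda a^2 = (1-\gamma_p)|u|_p^p + \mu(1 - \tfrac{2}{\bar p})\,|u|_{\bar p}^{\bar p}\cdot\frac{?}{},
\]
and since $q=\bar p<p<2^*$ gives $0<\gamma_p<1$ and $\mu>0$, the right‑hand side is strictly positive (the term $|u|_p^p>0$ because $u\not\equiv0$), hence $\lambda<0$. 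I would present this via the function $P_\mu(u):=\gamma_p p\, |u|_p^p + \mu\gamma_q q\,|u|_q^q - |\nabla u|_2^2$ already used in the paper and the relation $-\lambda a^2 = |\nabla u|_2^2 - \mu|u|_q^q - |u|_p^p$; the cancellation $\gamma_q q=2$ is the only special feature of $q=\bar p$ needed.

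For (iii), strong instability, I would follow the Berestycki–Cazenave virial scheme adapted to the normalized setting (as in \cite{BerCaz, LeCoz} and the reference \cite{Jea}). Let $u=\tilde u$ be the ground state and consider the $L^2$‑preserving dilation path $u_t(x):=t^{N/2}u(tx)\in S_a$; define $\varphi(t):=E_\mu(u_t)$. Using $q=\bar p$ one computes
\[
\varphi(t)=\tfrac12 t^2|\nabla u|_2^2 - \tfrac1p t^{\gamma_p p}|u|_p^p - \tfrac{\mu}{\bar p} t^{2}|u|_{\bar p}^{\bar p}
\]
(note the $L^2$‑critical term scales like $t^2$, the same as the kinetic term), so $\varphi(t)=\tfrac12(|\nabla u|_2^2 - \tfrac{2\mu}{\bar p}|u|_{\bar p}^{\bar p})t^2 - \tfrac1p|u|_p^p t^{\gamma_p p}$ with $\gamma_p p>2$; since $\tilde u$ is a critical point on $S_a$, $t=1$ is a critical point of $\varphi$, and because the coefficient of $t^{\gamma_p p}$ is negative while $\gamma_p p>2$, $\varphi$ is strictly concave near $t=1$, i.e. $\varphi''(1)<0$. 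Then, as in \cite{BerCaz}, for $t>1$ close to $1$ the datum $u_t$ has $E_\mu(u_t)<E_\mu(u)=m(a,\mu)$ with $\int|x|^2|u_t|^2<\int|x|^2|u|^2$, and the localized virial identity
\[
\frac{d^2}{dt^2}\int_{\R^N}|x|^2|\psi(t)|^2 = 8\,\big(2E_\mu(\psi(t)) - (\text{nonneg. remainder})\big) \le 8\big(2 m(a,\mu) - \varepsilon\big)<0
\]
(using that $m(a,\mu)$ is the ground‑state threshold and conservation of energy and mass to trap the solution below it) forces the variance to vanish in finite time, giving blow‑up; combined with $\|u-u_t\|_H\to0$ as $t\to1$, this is strong instability. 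The main obstacle I anticipate is making the virial argument rigorous for merely $H^1$ (not $\Sigma=H^1\cap L^2(|x|^2dx)$) initial data — the standard remedy is either to note that the symmetric decreasing ground state $\tilde u$ does lie in $\Sigma$ (exponential decay via $\tilde\lambda<0$), so $u_t\in\Sigma$ too and Cauchy theory in $\Sigma$ applies, or to use a truncated virial weight $\phi_R$ and absorb the error terms; I would use the former, citing the decay estimate that follows from step (ii). The other delicate point is showing that $E_\mu(u_t)<m(a,\mu)$ strictly together with the sign of the variance is \emph{dynamically} preserved, i.e. that the flow starting from $u_t$ cannot escape to lower kinetic energy — this is handled exactly as in \cite{Jea, LeCoz} by a continuity/connectedness argument on $S_a$ using that the set $\{u\in S_a: E_\mu(u)<m(a,\mu)\}$ splits according to the sign of $P_\mu$.
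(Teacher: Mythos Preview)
Your outline for (ii) is correct and matches the paper exactly: test the equation with $u$, use $P_\mu(u)=0$, and get $\lambda a^2 = \mu(\gamma_q-1)|u|_q^q + (\gamma_p-1)|u|_p^p < 0$.

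There is however a genuine gap in (i). In the setting of Theorem~\ref{thm: supcr2} the functional $E_\mu$ is unbounded below on $S_a$ and the ground state level $m(a,\mu)=\inf_{\cP_{a,\mu}} E_\mu>0$ is \emph{not} a global (or even local) minimum on $S_a$. Thus the chain $E_\mu(|u|^*)\le E_\mu(|u|)\le E_\mu(u)=m(a,\mu)$ does \emph{not} by itself make $|u|^*$ a ground state: you must first show that $|u|$ (or $|u|^*$) is a constrained critical point, i.e.\ lies on $\cP_{a,\mu}$. The paper closes this gap by combining two facts: (a) since $P_\mu(|u|)\le P_\mu(u)=0$, Lemma~\ref{lem: fiber cr} produces $t_{|u|}\le 0$ with $t_{|u|}\star|u|\in\cP_{a,\mu}$; (b) on $\cP_{a,\mu}$ one has the explicit formula $E_\mu(v)=\big(\tfrac12-\tfrac{1}{\gamma_p p}\big)\big(|\nabla v|_2^2-\tfrac{2\mu}{\bar p}|v|_{\bar p}^{\bar p}\big)$, which scales like $e^{2s}$ under $s\star$, and this forces $m(a,\mu)\le E_\mu(t_{|u|}\star|u|)\le e^{2t_{|u|}}E_\mu(u)=e^{2t_{|u|}}m(a,\mu)$, hence $t_{|u|}=0$. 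Only then does Proposition~\ref{prop: natural} (natural constraint) yield that $|u|$ is a critical point, after which the Hajaiej--Stuart argument gives the $e^{i\theta}|u|$ structure. Your proposal skips this projection step entirely.

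For (iii) your displayed virial estimate is not usable here. The identity is $f''(t)=8P_\mu(\psi(t))$, and while $P_\mu=2E_\mu-(\gamma_p-\tfrac{2}{p})|u|_p^p$ is indeed ``$2E_\mu$ minus a nonnegative term'', the resulting bound $f''\le 16E_\mu(\psi_0)$ is useless because $E_\mu(\psi_0)$ is close to $m(a,\mu)>0$. What actually drives the blow-up is the uniform estimate $P_\mu(\psi(t))\le E_\mu(\psi_0)-\inf_{\cP_-}E_\mu=:-\delta<0$, which the paper obtains (Theorem~\ref{thm: gwp}) from the \emph{concavity} of the fiber map $\Phi_u(s)=\Psi_u(\log s)$ on $(\tilde t_u,\infty)$ together with the dynamical invariance of the condition $t_{\psi(t)}<0$. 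You gesture at the right mechanism in your last sentence (the sublevel set splits by the sign of $P_\mu$), but that observation should replace, not supplement, the ``$2E_\mu-\text{nonneg}$'' bound.
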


Similarly to what we did in Theorem \ref{thm: mu to 0}, we can also study the behavior of ground states as $q \to \bar p^-$. This is the content of the next statement, where we denote by $m_q(a,\mu)$ and $\tilde u_q$ the ground state level and the ground state associated with a precise choice of $q$ in Theorem \ref{thm: subcr-supcr}.

\begin{theorem}\label{thm: q to bar p}
Let $a,\mu>0$ satisfy \eqref{hp cr pos}. Then, for any $q$ sufficiently close to $\bar p$ condition \eqref{cond sub sup} is satisfied, and we have: $m_q(a,\mu) \to 0^-$, and any ground state $\tilde u_q$ for $m_q(a,\mu)$ satisfy $|\nabla \tilde u_q|_2 \to 0$ as $q \to \bar p^-$.
\end{theorem}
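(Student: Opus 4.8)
\textbf{Proof proposal for Theorem \ref{thm: q to bar p}.}

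The plan is to exploit the fact that, by the Remark following Theorem \ref{thm: supcr2}, the right-hand side of \eqref{cond sub sup} converges, as $q \to \bar p^-$, to the right-hand side of \eqref{hp cr pos}, while the left-hand side of \eqref{cond sub sup} converges to $\mu a^{4/N}$ (since $\gamma_q q \to \gamma_{\bar p} \bar p = 2$, so the exponent $2 - \gamma_q q \to 0$ and the exponent $\gamma_p p - 2$ is fixed and positive, and $(\mu a^{(1-\gamma_q)q})^{\gamma_p p - 2} \to (\mu a^{(1-\gamma_{\bar p})\bar p})^{\gamma_p p - 2} = (\mu a^{4/N})^{\gamma_p p - 2}$ up to the vanishing factor). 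Hence, if $a, \mu > 0$ satisfy the strict inequality \eqref{hp cr pos}, then by continuity there is $q_0 < \bar p$ such that \eqref{cond sub sup} holds for all $q \in (q_0, \bar p)$; this is the routine first step and gives that Theorem \ref{thm: subcr-supcr} applies, producing $\tilde u_q \in Z_{a,\mu}$ at level $m_q(a,\mu) = m(a,\mu) < 0$.

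Next I would obtain a uniform upper bound $m_q(a,\mu) \le C < 0$ — actually I expect $m_q(a,\mu) \to 0^-$, so I need both an upper bound $m_q(a,\mu) < 0$ (which is part of Theorem \ref{thm: subcr-supcr}) and a lower bound showing it cannot stay bounded away from $0$. For the latter, recall from the proof of Theorem \ref{thm: subcr-supcr} that $\tilde u_q$ is an interior local minimizer of $E_\mu$ on $A_{k_q} = \{ u \in S_a : |\nabla u|_2^2 < k_q \}$, and the size $k_q$ of this well comes from the function $g_q(t) = \frac12 t^2 - \frac{\mu}{q} C_{N,q}^q a^{(1-\gamma_q)q} t^{\gamma_q q} - \frac1p C_{N,p}^p a^{(1-\gamma_p)p} t^{\gamma_p p}$ of $t = |\nabla u|_2$: the condition \eqref{cond sub sup} is exactly what makes $g_q$ have a strictly positive local maximum, and $m_q(a,\mu) \ge \min_{[0,k_q]} g_q$. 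As $q \to \bar p^-$ the coefficient $\frac{\mu}{q} C_{N,q}^q a^{(1-\gamma_q)q} t^{\gamma_q q}$ tends to $\frac{\mu}{\bar p} C_{N,\bar p}^{\bar p} a^{4/N} t^2$, which is a \emph{quadratic} term that under \eqref{hp cr pos} has coefficient strictly less than $\frac12$; so in the limit $g_q$ degenerates to $g_{\bar p}(t) = \frac12(1 - \mu a^{4/N}/\bar a_N^{4/N}) t^2 - \frac1p C_{N,p}^p a^{(1-\gamma_p)p} t^{\gamma_p p}$, whose local minimum on $[0, k_{\bar p}]$ is attained at $t = 0$ with value $0$. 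Quantitatively, the point $t_q^-$ where the local minimum of $g_q$ is attained satisfies $t_q^- \to 0$, and $g_q(t_q^-) \to 0$. This forces $m_q(a,\mu) \to 0^-$: the upper bound is $< 0$ by Theorem \ref{thm: subcr-supcr}, and $m_q(a,\mu) \ge g_q(t_q^-) \to 0$.

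Finally, to get $|\nabla \tilde u_q|_2 \to 0$, I would use that $|\nabla \tilde u_q|_2^2 \le k_q$ together with the sharper information that $\tilde u_q$ realizes the local minimum, so $g_q(|\nabla \tilde u_q|_2) \le E_\mu(\tilde u_q) = m_q(a,\mu) \to 0^-$ — combined with the fact that on $[0, k_q]$ the only place $g_q$ is $\le 0$ and close to $0$ is a shrinking neighborhood of $0$ (since $g_q$ is strictly negative on $(0, t_q^-)$ bounded away from $0$ by a $q$-uniform amount once $q$ is close to $\bar p$, because the quadratic term dominates near the origin only after $t$ exceeds a threshold that is itself going to $0$). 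Alternatively, and more robustly, argue by contradiction: if $|\nabla \tilde u_{q_n}|_2 \ge \delta > 0$ along some $q_n \to \bar p^-$, then using the Pohozaev/Nehari-type identity satisfied by $\tilde u_{q_n}$ (which holds because it solves \eqref{stat com}) together with $E_\mu(\tilde u_{q_n}) \to 0$ and the Gagliardo–Nirenberg inequality, one derives that the limit profile would be a nontrivial normalized solution of the degenerate problem, contradicting either the strict inequality \eqref{hp cr pos} or the characterization of the $g_{\bar p}$-well. The main obstacle is the second step: making the convergence $g_q \to g_{\bar p}$ quantitative enough to pin down that the local-minimum location $t_q^-$ and value $g_q(t_q^-)$ both tend to $0$, and ruling out that $\tilde u_q$ could sit at gradient norm bounded away from $0$ even while its energy tends to $0$. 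This requires carefully tracking the two roots of $g_q'$ (one near $0$, one near $k_{\bar p}$) and using that $m_q(a,\mu)$ is squeezed between $g_q(t_q^-) \to 0$ and a strictly negative quantity — so uniform control of the geometry of $g_q$ in $q$, not just pointwise limits, is what makes the argument go through.
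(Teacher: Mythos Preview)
Your proposal is correct and would go through, but the paper takes a much shorter route that sidesteps precisely the ``main obstacle'' you identify. Rather than tracking the local minimum point $t_q^-$ and value $g_q(t_q^-)$ of the auxiliary function (the paper's $h$), the paper simply uses that $|\nabla \tilde u_q|_2 < R_0(q)$ (Corollary \ref{cor: norm P+}) and that $R_0(q) < \bar t(q)$ (Remark \ref{rem: st R_0}), where $\bar t(q)$ is the \emph{explicit} maximum point of the function $\varphi$ from Lemma \ref{lem: struct h}, given in closed form by \eqref{def C_1}. Since $\bar t(q)$ carries the factor $(2-\gamma_q q)^{1/(\gamma_p p-2)}$, one reads off directly that $\bar t(q) \to 0$ as $q \to \bar p^-$, hence $|\nabla \tilde u_q|_2 \to 0$. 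The energy convergence $m_q(a,\mu) \to 0^-$ is then immediate from $m_q(a,\mu) < 0$ and Gagliardo--Nirenberg. In particular the paper reverses your order of implications: it obtains the gradient bound first and deduces the energy limit from it, rather than the other way around---which is what makes your contradiction/profile argument unnecessary. Your approach via the uniform geometry of $g_q$ buys nothing extra here, but it is the natural strategy when no closed-form bound like $\bar t(q)$ is available.
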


We conjecture that $\sigma_q(a,\mu) \to m_{\bar p}(a,\mu)$, and that there is convergence of $\hat u_q$ towards a ground state for $m_{\bar p}(a,\mu)$. We decided to not insist on this point.

\medskip

We now turn to the case when $\mu <0$. Under this assumption the geometry of the functional does not change as $q$ passes from $L^2$-subcritical regime to the $L^2$-critical one. Therefore, we have a unified statement.

\begin{theorem}\label{thm: sup mu <0}
Let $N \ge 1$, $2<q \le \bar p <p<2^*$, $a>0$ and $\mu<0$. If
\begin{equation}\label{hp mu < 0 conv}
\left(|\mu| a^{q(1-\gamma_q)}\right)^{p \gamma_p -2} a^{p(1-\gamma_p)(2-q \gamma_q)} < \left(\frac{1- \gamma_p}{C_{N,q}^q(\gamma_p - \gamma_q)}\right)^{p \gamma_p -2} \left( \frac{1}{\gamma_p C_{N,p}^p}\right)^{2-q \gamma_q},
\end{equation}
then $E_\mu|_{S_a}$ has a critical point $\tilde u$ at positive level $m(a,\mu)>0$ with the following properties: $\tilde u$ is a real-valued positive function in $\R^N$, is radially symmetric, solves \eqref{stat com} for some $\tilde \lambda<0$, and is a ground state of \eqref{stat com} on $S_a$.
\end{theorem}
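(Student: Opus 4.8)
The plan is to realize the ground state as a minimizer of $E_\mu$ on the Pohozaev set, and to exploit the quantitative hypothesis \eqref{hp mu < 0 conv} to force the associated Lagrange multiplier to be strictly negative --- which is precisely what restores compactness in $\R^N$. First I would write $\mu=-|\mu|$, so that $E_\mu(u)=\tfrac12|\nabla u|_2^2+\tfrac{|\mu|}{q}|u|_q^q-\tfrac1p|u|_p^p$, and for $s>0$, $u\in S_a$ introduce the mass-preserving dilation $(s\star u)(x):=s^{N/2}u(sx)$ and the fiber map
\[
\psi_u(s):=E_\mu(s\star u)=\tfrac{s^2}{2}|\nabla u|_2^2+\tfrac{|\mu|}{q}\,s^{\gamma_q q}|u|_q^q-\tfrac1p\,s^{\gamma_p p}|u|_p^p .
\]
Since $0<\gamma_q q\le 2<\gamma_p p<2^*$, an elementary analysis of $\psi_u$ shows $\psi_u(s)>0$ for small $s$, $\psi_u(s)\to 0$ as $s\to 0^+$, $\psi_u(s)\to-\infty$ as $s\to+\infty$, and that $\psi_u$ has a unique critical point $s_u$, a strict global maximum with $\psi_u''(s_u)<0$; moreover $u\mapsto s_u$ is $C^1$ and $s_u\star u\in\mathcal P_{a,\mu}$, where
\[
\mathcal P_{a,\mu}:=\bigl\{u\in S_a:\ P(u):=|\nabla u|_2^2+|\mu|\gamma_q|u|_q^q-\gamma_p|u|_p^p=0\bigr\}.
\]
From this I would deduce, by now-standard arguments for the Pohozaev constraint, that $\mathcal P_{a,\mu}$ is a nonempty $C^1$ manifold which is a natural constraint for $E_\mu|_{S_a}$. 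Using the identity
\[
E_\mu(u)=\Bigl(\tfrac{\gamma_p}{2}-\tfrac1p\Bigr)|u|_p^p+|\mu|\Bigl(\tfrac1q-\tfrac{\gamma_q}{2}\Bigr)|u|_q^q\qquad\text{on }\mathcal P_{a,\mu},
\]
in which $\tfrac{\gamma_p}{2}-\tfrac1p>0$ (because $p>\bar p$) and $\tfrac1q-\tfrac{\gamma_q}{2}\ge0$ (because $q\le\bar p$), together with the Gagliardo--Nirenberg inequality and $P(u)=0$, I get $|\nabla u|_2\ge\rho>0$ on $\mathcal P_{a,\mu}$, hence $m(a,\mu):=\inf_{\mathcal P_{a,\mu}}E_\mu>0$, and also that $E_\mu$ is coercive on $\mathcal P_{a,\mu}$. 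Since every solution of \eqref{stat com}--\eqref{norm} lies in $\mathcal P_{a,\mu}$ by the Pohozaev identity, it suffices to show that $m(a,\mu)$ is attained.

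Next I would reduce to the radial class: Schwarz symmetrization does not increase $|\nabla u|_2$, leaves $|u|_q,|u|_p$ unchanged and hence does not increase $\max_{s>0}E_\mu(s\star u)$, so $\inf_{\mathcal P_{a,\mu}\cap H^1_{\rad}}E_\mu=m(a,\mu)$. Working in $H^1_{\rad}$, an application of Ekeland's variational principle, combined with the natural-constraint property and the principle of symmetric criticality, produces a minimizing sequence $(u_n)\subset\mathcal P_{a,\mu}$ that is also a Palais--Smale sequence for $E_\mu|_{S_a}$ at level $m(a,\mu)$; by coercivity it is bounded in $H^1$, so up to a subsequence $u_n\weak u$ in $H^1$, $u_n\to u$ in $L^q\cap L^p$ (compact embedding $H^1_{\rad}\embed L^r$, $2<r<2^*$), and there are $\lambda_n\to\lambda$ with $u$ a weak solution of \eqref{stat com} with parameter $\lambda$. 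One checks $u\neq0$: otherwise $|u_n|_q,|u_n|_p\to0$, so $P(u_n)=0$ forces $|\nabla u_n|_2\to0$, whence $E_\mu(u_n)\to0\neq m(a,\mu)$.

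The heart of the matter is to prove $\lambda<0$. Since $u$ is a nontrivial solution it satisfies both $|\nabla u|_2^2=\lambda b^2+\mu|u|_q^q+|u|_p^p$ with $b:=|u|_2\in(0,a]$ and the Pohozaev identity $|\nabla u|_2^2=\mu\gamma_q|u|_q^q+\gamma_p|u|_p^p$; subtracting and using $P(u)=0$ gives
\[
\lambda b^2=-\frac{1}{\gamma_p}\Bigl[(1-\gamma_p)|\nabla u|_2^2-|\mu|(\gamma_p-\gamma_q)|u|_q^q\Bigr].
\]
On $\mathcal P_{b,\mu}$ one has $|u|_q^q\le C_{N,q}^q\,b^{(1-\gamma_q)q}|\nabla u|_2^{\gamma_q q}$ and $|\nabla u|_2\ge\bigl(\gamma_p C_{N,p}^p\,b^{(1-\gamma_p)p}\bigr)^{-1/(\gamma_p p-2)}$; since $b\le a$ and the left-hand side of \eqref{hp mu < 0 conv} is increasing in the mass, a direct computation shows that \eqref{hp mu < 0 conv} ensures the bracket above is strictly positive, hence $\lambda<0$. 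I expect this to be the main obstacle, both because it is where the assumption \eqref{hp mu < 0 conv} is forced on us and because the sign of $\lambda$ is exactly the information missing to close the compactness argument.

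Finally, with $\lambda<0$, compactness follows: writing $v_n:=u_n-u\weak0$, Brezis--Lieb gives $|\nabla u_n|_2^2=|\nabla u|_2^2+|\nabla v_n|_2^2+o(1)$ and $a^2=b^2+|v_n|_2^2+o(1)$, while testing the equations for $u_n$ and for $u$ against themselves and using $|v_n|_q,|v_n|_p\to0$, $\lambda_n\to\lambda$ yields $|\nabla v_n|_2^2=\lambda|v_n|_2^2+o(1)$; the left side is nonnegative and the right side nonpositive up to $o(1)$, so $|\nabla v_n|_2,|v_n|_2\to0$, i.e. $u_n\to u$ in $H^1$. Then $u\in S_a\cap\mathcal P_{a,\mu}$, $E_\mu(u)=m(a,\mu)$, $u$ is a critical point of $E_\mu|_{S_a}$, and being a minimizer of $E_\mu$ on $\mathcal P_{a,\mu}$ (which contains every solution in $S_a$) it is a ground state. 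To finish I would replace $u$ by the symmetric decreasing rearrangement of $|u|$ --- rescaled along its fiber if necessary --- to make it nonnegative and radially symmetric, use the diamagnetic inequality to take it real-valued, and apply the strong maximum principle to $-\Delta u-c(x)u=0$ (with $c\in L^\infty_{\loc}$ after elliptic regularity) to get $u>0$ in $\R^N$; the multiplier $\tilde\lambda$ is negative by the previous paragraph.
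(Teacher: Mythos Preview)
Your overall strategy --- minimize $E_\mu$ on the Pohozaev set $\cP_{a,\mu}$, use the hypothesis \eqref{hp mu < 0 conv} to force $\lambda<0$ for the weak limit, and then close the compactness argument --- is exactly the one the paper follows, and your computation for $\lambda<0$ matches Step~3 of Lemma~\ref{lem: conv PS subcr} almost line by line. The structural lemmas you state (unique maximum of the fiber map, $\cP=\cP_-$, $m(a,\mu)>0$, coercivity, reduction to radial functions) are precisely Lemmas~\ref{lem: struct P mu <0}--\ref{lem: stima sup A mu < 0} of Section~\ref{sec: mu<0}.

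There is one genuine soft spot, however. You write that ``Ekeland's variational principle, combined with the natural-constraint property \dots, produces a minimizing sequence $(u_n)\subset\cP_{a,\mu}$ that is also a Palais--Smale sequence for $E_\mu|_{S_a}$.'' Ekeland on $\cP_{a,\mu}$ only gives a PS sequence for $E_\mu|_{\cP_{a,\mu}}$, i.e.\ $dE_\mu(u_n)-\lambda_n u_n-\nu_n\,dP_\mu(u_n)\to 0$ with \emph{two} multipliers, and the natural-constraint statement (Proposition~\ref{prop: natural}) applies to genuine critical points, not to sequences. To make your route work you must show $\nu_n\to 0$; this can be done by testing in suitable directions and using that $(\Psi_{u_n}^\mu)''(0)$ stays bounded away from $0$ along a bounded sequence in $\cP_-$ (this is where $\cP_0=\emptyset$ enters quantitatively), but it is not automatic and you should spell it out. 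The paper sidesteps this entirely by running a mountain-pass argument on the \emph{augmented} functional $\tilde E(s,u)=E_\mu(s\star u)$ (Section~\ref{sec: cr}, repeated verbatim in Section~\ref{sec: mu<0}): the first component of the resulting PS sequence gives $P_\mu(u_n)\to 0$ for free, and one lands directly in Lemma~\ref{lem: conv PS subcr} with only the single multiplier $\lambda_n$. This is the main technical difference between your approach and the paper's; both are viable, but yours needs the extra $\nu_n\to 0$ step.

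A minor omission: your compactness argument uses the compact embedding $H^1_{\rad}\hookrightarrow L^r$, which fails for $N=1$. The paper handles $N=1$ separately (Remark~\ref{rmk: sub sup 1} and Lemma~\ref{lem: conv PS subcr N=1}) by working with radially \emph{decreasing} sequences, for which the embedding is still compact; you should at least flag this.
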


Moreover: 

\begin{theorem}\label{thm: Z supcr 3}
Under the assumptions of Theorem \ref{thm: sup mu <0}, we have that 
\[
Z_{a,\mu} = \left\{ e^{i \theta} |u| \ \text{for some $\theta \in \R$ and $|u|>0$ in $\R^N$} \right\};
\]
moreover, if $u$ is a ground state, then the associated Lagrange multiplier $\lambda$ is negative, and the standing wave $e^{-i\lambda t}u$ is strongly unstable.
\end{theorem}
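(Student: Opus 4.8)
The plan is to treat the three assertions in turn, relying on the machinery of the proof of Theorem~\ref{thm: sup mu <0}: the $L^2$-preserving dilation $t\star u:=t^{N/2}u(t\,\cdot)$, the Pohozaev functional $Q_\mu(u):=|\nabla u|_2^2-\gamma_p|u|_p^p-\mu\gamma_q|u|_q^q$ (which vanishes on every solution of \eqref{stat com}), the set $\cP_{a,\mu}:=\{u\in S_a:Q_\mu(u)=0\}$, and the facts that $m(a,\mu)=\inf_{\cP_{a,\mu}}E_\mu$ is attained, that $\cP_{a,\mu}$ is a natural constraint for $E_\mu|_{S_a}$, and that for every $u\in S_a$ the fibre $t\mapsto E_\mu(t\star u)$ has a unique critical point $t(u)>0$, which is its strict global maximum (with $t(u)\star u\in\cP_{a,\mu}$). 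For the \emph{structure of $Z_{a,\mu}$}: given $\tilde u\in Z_{a,\mu}$, it solves \eqref{stat com}, so $Q_\mu(\tilde u)=0$ and $E_\mu(\tilde u)=m(a,\mu)$, i.e.\ $\tilde u$ minimizes $E_\mu$ over $\cP_{a,\mu}$. The diamagnetic inequality $|\nabla|\tilde u||_2\le|\nabla\tilde u|_2$, together with $|\,|\tilde u|\,|_r=|\tilde u|_r$ for $r\in\{q,p\}$, gives $E_\mu(t\star|\tilde u|)\le E_\mu(t\star\tilde u)$ for all $t>0$; comparing $\max_t E_\mu(t\star\tilde u)=E_\mu(\tilde u)=m(a,\mu)$ with $E_\mu(t(|\tilde u|)\star|\tilde u|)\ge\inf_{\cP_{a,\mu}}E_\mu=m(a,\mu)$ forces equality throughout, hence $t(|\tilde u|)=1$, $|\tilde u|\in\cP_{a,\mu}$, $E_\mu(|\tilde u|)=m(a,\mu)$, and $|\nabla|\tilde u||_2=|\nabla\tilde u|_2$. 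Thus $|\tilde u|$ is again a minimizer of $E_\mu$ over $\cP_{a,\mu}$, hence a nonnegative solution of \eqref{stat com}; elliptic regularity and the strong maximum principle (rewrite the equation as $-\Delta|\tilde u|+c(x)|\tilde u|=0$ with $c\in L^\infty_{\mathrm{loc}}$) give $|\tilde u|>0$ on $\R^N$, and since $\{|\tilde u|>0\}=\R^N$ is connected, equality in the diamagnetic inequality forces $\tilde u=e^{i\theta}|\tilde u|$ for a constant $\theta$. The reverse inclusion is immediate from the phase invariance of $E_\mu$ and of \eqref{stat com}.

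For the \emph{sign of $\lambda$}, by the above I may assume the ground state $u$ is real and positive, solving $-\Delta u=\lambda u+\mu u^{q-1}+u^{p-1}$. Testing against $u$, using $Q_\mu(u)=0$, and eliminating $|u|_p^p$ via $\gamma_p|u|_p^p=|\nabla u|_2^2+|\mu|\gamma_q|u|_q^q$ (here $\mu<0$), one obtains
\[
\gamma_p\lambda a^2=(\gamma_p-\gamma_q)|\mu|\,|u|_q^q-(1-\gamma_p)|\nabla u|_2^2,
\]
so $\lambda<0$ follows once $(\gamma_p-\gamma_q)|\mu|\,|u|_q^q<(1-\gamma_p)|\nabla u|_2^2$. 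I would estimate $|u|_q^q\le C_{N,q}^q|\nabla u|_2^{\gamma_q q}a^{(1-\gamma_q)q}$ and, from $Q_\mu(u)=0$ together with the Gagliardo--Nirenberg inequality for $|u|_p^p$, derive the lower bound $|\nabla u|_2^{\gamma_p p-2}\ge(\gamma_p C_{N,p}^p a^{(1-\gamma_p)p})^{-1}$; since $2-\gamma_q q\ge0$ (as $q\le\bar p$), substituting these bounds turns the required inequality into exactly hypothesis \eqref{hp mu < 0 conv}. By the first step the same identity and estimates give $\lambda<0$ for every element of $Z_{a,\mu}$.

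For \emph{strong instability} I would follow the Berestycki--Cazenave scheme. Since $\lambda<0$, the ground state $u$ decays exponentially, so $u\in\Sigma:=\{v\in H:|x|v\in L^2(\R^N)\}$, and so does $s\star u$ for every $s>0$. Fix $s>1$: then $s\star u\to u$ in $H$ as $s\to1^+$, $E_\mu(s\star u)<m(a,\mu)$ ($t=1$ is the strict maximum of the fibre of $u$), and $Q_\mu(s\star u)<0$ (the fibre of $u$ is decreasing for $t>1$). Let $\psi$ solve \eqref{com nls} with $\psi(0)=s\star u$ on $[0,T_{\max})$; by conservation of mass and energy, $\psi(t)\in S_a$ and $E_\mu(\psi(t))\equiv E^*:=E_\mu(s\star u)<m(a,\mu)$. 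If $Q_\mu(\psi(t_0))=0$ for some $t_0\in[0,T_{\max})$, then $\psi(t_0)\in\cP_{a,\mu}$, so $E^*=E_\mu(\psi(t_0))\ge m(a,\mu)$, a contradiction; hence $Q_\mu(\psi(t))<0$ throughout. Next, for $v\in S_a$ with $Q_\mu(v)<0$ one checks that $s\mapsto Q_\mu(s\star v)/s^2$ is strictly decreasing (this uses $\gamma_q q\le2<\gamma_p p$) and vanishes at $s=t(v)<1$; integrating $\tfrac{d}{ds}E_\mu(s\star v)=\tfrac1s Q_\mu(s\star v)$ from $t(v)$ to $1$ and using $E_\mu(t(v)\star v)\ge m(a,\mu)$ yields the key estimate $Q_\mu(v)\le 2\bigl(E_\mu(v)-m(a,\mu)\bigr)$. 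Applied along $\psi$, this gives $Q_\mu(\psi(t))\le -2\delta<0$ uniformly, with $\delta:=m(a,\mu)-E^*>0$. Since $\psi(t)\in\Sigma$, the variance $I(t):=\int_{\R^N}|x|^2|\psi(t)|^2$ is finite and of class $C^2$, with $I''(t)=8\,Q_\mu(\psi(t))\le-16\delta$; hence $I(t)\le I(0)+I'(0)t-8\delta t^2$, which together with $I\ge0$ forces $T_{\max}<\infty$, i.e.\ $\psi$ blows up in finite time. Letting $s\to1^+$ produces blow-up initial data arbitrarily close to $u$; since $e^{-i\lambda t}u$ and $e^{-i\lambda t}|u|$ differ only by a fixed phase, strong instability follows for every $u\in Z_{a,\mu}$.

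The main obstacle, I expect, is twofold. In the second step one must keep the $|u|_q^q$ term while eliminating $|u|_p^p$ — simply discarding it would lead to a strictly stronger requirement than \eqref{hp mu < 0 conv} — so that the Gagliardo--Nirenberg estimates are exactly sharp enough to match the stated hypothesis. In the third step, the uniform negativity of $Q_\mu(\psi(t))$, and hence the concavity argument, rests on the monotonicity of $s\mapsto Q_\mu(s\star v)/s^2$, which is special to the mixed regime $q\le\bar p<p$. The remaining ingredients — local well-posedness of \eqref{com nls} in $\Sigma$, the virial identity, exponential decay of ground states, and the natural constraint property of $\cP_{a,\mu}$ — are standard, the last being already available from the proof of Theorem~\ref{thm: sup mu <0}.
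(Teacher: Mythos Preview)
Your proposal is correct and follows essentially the same route as the paper: the structure of $Z_{a,\mu}$ via the diamagnetic inequality and the uniqueness of the fibre maximum, the sign of $\lambda$ exactly as in Step~3 of Lemma~\ref{lem: conv PS subcr} (which is precisely what the paper invokes here), and the strong instability via the Berestycki--Cazenave/virial argument. Your quantitative estimate $Q_\mu(v)\le 2(E_\mu(v)-m(a,\mu))$, obtained from the monotonicity of $s\mapsto Q_\mu(s\star v)/s^2$, differs only cosmetically from the paper's use of concavity of $s\mapsto E_\mu(s\star v)$ on $(t(v),\infty)$ to get $Q_\mu(v)\le E_\mu(v)-m(a,\mu)$; both yield the uniform negative bound needed for the virial blow-up.
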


\begin{remark}
Assumption \eqref{hp mu < 0 conv} looks similar to \eqref{cond sub sup} and \eqref{hp cr pos}. Nevertheless, they play very different roles. While \eqref{cond sub sup} and \eqref{hp cr pos} are used to describe the geometry of $E_\mu$ (and are not involved in compactness issues), assumption \eqref{hp mu < 0 conv} is fundamental in proving the convergence of Palais-Smale sequences when $\mu<0$ (and is not involved in the study of the geometry of $E_\mu|_{S_a}$). Under the assumptions of $q$ and $p$ covered by Theorems \ref{thm: subcr-supcr}, \ref{thm: supcr2} and \ref{thm: sup mu <0}, it is an interesting and difficult question to understand if a ground state solutions may exist without any assumption on $a$ and $\mu$. We believe that this is not the case. 
\end{remark}

\begin{remark}
We could study the behavior of the ground states as $\mu \to 0$ also in Theorems \ref{thm: supcr2} and \ref{thm: sup mu <0}. It is not difficult to modify the proof of Theorem \ref{thm: mu to 0} (convergence of $\hat u$) and deduce that in both Theorems \ref{thm: supcr2} and \ref{thm: sup mu <0} we have $m(a,\mu) \to m(a,0)$, and $\tilde u_\mu \to \tilde u_0$ strongly.
\end{remark}

In the proofs of Theorems \ref{thm: subcr-supcr}-\ref{thm: Z supcr 3}, a special role will be played by the \emph{Pohozaev set}
\begin{equation}\label{def P}
\cP_{a,\mu} = \left\{u \in S_{a}: P_\mu(u) = 0\right\},
\end{equation}
where
\begin{equation}\label{Poh}
P_\mu(u) := \int_{\R^N} |\nabla u|^2 - \gamma_p \int_{\R^N} |u|^p - \mu \gamma_q \int_{\R^N} |u|^q.
\end{equation}
It is well known that any critical point of $E_\mu|_{S_a}$ stays in $\cP_{a,\mu}$, as a consequence of the Pohozaev identity (we refer for instance to \cite[Lemma 2.7]{Jea}). Moreover, $\cP_{a,\mu}$ is \emph{a natural constraint}, in the following sense:
\begin{proposition}\label{prop: natural}
Suppose that either the assumptions of Theorem \ref{thm: subcr-supcr}, or those of Theorem \ref{thm: supcr2}, or else those of Theorem \ref{thm: sup mu <0} hold. Then $\cP_{a,\mu}$ is a smooth manifold of codimension $1$ in $S_a$. Moreover, if $u \in \cP_{a,\mu}$ is a critical point for $E_\mu|_{\cP_{a,\mu}}$, then $u$ is a critical point for $E_\mu|_{S_a}$.
\end{proposition}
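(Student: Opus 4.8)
The plan is to show that $\mathcal{P}_{a,\mu}$ is a smooth manifold by verifying that $0$ is a regular value of $P_\mu$ restricted to $S_a$, and then to use a Lagrange multiplier argument to deduce the natural constraint property. Throughout I would work with the functional $\Psi_u(s):= E_\mu(s\star u)$, where $(s\star u)(x):= e^{Ns/2}u(e^s x)$ is the $L^2$-preserving dilation (the \emph{fiber map}), noting the standard fact that $\Psi_u'(s) = P_\mu(s\star u)$ and that $s\star u \in \mathcal{P}_{a,\mu}$ if and only if $s$ is a critical point of $\Psi_u$. The key computation is
\[
P_\mu(u) = \int_{\R^N}|\nabla u|^2 - \gamma_p \int_{\R^N}|u|^p - \mu\gamma_q\int_{\R^N}|u|^q,
\]
whose differential along the constraint $S_a$ is controlled by the ``second derivative'' quantity
\[
\Psi_u''(0) = 2\int_{\R^N}|\nabla u|^2 - p\gamma_p^2\int_{\R^N}|u|^p - \mu q\gamma_q^2\int_{\R^N}|u|^q,
\]
which appears when one differentiates $P_\mu$ in the dilation direction.

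First I would argue that $\mathcal{P}_{a,\mu}$ is nonempty and is a $C^1$-submanifold of codimension $1$ in $S_a$. Since $E_\mu \in C^1$, the map $P_\mu$ is $C^1$ on $H^1$; to see it is a regular value it suffices to show that for every $u \in \mathcal{P}_{a,\mu}$ the differential $dP_\mu(u)$ restricted to $T_u S_a$ is nonzero, equivalently that $dP_\mu(u)$ is not proportional to $d(\tfrac12|u|_2^2)(u)$. Testing with the dilation vector field $w = \partial_s(s\star u)|_{s=0}$ (which lies in $T_u S_a$ because dilation preserves the $L^2$-norm) gives $\langle dP_\mu(u), w\rangle = \Psi_u''(0)$. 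Thus the manifold property reduces to showing $\Psi_u''(0) \neq 0$ on $\mathcal{P}_{a,\mu}$. On $\mathcal{P}_{a,\mu}$ we may use $P_\mu(u)=0$ to eliminate $|\nabla u|_2^2 = \gamma_p|u|_p^p + \mu\gamma_q|u|_q^q$, so
\[
\Psi_u''(0) = 2\gamma_p(1-\ldots) \int |u|^p \ \cdots,
\]
more precisely $\Psi_u''(0) = \gamma_p(2 - p\gamma_p)\int|u|^p + \mu\gamma_q(2 - q\gamma_q)\int|u|^q$ — and then I would check case by case that this cannot vanish: in the setting of Theorem~\ref{thm: subcr-supcr} one has $p\gamma_p>2$ and $q\gamma_q<2$ with $\mu>0$, so the two terms have opposite signs and the Gagliardo--Nirenberg inequality together with condition \eqref{cond sub sup} forces strict separation from zero (this is exactly what \eqref{cond sub sup} encodes, via comparing the two power terms on $\mathcal{P}_{a,\mu}$); in the setting of Theorem~\ref{thm: supcr2} one has $q=\bar p$, so $q\gamma_q=2$ and the $\mu$-term drops out, leaving $\gamma_p(2-p\gamma_p)\int|u|^p<0$ strictly since $u\neq 0$; in the setting of Theorem~\ref{thm: sup mu <0}, $\mu<0$ and both $2-p\gamma_p<0$ and $2-q\gamma_q\le 0$, so both terms are $\le 0$ and the first is strictly negative. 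In every case $\Psi_u''(0)<0$ on $\mathcal{P}_{a,\mu}$, hence $0$ is a regular value and $\mathcal{P}_{a,\mu}$ is a smooth codimension-$1$ submanifold of $S_a$.

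For the natural constraint property, suppose $u\in\mathcal{P}_{a,\mu}$ is a critical point of $E_\mu|_{\mathcal{P}_{a,\mu}}$. By the Lagrange multiplier theorem applied on $S_a$ (where both $E_\mu$ and $P_\mu$ are $C^1$ and the constraint functions are independent), there exist $\lambda, \nu \in \R$ with
\[
dE_\mu(u) = \lambda\, d\Big(\tfrac12|u|_2^2\Big)(u) + \nu\, dP_\mu(u)
\]
as elements of the dual of $H^1$. The goal is to prove $\nu = 0$. Testing this identity against the dilation vector $w=\partial_s(s\star u)|_{s=0}\in T_u S_a$ kills the $\lambda$-term and gives $\langle dE_\mu(u), w\rangle = \nu\,\langle dP_\mu(u),w\rangle = \nu\,\Psi_u''(0)$. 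But $\langle dE_\mu(u),w\rangle = \Psi_u'(0) = P_\mu(u) = 0$ since $u\in\mathcal{P}_{a,\mu}$. As we have just shown $\Psi_u''(0)\neq 0$, we conclude $\nu=0$, whence $dE_\mu(u) = \lambda\,d(\tfrac12|u|_2^2)(u)$, i.e.\ $u$ is a critical point of $E_\mu|_{S_a}$ (and solves \eqref{stat com} with Lagrange multiplier $\lambda$).

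The main obstacle is the case analysis establishing $\Psi_u''(0)\neq 0$, and in particular checking that condition \eqref{cond sub sup} is precisely what guarantees this in the mixed subcritical/supercritical regime of Theorem~\ref{thm: subcr-supcr}: one must combine $P_\mu(u)=0$, the Gagliardo--Nirenberg inequalities $|u|_p^p \le C_{N,p}^p|\nabla u|_2^{p\gamma_p}|u|_2^{p(1-\gamma_p)}$ and $|u|_q^q \le C_{N,q}^q|\nabla u|_2^{q\gamma_q}|u|_2^{q(1-\gamma_q)}$, and the normalization $|u|_2=a$ to derive a lower bound on $|\nabla u|_2$ on $\mathcal{P}_{a,\mu}$, then feed that into the expression for $\Psi_u''(0)$; the inequality \eqref{cond sub sup} is exactly the threshold ensuring the destabilizing term cannot balance the stabilizing one. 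The remaining cases ($q=\bar p$ and $\mu<0$) are easier because the sign of $\Psi_u''(0)$ is unconditionally negative. Everything else — nonemptiness of $\mathcal{P}_{a,\mu}$ (obtained by noting $\Psi_u(s)\to 0^+$ or $-\infty$ at the ends and is eventually negative, so it has an interior maximum), the $C^1$ regularity, and the Lagrange multiplier bookkeeping — is routine.
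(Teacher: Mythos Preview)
Your overall strategy is correct and essentially matches the paper's: both reduce everything to the fact that $\mathcal P_0^{a,\mu}=\emptyset$ (i.e.\ $\Psi_u''(0)\neq 0$ on $\mathcal P_{a,\mu}$), and then kill the extra Lagrange multiplier $\nu$ by producing the relation $\nu\,\Psi_u''(0)=0$. Two points deserve correction or care.

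\medskip
\textbf{A factual slip.} You conclude ``in every case $\Psi_u''(0)<0$ on $\mathcal P_{a,\mu}$''. This is false in the setting of Theorem~\ref{thm: subcr-supcr}: there $\mathcal P_+^{a,\mu}\neq\emptyset$ (see Proposition~\ref{prop: struct P}), so $\Psi_u''(0)>0$ on that component. What your argument actually needs --- and what the case analysis delivers --- is only $\Psi_u''(0)\neq 0$. Relatedly, your sketch for the subcritical/supercritical case (``derive a lower bound on $|\nabla u|_2$, feed it into $\Psi_u''(0)$'') is off target: you must instead assume $P_\mu(u)=0$ \emph{and} $\Psi_u''(0)=0$, obtain from these \emph{two} incompatible bounds on $|\nabla u|_2$ via Gagliardo--Nirenberg, and check that their compatibility would violate \eqref{cond sub sup}. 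This is exactly Lemma~\ref{lem: struct P} in the paper.

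\medskip
\textbf{A technical point on the dilation test.} You test both $dP_\mu(u)$ and the Lagrange identity against $w=\partial_s(s\star u)|_{s=0}=\tfrac{N}{2}u+x\cdot\nabla u$. For an arbitrary $u\in H^1$ this vector need not lie in $H^1$, so strictly speaking $w$ may not be an admissible test direction. The paper sidesteps this: for the manifold property it argues by contradiction (if $dP_\mu(u)$ vanished on $T_uS_a$ then $u$ would be a constrained critical point of $P_\mu$ on $S_a$, hence solve an elliptic PDE whose Pohozaev identity forces $u\in\mathcal P_0^{a,\mu}$); for the natural-constraint step it writes the Euler--Lagrange equation
\[
(1-2\nu)(-\Delta u)=\lambda u+(1-\nu p\gamma_p)|u|^{p-2}u+\mu(1-\nu q\gamma_q)|u|^{q-2}u,
\]
applies the Pohozaev identity to \emph{this} PDE, and subtracts the relation $P_\mu(u)=0$ to obtain $\nu\big(2|\nabla u|_2^2-\mu q\gamma_q^2|u|_q^q-p\gamma_p^2|u|_p^p\big)=0$, i.e.\ $\nu\,\Psi_u''(0)=0$. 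This is the rigorous incarnation of your dilation test (Pohozaev \emph{is} the dilation test for solutions), and it avoids the regularity issue. Your argument can be repaired either by invoking elliptic regularity and decay of $u$ once you know it solves an equation, or --- more cleanly --- by replacing the formal pairing with the Pohozaev identity as the paper does.
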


The properties of $\cP_{a,\mu}$ are then intimately related to the minimax structure of $E_\mu|_{S_a}$, and in particular to the behavior of $E_\mu$ with respect to dilations preserving the $L^2$-norm. To be more precise, for $u \in S_a$ and $s \in \R$, let
\begin{equation}\label{def star}
(s \star u)(x) := e^{\frac{N}{2}s} u(e^s x), \qquad \text{for a.e. $x \in \R^N$}.
\end{equation}
It results that $s \star u \in S_a$, and hence it is natural to study the \emph{fiber maps}
\begin{equation}\label{def Psi intro}
\Psi^\mu_{u}(s) := E_\mu(s \star u) = \frac{e^{2s}}{2} \int_{\R^N} |\nabla u|^2 - \frac{e^{p \gamma_p  s}}{p} \int_{\R^N} |u|^p -  \mu \frac{e^{q \gamma_q  s}}{q} \int_{\R^N} |u|^q.
\end{equation}
We shall see that critical points of $\Psi^\mu_u$ allow to project a function on $\cP_{a,\mu}$. Thus, monotonicity and convexity properties of $\Psi^\mu_u$ strongly affect the structure of $\cP_{a,\mu}$ (and in turn the geometry of $E_\mu|_{S_a}$), and also have a strong impact on properties of the the time-dependent equation \eqref{com nls}.  

In this direction, let us consider the decomposition of $\cP$ into the disjoint union $\cP_{a,\mu} = \cP_+^{a,\mu} \cup \cP_0^{a,\mu} \cup \cP_-^{a,\mu}$, where
\begin{equation}\label{split P}
\begin{split}
&\mathcal{P}_+^{a,\mu}  := \left\{ u \in \cP_{a,\mu}: 2|\nabla u|_2^2 > \mu q \gamma_q^2 |u|_q^q + p \gamma_p^2 |u|_p^p \right\} = \left\{ u \in \cP_{a,\mu}: \ (\Psi_u^\mu)''(0) > 0 \right\}
  \\
&\mathcal{P}_-^{a,\mu}  := \left\{ u \in \cP_{a,\mu}: 2|\nabla u|_2^2 < \mu q \gamma_q^2 |u|_q^q + p \gamma_p^2 |u|_p^p \right\} =  \left\{ u \in \cP_{a,\mu}: \ (\Psi_u^\mu)''(0) < 0 \right\} \\
&\mathcal{P}_0^{a,\mu}  := \left\{ u \in \cP_{a,\mu}: 2|\nabla u|_2^2 = \mu q \gamma_q^2 |u|_q^q + p \gamma_p^2 |u|_p^p \right\} =  \left\{ u \in \cP_{a,\mu}: \ (\Psi_u^\mu)''(0) = 0 \right\}.
\end{split}
\end{equation}
Denoting by $S_{a,r}$ the subset of the radially symmetric functions in $S_a$, we have:

\begin{proposition}\label{prop: struct P}
1) Under the assumptions of Theorem \ref{thm: subcr-supcr}, we have $\cP_0^{a,\mu}= \emptyset$, both $\cP_+^{a,\mu}$ and $\cP_-^{a,\mu}$ are not empty, and 
\[
m(a,\mu) = \min_{\cP_+^{a,\mu}} E_\mu \quad {while} \quad \sigma(a,\mu) = \min_{\cP_-^{a,\mu} \cap S_{a,r}} E_\mu.
\]
2) Under the assumptions of both Theorems \ref{thm: supcr2} and \ref{thm: sup mu <0}, we have $\cP_+^{a,\mu} = \cP_0^{a,\mu}= \emptyset$, and 
\[
m(a,\mu) = \min_{\cP_-^{a,\mu}} E_\mu = \min_{\cP_-^{a,\mu} \cap S_{a,r}} E_\mu.
\]
\end{proposition}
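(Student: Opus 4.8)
The plan is to deduce the proposition from a sharp description of the fiber maps $\Psi^\mu_u$, combined with the variational characterizations of $m(a,\mu)$ and $\sigma(a,\mu)$ established in the proofs of Theorems \ref{thm: subcr-supcr}, \ref{thm: supcr2} and \ref{thm: sup mu <0}. The key observation is the identity $P_\mu(s\star u)=(\Psi^\mu_u)'(s)$: thus $u\in\cP_{a,\mu}$ precisely when $0$ is a critical point of $\Psi^\mu_u$, and, by the definitions recalled in \eqref{split P}, $u$ belongs to $\cP_+^{a,\mu}$, $\cP_-^{a,\mu}$ or $\cP_0^{a,\mu}$ according as $0$ is a non-degenerate local minimum, a non-degenerate local maximum, or a degenerate critical point of $\Psi^\mu_u$.

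The first step is the fiber analysis. For $u\in S_a$ write $\Psi^\mu_u(s)=Ae^{2s}-Be^{p\gamma_p s}-\mu De^{q\gamma_q s}$ with $A=|\nabla u|_2^2/2>0$, $B=|u|_p^p/p>0$, $D=|u|_q^q/q>0$, and bound $B$ and $D$ from above by the Gagliardo--Nirenberg inequality to obtain $\Psi^\mu_u(s)\ge\varphi(e^s|\nabla u|_2)$, where $\varphi(t)=t^2/2-c_p t^{p\gamma_p}-c_q t^{q\gamma_q}$ with $c_p,c_q\ge0$ depending only on $N,p,q,a$ and $\mu_+=\max\{\mu,0\}$. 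Under the assumptions of Theorem \ref{thm: subcr-supcr} one has $0<q\gamma_q<2<p\gamma_p$; moreover $\Psi^\mu_u(s)\to0^-$ as $s\to-\infty$ and $\Psi^\mu_u(s)\to-\infty$ as $s\to+\infty$, the substitution $r=e^s$ shows that $(\Psi^\mu_u)'$ has at most two zeros, and under \eqref{cond sub sup} one has $\max_{t>0}\varphi(t)>0$, so $\Psi^\mu_u$ takes a strictly positive value. Putting these together, $\Psi^\mu_u$ has exactly two critical points $s_u<t_u$, both non-degenerate, $s_u$ a strict local minimum at a negative level and $t_u$ a strict global maximum at a level $\ge\max_{t>0}\varphi(t)>0$. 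This yields Part 1): projecting an arbitrary $u\in S_a$ gives $s_u\star u\in\cP_+^{a,\mu}$ and $t_u\star u\in\cP_-^{a,\mu}$, so both sets are nonempty; since every $u\in\cP_{a,\mu}$ has $0$ as a non-degenerate critical point of $\Psi^\mu_u$, $\cP_0^{a,\mu}=\emptyset$ and $\cP_{a,\mu}=\cP_+^{a,\mu}\sqcup\cP_-^{a,\mu}$; and if $u\in\cP_-^{a,\mu}$ then $0=t_u$, whence $E_\mu(u)=\Psi^\mu_u(0)=\max_s\Psi^\mu_u(s)\ge\max_{t>0}\varphi(t)>0$, while $E_\mu(u)=\Psi^\mu_u(s_u)<0$ for $u\in\cP_+^{a,\mu}$. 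Under the assumptions of Theorem \ref{thm: supcr2} (where $q\gamma_q=2$ and \eqref{hp cr pos} forces the coefficient of $e^{2s}$ to be positive) or of Theorem \ref{thm: sup mu <0} (where $\mu<0$, so the $e^{q\gamma_q s}$-term is positive and $q\gamma_q\le2<p\gamma_p$), the same computation instead gives that $\Psi^\mu_u$ has a unique critical point, a non-degenerate global maximum, with $\Psi^\mu_u\to0^+$ as $s\to-\infty$; hence $\cP_+^{a,\mu}=\cP_0^{a,\mu}=\emptyset$, $\cP_{a,\mu}=\cP_-^{a,\mu}$, and $E_\mu>0$ on $\cP_-^{a,\mu}$.

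It remains to identify the levels. For Part 1), by Theorem \ref{thm: subcr-supcr} the set $Z_{a,\mu}$ is nonempty with energy $m(a,\mu)<0$; its elements are critical points, hence lie in $\cP_{a,\mu}=\cP_+^{a,\mu}\sqcup\cP_-^{a,\mu}$, and thus in $\cP_+^{a,\mu}$ because $E_\mu>0$ on $\cP_-^{a,\mu}$; this gives $m(a,\mu)\ge\inf_{\cP_+^{a,\mu}}E_\mu$. Conversely, the proof of Theorem \ref{thm: subcr-supcr} establishes relative compactness of the minimizing sequences for this infimum, so it is attained; since $\cP_+^{a,\mu}$ is relatively open in the $C^1$ manifold $\cP_{a,\mu}$ (here $\cP_0^{a,\mu}=\emptyset$ is used), any minimizer is a critical point of $E_\mu|_{\cP_{a,\mu}}$, hence of $E_\mu|_{S_a}$ by Proposition \ref{prop: natural}, so $m(a,\mu)\le\inf_{\cP_+^{a,\mu}}E_\mu$ and $m(a,\mu)=\min_{\cP_+^{a,\mu}}E_\mu$. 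For $\sigma(a,\mu)$: by construction it is a mountain-pass value over paths running from the region of small gradient norm to the region of large gradient norm; using that the fiber projection $u\mapsto t_u$ (the maximum point of $\Psi^\mu_u$) is well defined and continuous on $S_a$ — by the non-degeneracy just obtained and the implicit function theorem — one shows every admissible path meets $\cP_-^{a,\mu}$ (its endpoints have $t_{\gamma(0)}>0>t_{\gamma(1)}$), so its maximal energy is $\ge\inf_{\cP_-^{a,\mu}}E_\mu$, while for $u\in\cP_-^{a,\mu}$ the fiber path $s\mapsto s\star u$ achieves maximal energy $\Psi^\mu_u(0)=E_\mu(u)$; hence $\sigma(a,\mu)=\inf_{\cP_-^{a,\mu}}E_\mu$. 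Finally, $\inf_{\cP_-^{a,\mu}}E_\mu=\inf_{\cP_-^{a,\mu}\cap S_{a,r}}E_\mu$ by Schwarz symmetrization: for $u\in\cP_-^{a,\mu}$ its symmetric decreasing rearrangement $u^*\in S_{a,r}$ satisfies $|\nabla u^*|_2\le|\nabla u|_2$, $|u^*|_p=|u|_p$, $|u^*|_q=|u|_q$, hence $\Psi^\mu_{u^*}\le\Psi^\mu_u$ pointwise, so $w:=t_{u^*}\star u^*\in\cP_-^{a,\mu}\cap S_{a,r}$ has $E_\mu(w)=\max_s\Psi^\mu_{u^*}(s)\le\max_s\Psi^\mu_u(s)=E_\mu(u)$; the infima are therefore equal, and the common minimum is attained at the (radial) mountain-pass solution $\hat u$, which lies in $\cP_-^{a,\mu}$ since $E_\mu(\hat u)=\sigma(a,\mu)>0$ while $E_\mu<0$ on $\cP_+^{a,\mu}$. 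For Part 2), $\cP_{a,\mu}=\cP_-^{a,\mu}$, the proofs of Theorems \ref{thm: supcr2} and \ref{thm: sup mu <0} produce the ground state as a minimizer of $E_\mu$ over $\cP_-^{a,\mu}\cap S_{a,r}$, and the same symmetrization argument gives $\min_{\cP_-^{a,\mu}}E_\mu=\min_{\cP_-^{a,\mu}\cap S_{a,r}}E_\mu=m(a,\mu)$.

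The main obstacle I expect is not the fiber analysis — which is elementary once one notes that, after the substitution $r=e^s$, each relevant derivative is a function with a single critical point — but the bookkeeping that makes the abstract levels coincide with the Pohozaev-set minima. For $m(a,\mu)$ in Part 1) this requires borrowing the compactness of minimizing sequences from the proof of Theorem \ref{thm: subcr-supcr} and combining it with the naturality of $\cP_{a,\mu}$ and the openness of $\cP_\pm^{a,\mu}$ in it; for $\sigma(a,\mu)$ the delicate point is the intersection lemma ``every admissible path meets $\cP_-^{a,\mu}$'', which is cleanest via the continuity of $u\mapsto t_u$ and a precise description of the endpoints of admissible paths (equivalently, of the sign of $P_\mu$ there).
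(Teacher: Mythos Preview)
Your proof is correct and follows the same overall architecture as the paper: fiber analysis of $\Psi_u^\mu$ to determine the structure of $\cP_{a,\mu}$, then identification of the levels via the minimax arguments in the proofs of Theorems~\ref{thm: subcr-supcr}, \ref{thm: supcr2}, \ref{thm: sup mu <0}. Two points of comparison are worth noting. First, for the identity $m(a,\mu)=\min_{\cP_+^{a,\mu}}E_\mu$ in Part~1, the paper takes a more direct route (Lemma~\ref{lem: loc inf su P}): since $\cP_+^{a,\mu}\subset A_{R_0}$ one has $m(a,\mu)\le\inf_{\cP_+^{a,\mu}}E_\mu$, and conversely for every $u\in A_{R_0}$ the projection $s_u\star u\in\cP_+^{a,\mu}\subset A_{R_0}$ satisfies $E_\mu(s_u\star u)\le E_\mu(u)$, whence $\inf_{\cP_+^{a,\mu}}E_\mu\le m(a,\mu)$; this avoids invoking the existence of ground states or the natural-constraint property. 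Second, your Schwarz-symmetrization argument showing $\inf_{\cP_-^{a,\mu}}E_\mu=\inf_{\cP_-^{a,\mu}\cap S_{a,r}}E_\mu$ via $\max_s\Psi^\mu_{|u|^*}(s)\le\max_s\Psi^\mu_u(s)$ actually proves more than the paper claims in Part~1: the paper only establishes $\sigma(a,\mu)=\min_{\cP_-^{a,\mu}\cap S_{a,r}}E_\mu$ and explicitly leaves the equality with $\inf_{\cP_-^{a,\mu}}E_\mu$ as an expectation (see the remark following Proposition~\ref{prop: struct P}). One small ordering issue: your mountain-pass linking argument, as written, only yields $\sigma(a,\mu)=\inf_{\cP_-^{a,\mu}\cap S_{a,r}}E_\mu$ directly (the admissible paths are radial), so the claim $\sigma(a,\mu)=\inf_{\cP_-^{a,\mu}}E_\mu$ should come \emph{after} the symmetrization step rather than before.
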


\begin{remark}
By \cite[Lemma 2.9]{Jea}, the situation described in point 2) also takes place when $\bar p < p <2^*$ and $\mu=0$. For $2<q<\bar p$, Proposition \ref{prop: struct P} gives another explanation of the discontinuity of the ground state level $m(a,\mu)$ when $\mu \to 0^+$: for $\mu>0$ we have a splitting $\cP_{a,\mu}= \cP_-^{a,\mu} \cup \cP_+^{a,\mu}$ into two disjoint components, and the ground state level is achieved on $\cP_+^{a,\mu}$; as $\mu \to 0$, however, $\cP_+^{a,\mu}$ becomes empty, while we have convergence both of the levels $\min_{\cP_-^{a,\mu} \cap S_{a,r}} E_\mu$ to $m(a,0)$, and of the associated minimizers, see Theorem \ref{thm: mu to 0}.

In point 1), it is natural to expect that $\hat u$ is in fact a minimizer on $\cP_-^{a,\mu}$, and not only in $S_{a,r} \cap \cP_-^{a,\mu}$. \end{remark}

\begin{remark}\label{rem: con ccv}
The change of the topology in $\cP_{a,\mu}$ obtained by the introduction of a focusing $L^2$-subcritical perturbation is reminiscent to what happens to the Nehari manifold in inhomogeneous elliptic problems \cite{Tar}, or in elliptic problems with concave-convex nonlinearities \cite{AmbBreCer, GarPer}. This is somehow surprising, since in \eqref{stat com} all the power-nonlinearities are super-linear; the phenomenon is a direct consequence of the $L^2$-constraint $S_a$, and of the behavior of $E_\mu$ with respect to $L^2$-norm-preserving dilations. Similar ``concave" effects in superlinear problems with $L^2$-constraint were already observed in \cite{BeJe, GoJe, JeLuWa}, and are the source of the multiplicity of positive normalized solutions. 
\end{remark}

The analysis of $\Psi^\mu_u$ for $u \in S_a$ is not only fruitful in the description of the geometry of $E_\mu|_{S_a}$, but also allows to give a quite precise characterization of global existence vs. finite-time blow-up. These issues were firstly studied in \cite{TaoVisZha} where, for $L^2$-supercritical and focusing leading nonlinearities, the occurrence of finite-time blow-up was proved under assumptions on the weighted mass current and on mass and energy of the initial datum\footnote{For the precise assumptions, we refer to \cite[Theorem 1.5]{TaoVisZha}. We remark that, with the notations in \cite{TaoVisZha}, the case of a $L^2$-supercritical and focusing leading term corresponds to $\lambda_2<0$, with $4/N< p_2< 4/(N - 2)$.}. In a different (and complementary) perspective, we have the following results where, in addition to finite time blow-up, we also provide conditions for global existence.

\begin{theorem}\label{thm: gwp}
Let us assume that the assumptions of either Theorem \ref{thm: subcr-supcr} or Theorem \ref{thm: supcr2}, or else Theorem \ref{thm: sup mu <0} are satisfied. Let $u \in S_a$ be such that $E_\mu(u) < \inf_{\cP_-^{a,\mu}} E_\mu$. Then $\Psi^\mu_u$ has a unique global maximum point $t_{u,\mu}$, and:
\begin{itemize}
\item[1)] if $t_{u,\mu} >0$, then the solution $\psi$ of \eqref{com nls} with initial datum $u$ exists globally in time.
\item[2)] if $t_{u,\mu}<0$ and $|x| u \in L^2(\R^N,\C)$, then the solution $\psi$ of \eqref{com nls} with initial datum $u$ blows-up in finite time.
\end{itemize}
\end{theorem}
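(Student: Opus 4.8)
The plan is to combine the analysis of the fiber maps $\Psi^\mu_u$ developed above with the Cauchy theory of \eqref{com nls}, in the spirit of a Payne--Sattinger/Kenig--Merle dichotomy. Write $\ell:=\inf_{\cP_-^{a,\mu}}E_\mu$ and, for the given $u$, $c:=E_\mu(u)<\ell$. Three ingredients enter: local well-posedness of \eqref{com nls} in $H^1(\R^N,\C)$ with conservation of mass and of $E_\mu$, and propagation of finite variance (\cite{Caz,TaoVisZha}); the virial identity, valid for data with $|x|u\in L^2$,
\[
\frac{d^2}{dt^2}\int_{\R^N}|x|^2|\psi(t,x)|^2\,dx \;=\; 8\,P_\mu(\psi(t)) \;=\; 8\,(\Psi^\mu_{\psi(t)})'(0);
\]
and the structure of $\Psi^\mu_v$, from which I recall that, under the hypotheses of Theorem \ref{thm: subcr-supcr}, \ref{thm: supcr2} or \ref{thm: sup mu <0}, for every $v\in S_a$ the map $\Psi^\mu_v$ has a unique critical point $t_{v,\mu}$ of maximum type; it is its strict global maximum point, it is non-degenerate (because $\cP_0^{a,\mu}=\emptyset$), $\Psi^\mu_v$ is strictly decreasing on $(t_{v,\mu},+\infty)$ with $(\Psi^\mu_v)'$ monotone there, and $\Psi^\mu_v(t_{v,\mu})=E_\mu(t_{v,\mu}\star v)\ge\ell$ since $t_{v,\mu}\star v\in\cP_-^{a,\mu}$. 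Non-degeneracy and the implicit function theorem give that $v\mapsto t_{v,\mu}$ is continuous on $S_a$, which already yields the first assertion. Since $E_\mu(v)<\ell$ forces $t_{v,\mu}\neq0$ (otherwise $v\in\cP_-^{a,\mu}$ and $E_\mu(v)\ge\ell$), the set $\{E_\mu<\ell\}\cap S_a$ is the disjoint union of the relatively open sets $\cO^{\pm}:=\{v\in S_a: E_\mu(v)<\ell,\ \pm t_{v,\mu}>0\}$, and by conservation of $E_\mu$ together with the continuity of $t\mapsto\psi(t)$ in $H^1$ and of $v\mapsto t_{v,\mu}$, both $\cO^\pm$ are invariant along the flow of \eqref{com nls}: a sign change of $t\mapsto t_{\psi(t),\mu}$ would produce $\tau$ with $\psi(\tau)\in\cP_-^{a,\mu}$, hence $E_\mu(\psi(\tau))\ge\ell$, a contradiction.

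In case 1) we have $u\in\cO^+$, hence $\psi(t)\in\cO^+$ throughout the maximal interval. The key fact is that $\cO^+\cap\{E_\mu\le c\}$ is bounded in $\dot H^1(\R^N)$; granting it, $\|\psi(t)\|_{H^1}$ stays bounded and the blow-up alternative gives a global solution. To prove the bound one argues by contradiction: if $v_n\in\cO^+\cap\{E_\mu\le c\}$ with $A_n:=|\nabla v_n|_2^2\to\infty$, then from $E_\mu(v_n)\le c$ and the Gagliardo--Nirenberg inequality one gets $|v_n|_p^p\ge\frac p2 A_n-o(A_n)$; rescaling to unit Dirichlet norm (which only shifts the fiber map) and using that the $L^q$-term is of lower order since $q\le\bar p$, one finds that the maximum point of $\Psi^\mu_{v_n}$ satisfies $t_{v_n,\mu}<0$ for $n$ large, i.e. $v_n\in\cO^-$, a contradiction. (Equivalently, $\cO^+$ is contained in $\{|\nabla v|_2^2<k\}$ with the $k$ of Theorem \ref{thm: subcr-supcr}, resp. in the analogous small sublevel set in the other two cases.)

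In case 2) we have $u\in\cO^-$, so $\psi(t)\in\cO^-$ for all $t$; since $|x|u\in L^2$ the virial identity applies. The key fact is that there is $\delta=\delta(c)>0$ with $P_\mu(v)\le-\delta$ for all $v\in\cO^-\cap\{E_\mu\le c\}$: granting it, $\frac{d^2}{dt^2}\int|x|^2|\psi|^2=8P_\mu(\psi(t))\le-8\delta$ forces $\int|x|^2|\psi(t)|^2$ to become negative in finite time unless the maximal interval is finite, so $\psi$ blows up in finite time. To prove this bound one splits according to the size of $|\nabla v|_2$. When $|\nabla v|_2^2>M$ for $M$ large, the Gagliardo--Nirenberg estimate $|v|_p^p\ge\frac p2|\nabla v|_2^2-o(|\nabla v|_2^2)$ (a consequence of $E_\mu(v)\le c$) gives directly $P_\mu(v)=|\nabla v|_2^2-\gamma_p|v|_p^p-\mu\gamma_q|v|_q^q\le(1-\tfrac{p\gamma_p}{2})|\nabla v|_2^2+o(|\nabla v|_2^2)\to-\infty$. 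When $|\nabla v|_2^2\le M$, with $s^*:=t_{v,\mu}<0$ and $v^*:=s^*\star v\in\cP_-^{a,\mu}$ (so $E_\mu(v^*)\ge\ell$, and $\Psi^\mu_{v^*}=\Psi^\mu_v(\cdot+s^*)$ attains its maximum at $0$ with $(\Psi^\mu_{v^*})'$ decreasing on $[0,-s^*]$), monotonicity yields
\[
-P_\mu(v)=-(\Psi^\mu_{v^*})'(-s^*)\;\ge\;\frac{1}{-s^*}\int_0^{-s^*}\!\!\bigl(-(\Psi^\mu_{v^*})'(\tau)\bigr)\,d\tau\;=\;\frac{E_\mu(v^*)-E_\mu(v)}{-s^*}\;\ge\;\frac{\ell-c}{\,|t_{v,\mu}|\,},
\]
and $|t_{v,\mu}|$ is bounded on $\{v\in\cO^-:E_\mu(v)\le c,\ |\nabla v|_2^2\le M\}$ (bounded coefficients in $\Psi^\mu_v$ keep its critical points in a bounded range). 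Combining the two regimes gives the uniform $\delta$.

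The step I expect to be the real obstacle is exactly establishing these two uniform estimates — the $\dot H^1$-bound on $\cO^+\cap\{E_\mu\le c\}$ and the gap estimate $P_\mu\le-\delta$ on $\cO^-\cap\{E_\mu\le c\}$. Because $p>\bar p$, $E_\mu$ is not coercive on $S_a$ (indeed unbounded from below), so these cannot be read off the energy alone; they demand a quantitative description of the fiber maps $\Psi^\mu_v$ — the location of the maximum as a function of $|\nabla v|_2$, $|v|_p$, $|v|_q$, and the way $E_\mu$ restricted to a fiber controls the Dirichlet norm — together with Gagliardo--Nirenberg, and they rely crucially on the standing assumption $E_\mu(u)<\inf_{\cP_-^{a,\mu}}E_\mu$. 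One must also keep in mind that, under the hypotheses of Theorem \ref{thm: subcr-supcr}, $\cP_{a,\mu}=\cP_+^{a,\mu}\cup\cP_-^{a,\mu}$ has two components, so $\cO^-$ is strictly smaller than $\{E_\mu<\ell,\ P_\mu<0\}$; it is $\cO^-$, not the latter set, that is flow-invariant and on which $P_\mu$ stays bounded away from $0$.
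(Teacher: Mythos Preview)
Your strategy---invariance of $\cO^\pm$ under the flow via continuity of $v\mapsto t_{v,\mu}$ and energy conservation, then the blow-up alternative for $\cO^+$ and the virial identity for $\cO^-$---is exactly the paper's. The difference is in how the two key uniform estimates are obtained. For part~1), the paper does not bound $\cO^+\cap\{E_\mu\le c\}$ in $\dot H^1$ directly; instead it observes that if $|\nabla\psi(t)|_2\to\infty$, then the identity
\[
E_\mu(\psi)-\frac{1}{\gamma_p p}P_\mu(\psi)=\frac12\Big(1-\frac{2}{\gamma_p p}\Big)|\nabla\psi|_2^2-\frac{\mu}{q}\Big(1-\frac{\gamma_q q}{\gamma_p p}\Big)|\psi|_q^q
\]
together with Gagliardo--Nirenberg and energy conservation force $P_\mu(\psi(t))\to-\infty$, and a separate preparatory lemma (needed only in the setting of Theorem~\ref{thm: subcr-supcr}, where $P_\mu<0$ does \emph{not} by itself imply $t_{v,\mu}<0$) shows that $P_\mu(v)<-M$ for $M$ large enough forces $t_{v,\mu}<0$. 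For part~2), the paper avoids your two-regime split and the bound on $|t_{v,\mu}|$ altogether by the change of variable $\Phi_v(s):=\Psi^\mu_v(\log s)$: this $\Phi_v$ is strictly decreasing and concave on $(\tilde t_v,+\infty)$ with $\tilde t_v:=e^{t_{v,\mu}}\in(0,1)$, and concavity at $s=1$ yields
\[
E_\mu(v)=\Phi_v(1)\ \ge\ \Phi_v(\tilde t_v)+\Phi_v'(1)(1-\tilde t_v)\ =\ E_\mu(t_{v,\mu}\star v)+P_\mu(v)(1-\tilde t_v)\ \ge\ \ell+P_\mu(v),
\]
since $P_\mu(v)<0$ and $0<1-\tilde t_v<1$; hence $P_\mu(v)\le c-\ell=:-\delta$ uniformly on $\cO^-\cap\{E_\mu\le c\}$. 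Your monotonicity estimate $-P_\mu(v)\ge(\ell-c)/|t_{v,\mu}|$ is valid, but the follow-up claim that $|t_{v,\mu}|$ stays bounded on $\{v\in\cO^-:E_\mu(v)\le c,\ |\nabla v|_2^2\le M\}$ is not automatic---the coefficients of $\Psi^\mu_v$ are only bounded from above, not from below, and under Theorem~\ref{thm: subcr-supcr} the fiber map has two critical points---so this would still need justification; the paper's $\log$-substitution sidesteps the issue entirely and gives the sharp constant $\delta=\ell-c$.
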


The theorem permits to reduce the discussion of global existence vs. finite time blow-up to the study of the $1$-variable function $\Psi_u^\mu$. The properties of $\Psi_u^\mu$ will be described in Lemmas \ref{lem: prop fiber sub-sup}, \ref{lem: fiber cr} and \ref{lem: fiber mu < 0}. Some immediate consequences are collected in the following corollary.

\begin{corollary}\label{cor: gwp}
For $u \in S_a$, let $\psi_u$ be the solution to \eqref{com nls} with initial datum $u$. We have:
\begin{itemize}
\item[1)] Under the assumptions of Theorems \ref{thm: subcr-supcr}, \ref{thm: supcr2} or Theorem \ref{thm: sup mu <0}, for every $u \in S_a$ there exist $s_1 \le s_2$ such that
\[
\begin{cases}
s < s_1 &\implies \quad \text{$\psi_{s\star u}$ is globally defined} \\ s> s_2 \text{ and }  |x| u \in L^2 & \implies \quad \text{$\psi_{s \star u}$ blows-up in finite time}.
\end{cases}
\]
\item[2)] Under the assumptions of Theorems \ref{thm: subcr-supcr}, \ref{thm: supcr2} or Theorem \ref{thm: sup mu <0}, if $P_\mu(u) >0$ and $E_\mu(u) < \inf_{\cP_-^{a,\mu}} E_\mu$, then $\psi_u$ is globally defined.
\item[3)] Under the assumptions of Theorems \ref{thm: subcr-supcr}, \ref{thm: supcr2} or Theorem \ref{thm: sup mu <0}, if $|\nabla u|_2$ is sufficiently small, then $\psi_u$ is globally defined.
\item[4)] Under the assumptions of Theorems \ref{thm: supcr2} or Theorem \ref{thm: sup mu <0}, if $|x| u \in L^2(\R^N,\C)$, $E_\mu(u) <  \inf_{\cP_-^{a,\mu}} E_\mu$, and $P_\mu(u)<0$, then $\psi_u$ blows-up in finite time.
\item[5)] Under the assumptions of Theorem \ref{thm: subcr-supcr}, if $|x| u \in L^2(\R^N,\C)$ and $E_\mu(u) < m(a,\mu)$, then $\psi_u$ blows-up in finite time.
\end{itemize}
\end{corollary}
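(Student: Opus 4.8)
The plan is to read all five items off Theorem~\ref{thm: gwp} together with the fine structure of the fiber maps $\Psi^\mu_u$ provided by Lemmas~\ref{lem: prop fiber sub-sup}, \ref{lem: fiber cr} and \ref{lem: fiber mu < 0}. Throughout I use the elementary identities $E_\mu(s\star u)=\Psi^\mu_u(s)$, $P_\mu(s\star u)=(\Psi^\mu_u)'(s)$ and $\Psi^\mu_{s_0\star u}(\cdot)=\Psi^\mu_u(\cdot+s_0)$ for $u\in S_a$, $s,s_0\in\R$ (so that a global maximum point $t_{u,\mu}$ of $\Psi^\mu_u$ singles out the point of $\cP_{a,\mu}$ lying in the fiber $\{r\star u:r\in\R\}$), together with the fact that $|x|u\in L^2(\R^N,\C)$ if and only if $|x|(s\star u)\in L^2(\R^N,\C)$. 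I first prove 2)--5) and then deduce 1).

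For 2), 4) and 5) it is enough to locate $t_{u,\mu}$ relative to $0$ and invoke Theorem~\ref{thm: gwp}. Under the hypotheses of Theorem~\ref{thm: supcr2} or of Theorem~\ref{thm: sup mu <0} the map $\Psi^\mu_u$ has a single, strict maximum, so $P_\mu(u)=(\Psi^\mu_u)'(0)>0$ (resp. $<0$) gives at once $t_{u,\mu}>0$ (resp. $t_{u,\mu}<0$); combined with $E_\mu(u)<\inf_{\cP_-^{a,\mu}}E_\mu$, this yields 2) and 4) in these cases. Under the hypotheses of Theorem~\ref{thm: subcr-supcr}, $\Psi^\mu_u$ has exactly two critical points $s_u<t_u$, with $s_u$ a local minimum (hence $s_u\star u\in\cP_+^{a,\mu}$) and $t_u$ the global maximum (hence $t_u\star u\in\cP_-^{a,\mu}$), and $(\Psi^\mu_u)'$ is negative on $(-\infty,s_u)$, positive on $(s_u,t_u)$, negative on $(t_u,\infty)$. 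If $P_\mu(u)>0$ then $0\in(s_u,t_u)$, so $t_{u,\mu}=t_u>0$ and 2) follows. For 5), Proposition~\ref{prop: struct P} gives $\Psi^\mu_u(s_u)=E_\mu(s_u\star u)\ge\min_{\cP_+^{a,\mu}}E_\mu=m(a,\mu)$; since $\Psi^\mu_u$ is nonincreasing on $(-\infty,s_u]$ and nondecreasing on $[s_u,t_u]$, the inequality $\Psi^\mu_u(0)=E_\mu(u)<m(a,\mu)\le\Psi^\mu_u(s_u)$ forces $0>t_u$, i.e. $t_{u,\mu}<0$; as $m(a,\mu)<0<\inf_{\cP_-^{a,\mu}}E_\mu$, Theorem~\ref{thm: gwp} applies and gives finite-time blow-up.

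For 3) I argue quantitatively. By the Gagliardo--Nirenberg inequality, for every $u\in S_a$ and $s\in\R$,
\[
\Psi^\mu_u(s)\ \ge\ \frac{e^{2s}}{2}|\nabla u|_2^2-\frac{C_{N,p}^p\,a^{(1-\gamma_p)p}}{p}e^{p\gamma_p s}|\nabla u|_2^{p\gamma_p}-\frac{\max\{\mu,0\}\,C_{N,q}^q\,a^{(1-\gamma_q)q}}{q}e^{q\gamma_q s}|\nabla u|_2^{q\gamma_q}=:G\bigl(e^s|\nabla u|_2\bigr),
\]
where $G(\rho)=\tfrac12\rho^2-c_p\rho^{p\gamma_p}-c_q\rho^{q\gamma_q}$ depends only on $N,p,q,a,\mu$. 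A direct computation shows that under the assumptions of Theorems~\ref{thm: subcr-supcr} and~\ref{thm: supcr2} conditions \eqref{cond sub sup} and \eqref{hp cr pos} are exactly equivalent to $M_0:=\max_{\rho>0}G(\rho)>0$, while under those of Theorem~\ref{thm: sup mu <0} ($\mu<0$, so $c_q=0$) $M_0>0$ holds automatically. Hence $\max_{s}\Psi^\mu_u(s)\ge M_0>0$ for every $u\in S_a$; in particular, since any $u\in\cP_-^{a,\mu}$ has $s=0$ as global maximum point of $\Psi^\mu_u$ (by the structure recalled above), $\inf_{\cP_-^{a,\mu}}E_\mu\ge M_0>0$. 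On the other hand, for $s\le0$ one has $\Psi^\mu_u(s)\le\tfrac12|\nabla u|_2^2$ if $\mu\ge0$, and $\Psi^\mu_u(s)\le\tfrac12|\nabla u|_2^2+\tfrac{|\mu|}{q}C_{N,q}^q a^{(1-\gamma_q)q}|\nabla u|_2^{q\gamma_q}$ if $\mu<0$, and both right-hand sides tend to $0$ as $|\nabla u|_2\to0$, uniformly in $u$. Therefore there is $\delta_0>0$, depending only on $N,p,q,a,\mu$, such that $|\nabla u|_2<\delta_0$ forces both $E_\mu(u)=\Psi^\mu_u(0)<\inf_{\cP_-^{a,\mu}}E_\mu$ and $\sup_{s\le0}\Psi^\mu_u(s)<M_0\le\max_s\Psi^\mu_u(s)$. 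The first makes Theorem~\ref{thm: gwp} applicable, producing the unique global maximum point $t_{u,\mu}$; the second forces $t_{u,\mu}>0$; hence $\psi_u$ is globally defined, which is 3).

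Finally, 1) follows by scaling. Since $|\nabla(s\star u)|_2=e^s|\nabla u|_2\to0$ as $s\to-\infty$, item~3) (with threshold $\delta_0$) gives global existence of $\psi_{s\star u}$ for all $s<s_1:=\log(\delta_0/|\nabla u|_2)$. On the other hand $E_\mu(s\star u)=\Psi^\mu_u(s)\to-\infty$ and $P_\mu(s\star u)=(\Psi^\mu_u)'(s)\to-\infty$ as $s\to+\infty$, so for $s$ large both $P_\mu(s\star u)<0$ and $E_\mu(s\star u)<\min\{m(a,\mu),\inf_{\cP_-^{a,\mu}}E_\mu\}$ hold; then, provided $|x|u\in L^2(\R^N,\C)$ (whence $|x|(s\star u)\in L^2(\R^N,\C)$), item~4) (under Theorems~\ref{thm: supcr2}, \ref{thm: sup mu <0}) or item~5) (under Theorem~\ref{thm: subcr-supcr}) gives finite-time blow-up of $\psi_{s\star u}$ for all $s>s_2$, and we may enlarge $s_2$ so that $s_1\le s_2$. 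I expect the delicate point to be item~3) in the regime of Theorem~\ref{thm: subcr-supcr} ($2<q<\bar p$, $\mu>0$): a function with $|\nabla u|_2$ small may perfectly well satisfy $P_\mu(u)<0$, so global existence cannot be read off from the sign of $P_\mu$; the resolution is the $u$-independent lower bound $\max_s\Psi^\mu_u(s)\ge M_0>0$ furnished by \eqref{cond sub sup}, which together with the smallness of $\Psi^\mu_u$ on $(-\infty,0]$ pins the global maximum of $\Psi^\mu_u$ to $\{s>0\}$.
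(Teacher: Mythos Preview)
Your proof is correct and follows the same strategy as the paper: reduce each item to Theorem~\ref{thm: gwp} by locating the global maximum point $t_{u,\mu}$ of $\Psi^\mu_u$ via the structure of the fiber maps (Lemmas~\ref{lem: prop fiber sub-sup}, \ref{lem: fiber cr}, \ref{lem: fiber mu < 0}). Your treatment is in fact slightly more complete than the paper's terse proof: for item~2) you explicitly handle the Theorem~\ref{thm: subcr-supcr} case (where $P_\mu(u)>0$ places $0$ in $(s_u,t_u)$), whereas the paper only cites the single-critical-point Lemmas~\ref{lem: fiber cr} and~\ref{lem: fiber mu < 0}; and for item~3) you spell out the quantitative lower bound $\max_s\Psi^\mu_u(s)\ge M_0>0$ that the paper leaves implicit in its references to Lemmas~\ref{lem: prop fiber sub-sup}, \ref{lem: stima sup A cr}, \ref{lem: stima sup A mu < 0}. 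The only minor stylistic difference is in item~1): the paper argues directly, using $t_{s\star u}=t_u-s$ and the limits of $\Psi^\mu_u$ at $\pm\infty$, while you deduce it from items~3)--5); both work equally well.
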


\begin{remark}
Differently to what happen in \cite{TaoVisZha}, we don't make any assumption on the weighted mass current of the initial datum in order to prove finite time blow-up. Moreover, Theorem \ref{thm: gwp} yields blow-up for positive energy solutions (while in \cite{TaoVisZha} only negative energy solutions are considered). The price to pay is that we have to impose some conditions on $a$ and $\mu$.

The difference between the case $q<\bar p < p$ and $\mu>0$ with the others in Corollary \ref{cor: gwp} is motivated by the different properties of the fiber maps $\Psi_u^\mu$, see Lemmas \ref{lem: prop fiber sub-sup}, \ref{lem: fiber cr} and \ref{lem: fiber mu < 0}.
\end{remark}

In the rest of the paper we give the proofs of the main results. After having discussed some preliminaries in Section \ref{sec: pre}, we prove Theorems \ref{thm: subcrit} and \ref{thm: Z subcr} in Section \ref{sec: cr subcr}. In Section \ref{sec: compactness}, we discuss the compactness of Palais-Smale sequences in $L^2$-supercritical framework. It is worth to remark that, dealing with normalized solutions, the compactness is a highly non-trivial problem, even if we are in a Sobolev subcritical framework. In Sections \ref{sec: subcr-supcr}, \ref{sec: cr} and \ref{sec: mu<0} we focus on existence of ground states, proving Theorems \ref{thm: subcr-supcr}, \ref{thm: supcr2} and \ref{thm: sup mu <0} respectively. In doing this, we also prove Proposition \ref{prop: struct P}. At this point we focus on the properties of ground states, with particular emphasis to stability and instability. In Section \ref{sec: prop I} we prove Theorems \ref{thm: Z supcr 1}, \ref{thm: mu to 0} and \ref{thm: q to bar p}, and in Section \ref{sec:10} we prove Theorems \ref{thm: Z supcr 2} and \ref{thm: Z supcr 3} and Proposition \ref{prop: natural}. Finally, Theorem \ref{thm: gwp} and Corollary \ref{cor: gwp} on global existence and finite time blow-up are discussed in Section \ref{sec: gwp}.


\medskip

Regarding the notation, in this paper we deal with both complex and real-valued functions, which will be in both cases denoted by $u, v, \dots$. This should not be a source of misunderstanding. The symbol $\bar u$ will always be used for the complex conjugate of $u$. For $p \ge 1$, the (standard) $L^p$-norm of $u \in L^p(\R^N,\C)$ (or of $u \in L^p(\R^N,\R)$) is denoted by $|u|_p$. We simply write $H$ for $H^1(\R^N,\C)$, and $H^1$ for the subspace of real valued functions $H^1(\R^N,\R)$. Similarly, $H^1_{\rad}$ denotes the subspace of functions in $H^1$ which are radially symmetric with respect to $0$, and $S_{a,r} = H^1_{\rad} \cap S_a$. The symbol $\|\cdot\|$ is used only for the norm in $H$ or $H^1$. Denoting by $^*$ the symmetric decreasing rearrangement of a $H^1$ function, we recall that, if $u \in H$, then $|u| \in H^1$, $|u|^* \in H^1_{\rad}$, with 
\[
|\nabla |u|^*|_2 \le |\nabla |u||_2 \le |\nabla u|_2
\]
(it is well known that the symmetric decreasing rearrangement decreases the $L^2$-norm of gradients; regarding the last inequality for complex valued functions, we refer to \cite[Proposition 2.2]{HajStu}). The symbol $\weak$ denotes weak convergence (typically in $H$ or $H^1$). Capital letters $C, C_1, C_2, \dots$ denote positive constant which may depend on $N$, $p$ and $q$ (but never on $a$ or $\mu$), whose precise value can change from line to line. We also mention that, within a section, after having fixed the parameters $a$ and $\mu$ we may choose to omit the dependence of $E_\mu$, $S_a$, $P_\mu$, $\cP_{a,\mu}$, \dots on these quantities, writing simply $E$, $S$, $P$, $\cP$, \dots. 

\medskip

\paragraph{\textbf{Acknowledgments}} We thank Prof. Dario Pierotti and Prof. Gianmaria Verzini for several fruitful discussions.

\section{Preliminaries}\label{sec: pre}

In this section we collect several results which will be often used throughout the rest of the paper.

\subsection*{Preliminaries on the homogeneous NLSE} We focus here on the case $\mu=0$, and in particular to existence and properties of ground states for 
\[
E_0(u) = \int_{\R^N} \left(\frac12 |\nabla u|^2 - \frac1p |u|^p\right)
\]
on $S_a$. Classically, the problem is equivalent to the search of real valued solutions to
\begin{equation}\label{mu = 0}
\begin{cases}
-\Delta u = \lambda u + u^{p-1} & \text{in $\R^N$} \\
u>0 & \text{in $\R^N$} \\
\int_{\R^N} u^2 = a^2, & u \in H^1(\R^N),
\end{cases}
\end{equation}
for some $\lambda< 0$. Thanks to the homogeneity of the nonlinear term, the problem is equivalent, by scaling, to 
\begin{equation}\label{fixed lambda}
-\Delta u + u = u^{p-1}, \quad u>0 \qquad \text{in $\R^N$}, \quad u \in H^1.
\end{equation}
It is well known \cite{Stra, Kwo} that, for $p \in (2,2^*)$, equation \eqref{fixed lambda} has a unique solution $w_{N,p}$, up to translations, and that $w_{N,p}$ is radially symmetric and radially decreasing with respect to a point. Moreover, if $p \ge 2^*$ there is no solution. It is not difficult to deduce that if $p \in (2,2^*) \setminus \{\bar p\}$, then \eqref{mu = 0} has a unique solution for any $a>0$, while if $p = \bar p= 2+4/N$, then \eqref{mu = 0} is solvable for the unique value $a = |w_{N,\bar p}|_2$, which from now on is denoted by $\bar a_N$. Moreover, for $a=\bar a_N$ problem \eqref{mu = 0} has infinitely many different radial ground states.

\subsection*{Gagliardo-Nirenberg inequality} We recall that, for every $N \ge 1$ and $p \in (2,2^*)$, there exists a constant $C_{N,p}$ depending on $N$ and on $p$ such that
\begin{equation}\label{GN ineq}
|u|_p \le C_{N,p} |\nabla u|_2^{\gamma_p} |u|_2^{1-\gamma_p} \qquad \forall u \in H,
\end{equation}
where $\gamma_p$ is defined by \eqref{def gamma_p}.
Weinstein \cite{Wei} proved that equality is achieved by $w_{N,p}$ (and by any of its rescaling). Moreover, he obtained the best constant $C_{N,p}$ in terms of the $L^2$-norm of (a scaling of) $w_{N,p}$. In the special case $p= \bar p$, formula (1.3) in \cite{Wei} allows to characterized the critical mass $\bar a_N$ as
\begin{equation}\label{best const}
\bar a_N = \left( \frac{\bar p}{2 C_{N, \bar p}^{\bar p}} \right)^\frac{N}{4}.
\end{equation}

\subsection*{Homogeneous NLSE from a variational perspective}

From the variational point of view, the transition through the $L^2$-critical exponent $\bar p$ can be easily explained. By \eqref{GN ineq}, we have that
\[
E_0(u) \ge \frac12|\nabla u|_2^2 - \frac{C_{N,p}^p}{p} a^{(1-\gamma_p) p} |\nabla u|_p^{\gamma_p p},
\]
with $\gamma_p$ defined by \eqref{def gamma_p}. Notice that
\[
\gamma_p p = \frac{N}2(p-2) \    \begin{cases} <2 & \text{if $2<p< \bar p$} \\ =2 & \text{if $p= \bar p$} \\ >2 & \text{if $\bar p<p < 2^*$}. \end{cases}
\]
This implies that $E_0$ is bounded from below on $S_a$ for $p< \bar p$ (for every choice of $a>0$), and for $p= \bar p$ provided that $a \le \bar a_N$. In the remaining cases, it is not difficult to check that $E_{0}|_{S_a}$ is unbounded from below: for $s \in \R$ and $u \in S_a$, we consider the scaling $s \star u$, defined in \eqref{def star},
and we observe that $s \star u \in S_a$ and 
\[
E_0(s \star u) = \frac{e^{2s}}{2} \int_{\R^N} |\nabla u|^2 - \frac{e^{\gamma_p p  s}}{p} \int_{\R^N} |u|^p.
\]
We deduce that, if $p> \bar p$ (so that $\gamma_p p>2$), then $E_0(s \star u) \to -\infty$ as $s \to +\infty$, for every $u \in S_a$, while in case $p= \bar p$ the same holds for all functions $u \in S_a$ with 
\[
\frac12 |\nabla u|_2^2 - \frac1{\bar p} |u|_{\bar p}^{\bar p}<0.
\]
Such a function does exist only if $a> \bar a_N$. 


\subsection*{Behavior of $E_\mu$ with respect to dilations.} A crucial role in the proof of all our results is represented by the study of the behavior of $E_\mu$ with respect to the $L^2$-norm preserving variations defined by \eqref{def star}. We consider, for $u \in S_a$ and $s \in \R$, the fiber $\Psi_u^\mu$ introduced in \eqref{def Psi intro}.
We have
\begin{equation*}
\begin{split}
(\Psi^\mu_u)'(s) &=  e^{2s}\int_{\R^N} |\nabla u|^2 - \gamma_p e^{p \gamma_p  s} \int_{\R^N} |u|^p -  \mu \gamma_q e^{q \gamma_q  s} \int_{\R^N} |u|^q \\
& = \int_{\R^N} |\nabla (s \star u)|^2 - \gamma_p \int_{\R^N} |s \star u|^p - \mu \gamma_q \int_{\R^N} |s \star u|^q = P_\mu(s \star u),
\end{split}
\end{equation*}
where $P_\mu$ is defined by \eqref{Poh}. Therefore:
\begin{proposition}\label{prop: psi P}
Let $u \in S_a$. Then: $s \in \R$ is a critical point for $\Psi^\mu_u$ if and only if $s \star u \in \cP_{a,\mu}$. 
\end{proposition}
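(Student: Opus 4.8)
The plan is to read off the statement directly from the derivative computation performed just above: since $\Psi^\mu_u(s) = E_\mu(s\star u)$ is, for fixed $u \in S_a$, a real-analytic function of the single variable $s$ (a finite linear combination of the exponentials $e^{2s}$, $e^{p\gamma_p s}$, $e^{q\gamma_q s}$), it is differentiable everywhere, and we have already shown
\[
(\Psi^\mu_u)'(s) = e^{2s}\int_{\R^N}|\nabla u|^2 - \gamma_p e^{p\gamma_p s}\int_{\R^N}|u|^p - \mu\gamma_q e^{q\gamma_q s}\int_{\R^N}|u|^q = P_\mu(s\star u).
\]
Hence $s$ is a critical point of $\Psi^\mu_u$, i.e. $(\Psi^\mu_u)'(s) = 0$, if and only if $P_\mu(s\star u) = 0$.

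To conclude, I would recall that the $\star$-action preserves the $L^2$-norm: a change of variables $y = e^s x$ gives $\int_{\R^N}|s\star u|^2 = \int_{\R^N} e^{Ns}|u(e^s x)|^2\,dx = \int_{\R^N}|u(y)|^2\,dy = a^2$, so $s\star u \in S_a$ for every $s \in \R$. Therefore the condition $P_\mu(s\star u) = 0$ is, by the definition \eqref{def P} of $\cP_{a,\mu}$, exactly the condition $s\star u \in \cP_{a,\mu}$. Chaining the two equivalences yields the claim. There is no genuine obstacle here — the only thing to be careful about is that membership in $\cP_{a,\mu}$ requires both $P_\mu(\cdot) = 0$ \emph{and} membership in $S_a$, and the latter is automatic along the fiber $s \mapsto s\star u$; so the equivalence is with criticality of $\Psi^\mu_u$ and not with the vanishing of $P_\mu$ on an a priori larger set.
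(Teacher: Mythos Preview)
Your proposal is correct and follows exactly the paper's approach: the proposition is stated immediately after the computation $(\Psi^\mu_u)'(s) = P_\mu(s\star u)$, and is simply read off from it together with the fact (already noted in the paper) that $s\star u \in S_a$. Your only addition is the explicit verification of the $L^2$-norm preservation, which the paper records beforehand without proof.
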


In particular, $u \in \cP_{a,\mu}$ if and only if $0$ is a critical point of $\Psi_u^\mu$. For future convenience, we also recall that the map
\begin{equation}\label{cont star}
(s,u) \in \R \times H^1 \mapsto (s \star u) \in H^1 \quad \text{is continuous},
\end{equation}
see \cite[Lemma 3.5]{BaSo2}.

\section{$L^2$-critical leading term}\label{sec: cr subcr}

In this section we prove Theorem \ref{thm: subcrit}.
It is useful to observe that, in the present setting, \eqref{def Psi intro} reads
\begin{equation}\label{E star cr-subcr}
\begin{split}
E_\mu(s \star u) &= e^{2s} \left( \int_{\R^N}\frac{1}{2} |\nabla u|^2 - \frac1{\bar p} |u|^{\bar p} \right) - \mu \frac{e^{q\gamma_q s}}{q} \int_{\R^N} |u|^q  = e^{2s} E_0(u) - \mu \frac{e^{q\gamma_q s}}{q} \int_{\R^N} |u|^q.
\end{split}
\end{equation}

\medskip

\noindent \textbf{The case $0<a \le \bar a_N$ with $\mu < 0$.} If there exists a solution $u$ to \eqref{stat com}-\eqref{norm}, then by Pohozaev identity $P_\mu(u) = 0$, and hence
\[
\int_{\R^N} |\nabla u|^2 = \frac{2}{\bar p}\int_{\R^N} |u|^{\bar p} + \mu \gamma_q \int_{\R^N} |u|^q.
\]
As recalled in Section \ref{sec: pre}, we have that $\inf_{S_a} E_0 \ge 0$ since $a<\bar a_N$, and hence we deduce that 
\[
0> \mu \gamma_q \int_{\R^N} |u|^q = 2 E_0(u)  \ge 2 \inf_{S_a} E_0 \ge 0,
\]
a contradiction.    

\medskip

\noindent{\textbf{The case $a = \bar a_N$ with $\mu>0$}.} Since $a= \bar a_N$, there exists $w=w_{N, \bar p} \in S_{a}$ with $E_0(w) = 0$. Therefore, by \eqref{E star cr-subcr}, 
\[
E_\mu(s \star w) = - \mu  \frac{e^{q\gamma_q s}}{q} |w|_q^q \to -\infty \qquad \text{as $s \to +\infty$.}
\]

\medskip

\noindent{\textbf{The case $a > \bar a_N$}.} Since $a> \bar a_N$, there exists $u \in S_{a}$ with $E_0(u) <0$. Using \eqref{E star cr-subcr} and the fact that $2>q \gamma_q$, we deduce again that $\inf_{S_{a}} E_\mu = -\infty$.

\medskip

\noindent{\textbf{The case $a<\bar a_N$ with $\mu>0$.}} At first, we show that $E_\mu$ is bounded from below on $S_a$, and that the infimum is negative. By the Gagliardo-Nirenberg inequality
\begin{equation}\label{691}
E_\mu(u) \ge \frac12 \left(1 - \frac2{\bar p} C_{N, \bar p}^{\bar p} a^{\bar p-2}\right)  |\nabla u|_2^2 - \frac{\mu}q a^{q(1-\gamma_q)} |\nabla u|_2^{\gamma_q q},
\end{equation}
for every $u \in S_a$. Since $a<\bar a_N$, $\gamma_q q<2$, and since the coefficient of $|\nabla u|_2^2$ is positive by \eqref{best const}, we have that $E_\mu$ is coercive on $S_a$, and $m(a,\mu) := \inf_{S_a} E_\mu>-\infty$. The fact that $m(a,\mu)<0$ follows by \eqref{E star cr-subcr}, since being $\mu>0$ we have that $E_\mu(s \star u)<0$ for every $(s,u) \in \R \times S_a$ with $s \ll-1$. Furthermore, we observe that $\inf_{S_a \cap H^1} E_\mu = \inf_{S_a} E_\mu$, since if $u \in H$ we have that $|u| \in S_a \cap H^1$ and $|\nabla |u||_2 \le |\nabla u|_2$. Now:

\begin{proposition}\label{prop: rel cpt sub}
Let $\{u_n\} \subset H^1(\R^N,\R)$ be a sequence such that
\[
E_\mu(u_n) \to m(a,\mu), \quad \text{and} \quad |u_n|_2 \to a.
\]
Then $\{u_n\}$ is relatively compact in $H^1$ up to translations; that is, there exist a subsequence $\{u_{n_k}\}$, a sequence of points $\{y_k\} \subset \R^N$, and a function $\tilde u \in S_a \cap H^1$ such that $u_{n_k}(\cdot + y_k) \to \tilde u$ strongly in $H^1$.
\end{proposition}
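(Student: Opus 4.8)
}
The plan is to run P.-L. Lions' concentration--compactness principle. Set $c:=a^2$ and, for $0<t\le c$, introduce the auxiliary infimum
\[
\mathcal J(t):=\inf\bigl\{E_\mu(u):\ u\in H^1(\R^N,\R),\ |u|_2^2=t\bigr\},
\]
so that $\mathcal J(c)=m(a,\mu)$. First I would reduce to a genuine minimizing sequence on $S_a$: by the coercivity estimate \eqref{691} (valid for $n$ large, since $|u_n|_2<\bar a_N$ eventually, so the coefficient of $|\nabla\cdot|_2^2$ stays positive and bounded away from $0$) the sequence $\{u_n\}$ is bounded in $H^1$; hence $v_n:=a\,u_n/|u_n|_2\in S_a$ satisfies $\|v_n-u_n\|\to 0$, so $E_\mu(v_n)\to m(a,\mu)$, and it suffices to prove relative compactness up to translations for $\{v_n\}$.

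The heart of the matter is the \emph{strict sub-additivity} inequality $\mathcal J(c)<\mathcal J(\alpha_1)+\mathcal J(\alpha_2)$ whenever $\alpha_1,\alpha_2>0$ and $\alpha_1+\alpha_2=c$. This I would deduce from the scaling claim
\[
\mathcal J(\alpha)<\tfrac{\alpha}{\beta}\,\mathcal J(\beta),\qquad 0<\beta<\alpha\le c .
\]
To prove the claim, take a minimizing sequence $\{\phi_n\}$ for $\mathcal J(\beta)$ and set $\psi_n(x):=\phi_n(x/\tau)$ with $\tau:=(\alpha/\beta)^{1/N}>1$; then $|\psi_n|_2^2=\alpha$, $|\psi_n|_r^r=\tfrac{\alpha}{\beta}|\phi_n|_r^r$ and $|\nabla\psi_n|_2^2=(\alpha/\beta)^{1-2/N}|\nabla\phi_n|_2^2$, so that, since $1-2/N<1$ and $\alpha/\beta>1$,
\[
\tfrac{\alpha}{\beta}E_\mu(\phi_n)-E_\mu(\psi_n)=\tfrac12\bigl(\tfrac{\alpha}{\beta}-(\alpha/\beta)^{1-2/N}\bigr)|\nabla\phi_n|_2^2 .
\]
Because $\mathcal J(\beta)<0$ (here I use the already established strict negativity of $m(\cdot,\mu)$ below $\bar a_N$), a minimizing sequence cannot have vanishing gradient --- otherwise Gagliardo--Nirenberg would force $E_\mu(\phi_n)\to 0$ --- hence $|\nabla\phi_n|_2^2\ge\delta>0$ eventually, and letting $n\to\infty$ gives the claim. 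The sub-additivity follows: assuming $\alpha_1\le\alpha_2$, the claim with $(\beta,\alpha)=(\alpha_2,c)$ yields $\mathcal J(c)<\tfrac{c}{\alpha_2}\mathcal J(\alpha_2)=\mathcal J(\alpha_2)+\tfrac{\alpha_1}{\alpha_2}\mathcal J(\alpha_2)$, and the claim with $(\beta,\alpha)=(\alpha_1,\alpha_2)$ (or a trivial identity if $\alpha_1=\alpha_2$) gives $\tfrac{\alpha_1}{\alpha_2}\mathcal J(\alpha_2)\le\mathcal J(\alpha_1)$. The same scaling also shows that $t\mapsto\mathcal J(t)$ is continuous on $(0,c]$.

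With these ingredients I would examine the concentration function of $\rho_n:=|v_n|^2$. Vanishing is excluded because it would give $|v_n|_{\bar p},|v_n|_q\to 0$, whence $\liminf E_\mu(v_n)\ge 0>m(a,\mu)$. Dichotomy is excluded by combining the standard dichotomy energy estimate, the continuity of $\mathcal J$, and the strict sub-additivity: it would produce $\alpha\in(0,c)$ with $m(a,\mu)\ge\mathcal J(\alpha)+\mathcal J(c-\alpha)$, a contradiction. Hence compactness holds: there are $\{y_n\}\subset\R^N$ such that $w_n:=v_n(\cdot+y_n)$ is tight in $L^2$; along a subsequence $w_n\weak\tilde u$ in $H^1$ and, by tightness, $w_n\to\tilde u$ in $L^2(\R^N)$, so $\tilde u\in S_a$, and by interpolation $w_n\to\tilde u$ in $L^r$ for all $r\in[2,2^*)$. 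Then $m(a,\mu)=\lim E_\mu(w_n)\ge E_\mu(\tilde u)\ge m(a,\mu)$, which forces $|\nabla w_n|_2\to|\nabla\tilde u|_2$; together with weak convergence this gives $w_n\to\tilde u$ strongly in $H^1$, and finally $u_{n_k}(\cdot+y_k)\to\tilde u$ in $H^1$ because $\|u_n-v_n\|\to 0$.

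The step I expect to be the real obstacle is the strict sub-additivity; everything else is a fairly routine instance of concentration--compactness. The subtle point is that, unlike a purely $L^2$-subcritical \emph{homogeneous} problem, $E_\mu$ is not homogeneous under mass rescaling, so strictness has to be extracted from the genuinely subcritical scaling exponent $1-2/N<1$ of the gradient term, using essentially that $m(a,\mu)<0$ to keep the gradients of minimizing sequences bounded away from zero.
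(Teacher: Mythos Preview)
Your proof is correct and follows the same concentration--compactness skeleton as the paper (boundedness via \eqref{691}, reduction to $v_n\in S_a$, ruling out vanishing and dichotomy, then upgrading $L^2$-tightness to strong $H^1$ convergence). The one genuine difference is in how strict sub-additivity is obtained. The paper uses the \emph{amplitude} scaling $u\mapsto\theta u$ for $\theta>1$: since $p,q>2$, one has $E_\mu(\theta u)<\theta^2 E_\mu(u)$ strictly whenever $|u|_p^p+|u|_q^q>0$, and a minimizing sequence for a negative infimum cannot have these norms tend to zero. You instead use the \emph{spatial dilation} $u\mapsto u(\cdot/\tau)$: the nonlinear terms scale linearly in the mass ratio while the kinetic term scales with the sublinear exponent $1-2/N$, and strictness is extracted from a uniform lower bound on $|\nabla\phi_n|_2$ (again via $m(\cdot,\mu)<0$). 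Both arguments are classical and equally effective here; your version has the mild advantage of not using $p,q>2$ for the sub-additivity step itself, while the paper's version is arguably one line shorter. Your appeal to continuity of $\mathcal J$ in the dichotomy step is fine but not essential: as in the paper, one can rescale each dichotomy piece to exact mass and use boundedness to pass to the limit.
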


Here we only consider real-valued functions. Indeed, using the argument developed in \cite[Section 3]{HajStu}, if relative compactness holds in $H^1(\R^N,\R)$, then one can easily deduce that it also holds in $H^1(\R^N,\C)$.

\begin{remark}
If one is only interested in the existence of a real-valued, positive and radial ground state, it is possible to work with a minimizing sequence of radially decreasing functions, and exploit their compactness properties. This approach was followed in \cite{LeCozMaRa}. However, the relative compactness of minimizing sequences is a stronger result which allows to prove the stability of the ground states set $Z_{a,\mu}$.
\end{remark}

The proof of Proposition \ref{prop: rel cpt sub} is an application of the concentration-compactness principle by P. L. Lions \cite{Lions1,Lions2}, and rests on the validity of the strict sub-additivity for $a \mapsto m(a,\mu)$.

\begin{lemma}\label{lem: sub sub}
Let $a_1, a_2>0$ be such that $a_1^2 + a_2^2 = a^2< \bar a_N^2$. Then
\[
m(a,\mu)< m(a_1,\mu) + m(a_2,\mu).
\]
\end{lemma}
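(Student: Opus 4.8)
The plan is to establish the strict sub-additivity by first proving a scaling monotonicity of the map $\theta \mapsto m(\theta a, \mu)$ for $\theta \ge 1$, and then deducing the strict inequality. The starting point is the simple observation, via the $L^2$-preserving dilation $s \star u$, that $m(a,\mu) < 0$ for every $0 < a < \bar a_N$ (this was already shown just before the statement using \eqref{E star cr-subcr}). I would first show the homogeneity-type relation: for $\theta > 1$ and $0 < a$, $\theta a < \bar a_N$, one has $m(\theta a, \mu) < \theta^2 m(a,\mu)$. To see this, take $u \in S_a$ near-optimal for $m(a,\mu)$ and test $m(\theta a,\mu)$ with $\theta u \in S_{\theta a}$: using the explicit form of $E_\mu$ and $q < \bar p = p$,
\[
E_\mu(\theta u) = \theta^2\left(\frac12|\nabla u|_2^2 - \frac{1}{\bar p}|u|_{\bar p}^{\bar p}\right) - \frac{\mu}{q}\theta^q|u|_q^q < \theta^2 E_\mu(u),
\]
because $\theta^q > \theta^2$ (as $q > 2$, $\theta > 1$) and the last term carries a minus sign with $\mu > 0$. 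Taking the infimum over $u$ gives $m(\theta a,\mu) \le \theta^2 m(a,\mu)$; a small additional argument (e.g.\ using that the term $-\frac{\mu}{q}(\theta^q-\theta^2)|u|_q^q$ is bounded away from $0$ along a minimizing sequence, since $|u_n|_q$ cannot vanish when $m(a,\mu)<0$) upgrades this to the strict inequality $m(\theta a,\mu) < \theta^2 m(a,\mu)$.

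Granted this, the strict sub-additivity follows by a standard argument. Assume without loss of generality $a_1 \ge a_2 > 0$ with $a_1^2 + a_2^2 = a^2 < \bar a_N^2$; then $a_1 < a$, so writing $a = (a/a_1)\, a_1$ with $a/a_1 > 1$ and $a < \bar a_N$, the monotonicity gives
\[
m(a,\mu) < \left(\frac{a}{a_1}\right)^2 m(a_1,\mu) = \frac{a_1^2 + a_2^2}{a_1^2}\, m(a_1,\mu) = m(a_1,\mu) + \frac{a_2^2}{a_1^2}\, m(a_1,\mu).
\]
Since $m(a_1,\mu) < 0$ and, by the same monotonicity applied with ratio $a_1/a_2 \ge 1$ (and $m(a_2,\mu)<0$), we have $\frac{a_2^2}{a_1^2} m(a_1,\mu) \le m(a_2,\mu)$ — indeed $m(a_1,\mu) = m((a_1/a_2) a_2, \mu) \le (a_1/a_2)^2 m(a_2,\mu)$, so multiplying by $a_2^2/a_1^2 < 0$-preserving care, $\frac{a_2^2}{a_1^2} m(a_1,\mu) \le m(a_2,\mu)$. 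Combining, $m(a,\mu) < m(a_1,\mu) + m(a_2,\mu)$, which is the claim. (If $a_1 = a_2$ the ratios equal $1$ and the strict inequality still comes from the first step, where $a/a_1 = \sqrt 2 > 1$.)

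The main obstacle I anticipate is not the algebra but making the strictness in the first step fully rigorous: the inequality $E_\mu(\theta u) < \theta^2 E_\mu(u)$ is strict for each fixed $u$ with $|u|_q > 0$, but passing to the infimum a priori only preserves $\le$. The clean fix is to note that $m(a,\mu) < 0$ forces any minimizing sequence $\{u_n\}$ to satisfy $\liminf_n |u_n|_q^q > 0$ (otherwise $E_\mu(u_n) \ge \frac12(1 - \tfrac{2}{\bar p}C_{N,\bar p}^{\bar p} a^{\bar p - 2})|\nabla u_n|_2^2 - o(1) \ge -o(1)$, contradicting $m(a,\mu)<0$), so $E_\mu(\theta u_n) \le \theta^2 E_\mu(u_n) - \frac{\mu}{q}(\theta^q - \theta^2)\,\delta$ for some $\delta > 0$ and all large $n$, yielding $m(\theta a,\mu) \le \theta^2 m(a,\mu) - c$ with $c > 0$. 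One must also record that $0 < a_i < a < \bar a_N$ guarantees $m(a_i,\mu) > -\infty$ and $< 0$, so all quantities are finite and negative, which is exactly where the hypothesis $a^2 < \bar a_N^2$ is used.
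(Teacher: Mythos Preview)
Your approach is essentially identical to the paper's: both prove the strict scaling inequality $m(\theta c,\mu) < \theta^2 m(c,\mu)$ for $\theta>1$ by testing with $\theta u_n$ along a minimizing sequence and ruling out degeneracy via $m(c,\mu)<0$, then deduce strict sub-additivity by the standard Lions argument (which the paper simply cites, and you spell out). One slip worth correcting: your displayed identity
\[
E_\mu(\theta u) = \theta^2\Big(\tfrac12|\nabla u|_2^2 - \tfrac{1}{\bar p}|u|_{\bar p}^{\bar p}\Big) - \tfrac{\mu}{q}\theta^q|u|_q^q
\]
is wrong, since the $L^{\bar p}$ term scales as $\theta^{\bar p}$, not $\theta^2$; but as $\bar p>2$ this only strengthens the inequality $E_\mu(\theta u) < \theta^2 E_\mu(u)$, so the argument is unaffected.
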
 
\begin{proof}
Let $0<c<\bar a_N$, let $\theta>1$ be such that $\theta c< \bar a_N$, and let $\{u_n\} \subset S_c$ be a minimizing sequence for $m(c,\mu)$. Then 
\begin{align*}
m(\theta c, \mu) &\le E_\mu(\theta u_n) = \frac12 \theta^2 |\nabla u_n|_2^2 - \frac{\mu  \theta^q}{q} |u_n|_q^q - \frac{\theta^p}{p} |u_n|_p^p  < \theta^2 E_\mu(u_n),
\end{align*}
since $\theta>1$ and $q,p>2$. As a consequence $m(\theta c,\mu) \le \theta^2 m(c,\mu)$, with equality if and only if $|u_n|_p^p + |u_n|_q^q \to 0$ as $n \to \infty$. But this is not possible, since otherwise we would find
\[
0> m(c,\mu) = \lim_{n \to \infty} E_\mu(u_n) \ge \liminf_{n \to \infty} \frac12 |\nabla u_n|_2^2 \ge 0,
\]
a contradiction. Thus, we have the strict inequality $m(\theta c,\mu) < \theta^2 m(c,\mu)$, and from this the thesis follows as in \cite[Lemma II.1]{Lions1}.
\end{proof}

\begin{proof}[Proof of Proposition \ref{prop: rel cpt sub}]
By \eqref{691}, and since $a_n \to a< \bar a_N$, the sequence $\{u_n\}$ is bounded in $H^1$. Thus, by the concentration-compactness principle (see in particular \cite[Lemma III.1]{Lions1}) applied to $v_n=a/a_n u_n$, there exists a subsequence, still denoted by $\{v_n\}$ satisfying one of the following three possibilities:\\
$i$) \emph{vanishing:} 
\[
\lim_{k \to \infty} \sup_{y \in \R^N} \int_{B_R(y)} |v_n|^2 =0 \qquad \forall R>0.
\]
$ii$) \emph{dichotomy:} there exists $a_1 \in (0,a)$ and $\{v_n^1\}$, $\{v_n^2\}$ bounded in $H^1$ such that as $n \to \infty$
\[
\begin{array}{l c l}
 |v_n - (v_n^1 + v_n^2)|_r \to 0 \qquad \text{for $2 \le r < 2^*$}; & & |v_n^1|_2 \to a_1 \quad \text{and} \quad |v_n^2|_2 \to \sqrt{a^2-a_1^2}; \\
 \dist(\supp \,v_n^1, \supp \, v_n^2) \to +\infty; & &\liminf_{n \to \infty} \Big[|\nabla v_n|_2^2 -|\nabla v_n^1|_2^2 -|\nabla v_n^2|_2^2 \Big] \ge 0.
\end{array}
\] 
$iii$) \emph{compactness:} there exists $y_n \in \R^N$ such that:
\[
\forall \eps>0 \quad \exists R>0: \quad \int_{B_R(y_k)} v_n^2 \ge a^2-\eps.
\]
Vanishing cannot occur, since otherwise $u_n \to 0$ strongly in $L^r(\R^N)$ for every $r \in (2,2^*)$ (see \cite[Lemma I.1]{Lions2}), whence it follows that $\liminf_n E_\mu(u_n) \ge 0$, in contradiction with $m(a,\mu)<0$. 

Also dichotomy cannot occur, since otherwise
\[
m(a,\mu) =\lim_{n \to \infty} E_\mu(u_n) = \lim_{n \to \infty} E_\mu(v_n) \ge \limsup_{n \to \infty} \left(E_\mu(v_n^1) + E_\mu(v_n^2) \right) \ge m(a_1,\mu) + m(a_2,\mu),
\]
in contradiction with Lemma \ref{lem: sub sub} (in the second equality, we used the facts that $\{u_n\}$ is bounded in $H^1$ and $a_n \to a$).

Therefore, compactness hold, and the sequence of translations $\tilde v_n := v_n(\cdot + y_n)$ converges, strongly in $L^2(\R^N)$ (and weakly in $H^1$), to a limit $\tilde u \in S_a \cap H^1$. Since $a_n \to a$ and $\{u_n\}$ is bounded, we deduce that in fact $\tilde u_n := u_n(\cdot + y_n)$ converges, strongly in $L^2(\R^N)$, to $\tilde u$. If $r \in (2,2^*)$, by H\"older and Sobolev inequality 
\[
|\tilde u_n - \tilde u|_r^r \le |\tilde u_n - \tilde u|_2^{2(1-\alpha)} |\tilde u_n - \tilde u|_{2^*}^{2^*\alpha} \le C |\tilde u_n - \tilde u|_2^{2\alpha} \to 0
\]
(for some $\alpha \in (0,1)$), whence
\[
m(a,\mu) \le E_\mu(\tilde u) \le \liminf_{n \to \infty} E_\mu(\tilde u_n) = \liminf_{n \to \infty} E_\mu(u_n) = m(a,\mu).
\]
We finally deduce that the previous inequalities are equalities, and in particular $\|\tilde u_n\| \to \|\tilde u\|$. This shows the relative compactness of any minimizing sequence for $m(a,\mu)$ of real valued functions, up to translations.
\end{proof}

We need two further ingredients in order to proceed with the stability.

\begin{lemma}\label{lem: cont gsl}
The function $a \in (0,\bar a_N)  \mapsto m(a,\mu)$ is continuous.
\end{lemma}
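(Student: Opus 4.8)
The plan is to establish upper and lower semicontinuity of $a \mapsto m(a,\mu)$ separately on $(0,\bar a_N)$, using the multiplicative rescaling $u \mapsto (b/a)u$ which maps $S_a$ onto $S_b$, together with the uniform coercivity provided by \eqref{691}.

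For upper semicontinuity, fix $a \in (0,\bar a_N)$ and $\eps>0$, and pick $u \in S_a$ with $E_\mu(u) < m(a,\mu)+\eps$. For $b>0$ the function $\frac{b}{a}u$ belongs to $S_b$, and since here $p=\bar p$,
\[
E_\mu\!\left(\tfrac{b}{a}u\right) = \frac{b^2}{2a^2}|\nabla u|_2^2 - \frac{\mu b^q}{q a^q}|u|_q^q - \frac{b^{\bar p}}{\bar p\, a^{\bar p}}|u|_{\bar p}^{\bar p} \longrightarrow E_\mu(u) \quad \text{as } b \to a.
\]
Hence $m(b,\mu) \le E_\mu(\frac{b}{a}u)$, so $\limsup_{b\to a} m(b,\mu) \le E_\mu(u) < m(a,\mu)+\eps$; letting $\eps\to0$ gives $\limsup_{b\to a} m(b,\mu)\le m(a,\mu)$.

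For lower semicontinuity, let $a_n \to a$ in $(0,\bar a_N)$ and choose $u_n \in S_{a_n}$ with $E_\mu(u_n) \le m(a_n,\mu) + 1/n$. Since $a_n \to a < \bar a_N$, for $n$ large we have $a_n \le a_0$ for some fixed $a_0 < \bar a_N$, so by \eqref{691} and \eqref{best const} the coefficient of $|\nabla u_n|_2^2$ there is bounded below by a positive constant independent of $n$; as $\gamma_q q < 2$, it follows both that $m(a_n,\mu)$ is bounded (below because $E_\mu$ is then uniformly coercive, above by the upper semicontinuity just proved) and that $\{u_n\}$ is bounded in $H^1$. Now set $w_n := \frac{a}{a_n}u_n \in S_a$. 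Using the explicit expression for $E_\mu$ and the boundedness of $|\nabla u_n|_2$, $|u_n|_q$ and $|u_n|_{\bar p}$, one gets $E_\mu(w_n) = E_\mu(u_n) + o(1)$. Therefore
\[
m(a,\mu) \le E_\mu(w_n) = E_\mu(u_n)+o(1) \le m(a_n,\mu) + \frac1n + o(1),
\]
and taking the liminf gives $m(a,\mu) \le \liminf_n m(a_n,\mu)$. Together with the upper bound, this proves continuity.

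The only delicate point is the boundedness of the near-minimizing sequences $\{u_n\}$: it rests on the fact that the Gagliardo--Nirenberg coercivity constant $1-\frac{2}{\bar p}C_{N,\bar p}^{\bar p}a_n^{\bar p-2}$ stays uniformly positive, which is exactly where the hypothesis $a<\bar a_N$ (hence $a_n$ eventually $\le a_0<\bar a_N$) enters. Everything else reduces to the continuity of the quadratic-plus-power expression defining $E_\mu$ under the trivial rescaling $u\mapsto (b/a)u$.
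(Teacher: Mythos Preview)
Your proof is correct and follows essentially the same approach as the paper: both use the multiplicative rescaling $u\mapsto (b/a)u$ between the constraint spheres together with the equi-coercivity estimate \eqref{691} to compare $E_\mu$-values and deduce both semicontinuity inequalities. The only cosmetic difference is that for upper semicontinuity you fix a single near-minimizer $u\in S_a$ and rescale it, whereas the paper rescales a minimizing sequence for $m(a,\mu)$; your version is marginally cleaner since no boundedness needs to be checked for that direction.
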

\begin{proof}
Let $a_n \to a  \in (0, \bar a_N)$. For every $n$ there exists $u_n \in S_{a_n}$ such that $m(a_n,\mu) \le E_\mu(u_n) < m(a_n,\mu) + 1/n$. By estimate \eqref{691}, taking into account that $a_n \le a +\eps < \bar a_N$ for $n$ sufficiently large (and $\eps>0$ sufficiently small), we deduce that $E_\mu|_{S_{a_n}}$ are equi-coercive, and hence $\{u_n\}$ is bounded in $H$. Now, let us consider $v_n:= a/a_n u_n \in S_a$. We have
\begin{align*}
m(a,\mu) & \le  E(v_n) = E(u_n) + \frac12\left( \frac{a^2}{a_n^2}-1\right) |\nabla u_n|_2^2 - \frac1p\left(\frac{a^p}{a_n^p}-1\right) |u_n|_p^p -  \frac{\mu}q\left(\frac{a^q}{a_n^q}-1\right) |u_n|_q^q \\
& = E(u_n) + o(1),
\end{align*}
where we used the boundedness of $\{u_n\}$ and the fact that $a_n \to a$. Passing to the limit as $n \to \infty$, we deduce that 
\[
m(a,\mu) \le \liminf_{n \to \infty} m(a_n,\mu).
\]

In a similar way, let $\{w_n\}$ be a minimizing sequence for $m(a,\mu)$, which is bounded by \eqref{691}, and let $z_n:= a_n/a w_n \in S_{a_n}$. Then we have
\[
m(a_n,\mu) \le E(z_n) = E(w_n) + o(1) \quad \implies \quad \limsup_{n \to \infty} m(a_n,\mu) \le m(a,\mu). \qedhere
\]
\end{proof}

\begin{lemma}\label{lem: gl ex 2910}
If $a \in (0,a_N)$ and $\mu>0$, then any solution $\psi$ to \eqref{com nls} with initial datum $u \in S_a$ is globally defined in time.
\end{lemma}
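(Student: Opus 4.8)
\textbf{Proof proposal for Lemma \ref{lem: gl ex 2910}.}

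The plan is to use the standard blow-up alternative for the NLS Cauchy problem in $H^1$ together with the \emph{a priori} bound on the $H^1$-norm coming from energy and mass conservation. Since $\psi_0 = u \in S_a$ with $a \in (0,\bar a_N)$, conservation of mass gives $|\psi(t,\cdot)|_2 = a$ for all times in the maximal interval of existence, and conservation of energy gives $E_\mu(\psi(t,\cdot)) = E_\mu(u)$. The crucial point is the coercivity estimate \eqref{691}: for every $v \in H$ with $|v|_2 = a$,
\[
E_\mu(v) \ge \frac12 \left(1 - \frac2{\bar p} C_{N,\bar p}^{\bar p} a^{\bar p - 2}\right) |\nabla v|_2^2 - \frac{\mu}{q} a^{q(1-\gamma_q)} |\nabla v|_2^{\gamma_q q}.
\]
Since $a < \bar a_N$ the coefficient of $|\nabla v|_2^2$ is strictly positive by \eqref{best const}, and since $\gamma_q q < 2$ (because $q < \bar p$), the right-hand side tends to $+\infty$ as $|\nabla v|_2 \to +\infty$; that is, $v \mapsto E_\mu(v)$ is coercive on $S_a$.

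First I would fix the maximal existence interval $(-T_{\min}, T_{\max})$ of the solution $\psi$ with initial datum $u$, which is well-defined by local well-posedness in $H^1$ (recalled in the introduction, following \cite{Caz, TaoVisZha}). Applying the coercivity estimate above to $v = \psi(t,\cdot)$ and using $E_\mu(\psi(t,\cdot)) = E_\mu(u)$, we obtain
\[
\frac12 \left(1 - \frac2{\bar p} C_{N,\bar p}^{\bar p} a^{\bar p - 2}\right) |\nabla \psi(t,\cdot)|_2^2 - \frac{\mu}{q} a^{q(1-\gamma_q)} |\nabla \psi(t,\cdot)|_2^{\gamma_q q} \le E_\mu(u)
\]
for every $t$ in the maximal interval. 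Since $\gamma_q q < 2$, this inequality forces $|\nabla \psi(t,\cdot)|_2$ to stay bounded by a constant depending only on $N$, $q$, $a$, $\mu$ and $E_\mu(u)$, but \emph{not} on $t$. Combined with the conserved bound $|\psi(t,\cdot)|_2 = a$, this yields a uniform bound $\sup_t \|\psi(t,\cdot)\| < \infty$ on the maximal interval.

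The conclusion then follows from the blow-up alternative: if $T_{\max} < \infty$ then $\|\psi(t,\cdot)\| \to \infty$ as $t \to T_{\max}^-$ (and similarly for $T_{\min}$), contradicting the uniform bound just obtained. Hence $T_{\max} = T_{\min} = \infty$ and $\psi$ is globally defined. I do not anticipate a serious obstacle here: the only mild point worth stating carefully is that the solution class considered is $H^1(\R^N,\C)$, so one uses the inequality $|\nabla |v||_2 \le |\nabla v|_2$ to apply the real-valued Gagliardo-Nirenberg-based estimate \eqref{691} to complex-valued $v$ — this is exactly the remark already made before Proposition \ref{prop: rel cpt sub}. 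The essential mechanism is simply that subcriticality of both powers (relative to the $L^2$-critical exponent, for the leading term in the sense $a < \bar a_N$, and strictly for the lower-order term) makes the energy coercive on the mass sphere, which rules out finite-time blow-up.
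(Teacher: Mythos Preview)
Your proof is correct and follows essentially the same approach as the paper: both arguments combine the coercivity estimate \eqref{691} on $S_a$ (valid since $a<\bar a_N$ and $\gamma_q q<2$) with conservation of mass and energy to rule out finite-time blow-up via the standard blow-up alternative. Your remark about passing to $|v|$ is unnecessary here, since the Gagliardo--Nirenberg inequality \eqref{GN ineq} and hence \eqref{691} already hold for complex-valued $u\in H$, but it does no harm.
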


\begin{proof}
Denoting by $(-T_{\min}, T_{\max})$ the maximal existence interval for $\psi$, we have classically that either $\psi$ is globally defined for positive times, or $|\nabla \psi(t)|_2 = +\infty$ as $t \to T_{\max}^-$ (and an analogue alternative holds for negative times), see \cite[Section 3]{TaoVisZha}. Supposing that $T_{\max}<+\infty$, we have then that $|\nabla \psi(t)|_2 \to +\infty$ as $t \to T_{\max}^-$, and as a consequence $E_\mu(\psi(t)) \to +\infty$ as $t \to T_{\max}^-$, by \eqref{691}. This is in contradiction with the conservation of the energy.
\end{proof}

\begin{proof}[Conclusion of the proof of Theorem \ref{thm: subcrit}]
Proposition \ref{prop: rel cpt sub} immediately implies the existence of a real-valued minimizer $\tilde u$ for $E_\mu$ on $S_a \cap H^1$. Denoting by $|u|^*$ the Schwarz rearrangement of $|u| \in H^1$, we observe that, since $E_\mu(|u|^*) \le E_\mu(u)$ and $|u|^* \in S_a$, we can suppose that $u \ge 0$ is radially symmetric and decreasing. Being a critical point of $E_\mu$ on $S_a \cap H^1$, $u$ is a real-valued solution to \eqref{stat com}-\eqref{norm} for some $\tilde \lambda \in \R$, and by regularity it is of class $C^2$; the strong maximum principle yields $u>0$ in $\R^N$. Finally, multiplying \eqref{stat com} by $\tilde u$ and integrating, we obtain
\[
\tilde \lambda a^2 = |\nabla u|_2^2 - \mu |u|_q^q - |u|_p^p = 2 m(a,\mu) + \mu \left( \frac{2}{q}-1\right) |u|_q^q + \left( \frac2p-1\right)|u|_p^p < 0,
\]
which shows that $\tilde \lambda<0$.
\end{proof}

\begin{proof}[Proof of Theorem \ref{thm: Z subcr}]
The validity of Proposition \ref{prop: rel cpt sub} for complex valued function can be proved exactly as in Theorem 3.1 in \cite{HajStu}, starting from the same property for real-valued functions and using Lemma \ref{lem: cont gsl}. Thus, the orbital stability of $Z_{a,\mu}$ can be proved following the classical Cazenave-Lions argument \cite{CazLio}, using the relative compactness of minimizing sequences in $H$ up to translations, and the global existence result in Lemma \ref{lem: gl ex 2910}. The structure of the set $Z_{a,\mu}$ can be determined exactly as in Theorem 4.1 of \cite{HajStu}. Finally, the asymptotic behavior of the ground states as $\mu \to 0^+$ follows directly from \eqref{691}, since we have
\[
0 > E_\mu(\tilde u_\mu) \ge \frac12 \left(1 - \frac2{\bar p} C_{N, \bar p}^{\bar p} a^{\bar p-2}\right)  |\nabla \tilde u_\mu|_2^2 - \frac{\mu}q a^{q(1-\gamma_q)} |\nabla \tilde u_\mu|_2^{\gamma_q q},
\]
whence
\[
\frac12 \left(1 - \frac2{\bar p} C_{N, \bar p}^{\bar p} a^{\bar p-2}\right)  |\nabla \tilde u_\mu|_2^{2- \gamma_q q} < \frac{\mu}q a^{q(1-\gamma_q)} \to 0
\]
as $\mu \to 0^+$.
\end{proof}

\section{Compactness of Palais-Smale sequences in the $L^2$-supercritical setting}\label{sec: compactness}

When the exponent $p$ in \eqref{stat com} is $L^2$-supercritical, the compactness of a Palais-Smale sequence (we will often write PS sequence for short) is a highly nontrivial issue. The boundedness of a PS sequence is not guaranteed in general\footnote{With respect to problems without normalization condition, we observe that if $u \in S_a$, then $u \not \in T_u S_a$, and hence cannot be used as test function; the standard argument to prove boundedness of PS sequence in a Sobolev subcritical setting relies on this fact.}; also, sequences of approximated Lagrange multipliers have to be controlled; and moreover, weak limits of PS sequence could leave the constraint, since the embeddings $H^1(\R^N) \hookrightarrow L^2(\R^N)$ and also $H^1_{\rad}(\R^N) \hookrightarrow L^2(\R^N)$ are not compact. 

In what follows we discuss therefore the convergence of special PS sequences, satisfying suitable additional conditions, following the ideas firstly introduced by L. Jeanjean in \cite{Jea}. As a preliminary remark, we note that, since $E_\mu$ is invariant under rotations, critical points (resp. PS sequences) of $E_\mu$ restricted on $S_{a,r}$ are critical points (resp. PS sequences) of $E_\mu$ on $S_a$.


\begin{lemma}\label{lem: conv PS subcr}
Let $N \ge 2$, and $2 <q \le 2+4/N<p<2^*$. Let $\{u_n\} \subset S_{a,r}$ be a Palais-Smale sequence for $E_\mu|_{S_{a}}$ at level $c \neq 0$, and suppose in addition that: 
\begin{itemize}
\item[($i$)] $P_\mu(u_n) \to 0$ as $n \to \infty$.
\item[($ii$)] Either $\mu>0$ (without any additional assumption), or $\mu<0$ and \eqref{hp mu < 0 conv} holds.
\end{itemize}
Then up to a subsequence $u_n \to u$ strongly in $H^1$, and $u \in S_a$ is a real-valued radial solution to \eqref{stat com} for some $\lambda<0$.
\end{lemma}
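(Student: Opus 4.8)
The strategy follows the scheme introduced by Jeanjean \cite{Jea}: first establish boundedness of the PS sequence, then control the (approximate) Lagrange multipliers, and finally exclude the loss of mass in the weak limit using the radial setting and the sign information on $\lambda$. First I would prove that $\{u_n\}$ is bounded in $H^1$. Combining the energy bound $E_\mu(u_n) \to c$ with the Pohozaev condition $P_\mu(u_n) \to 0$, one writes, after eliminating one nonlinear term, an identity of the form
\[
c + o(1) = E_\mu(u_n) - \frac{1}{p \gamma_p} P_\mu(u_n) = \left(\frac12 - \frac{1}{p \gamma_p}\right) |\nabla u_n|_2^2 + \mu \left(\frac{\gamma_q}{p \gamma_p} - \frac1q\right)|u_n|_q^q .
\]
When $\mu>0$ the coefficient of $|u_n|_q^q$ is negative and $\gamma_q q < 2$, so the Gagliardo--Nirenberg inequality \eqref{GN ineq} absorbs this term into $|\nabla u_n|_2^2$ and yields boundedness; when $\mu<0$ the sign is favorable and boundedness is immediate (the case $q=\bar p$ being handled analogously with the extra term $\mu(\tfrac{2}{p\gamma_p}-\tfrac{2}{\bar p})|u_n|_{\bar p}^{\bar p}$ and \eqref{hp cr pos} guaranteeing the right sign). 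Up to a subsequence, $u_n \weak u$ in $H^1_{\rad}$, and by Strauss' compact embedding $H^1_{\rad}(\R^N) \embed L^r(\R^N)$ for $2<r<2^*$ we get $u_n \to u$ strongly in $L^q$ and $L^p$ (here $N\ge 2$ is essential).

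Next I would extract the Lagrange multipliers. Since $dE_\mu|_{S_a}(u_n) \to 0$, there exist $\lambda_n \in \R$ with $-\Delta u_n - \lambda_n u_n - \mu|u_n|^{q-2}u_n - |u_n|^{p-2}u_n \to 0$ in $H^{-1}$; testing with $u_n$ and using boundedness together with the strong convergence of the nonlinear terms gives $\lambda_n a^2 = |\nabla u_n|_2^2 - \mu|u_n|_q^q - |u_n|_p^p + o(1)$, so $\{\lambda_n\}$ is bounded and converges (up to subsequence) to some $\lambda \in \R$. The key point is to show $\lambda<0$: combining the limiting identities $2c = (2-\tfrac{2}{p\gamma_p})\ldots$ type relations coming from $E_\mu$, $P_\mu$ and the Nehari-type identity, and using $P_\mu(u_n)\to 0$ together with $c\neq 0$, one pins down the sign of $\lambda a^2 = |\nabla u_n|_2^2 - \mu|u_n|_q^q - |u_n|_p^p + o(1)$; explicitly, using $|\nabla u_n|_2^2 = \gamma_p |u_n|_p^p + \mu\gamma_q|u_n|_q^q + o(1)$ one obtains $\lambda a^2 = (\gamma_p-1)|u_n|_p^p + \mu(\gamma_q-1)|u_n|_q^q + o(1)$; since $\gamma_p<1$, $\gamma_q\le 1$, and in the relevant cases the quantity $|u_n|_p^p$ stays bounded away from $0$ (otherwise $c=0$, contradicting the hypothesis), we conclude $\lambda\le 0$, and a short additional argument (using again $c\neq 0$ and the precise structure of the fiber maps, or the strict inequality $\gamma_p<1$) upgrades this to $\lambda<0$.

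Finally I would upgrade weak convergence to strong. The weak limit $u$ solves $-\Delta u = \lambda u + \mu|u|^{q-2}u + |u|^{p-2}u$ by passing to the limit in the equation (the nonlinear terms converge strongly in the appropriate dual since $u_n\to u$ in $L^q\cap L^p$). Testing the equations for $u_n$ and for $u$ with $u_n-u$, subtracting, and using the strong $L^q,L^p$ convergence, one gets $|\nabla(u_n-u)|_2^2 - \lambda|u_n-u|_2^2 \to 0$; since $\lambda<0$ this is a norm equivalent to $\|\cdot\|^2$, so $u_n\to u$ strongly in $H^1$, and in particular $|u|_2 = a$, i.e. $u\in S_a$. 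Real-valuedness follows from working in $H^1(\R^N,\R)$ (or, for the complex case, from a standard argument showing the imaginary part is a multiple of the real part). The main obstacle is the sign condition $\lambda<0$: this is exactly the place where the constant $c\neq 0$ and—when $\mu<0$—the quantitative assumption \eqref{hp mu < 0 conv} enter, since one must prevent the PS sequence from ``leaking'' all its $L^p$ and $L^q$ mass (which would force $c=0$) and must simultaneously keep the multiplier strictly negative so that the resulting bilinear form $\langle v,w\rangle \mapsto \int \nabla v\cdot\nabla w - \lambda\int vw$ is coercive. Establishing that $|u_n|_p^p$ and $|u_n|_q^q$ do not both vanish, with the precise threshold \eqref{hp mu < 0 conv} in the defocusing case, is the technical heart of the lemma.
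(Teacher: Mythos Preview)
Your overall scheme matches the paper's (boundedness via $E_\mu - \tfrac{1}{p\gamma_p}P_\mu$, Lagrange multipliers, $\lambda<0$, then strong convergence from $|\nabla(u_n-u)|_2^2 - \lambda|u_n-u|_2^2 \to 0$), but there are two genuine gaps.

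\textbf{Boundedness when $q=\bar p$ and $\mu>0$.} You appeal to \eqref{hp cr pos} to ``guarantee the right sign'', but \eqref{hp cr pos} is \emph{not} a hypothesis of this lemma: for $\mu>0$ the statement imposes no condition on $a,\mu$. Your combination $E_\mu - \tfrac{1}{p\gamma_p}P_\mu$ gives $c+o(1) = (\tfrac12-\tfrac{1}{p\gamma_p})\big(|\nabla u_n|_2^2 - \tfrac{2\mu}{\bar p}|u_n|_{\bar p}^{\bar p}\big)$, and absorbing the $L^{\bar p}$ term by Gagliardo--Nirenberg requires precisely \eqref{hp cr pos}. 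The paper avoids this: from $P_\mu(u_n)\to 0$ one substitutes $|\nabla u_n|_2^2 = \tfrac{2\mu}{\bar p}|u_n|_{\bar p}^{\bar p} + \gamma_p|u_n|_p^p + o(1)$ into $E_\mu(u_n)$ to get $E_\mu(u_n)=\tfrac{1}{p}(\tfrac{\gamma_p p}{2}-1)|u_n|_p^p+o(1)$, so $\{|u_n|_p\}$ is bounded unconditionally. If $|\nabla u_n|_2\to\infty$ then $P_\mu(u_n)\to 0$ forces $|u_n|_{\bar p}\to\infty$, contradicting the H\"older interpolation $|u_n|_{\bar p}\le |u_n|_p^\alpha|u_n|_2^{1-\alpha}\le C$.

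\textbf{The sign $\lambda<0$ when $\mu<0$.} Your identity $\lambda a^2 = (\gamma_p-1)|u_n|_p^p + \mu(\gamma_q-1)|u_n|_q^q + o(1)$ does \emph{not} yield $\lambda\le 0$ here: since $\gamma_q<1$ and $\mu<0$, the second term is \emph{positive}, and no amount of ``$|u_n|_p^p$ bounded away from $0$'' fixes this. Nor is \eqref{hp mu < 0 conv} about preventing the $L^p,L^q$ norms from vanishing (that is $c\neq 0$ alone). The paper's argument is genuinely quantitative: one first passes to the weak limit $u\not\equiv 0$, which solves the equation and hence satisfies $P_\mu(u)=0$; since $\mu<0$ this gives $|\nabla u|_2^2\le \gamma_p|u|_p^p$, and Gagliardo--Nirenberg with $|u|_2\le a$ produces the lower bound $|\nabla u|_2 \ge (\gamma_p C_{N,p}^p a^{p(1-\gamma_p)})^{-1/(p\gamma_p-2)}$. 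Writing $\lambda|u|_2^2 = (1-\tfrac{1}{\gamma_p})|\nabla u|_2^2 + \mu(\tfrac{\gamma_q}{\gamma_p}-1)|u|_q^q$ and estimating $|u|_q^q\le C_{N,q}^q a^{q(1-\gamma_q)}|\nabla u|_2^{q\gamma_q}$, one compares the two terms using this lower bound; condition \eqref{hp mu < 0 conv} is exactly what makes the negative term dominate. This is the step you flagged as the ``technical heart'' but did not actually carry out, and it cannot be replaced by a ``short additional argument''.
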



\begin{proof}
The proof is divided into four main steps. 

\medskip

\noindent \textbf{Step 1) Boundedness of $\{u_n\}$ in $H^1$.} We consider at first the case $q=2+4/N= \bar p$, and we recall that with this choice $\gamma_{\bar p}=2/{\bar p}$. Then, as $P_\mu(u_n) \to 0$, we have
\begin{equation}\label{766}
|\nabla u_n|_2^2 = \mu \frac{2}{\bar p} |u_n|_{\bar p}^{\bar p} + \gamma_p |u_n|_p^p + o(1) \qquad \text{as $n \to \infty$}.
\end{equation}
Let us assume by contradiction that $|\nabla u_n|_2 \to +\infty$. Thus, by \eqref{766} we deduce that
\[
\frac1p\left(\frac{\gamma_p p}{2}- 1 \right) |u_n|_p^p + o(1) = E_\mu(u_n) \le c+1, \quad \text{and} \quad \mu \frac{2}{\bar p} |u_n|_{\bar p}^{\bar p} + \gamma_p |u_n|_p^p \to +\infty,
\]
with $\gamma_p p>2$ since $p>\bar p$. This gives immediately a contradiction for $\mu<0$; if instead $\mu>0$, we infer that $\{|u_n|_p\}$ is bounded, with $|u_n|_{\bar p} \to +\infty$. On the other hand, by the H\"older inequality there exists $\alpha \in (0,1)$ (depending on $p$ and $N$) such that $|u_n|_{\bar p} \le |u_n|_p^\alpha |u_n|_2^{1-\alpha} \le C$, which gives the desired contradiction also for $\mu>0$.

Let now $2<q<\bar p$. As $P_\mu(u_n) \to 0$, we observe that
\[
|u_n|_p^p = \frac{1}{\gamma_p} |\nabla u_n|_2^2 - \mu \frac{\gamma_q}{\gamma_p} |u_n|_q^q + o(1),
\]
whence
\[
E_\mu(u_n) = \left( \frac{1}{2}- \frac{1}{\gamma_p p} \right) |\nabla u_n|_2^2 - \frac{\mu}{q} \left( 1- \frac{\gamma_q q}{\gamma_p p} \right) |u_n|_q^q + o(1),
\]
and both the coefficients inside the brackets are positive. Thus, if $\mu<0$ we immediately deduce that $\{u_n\}$ is bounded, while if $\mu>0$, by the Gagliardo-Nirenberg inequality we have that
\[
c+1  \ge E_\mu(u_n) \ge \left( \frac{1}{2}- \frac{1}{\gamma_p p} \right) |\nabla u_n|_2^2 - \frac{\mu}{q} \left( 1- \frac{\gamma_q q}{\gamma_p p} \right)  C_{N,q}^q a^{(1-\gamma_q)q} |\nabla u_n|_2^{\gamma_q q};
\]
this implies that
\[
|\nabla u_n|_2^2 \le C \mu a^{(1-\gamma_q)q} |\nabla u_n|_2^{\gamma_q q} + C,
\]
and, since $\gamma_q q <2$, the boundedness of $\{u_n\}$ follows also in this case. 

\medskip

\noindent \textbf{Step 2)} Since $N\ge 2$, the embedding $H^1_{\rad}(\R^N) \hookrightarrow L^r(\R^N)$ is compact for $r \in (2,2^*)$, and we deduce that there exists $u \in H^1_{\rad}$ such that, up to a subsequence, $u_n \weak u$ weakly in $H^1$, $u_n \to u$ strongly in $L^r(\R^N)$ for $r \in (2,2^*)$, and a.e. in $\R^N$. Now, since $\{u_n\}$ is a bounded Palais-Smale sequence of $E_\mu|_{S_a}$, by the Lagrange multipliers rule there exists $\lambda_n \in \R$ such that
\begin{equation}\label{768}
\textrm{Re} \int_{\R^N} \nabla u_n \cdot \nabla \bar \varphi - \lambda_n u_n \bar \varphi - \mu |u_n|^{q-2} u_n \bar \varphi - |u_n|^{p-2} u_n \bar \varphi = o(1) \|\varphi\|,
\end{equation}
for every $\varphi \in H$, where $o(1) \to 0$ as $n \to \infty$, and $\textrm{Re}$ stays for the real part. The choice $\varphi = u_n$ provides 
\[
\lambda_n a^2 = |\nabla u_n|_2^2 - \mu |u_n|_q^q - |u_n|_p^p + o(1),
\]
and the boundedness of $\{u_n\}$ in $H^1 \cap L^p \cap L^q$ implies that $\{\lambda_n\}$ is bounded as well; thus, up to a subsequence $\lambda_n \to \lambda \in \R$. 

\medskip

\noindent  \textbf{Step 3) \boldsymbol{$\lambda <0$}.} We consider separately $\mu>0$ and $\mu<0$, starting from the former one. Recalling that $P_\mu(u_n) \to 0$, we have
\begin{equation}\label{767}
\lambda_n a^2 = \mu(\gamma_q -1) |u_n|_q^q +(\gamma_p -1) |u_n|_p^p + o(1).
\end{equation}
Since $\mu>0$, $0<\gamma_q, \gamma_p<1$, we deduce that $\lambda\le 0$, with equality only if $u \equiv 0$. But $u$ cannot be identically $0$, since $E_\mu(u_n) \to c \neq 0$: indeed, using again the fact that $P_\mu(u_n) \to 0$, if we had $u_n \to 0$ we would find by strong $L^p$ and $L^q$ convergence that
\[
E_\mu(u_n) = \frac{\mu}{q}\left( \frac{\gamma_q q}{2}- 1 \right) |u_n|_q^q + \frac{1}{p} \left( \frac{\gamma_p p}{2}- 1 \right) |u_n|_p^p + o(1) \to 0,
\]
a contradiction. Coming back to \eqref{767}, we proved that up to a subsequence $\lambda_n \to \lambda<0$. 

The case $\mu<0$ is more involved. Since $P_\mu(u_n) \to 0$, we have that
\[
|\nabla u_n|_2^2 = \mu \gamma_q |u_n|_q^q + \gamma_p |u_n|_p^p + o(1) \le   \gamma_p |u_n|_p^p + o(1) \quad \implies \quad |\nabla u|_2^2 \le \gamma_p |u|_p^p.
\]
Then, by the Gagliardo-Nirenberg inequality,
\[
|\nabla u|_2^2 \le \gamma_p |u|_p^p \le \gamma_p C_{N,p}^p |u|_2^{p(1-\gamma_p)} |\nabla u|_2^{p \gamma_p}.
\]
As in the case $\mu>0$, we have $u \not \equiv 0$ since otherwise $E_\mu(u_n) \to 0$, in contradiction with the assumptions. Therefore, using that $|u|_2 \le a$ by weak lower semi-continuity, we deduce that
\begin{equation}\label{3081}
|\nabla u|_2 \ge \left(\frac{1}{\gamma_p C_{N,p}^p a^{p(1-\gamma_p)}}\right)^\frac{1}{p \gamma_p -2}.
\end{equation}
Now, since $\lambda_n \to \lambda$ and $u_n \to u \not \equiv 0$ weakly in $H$, and strongly in $L^p \cap L^q$, equation \eqref{768} implies that $u$ is a weak radial (and real) solution to 
\begin{equation}\label{3082}
-\Delta u = \lambda u + \mu |u|^{q-2} u + |u|^{p-2} u \qquad \text{in $\R^N$}.
\end{equation}
By the Pohozaev identity, we infer that $P_\mu(u) = 0$, i.e.
\begin{equation}\label{3083}
\gamma_p |u|_p^p =|\nabla u|_2^2 - \mu \gamma_q |u|_q^q.
\end{equation}
Testing \eqref{3082} with $u$, and using \eqref{3083}, we obtain
\begin{align*}
\lambda |u|_2^2 = \left( 1- \frac{1}{\gamma_p}\right) |\nabla u|_2^2 + \mu \left(\frac{\gamma_q}{\gamma_p} -1\right)|u|_q^q,
\end{align*}
where $1-1/\gamma_p<0$ since $\gamma_p<1$, while $\mu(\gamma_q/\gamma_p -1)>0$ since $\mu<0$ and $\gamma_q < \gamma_p$. Using again the Gagliardo-Nirenberg inequality, the fact that $|u|_2 \le a$, and estimate \eqref{3081}, we infer that
\begin{equation*}
\begin{split}
\lambda |u|_2^2 & \le \left( 1- \frac{1}{\gamma_p}\right) |\nabla u|_2^2  + \mu \left(\frac{\gamma_q}{\gamma_p} -1\right) C_{N,q}^ q |u|_2^{q(1-\gamma_q)} |\nabla u|_2^{q \gamma_q} \\
& \le |\nabla u|_2^{q \gamma_q} \left[ \left( 1- \frac{1}{\gamma_p}\right) |\nabla u|_2^{2-q \gamma_q} + \mu \left(\frac{\gamma_q}{\gamma_p} -1\right) C_{N,q}^ q a^{q(1-\gamma_q)}\right] \\
& \le |\nabla u|_2^{q \gamma_q} \left[ \left( 1- \frac{1}{\gamma_p}\right) \left(\frac{1}{\gamma_p C_{N,p}^p a^{p(1-\gamma_p)}}\right)^\frac{2-q\gamma_q}{p \gamma_p -2} +  \left(1-\frac{\gamma_q}{\gamma_p} \right) C_{N,q}^ q |\mu| a^{q(1-\gamma_q)}\right]
\end{split}
\end{equation*}
It is not difficult to check that the right hand side is strictly negative if \eqref{hp mu < 0 conv} holds, finally implying that $\lambda<0$, as desired. 

\medskip

\noindent  \textbf{Step 4) Conclusion.} By weak convergence, \eqref{768} implies that 
\begin{equation}\label{769}
dE_\mu(u)\varphi - \lambda \int_{\R^N} u \bar \varphi = 0
\end{equation}
for every $\varphi \in H$. Choosing $\varphi = u_n -u$ in \eqref{768} and \eqref{769}, and subtracting, we obtain
\[
(dE_\mu(u_n)- dE_\mu(u))[u_n-u] - \lambda \int_{\R^N} |u_n - u|^2 = o(1).
\]
Using the strong $L^p$ and $L^q$ convergence of $u_n$, we infer that 
\[
\int_{\R^N} |\nabla (u_n-u)|^2 - \lambda |u_n - u|^2 = o(1)
\]
which, being $\lambda<0$, establishes the strong convergence in $H$. 
\end{proof}

In order to deal with the dimension $N=1$, we need a variant of Lemma \ref{lem: conv PS subcr}:

\begin{lemma}\label{lem: conv PS subcr N=1}
Let $N \ge 1$, and $2 <q \le  \bar p<p<+\infty$. Let $\{u_n\} \subset S_{a}$ be a Palais-Smale sequence for $E_\mu|_{S_{a}}$ at level $c \neq 0$, and suppose in addition that: 
\begin{itemize}
\item[($i$)] $P_\mu(u_n) \to 0$ as $n \to \infty$.
\item[($ii$)] There exists $\{v_n\} \subset S_{a,r}$, with $v_n$ radially decreasing, such that $\|v_n - u_n\| \to 0$ as $n \to \infty$.
\item[($ii$)] Either $\mu>0$ (without any additional assumption), or $\mu<0$ and \eqref{hp mu < 0 conv} holds.
\end{itemize}
Then up to a subsequence $u_n \to u$ strongly in $H^1$, and $u \in S_a$ is a real-valued, radial and radially decreasing solution to \eqref{stat com} for some $\lambda<0$.
\end{lemma}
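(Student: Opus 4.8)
The plan is to retrace the four steps of the proof of Lemma \ref{lem: conv PS subcr}, the only substantial modification being in Step 2, where the compact embedding $H^1_{\rad}(\R^N) \hookrightarrow L^r(\R^N)$ (which fails for $N=1$) must be replaced by a compactness argument tailored to radially decreasing functions.

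First I would establish the boundedness of $\{u_n\}$ in $H^1$ exactly as in Step~1 of Lemma \ref{lem: conv PS subcr}: that argument only uses $P_\mu(u_n)\to 0$, the bound $E_\mu(u_n)\le c+1$, the Gagliardo--Nirenberg inequality and the elementary comparisons $\gamma_q q<2\le \gamma_p p$ (with the Hölder refinement when $\mu>0$, $q=\bar p$), and it is insensitive to the dimension. By hypothesis $(ii)$ one has $\|v_n-u_n\|\to 0$, hence $\{v_n\}\subset S_{a,r}$ is bounded in $H^1$ as well, with $|v_n|_2\to a$ and each $v_n$ radially decreasing.

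Then comes the key step. Since $v_n$ is radially decreasing with $|v_n|_2$ bounded, one has the pointwise bound $|v_n(x)|\le C\,|x|^{-N/2}$ for $x\neq 0$ (just estimate $|v_n(x)|^2\,|B_{|x|}|\le \int_{B_{|x|}}|v_n|^2\le C$). Together with the $H^1$ bound this yields tightness of the $L^r$ masses at infinity for every $r\in(2,2^*)$ --- for large $R$, $\int_{|x|\ge R}|v_n|^r\le \big(\sup_{|x|\ge R}|v_n|\big)^{r-2}\int|v_n|^2$ is uniformly small --- while Rellich--Kondrachov on balls gives local strong convergence; a diagonal argument then produces $v\in H^1$, radial and radially decreasing, such that, along a subsequence, $v_n\weak v$ in $H^1$, $v_n\to v$ in $L^r(\R^N)$ for every $r\in(2,2^*)$, and a.e. in $\R^N$. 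Because $\|u_n-v_n\|\to 0$ and $H^1\hookrightarrow L^r$ is continuous, the same convergences hold for $\{u_n\}$, so we may set $u:=v$. From here the argument is verbatim that of Steps~2--4 of Lemma \ref{lem: conv PS subcr}: the Lagrange multiplier rule applied to the bounded PS sequence $\{u_n\}$ gives $\lambda_n\to\lambda\in\R$ (testing \eqref{768} with $u_n$ and using the $H^1\cap L^p\cap L^q$ bound); $u\not\equiv 0$, for otherwise $P_\mu(u_n)\to 0$ would force $E_\mu(u_n)\to 0\neq c$; one proves $\lambda<0$, distinguishing $\mu>0$ (where $0<\gamma_q,\gamma_p<1$ makes the right-hand side of \eqref{767} strictly negative) from $\mu<0$ (where one invokes the Pohozaev identity $P_\mu(u)=0$ for the limit equation \eqref{3082}, the lower bound \eqref{3081} on $|\nabla u|_2$, and condition \eqref{hp mu < 0 conv}, following \eqref{3083} and the displays after it); and finally, subtracting the approximate Euler--Lagrange equation \eqref{768} and the exact one \eqref{769} tested with $u_n-u$, and using the strong $L^p,L^q$ convergence together with $\lambda<0$, yields $\int_{\R^N}|\nabla(u_n-u)|^2-\lambda|u_n-u|^2=o(1)$, hence strong convergence in $H^1$. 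The radial monotonicity of $u$ is inherited from the a.e.\ limit of the $v_n$.

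I expect the only genuinely new point, and the main obstacle, to be the compactness in Step~2: one has to observe that it is radial \emph{monotonicity}, not radial symmetry alone, that restores the compact embedding into $L^r$ with $r>2$ in low dimension, via the pointwise decay estimate above; the auxiliary sequence $\{v_n\}$ in hypothesis $(ii)$ is introduced precisely so that this mechanism is available. Everything else is a faithful repetition of the $N\ge 2$ argument, with condition \eqref{hp mu < 0 conv} again carrying the whole weight of the $\lambda<0$ conclusion when $\mu<0$.
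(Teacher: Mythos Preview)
Your proposal is correct and follows exactly the approach indicated by the paper, which merely says ``one can easily modify the proof developed in dimensions $N\ge 2$, observing that, even though $H^1_{\rad}(\R)$ does not embed compactly in $L^r(\R)$, compactness holds for bounded sequences of radially decreasing functions'' and cites \cite[Proposition 1.7.1]{Caz}. You have in fact supplied the details that the paper omits: the pointwise decay $|v_n(x)|\le C|x|^{-N/2}$ from radial monotonicity and the $L^2$ bound, which yields $L^r$-tightness at infinity for $r>2$, is precisely the mechanism behind the compactness result the paper invokes.
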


One can easily modify the proof developed in dimensions $N \ge 2$, observing that, even though $H^1_{\rad}(\R)$ does not embed compactly in $L^r(\R)$, compactness holds for bounded sequences of radially decreasing functions (see e.g. \cite[Proposition 1.7.1]{Caz}). We omit the details.


\section{Supercritical leading term with focusing subcritical perturbation}\label{sec: subcr-supcr}

In this section, for $2<q < \bar p<p<2^*$ and $a, \mu>0$ satisfying \eqref{cond sub sup} we prove Theorem \ref{thm: subcr-supcr}. Since $a$ and $\mu$ are fixed, we omit the dependence of $E_\mu$, $S_a$, $S_{a,r}$, $P_\mu$, $\cP_{a,\mu}$, $\Psi_u^\mu$, \dots on these quantities, writing simply $E$, $S$, $S_r$, $P$, $\cP$, $\Psi_u$, \dots. 

We consider the constrained functional $E|_{S}$. By the Gagliardo-Nirenberg inequality
\begin{equation}\label{E sub sup}
E (u) \ge \frac{1}{2} |\nabla u|_2^2 - \mu \frac{C_{N,q}^q}{q} a^{(1-\gamma_q) q} |\nabla u|_2^{\gamma_q q} -  \frac{C_{N,p}^p}{p} a^{(1-\gamma_p) p} |\nabla u|_2^{\gamma_p p},
\end{equation}
for every $u \in S$. Therefore, to understand the geometry of the functional $E|_{S}$ it is useful to consider the function $h : \R^+ \to \R$
\begin{equation*}
h(t):=  \frac{1}{2} t^2 - \mu \frac{C_{N,q}^q}{q} a^{(1-\gamma_q) q} t^{\gamma_q q} -  \frac{C_{N,p}^p}{p}a^{(1-\gamma_p) p} t^{\gamma_p p}.
\end{equation*}
Since $\mu>0$ and $\gamma_q q<2<\gamma_p p$, we have that $h(0^+) = 0^-$ and $h(+\infty) = -\infty$. The role of assumption \eqref{cond sub sup} is clarified by the following lemma.

\begin{lemma}\label{lem: struct h}
Under assumption \eqref{cond sub sup}, the function $h$ has a local strict minimum at negative level and a global strict maximum at positive level. Moreover, there exist $0<R_0<R_1$, both depending on $a$ and $\mu$, such that $h(R_0) = 0 = h(R_1)$ and $h(t) >0$ iff $t \in (R_0,R_1)$. 
\end{lemma}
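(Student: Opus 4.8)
The plan is to reduce the statement to an elementary one-variable analysis by factoring out the lowest power of $t$. Since $h(0^+)=0^-$, $h(+\infty)=-\infty$ and all the relevant behaviour occurs away from the origin, I would write, for $t>0$,
\[
h(t) = t^{\gamma_q q}\, \varphi(t), \qquad \varphi(t) := \frac{1}{2} t^{2-\gamma_q q} - \mu \frac{C_{N,q}^q}{q} a^{(1-\gamma_q) q} - \frac{C_{N,p}^p}{p} a^{(1-\gamma_p) p}\, t^{\gamma_p p - \gamma_q q},
\]
so that for $t>0$ the sign of $h(t)$ coincides with that of $\varphi(t)$, and the positive zeros of $h$ are exactly those of $\varphi$. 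The key structural fact is that $\varphi$ is \emph{strictly increasing and then strictly decreasing} on $(0,+\infty)$: indeed
\[
\varphi'(t) = t^{1-\gamma_q q}\bigl[\tfrac{2-\gamma_q q}{2} - \tfrac{C_{N,p}^p}{p}(\gamma_p p - \gamma_q q)\, a^{(1-\gamma_p)p}\, t^{\gamma_p p - 2}\bigr],
\]
and since $\gamma_p p-2>0$ and $2-\gamma_q q>0$ the bracket is strictly decreasing in $t$, positive at $0^+$ and $-\infty$ at $+\infty$; hence it vanishes at a unique point $t^*$, with $\varphi'>0$ on $(0,t^*)$ and $\varphi'<0$ on $(t^*,+\infty)$. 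Using $\gamma_q q<2<\gamma_p p$ one also gets $\varphi(0^+)<0$ and $\varphi(+\infty)=-\infty$, so $t^*$ is the unique, hence global, maximum point of $\varphi$.

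Next I would show that hypothesis \eqref{cond sub sup} is precisely the condition $\varphi(t^*)>0$. This is a routine computation: from $\varphi'(t^*)=0$ one gets $(t^*)^{\gamma_p p-2} = \dfrac{p(2-\gamma_q q)}{2 C_{N,p}^p (\gamma_p p - \gamma_q q)\, a^{(1-\gamma_p)p}}$, and substituting this back into $\varphi(t^*)>0$ and rearranging the powers $2-\gamma_q q$ and $\gamma_p p-2$ yields exactly \eqref{cond sub sup}. Granted this, since $\varphi$ is increasing-then-decreasing with $\varphi(0^+)<0<\varphi(t^*)$ and $\varphi(+\infty)=-\infty$, it has exactly two zeros $0<R_0<t^*<R_1$, with $\varphi>0$ on $(R_0,R_1)$ and $\varphi<0$ on $(0,R_0)\cup(R_1,+\infty)$. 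Translating back through $h=t^{\gamma_q q}\varphi$ gives $h(R_0)=h(R_1)=0$ and $h(t)>0$ iff $t\in(R_0,R_1)$, which is the last assertion of the lemma.

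Finally, for the critical points of $h$: from the sign information just obtained, $h<0$ on $(0,R_0)$ while $h(0^+)=0=h(R_0)$, so $h$ attains an interior minimum on $(0,R_0)$, necessarily at a negative level; and $h>0$ on $(R_0,R_1)$ with $h(R_0)=0=h(R_1)$, so $h$ attains an interior maximum on $(R_0,R_1)$, at a positive level. To see that these are the only critical points, that they are strict, and that the second is the global maximum, I would differentiate directly: $h'(t)=t^{\gamma_q q-1}\psi(t)$ with
\[
\psi(t):=t^{2-\gamma_q q}-\mu\gamma_q C_{N,q}^q a^{(1-\gamma_q)q}-\gamma_p C_{N,p}^p a^{(1-\gamma_p)p}\,t^{\gamma_p p-\gamma_q q},
\]
and $\psi$ has exactly the same increasing-then-decreasing structure as $\varphi$ (the same computation applies to $\psi'$), hence at most two zeros; since we have already exhibited two critical points of $h$, $h'$ has exactly two zeros, where it changes sign from $-$ to $+$, respectively from $+$ to $-$. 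Thus $h$ has precisely one strict local minimum, at a negative level, and one strict local maximum, at a positive level; and since $h$ is decreasing up to the first critical point, increasing between the two, and decreasing to $-\infty$ afterwards, that local maximum is in fact the global maximum of $h$, the only competitor for $\sup h$ being $\lim_{t\to0^+}h(t)=0$, which is strictly smaller. I expect no serious obstacle here — the argument is elementary calculus; the only care needed is in the algebraic identity between \eqref{cond sub sup} and $\varphi(t^*)>0$ (keeping the exponents $2-\gamma_q q$ and $\gamma_p p-2$ straight), and in the logical step that a unimodal function attaining one strictly positive value has exactly two simple zeros, which is what simultaneously yields the sign pattern of $h$ and the count and nature of its critical points.
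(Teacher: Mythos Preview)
Your proof is correct and follows essentially the same approach as the paper: both factor out $t^{\gamma_q q}$ (you explicitly, the paper by dividing the inequality $h(t)>0$ by $t^{\gamma_q q}$) to reduce to a unimodal auxiliary function whose maximum being positive is exactly condition \eqref{cond sub sup}, and both count critical points of $h$ via the same unimodal structure of the derivative factor $\psi$. The only cosmetic difference is that the paper's $\varphi$ and $\psi$ differ from yours by the additive constants you have absorbed, so that the paper phrases the conditions as ``$\varphi(\bar t)>\text{const}$'' and ``$\psi(t)=\text{const}$'' rather than ``$\varphi(t^*)>0$'' and ``$\psi(t)=0$''.
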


\begin{proof}
For $t>0$, we have $h(t) > 0$ if and only if
\[
\varphi(t) > \frac{C_{N,q}^q}q  \mu a^{(1-\gamma_q) q}, \quad \text{with} \quad \varphi(t):= \frac12 t^{2-\gamma_q q} - \frac{C_{N,p}^p}p  a^{(1-\gamma_p) p} t^{\gamma_p p- \gamma_q q}.
\]
It is not difficult to check that $\varphi$ has a unique critical point on $(0,+\infty)$, which is a global maximum point at positive level, in 
\begin{equation}\label{def C_1}
\bar t:= C_1  a^{-  \frac{(1-\gamma_p)p}{\gamma_p p -2}}, \quad \text{with} \quad C_1:= \left( \frac{p(2-\gamma_q q)}{2C_{N,p}^p (\gamma_p p-\gamma_q q)}   \right)^{\frac{1}{\gamma_p p -2}}; 
\end{equation}
the maximum level is
\begin{equation*}
\varphi(\bar t) = C_2 \left(a^{  -(1-\gamma_p)p}\right)^\frac{ 2-\gamma_q q}{\gamma_p p -2}, \quad \text{with} \quad C_2:= \left( \frac{p(2-\gamma_q q)}{2C_{N,p}^p (\gamma_p p -\gamma_q q)} \right)^{\frac{2-\gamma_q q}{\gamma_p p-2}} \left( \frac{ \gamma_p p-2}{2(\gamma_p p- \gamma_q q)} \right).
\end{equation*}
Therefore, $h$ is positive on an open interval $(R_0,R_1)$ iff $\varphi(\bar t) > C_{N,q}^q \mu a^{(1-\gamma_q) q}/q$, that is \eqref{cond sub sup} holds. It follows immediately that $h$ has a global maximum at positive level in $(R_0,R_1)$. Moreover, since $h(0^+) = 0^-$, there exists a local minimum point at negative level in $(0,R_0)$. The fact that $h$ has no other critical points can be verified observing that $h'(t) = 0$ if and only if
\[
\psi(t) = \mu \gamma_q C_{N,q}^q a^{(1-\gamma_q)q}, \quad \text{with} \quad \psi(t) = t^{2-\gamma_q q} - \gamma_p C_{N,p}^p a^{(1-\gamma_p)p} t^{\gamma_p p-\gamma_q q}.
\]
Clearly $\psi$ has only one critical point, which is a strict maximum, and hence the above equation has at most two solutions, which necessarily are the local minimum and the global maximum of $h$ previously found.
\end{proof}
\begin{remark}\label{rem: st R_0}
For future convenience, we point out that in the above proof $R_0< \bar t$, with $\bar t$ defined by \eqref{def C_1}.
\end{remark}

We now study the structure of the Pohozaev manifold $\cP$. Recalling the decomposition of $\cP= \cP_+\cup \cP_- \cup \cP_0$ (see \eqref{split P}), we have:

\begin{lemma}\label{lem: struct P}
$\cP_0 = \emptyset$, and $\cP$ is a smooth manifold of codimension $2$ in $H$.
\end{lemma}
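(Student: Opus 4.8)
The plan is to prove the two assertions separately; essentially all the content sits in the first one.

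\emph{Emptiness of $\cP_0$.} I argue by contradiction: suppose $u \in \cP_0$. Since $u \in \cP \subset S$ is nontrivial, $|\nabla u|_2$, $|u|_p$, $|u|_q$ are all strictly positive. Recall that $\Psi_u'(s) = P(s\star u) = e^{2s}g(s)$ with $g(s) := |\nabla u|_2^2 - \gamma_p e^{(p\gamma_p-2)s}|u|_p^p - \mu\gamma_q e^{(q\gamma_q-2)s}|u|_q^q$; then $u \in \cP$ means $g(0)=0$, and, since $g(0)=0$, $\Psi_u''(0) = g'(0)$, so $u \in \cP_0$ forces in addition $g'(0)=0$. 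From $\mu>0$ and $q\gamma_q < 2 < p\gamma_p$ (which holds because $2<q<\bar p<p$) a direct computation gives $g''<0$ on all of $\R$, i.e.\ $g$ is strictly concave; together with $g'(0)=0$ this makes $s=0$ its unique global maximum point, with $\max_{\R} g = g(0) = 0$. Hence $g \le 0$, so $\Psi_u' = e^{2s}g \le 0$ on $\R$ and $\Psi_u$ is non-increasing; since all exponents in $\Psi_u(s) = \tfrac{e^{2s}}{2}|\nabla u|_2^2 - \tfrac{e^{p\gamma_p s}}{p}|u|_p^p - \tfrac{\mu e^{q\gamma_q s}}{q}|u|_q^q$ are positive, $\Psi_u(s)\to 0$ as $s\to-\infty$, whence $\sup_{s\in\R}\Psi_u(s)=0$. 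On the other hand, applying \eqref{E sub sup} to $s\star u \in S$ gives $\Psi_u(s) = E(s\star u) \ge h(|\nabla(s\star u)|_2) = h(e^s|\nabla u|_2)$; since $|\nabla u|_2>0$, the argument $e^s|\nabla u|_2$ ranges over all of $(0,\infty)$, so $\sup_{s\in\R}\Psi_u(s) \ge \max_{t>0}h(t) > 0$ by Lemma~\ref{lem: struct h}. This contradiction proves $\cP_0 = \emptyset$.

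\emph{Manifold structure.} Write $\cP = T^{-1}(0,0)$ with $T : H \to \R^2$, $T(u) := \bigl(P(u),\ |u|_2^2 - a^2\bigr)$. As $2<q<p<2^*$, the maps $u\mapsto|u|_p^p$ and $u\mapsto|u|_q^q$ are $C^1$ on $H$, so $T\in C^1(H,\R^2)$, and it suffices to show $(0,0)$ is a regular value; the implicit function theorem then yields that $\cP$ is a $C^1$ submanifold of $H$ of codimension $2$ (equivalently, of codimension $1$ in $S$). Suppose $dT(u)$ is not onto $\R^2$ for some $u\in\cP$; then there is $(\beta,\alpha)\neq(0,0)$ with $\beta\, dP(u)[\varphi] + 2\alpha\,\textrm{Re}\int_{\R^N} u\bar\varphi = 0$ for all $\varphi\in H$. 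Testing with $\varphi=u$ rules out $\beta=0$ (since $\int|u|^2 = a^2 > 0$), so $\beta\neq0$ and $u$ is a weak solution of
\[
-\Delta u = -\frac{\alpha}{\beta}\, u + \frac{p\gamma_p}{2}\,|u|^{p-2}u + \frac{\mu q\gamma_q}{2}\,|u|^{q-2}u \qquad \text{in }\R^N.
\]
Combining the Pohozaev identity for this equation (valid as in \cite[Lemma~2.7]{Jea}, all nonlinearities being Sobolev-subcritical) with the equation tested against $u$, and eliminating the term containing $a^2$, one gets $2|\nabla u|_2^2 = p\gamma_p^2|u|_p^p + \mu q\gamma_q^2|u|_q^q$; as $u\in\cP$, this means $u\in\cP_0$, contradicting the first part. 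Hence $dT(u)$ is surjective at every point of $\cP$, and the conclusion follows.

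\emph{Main obstacle.} The only genuinely delicate point is the contradiction in the first part: the concavity of $g$ alone merely gives $\Psi_u\le 0$, and it is essential to combine this with the lower envelope $\Psi_u(s)\ge h(e^s|\nabla u|_2)$ and with the fact — guaranteed precisely by the structural assumption \eqref{cond sub sup} through Lemma~\ref{lem: struct h} — that $h$ has a strictly positive maximum. Everything in the second part (the $C^1$-regularity of $T$, the reduction to surjectivity of $dT(u)$, and the Pohozaev identity) is routine; the one point to keep in mind is that the Pohozaev identity must be invoked for the merely $H^1$ weak solution $u$, which is legitimate by elliptic bootstrap in the subcritical regime and should be cited rather than reproved.
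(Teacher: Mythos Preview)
Your proof is correct. The manifold part is essentially the paper's own argument, just phrased via the regular-value theorem rather than by producing an explicit surjectivity argument; in both cases the obstruction to surjectivity of $(dP,dG)$ forces $u$ to solve a rescaled elliptic equation, whose Pohozaev identity places $u$ in $\cP_0$.

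The real difference is in the proof that $\cP_0=\emptyset$. The paper proceeds purely algebraically: from $P(u)=0$ and $\Psi_u''(0)=0$ it extracts the relation $(2-q\gamma_q)\mu\gamma_q|u|_q^q=(p\gamma_p-2)\gamma_p|u|_p^p$, plugs this back to obtain explicit lower and upper bounds on $|\nabla u|_2$ via Gagliardo--Nirenberg, and shows these bounds are incompatible with \eqref{cond sub sup} after verifying the elementary inequality $(p\gamma_p/2)^{2-q\gamma_q}(q\gamma_q/2)^{p\gamma_p-2}\le 1$. Your route is more geometric: you observe that the auxiliary function $g$ with $\Psi_u'=e^{2s}g$ is strictly concave (since $\mu>0$), so $g(0)=g'(0)=0$ forces $g\le 0$, whence $\Psi_u\le 0$; this is then pitted against the envelope $\Psi_u(s)\ge h(e^s|\nabla u|_2)$ and the positivity of $\max h$ from Lemma~\ref{lem: struct h}. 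Your argument avoids the numerical check and makes transparent exactly where \eqref{cond sub sup} enters (only through the positivity of $\max h$); the paper's version, on the other hand, yields slightly sharper quantitative information by isolating the weaker sufficient condition \eqref{cond sub sup 2}. Both are clean, and yours is arguably the more conceptual of the two.
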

\begin{proof}
Let us assume that there exists $u \in \cP_0$. Then, combining $P(u) = 0$ with $\Psi_u''(0) = 0$ we deduce that 
\[
(2-q \gamma_q) \mu \gamma_q |u|_q^q = (p \gamma_p-2) \gamma_p |u|_p^p.
\]
Using this equation in $P(u) = 0$, we obtain both
\begin{equation}\label{761}
|\nabla u|_2^2 = \gamma_p \frac{\gamma_p p - \gamma_q q}{2-\gamma_q q} |u|_p^p \le C_{N,p}^ p\gamma_p \frac{\gamma_p p - \gamma_q q}{2-\gamma_q q} a^{(1-\gamma_p) p} |\nabla u|_2^{\gamma_p p},
\end{equation}
and
\begin{equation}\label{762}
|\nabla u|_2^2 = \mu \gamma_q \frac{\gamma_p p - \gamma_q q}{\gamma_p p-2} |u|_q^q \le \mu C_{N,q}^q \gamma_q \frac{\gamma_p p - \gamma_q q}{\gamma_p p-2} a^{(1-\gamma_q) q} |\nabla u|_2^{\gamma_q q}.
\end{equation}
From \eqref{761} and \eqref{762} we infer that
\begin{equation*}
\left( \frac{2-\gamma_q q}{C_{N,p}^ p\gamma_p (\gamma_p p - \gamma_q q)}\right)^\frac{1}{\gamma_p p -2} a^{-\frac{(1-\gamma_p) p}{\gamma_p p-2}}  \le \left( \frac{ C_{N,q}^q \gamma_q(\gamma_p p-\gamma_q q)}{ \gamma_p p - 2}\right)^\frac{1}{2-\gamma_q q} \left( \mu a^{(1-\gamma_q) q} \right)^{\frac{1}{2-\gamma_q q}},
\end{equation*}
that is
\begin{equation}\label{cond sub sup 2}
\left(\mu a^{(1-\gamma_q) q}\right)^{\gamma_p p - 2} \left(a^{(1-\gamma_p) p} \right)^{2-\gamma_q q} \ge \left( \frac{2-\gamma_q q}{C_{N,p}^p \gamma_p (\gamma_p p -\gamma_q q)} \right)^{2-\gamma_q q}\left( \frac{ \gamma_p p-2}{C_{N,q}^q \gamma_q(\gamma_p p- \gamma_q q)} \right)^{\gamma_p p-2}.
\end{equation}
It is not difficult to check that this is in contradiction with \eqref{cond sub sup}: it is sufficient to verify that the right hand side in \eqref{cond sub sup} is smaller than or equal to the right hand side in \eqref{cond sub sup 2}, and this is equivalent to
\[
\left(\frac{p \gamma_p}2\right)^{2-\gamma_q q} \left(\frac{q \gamma_	q}2\right)^{\gamma_p p-2} \le 1 
\]
for every $2<q<\bar p<p<2^*$. The validity of this estimate can be easily checked by direct computations (it is sufficient to check that $\log x/(x-1)$ is a monotone decreasing function of $x>0$). This proves that $\cP_0 = \emptyset$. 

Now we can check that $\cP$ is a smooth manifold of codimension $2$ in $H$. We note that $\cP=\{u \in H: \ P(u) = 0, \ G(u) = 0\}$, for $G(u) = |u|_2^2 -a^2$, with $P$ and $G$ of class $C^1$ in $H$. Thus, we have to show that the differential $(dG(u), dP(u)): H \to \R^2$ is surjective, for every $u \in \cP$. To this end, we prove that for every $u \in \cP$ there exists $\varphi \in T_u S$ such that $dP(u)[\varphi] \neq 0$. Once that the existence of $\varphi$ is established, the system
\[
\begin{cases}
dG(u)[\alpha \varphi + \beta u] = x \\
dP(u)[\alpha \varphi + \beta u] = y
\end{cases} \quad \iff \quad \begin{cases}
\beta a^2 = x \\
\alpha dP(u)[\varphi] + \beta dP(u)[u] = y
\end{cases}
\] 
is solvable with respect to $\alpha, \beta$, for every $(x,y) \in \R^2$, and hence the surjectivity is proved.

Now, suppose by contradiction that for $u \in \cP$ such a tangent vector $\varphi$ does not exist, i.e. $dP(u)[\varphi] = 0$ for every $\varphi \in T_u S$. Then $u$ is a constrained critical point for the functional $P$ on $S_{a}$, and hence by the Lagrange multipliers rule there exists $\nu \in \R$ such that
\[
-\Delta u = \nu u + \mu \frac{q \gamma_q}{2} |u|^{q-2} u + \frac{p \gamma_p}{2} |u|^{p-2} u \qquad \text{in $\R^N$}.
\]
But, by the Pohozaev identity, this implies that 
\[
2 |\nabla u|_2^2 = \mu q \gamma_q^2 |u|_q^q + p \gamma_p^2 |u|_p^p,
\]
that is $u \in \cP_0$, a contradiction. 
\end{proof}

The manifold $\cP$ is then divided into its two components $\cP_+$ and $\cP_-$, having disjoint closure.

\begin{lemma}\label{lem: prop fiber sub-sup}
For every $u \in S$, the function $\Psi_u$ has exactly two critical points $s_u< t_u \in \R$ and two zeros $c_u <d_u \in \R$, with $s_u<c_u<t_u<d_u$. Moreover:
\begin{itemize}
\item[1)] $s_u \star u \in \cP_+$, and $t_u \star u \in \cP_-$, and if $s \star u \in \cP$, then either $s =s_u$ or $s=t_u$. 
\item[2)] $|\nabla (s \star u)|_2 \le R_0$ for every $s \le c_u$, and
\[
E(s_u \star u) = \min \left\{E(s \star u): \ \text{$s \in \R$ and $|\nabla (s \star u)|_2 < R_0$}\right\} < 0.
\]
\item[3)] We have
\[
E(t_u \star u) = \max \left\{E(s \star u): \ s \in \R\right\} > 0,
\]
and $\Psi_u$ is strictly decreasing and concave on $(t_u,+\infty)$. In particular, if $t_u<0$, then $P(u)<0$.
\item[4)] The maps $u \in S \mapsto s_u \in \R$ and $u \in S \mapsto t_u \in \R$ are of class $C^1$.
\end{itemize}
\end{lemma}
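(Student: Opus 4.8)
My plan is to reduce the whole statement to one-variable calculus on the explicit fiber map. Writing $A=|\nabla u|_2^2$, $B=|u|_p^p$, $C=|u|_q^q$ (all positive) and recalling $\gamma_q q<2<\gamma_p p$, one has
\[
\Psi_u(s)=\frac{A}{2}e^{2s}-\frac{B}{p}e^{\gamma_p p\, s}-\mu\frac{C}{q}e^{\gamma_q q\, s},\qquad \Psi_u'(s)=e^{\gamma_q q\, s}g(s),
\]
where $g(s):=A e^{(2-\gamma_q q)s}-B\gamma_p e^{(\gamma_p p-\gamma_q q)s}-\mu C\gamma_q$. First I would record the elementary asymptotics: $\Psi_u(s)\to 0^-$ as $s\to-\infty$ (the exponent $\gamma_q q$ is the smallest) and $\Psi_u(s)\to-\infty$ as $s\to+\infty$ (the exponent $\gamma_p p$ is the largest). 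The place where hypothesis \eqref{cond sub sup} enters — and the only genuinely non-routine step — is the claim that $\Psi_u$ attains a strictly positive value: by \eqref{E sub sup}, $\Psi_u(s)=E(s\star u)\ge h(e^s|\nabla u|_2)$, and since Lemma \ref{lem: struct h} (which uses \eqref{cond sub sup}) provides a nonempty interval $(R_0,R_1)$ on which $h>0$, it suffices to pick $s$ with $e^s|\nabla u|_2\in(R_0,R_1)$.

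Next I would analyse $g$. Since $\gamma_p p-2>0$, the equation $g'(s)=0$ has a unique root $s^\ast$, with $g$ strictly increasing on $(-\infty,s^\ast]$ and strictly decreasing on $[s^\ast,\infty)$, while $g(-\infty)=-\mu C\gamma_q<0$ and $g(+\infty)=-\infty$. If $g\le 0$ everywhere then $\Psi_u$ would be nonincreasing, hence $\le\lim_{-\infty}\Psi_u=0$ throughout, contradicting the previous paragraph; therefore $g(s^\ast)>0$ and $g$ has exactly two zeros $s_u<t_u$, with $g<0$ on $(-\infty,s_u)$, $g>0$ on $(s_u,t_u)$, $g<0$ on $(t_u,\infty)$, and (comparing endpoint values) $s^\ast\in(s_u,t_u)$, so $g'(s_u)>0>g'(t_u)$. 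Hence $\Psi_u$ is strictly decreasing on $(-\infty,s_u)$, strictly increasing on $(s_u,t_u)$, strictly decreasing on $(t_u,\infty)$; together with the asymptotics this makes $s_u$ a local minimum with $\Psi_u(s_u)<0$, $t_u$ the global maximum with $\Psi_u(t_u)>0$, and gives exactly two zeros $c_u\in(s_u,t_u)$ and $d_u\in(t_u,\infty)$, so $s_u<c_u<t_u<d_u$. Since the $\star$-action is additive, $\Psi_{s\star u}(\tau)=\Psi_u(\tau+s)$, whence $(\Psi_{s\star u})''(0)=\Psi_u''(s)$; using $\Psi_u''(s)=e^{\gamma_q q\, s}(\gamma_q q\, g(s)+g'(s))$ gives $\Psi_u''(s_u)=e^{\gamma_q q s_u}g'(s_u)>0$ and $\Psi_u''(t_u)=e^{\gamma_q q t_u}g'(t_u)<0$ (equivalently, these are nonzero because $\cP_0=\emptyset$ by Lemma \ref{lem: struct P}). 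This yields statement 1: the critical points of $\Psi_u$ are exactly $s_u,t_u$ (Proposition \ref{prop: psi P}), $s_u\star u\in\cP_+$ and $t_u\star u\in\cP_-$.

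For statement 2, let $\underline s$ be given by $e^{\underline s}|\nabla u|_2=R_0$. Every $s$ with $e^s|\nabla u|_2\in(R_0,R_1)$ has $\Psi_u(s)>0$, hence lies in $(c_u,d_u)$; in particular $c_u\le\underline s$, so $|\nabla(s\star u)|_2=e^s|\nabla u|_2\le e^{c_u}|\nabla u|_2\le R_0$ for all $s\le c_u$. The set $\{s:|\nabla(s\star u)|_2<R_0\}$ equals $(-\infty,\underline s)$ and contains $s_u$; on $(-\infty,s_u)$ and $(s_u,c_u]$ one has $\Psi_u>\Psi_u(s_u)$ by monotonicity, and on $(c_u,\underline s)\subset(c_u,d_u)$ one has $\Psi_u>0>\Psi_u(s_u)$, so the minimum over this set is $\Psi_u(s_u)=E(s_u\star u)<0$. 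Statement 3: $E(t_u\star u)=\Psi_u(t_u)$ is the global maximum, already $>0$; on $(t_u,\infty)$ we have $g<0$ and, since $s>t_u>s^\ast$, also $g'<0$, so $\Psi_u'=e^{\gamma_q q\, s}g<0$ and $\Psi_u''=e^{\gamma_q q\, s}(\gamma_q q\, g+g')<0$, giving strict decrease and concavity; if $t_u<0$ then $0\in(t_u,\infty)$, so $P(u)=\Psi_u'(0)<0$. Finally, statement 4 is the implicit function theorem applied to $F(s,u)=\Psi_u'(s)$, which is $C^1$ on $\R\times H$ (as $q,p<2^\ast$): at $(s_u,u)$ and $(t_u,u)$ the partial $\partial_sF=\Psi_u''$ is respectively positive and negative, hence nonzero, so each of $s_u,t_u$ is locally a $C^1$ function of $u$; the local branch through $s_{u_0}$ is globally $u\mapsto s_u$ because $\Psi_u$ has exactly two critical points and, by continuity of $\Psi_u''$, this branch (where $\Psi_u''>0$) cannot coincide with $t_u$ (where $\Psi_u''<0$), and symmetrically for $t_u$. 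The main obstacle is precisely the positivity of $\max\Psi_u$, i.e. upgrading ``$g$ has at most two zeros'' to ``exactly two'': this is what simultaneously produces the interior local minimizer and the mountain-pass geometry, and it is the sole use of \eqref{cond sub sup} in the lemma.
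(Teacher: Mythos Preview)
Your proof is correct and follows essentially the same route as the paper: both use the lower bound $\Psi_u(s)\ge h(e^s|\nabla u|_2)$ together with Lemma~\ref{lem: struct h} to force $\Psi_u$ to attain a positive value, reduce the count of critical points to the shape of a single-peaked auxiliary function (your $g$ is exactly the paper's $\varphi(s)-\mu\gamma_q|u|_q^q$ from \eqref{eq661}), and finish with the implicit function theorem for the $C^1$ dependence. Your treatment is in fact slightly more self-contained in two places: you obtain the strict signs $\Psi_u''(s_u)>0$ and $\Psi_u''(t_u)<0$ directly from $g'(s_u)>0>g'(t_u)$ rather than by invoking $\cP_0=\emptyset$, and you establish concavity on $(t_u,\infty)$ via $g<0$, $g'<0$ there rather than through a separate inflection-point count; both shortcuts are sound.
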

\begin{proof}
Let $u \in S$. Then, as observed in Proposition \ref{prop: psi P}, $s \star u \in \cP$ if and only if $\Psi_u'(s) = 0$. Thus, we first show that $\Psi_u$ has at least two critical points. To this end, we recall that by \eqref{E sub sup} 
\[
\Psi_u(s) = E(s \star u) \ge h(|\nabla (s \star u)|_2) = h(e^s |\nabla u|_2). 
\]
Thus, the $C^2$ function $\Psi_u$ is positive on $(\log(R_0/|\nabla u|_2), \log(R_1/|\nabla u|_2))$, and clearly $\Psi_u(-\infty) = 0^-$, $\Psi_u(+\infty)  =-\infty$. It follows that $\Psi_u$ has at least two critical points $s_u < t_u$, with $s_u$ local minimum point on $(0, \log(R_0/|\nabla u|_2))$ at negative level, and $t_u > s_u$ global maximum point at positive level. It is not difficult to check that there are no other critical points. Indeed $\Psi_u'(s) =0$ reads
\begin{equation}\label{eq661}
\varphi(s) = \mu \gamma_q |u|_q^q, \quad \text{with} \quad \varphi(s) = |\nabla u|_2^2 e^{(2-\gamma_q q)s} -  \gamma_p |u|_p^p e^{(\gamma_p p - \gamma_q q) s}.
\end{equation}
But $\varphi$ has a unique maximum point, and hence equation \eqref{eq661} has at most two solutions. 

Collecting together the above considerations, we conclude that $\Psi_u$ has exactly two critical points: $s_u$, local minimum on $(-\infty, \log(R_0 / |\nabla u|_2))$ at negative level, and $t_u$, global maximum at positive level. By Proposition \ref{prop: psi P}, we have $s_u \star u, t_u \star u \in \cP$, and $s \star u \in \cP$ implies $s \in \{s_u,t_u\}$. By minimality $\Psi_{s_u \star u}''(0) = \Psi_u''(s_u) \ge 0$, and in fact strict inequality must hold, since $\cP_0 = \emptyset$; namely $s_u \star u \in \cP_+$. In the same way $t_u \star u \in \cP_-$.

By monotonicity and recalling the behavior at infinity, $\Psi_u$ has moreover exactly two zeros $c_u<d_u$, with $s_u<c_u<t_u<d_u$; and, being a $C^2$ function, $\Psi_u$ has at least two inflection points. Arguing as before, we can easily check that actually $\Psi_u$ has exactly two inflection points.
In particular, $\Psi_u$ is concave on $[t_u,+\infty)$, and hence, if $t_u<0$, then $P(u) = \Psi_u'(0) <0$.

It remains to show that $u \mapsto s_u$ and $u \mapsto t_u$ are of class $C^1$; to this end, we apply the implicit function theorem on the $C^1$ function $\Phi(s,u) := \Psi'_u(s)$. We use that $\Phi(s_u,u) = 0$, that $\pa_s \Phi(s_u,u) = \Psi_u''(s_u) <0$, and the fact that it is not possible to pass with continuity from $\cP_+$ to $\cP_-$ (since $\cP_0 = \emptyset$). The same argument proves that $u \mapsto t_u$ is $C^1$.
\end{proof}

For $k >0$, let us set
\[
A_k:= \left\{u \in S: |\nabla u|_2<k\right\}, \quad \text{and} \quad m(a,\mu):= \inf_{u \in A_{R_0}} E(u).
\]
As an immediate corollary, we have: 
\begin{corollary}\label{cor: norm P+}
The set $\cP_+$ is contained in $A_{R_0}=\{u \in S: |\nabla u|_2<R_0\}$, and $\sup_{\cP_+}E \le 0 \le \inf_{\cP_-}E$.
\end{corollary}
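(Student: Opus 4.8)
The statement collects two easy consequences of Lemma \ref{lem: prop fiber sub-sup}. First I would prove the inclusion $\cP_+ \subset A_{R_0}$. Let $u \in \cP_+$. By Proposition \ref{prop: psi P}, since $u \in \cP$ we know $0$ is a critical point of $\Psi_u$, and the condition $(\Psi_u^\mu)''(0) > 0$ defining $\cP_+$ forces $0$ to be the local minimum point, i.e.\ $s_u = 0$ in the notation of Lemma \ref{lem: prop fiber sub-sup}. From part 2) of that lemma, $s_u = 0 \le c_u$, hence $|\nabla u|_2 = |\nabla(0 \star u)|_2 \le R_0$. (If one worries about the boundary case $|\nabla u|_2 = R_0$: this cannot happen because $h(R_0) = 0$ while $E(s_u \star u) < 0$ by part 2), so the inequality is strict and $u \in A_{R_0}$.) This gives $\cP_+ \subset A_{R_0}$.

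Next, the energy estimates $\sup_{\cP_+} E \le 0 \le \inf_{\cP_-} E$. For $u \in \cP_+$ we have $s_u = 0$, so $E(u) = E(s_u \star u) = \Psi_u(s_u)$, which is the negative local minimum value appearing in part 2) of Lemma \ref{lem: prop fiber sub-sup}; in particular $E(u) < 0$, so $\sup_{\cP_+} E \le 0$. For $u \in \cP_-$ the condition $(\Psi_u^\mu)''(0) < 0$ at the critical point $0$ forces $0 = t_u$, the global maximum point, so $E(u) = E(t_u \star u) = \Psi_u(t_u) > 0$ by part 3) of the lemma; hence $\inf_{\cP_-} E \ge 0$. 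Combining the two gives $\sup_{\cP_+} E \le 0 \le \inf_{\cP_-} E$.

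There is really no obstacle here: the corollary is a bookkeeping consequence of the fiber-map analysis already carried out, the only mild subtlety being to confirm that the relevant inequalities are strict (so that the $R_0$-ball is open and contains $\cP_+$), which follows from $h(R_0) = 0$ together with the strict negativity/positivity of $\Psi_u$ at its two critical points.
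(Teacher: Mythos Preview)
Your proof is correct and matches the paper's approach: the paper states the corollary as an immediate consequence of Lemma~\ref{lem: prop fiber sub-sup} without spelling out the details, and your argument is exactly the natural way to unpack them. The only remark is that the strict inequality $|\nabla u|_2<R_0$ can also be read off directly from the proof of Lemma~\ref{lem: prop fiber sub-sup}, where $s_u$ is shown to lie in $(-\infty,\log(R_0/|\nabla u|_2))$; your alternative via $E(u)\ge h(|\nabla u|_2)$ and $h(R_0)=0$ is equally valid.
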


Furthermore:

\begin{lemma}\label{lem: loc inf su P}
It results that $m(a,\mu) \in (-\infty,0)$, that
\[
m(a,\mu) =  \inf_{\cP} E = \inf_{\cP_+}E, \quad \text{and that} \quad m(a,\mu) < \, \inf_{\overline{A_{R_0}} \setminus A_{R_0-\rho}} E
\]
for $\rho>0$ small enough.
\end{lemma}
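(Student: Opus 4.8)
The statement asserts three things: that $m(a,\mu)$ is finite and strictly negative; that it equals $\inf_{\cP}E = \inf_{\cP_+}E$; and that it is strictly separated from the energy on the annular region near the boundary of $A_{R_0}$. The plan is to exploit Lemma \ref{lem: struct h} and Lemma \ref{lem: prop fiber sub-sup} to control $E$ on $A_{R_0}$ via the function $h$, and to use the fiber maps to move any function into $A_{R_0}$ without raising its energy.

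\textbf{Step 1: finiteness and negativity of $m(a,\mu)$.} For $u \in A_{R_0}$, the Gagliardo–Nirenberg bound \eqref{E sub sup} gives $E(u) \ge h(|\nabla u|_2) \ge \min_{[0,R_0]} h > -\infty$, so $m(a,\mu) > -\infty$. For the strict negativity, pick any $u \in S$; by Lemma \ref{lem: prop fiber sub-sup}(2) we have $s_u \star u \in A_{R_0}$ (since $|\nabla(s_u\star u)|_2 \le R_0$, indeed $s_u < c_u$) and $E(s_u \star u) < 0$, hence $m(a,\mu) \le E(s_u\star u) < 0$.

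\textbf{Step 2: identification with $\inf_\cP E$ and $\inf_{\cP_+}E$.} By Corollary \ref{cor: norm P+}, $\cP_+ \subset A_{R_0}$, so $\inf_{\cP_+}E \ge m(a,\mu)$. For the reverse inequality, given any $u \in A_{R_0}$, consider the fiber $\Psi_u$: since $|\nabla u|_2 < R_0$, the minimizer $s_u$ of $\Psi_u$ over $\{s : |\nabla(s\star u)|_2 < R_0\}$ from Lemma \ref{lem: prop fiber sub-sup}(2) satisfies $E(s_u\star u) \le E(0\star u) = E(u)$ (because $0$ lies in that set and $\Psi_u(s_u)$ is the minimum there), and $s_u \star u \in \cP_+$. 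Taking the infimum over $u \in A_{R_0}$ yields $\inf_{\cP_+}E \le m(a,\mu)$, so $m(a,\mu) = \inf_{\cP_+}E$. Finally, since $\cP_+ \subset \cP$ and (again by Corollary \ref{cor: norm P+}) $\inf_{\cP_-}E \ge 0 > m(a,\mu) = \inf_{\cP_+}E$, we get $\inf_\cP E = \min\{\inf_{\cP_+}E, \inf_{\cP_-}E\} = m(a,\mu)$.

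\textbf{Step 3: strict separation near $\partial A_{R_0}$.} On $\overline{A_{R_0}}\setminus A_{R_0-\rho}$ we have $|\nabla u|_2 \in [R_0-\rho, R_0]$, hence $E(u) \ge h(|\nabla u|_2) \ge \min_{[R_0-\rho,\,R_0]} h$. Since $h(R_0)=0$ and $h<0$ on a left-neighborhood of $R_0$ but $h$ is continuous, $\min_{[R_0-\rho,R_0]} h \to h(R_0)=0$ as $\rho \to 0^+$; more precisely, for $\rho$ small this minimum is a fixed negative number close to $0$, while $m(a,\mu)$ is a fixed strictly negative number \emph{not} depending on $\rho$. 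Choosing $\rho$ small enough that $\min_{[R_0-\rho,R_0]} h > m(a,\mu)$ — which is possible because $m(a,\mu) < 0 = h(R_0) = \lim_{\rho\to 0^+}\min_{[R_0-\rho,R_0]}h$ — gives the claim.

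\textbf{Main obstacle.} The only delicate point is Step 3: one must be sure that $m(a,\mu)$ is strictly below $0$ and fixed \emph{before} choosing $\rho$, so that the continuity of $h$ at $R_0$ can be invoked to squeeze $\min_{[R_0-\rho,R_0]}h$ above $m(a,\mu)$. This is exactly why Steps 1–2 (establishing $m(a,\mu)<0$ independently of $\rho$) must precede it. Everything else is a direct bookkeeping exercise with the fiber maps and the function $h$ already analyzed in Lemmas \ref{lem: struct h} and \ref{lem: prop fiber sub-sup}.
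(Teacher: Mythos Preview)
The proposal is correct and follows essentially the same approach as the paper's proof: the lower bound and the identification $m(a,\mu)=\inf_{\cP_+}E=\inf_{\cP}E$ are obtained via the same combination of the Gagliardo--Nirenberg estimate $E(u)\ge h(|\nabla u|_2)$, Corollary~\ref{cor: norm P+}, and the fiber-map projection $u\mapsto s_u\star u$ of Lemma~\ref{lem: prop fiber sub-sup}, while the separation near $\partial A_{R_0}$ is deduced from the continuity of $h$ at $R_0$ (the paper makes this quantitative by choosing $\rho$ so that $h\ge m(a,\mu)/2$ on $[R_0-\rho,R_0]$, which is just a concrete version of your limit argument).
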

\begin{proof}
For $u \in A_{R_0}$
\[
E(u) \ge h(|\nabla u|_2) \ge \min_{t \in [0,R_0]} h(t) >-\infty,
\]
and hence $m(a,\mu)>-\infty$. Moreover, for any $u \in S$ we have $|\nabla(s \star u)|_2 < R_0$ and $E(s \star u)<0$ for $s \ll-1$, and hence $m(a,\mu)<0$. 

Now, $m(a,\mu) \le \inf_{\cP_+}E$ since $\cP_+\subset A_{R_0}$ by Corollary \ref{cor: norm P+}. On the other hand, if $u \in A_{R_0}$, then $s_u \star u \in \cP_+\subset A_{R_0}$, and 
\[
E(s_u \star u) = \min\left\{E(s \star u): \ \text{$s \in \R$ and $|\nabla (s \star u)|_2 < R_0$}\right\} \le E(u),
\]
which implies that $\inf_{\cP_+}E \le m(a,\mu)$. To prove that $\inf_{\cP_+}E = \inf_{\cP} E$, it is sufficient to recall that $E>0$ on $\cP_-$, see Corollary \ref{cor: norm P+}. 

Finally, by continuity of $h$ there exists $\rho>0$ such that $h(t) \ge m(a,\mu)/2$ if $t \in [R_0-\rho, R_0]$. Therefore
\[
E(u) \ge h(|\nabla u|_2) \ge \frac{m(a,\mu)}2 > m(a,\mu)
\]
for every $u \in S$ with $R_0-\rho \le |\nabla u|_2 \le R_0$.
\end{proof}

\begin{proof}[Existence of a local minimizer]
Let us consider a minimizing sequence $\{v_n\}$ for $E|_{A_{R_0}}$. It is not restrictive to assume that $v_n \in S_r$ is radially decreasing for every $n$ (if this is not the case, we can replace $v_n$ with $|v_n|^*$, the Schwarz rearrangement of $|v_n|$, and we obtain another function in $A_{R_0}$ with $E(|v_n|^*) \le E(v_n)$). Furthermore, for every $n$ we can take $s_{v_n} \star v_n \in \cP_+$, observing that then by Lemma \ref{lem: prop fiber sub-sup} and Corollary \ref{cor: norm P+} $|\nabla (s_{v_n} \star v_n)|_2 <R_0$ and
\[
E(s_{v_n} \star v_n) = \min\left\{E(s \star v_n): \ \text{$s \in \R$ and $|\nabla (s \star v_n)|_2 < R_0$}\right\} \le E(v_n);
\]
in this way we obtain a new minimizing sequence $\{w_n=s_{v_n} \star v_n\}$, with $w_n \in S_r \cap \cP_+$ radially decreasing for every $n$. By Lemma \ref{lem: loc inf su P}, $|\nabla w_n|_2 < R_0-\rho$ for every $n$, and hence the Ekeland's variational principle yields in a standard way the existence of a new minimizing sequence $\{u_n\} \subset A_{R_0}$ for $m(a,\mu)$, with the property that $\|u_n-w_n\| \to 0$ as $n \to \infty$, which is also a Palais-Smale sequence for $E$ on $S$. The condition $\|u_n-w_n\| \to 0$ and the boundedness of $\{w_n\}$ (each $w_n$ stays in $A_{R_0}$) imply $P(u_n) \to 0$, and hence $\{u_n\}$ satisfies all the assumptions of Lemma \ref{lem: conv PS subcr N=1}: as a consequence, up to a subsequence $u_n \to \tilde u$ strongly in $H$, $\tilde u$ is an interior local minimizer for $E|_{A_{R_0}}$, and solves \eqref{stat com}-\eqref{norm} for some $\tilde \lambda<0$. The basic properties of $\tilde u$ follow directly by the convergence and by the maximum principle, and it only remains to show that $\tilde u$ is a ground state for $E|_{S}$. This follows immediately from the fact that any critical point of $E|_{S}$ lies in $\cP$, and $m(a,\mu) = \inf_{\cP} E$ (see Lemma \ref{lem: loc inf su P}).
\end{proof}


We focus now on the existence of a second critical point for $E|_{S}$.

\begin{lemma}\label{lem: 15101}
Suppose that $E(u)<m(a,\mu)$. Then the value $t_u$ defined by Lemma \ref{lem: prop fiber sub-sup} is negative.
\end{lemma}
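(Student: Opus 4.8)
The statement to be proved is: if $u \in S$ satisfies $E(u) < m(a,\mu)$, then the global maximum point $t_u$ of the fiber $\Psi_u$ (as furnished by Lemma~\ref{lem: prop fiber sub-sup}) is strictly negative. The plan is to argue by contradiction: suppose $t_u \ge 0$, and exploit the detailed structure of $\Psi_u$ from Lemma~\ref{lem: prop fiber sub-sup} together with the characterization $m(a,\mu) = \inf_{\cP_+} E$ (and $\sup_{\cP_+} E \le 0 \le \inf_{\cP_-} E$, Corollary~\ref{cor: norm P+}) to conclude that $E(u) \ge m(a,\mu)$, contradicting the hypothesis.

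\textbf{Key steps.} First I recall that $\Psi_u$ has exactly two critical points $s_u < t_u$, with $s_u \star u \in \cP_+$ a local minimum at negative level and $t_u \star u \in \cP_-$ the global maximum at positive level; moreover $\Psi_u$ is strictly decreasing on $(t_u, +\infty)$ and the associated zeros satisfy $s_u < c_u < t_u < d_u$. Assume now $t_u \ge 0$. I distinguish according to the sign of $s_u$. If $s_u \ge 0$ as well, then $0 \in (-\infty, t_u)$ lies on the portion where... actually the cleanest route is: consider the point $0$ relative to $s_u$. If $0 \le s_u$, then since $\Psi_u$ is increasing on $(s_u, t_u)$ and we want a lower bound — here one should instead use that on $(-\infty, s_u]$, $\Psi_u$ first... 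Let me be careful: $\Psi_u(-\infty) = 0^-$, $\Psi_u$ decreases then increases to the local min $s_u$? No — $s_u$ is a \emph{local minimum}, so $\Psi_u$ is decreasing on $(-\infty, s_u)$ and increasing on $(s_u, t_u)$. Hence the two cases are: (a) if $0 \le s_u$, then $E(u) = \Psi_u(0) \ge \Psi_u(s_u) = E(s_u \star u) \ge \inf_{\cP_+} E = m(a,\mu)$, a contradiction; (b) if $s_u < 0 \le t_u$, then $0$ lies in the interval $(s_u, t_u)$ where $\Psi_u$ is increasing, so... $\Psi_u(0) \ge \Psi_u(s_u) = m(a,\mu)$-type bound again, i.e.\ $E(u) \ge E(s_u \star u) \ge m(a,\mu)$, contradiction. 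In either subcase the hypothesis $t_u \ge 0$ forces $E(u) \ge E(s_u\star u) \ge m(a,\mu)$, so $t_u < 0$ must hold.

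\textbf{Main obstacle.} The delicate point is simply keeping the monotonicity bookkeeping of $\Psi_u$ straight and making sure that in \emph{every} configuration with $t_u \ge 0$ one genuinely has $E(u) = \Psi_u(0) \ge E(s_u \star u)$; this rests on the fact (from Lemma~\ref{lem: prop fiber sub-sup}) that $\Psi_u$ attains its \emph{only} values below $E(s_u\star u)$ on the half-line $(-\infty, s_u)$ where it is strictly decreasing toward $0^-$, hence $\Psi_u(s) \ge E(s_u\star u)$ for all $s \ge s_u$ up to $t_u$, and in particular at $s = 0$ provided $0 \in [s_u, t_u]$. Since $E(s_u \star u) \ge \inf_{\cP_+} E = m(a,\mu)$ by Lemma~\ref{lem: loc inf su P}, this closes the argument. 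One must also note that the case $t_u \ge 0$ together with $s_u < t_u$ is compatible with either sign of $s_u$, which is why the short case split above is needed; no compactness or Gagliardo–Nirenberg estimate enters — the proof is purely a consequence of the already-established shape of the fiber maps.
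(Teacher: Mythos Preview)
Your argument is correct and follows essentially the same contradiction strategy as the paper: assume $t_u \ge 0$ and deduce $E(u) = \Psi_u(0) \ge E(s_u \star u) \ge m(a,\mu)$. The paper reaches this conclusion by invoking the zeros $c_u < d_u$ of $\Psi_u$ and part (2) of Lemma~\ref{lem: prop fiber sub-sup} (the characterization of $E(s_u\star u)$ as the minimum over $\{|\nabla(s\star u)|_2 < R_0\}$), whereas you bypass the zeros entirely and use only the monotonicity of $\Psi_u$ on $(-\infty,s_u]$ and $[s_u,t_u]$, which follows immediately from $s_u$ and $t_u$ being the only critical points; this is a mild but genuine simplification.
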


\begin{proof}
We consider again the function $\Psi_u$, and we consider $s_u<c_u<t_u<d_u$ as in Lemma \ref{lem: prop fiber sub-sup}. If $d_u \le 0$, then $t_u<0$, and hence we can assume by contradiction that $d_u > 0$. If $0 \in (c_u,d_u)$, then $E(u) = \Psi_u(0)>0$, which is not possible since $E(u) < m(a,\mu)<0$. Therefore $c_u>0$, and by Lemma \ref{lem: prop fiber sub-sup}-(2)
\begin{align*}
m(a,\mu) & >E(u) = \Psi_u(0) \ge \inf_{s \in (-\infty,c_u]} \Psi_u(s)  \\
&\ge \inf\left\{E(s \star u): \ \text{$s \in \R$ and $|\nabla (s \star u)|_2 < R_0$}\right\} 
 =E(s_u \star u) \ge m(a,\mu),
\end{align*}
which is again a contradiction. 
\end{proof}

\begin{lemma}\label{lem: 15102}
It results that
\[
\tilde \sigma(a,\mu):=\inf_{u \in \cP_-}E(u) >0.
\]
\end{lemma}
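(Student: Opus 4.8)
The plan is to read off everything from the fiber-map machinery already set up in Lemmas \ref{lem: prop fiber sub-sup} and \ref{lem: struct h}, which in fact make the statement almost immediate. The key structural observation is that $\cP_-$ consists exactly of the \emph{top} critical points of the fibers $\Psi_u$: if $u\in\cP_-$, then $u=0\star u\in\cP$, so by Lemma \ref{lem: prop fiber sub-sup}(1) either $0=s_u$ or $0=t_u$; but $u\in\cP_-$ means $\Psi_u''(0)<0$, whereas $s_u\star u\in\cP_+$ gives $\Psi_u''(s_u)>0$ (using $\cP_0=\emptyset$), so necessarily $0=t_u$. Hence, by Lemma \ref{lem: prop fiber sub-sup}(3),
\[
E(u)=\Psi_u(0)=\Psi_u(t_u)=\max_{s\in\R}\Psi_u(s)\qquad\text{for every }u\in\cP_-.
\]

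Next I would bound this maximum below by a constant independent of $u$. By the Gagliardo--Nirenberg estimate \eqref{E sub sup} we have $\Psi_u(s)=E(s\star u)\ge h\bigl(e^s|\nabla u|_2\bigr)$ for every $s\in\R$, where $h$ is the auxiliary function of Lemma \ref{lem: struct h}. Since $u\in S$ forces $|\nabla u|_2>0$, the quantity $e^s|\nabla u|_2$ runs over all of $(0,+\infty)$ as $s$ runs over $\R$, so
\[
E(u)=\max_{s\in\R}\Psi_u(s)\ \ge\ \sup_{s\in\R}h\bigl(e^s|\nabla u|_2\bigr)\ =\ \max_{t>0}h(t)\ >\ 0,
\]
the last equality and the strict positivity being precisely the content of Lemma \ref{lem: struct h} (global strict maximum of $h$ at positive level, where condition \eqref{cond sub sup} is what guarantees this). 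Taking the infimum over $u\in\cP_-$ yields $\tilde\sigma(a,\mu)\ge\max_{t>0}h(t)>0$.

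I do not foresee a genuine obstacle: the whole argument rests on two already-established facts — that membership in $\cP_-$ pins $t_u=0$, and that the Gagliardo--Nirenberg lower bound $h(e^s|\nabla u|_2)$ is valid along the entire $\star$-orbit, so its positive maximum is a uniform lower bound for $E$ on $\cP_-$. The only points that require a line of care are the identification of $0$ with $t_u$ (rather than $s_u$) for $u\in\cP_-$, which uses $\cP_0=\emptyset$ from Lemma \ref{lem: struct P}, and the remark that $|\nabla u|_2>0$ so that the supremum of $h$ over the orbit is the full $\sup_{t>0}h$.
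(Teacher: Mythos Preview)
Your proof is correct and takes essentially the same approach as the paper: identify $0=t_u$ for $u\in\cP_-$ so that $E(u)=\max_{s}\Psi_u(s)$, then bound this below by $\max_{t>0}h(t)>0$ via the Gagliardo--Nirenberg inequality along the $\star$-orbit. The paper phrases the last step by choosing a specific $\tau_u$ with $|\nabla(\tau_u\star u)|_2=t_{\max}$ (the maximizer of $h$) and writing $E(u)\ge\Psi_u(\tau_u)\ge h(t_{\max})$, which is exactly your supremum argument unfolded at one point.
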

\begin{proof}
Let $t_{\max}$ denote the strict maximum of the function $h$ at positive level, see Lemma \ref{lem: struct h}. For every $u \in \cP_-$, there exists $\tau_u \in \R$ such that $|\nabla(\tau_u \star u)|_2 = t_{\max}$. Moreover, since $u \in \cP_-$ we also have by Lemma \ref{lem: prop fiber sub-sup} that the value $0$ is the unique strict maximum of the function $\Psi_u$. Therefore
\[
E(u) =\Psi_u(0) \ge \Psi_u(\tau_u) = E(\tau_u \star u) \ge h(|\nabla (\tau_u \star u)|_2) = h(t_{\max})>0.
\]
Since $u \in \cP_-$ was arbitrarily chosen, we deduce that $\inf_{\cP_-}E \ge \max_{\R} h>0$, as desired.
\end{proof}

We shall also need the following result, where $T_u S$ denotes the tangent space to $S$ in $u$.
\begin{lemma}\label{lem: tg}
For $u\in S_a$ and $s\in\R$ the map
\[
T_{u} S \to T_{s \star u} S, \quad \varphi \mapsto s \star \varphi
\]
is a linear isomorphism with inverse $\psi\mapsto (-s) \star \psi$. \end{lemma}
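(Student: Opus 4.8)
The plan is to reduce everything to two elementary properties of the dilation $s \star \cdot$. First, it is a linear bijection of $H$ onto itself: linearity is immediate from the definition \eqref{def star}, while the computation $s_1 \star (s_2 \star u) = (s_1 + s_2)\star u$ and $0 \star u = u$ shows that $s \star \cdot$ and $(-s)\star \cdot$ are mutually inverse. Second, $s \star \cdot$ is $L^2$-unitary: the change of variables $y = e^s x$ gives
\[
\mathrm{Re}\int_{\R^N} (s\star u)\,\overline{(s\star \varphi)} = e^{Ns} \mathrm{Re}\int_{\R^N} u(e^s\cdot)\,\overline{\varphi(e^s\cdot)} = \mathrm{Re}\int_{\R^N} u\bar\varphi,
\]
since the two factors $e^{\frac N2 s}$ combine with the Jacobian $e^{-Ns}$. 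In particular $|s\star\varphi|_2 = |\varphi|_2$ and $|\nabla(s\star\varphi)|_2 = e^s|\nabla\varphi|_2$, so $s\star\cdot$ maps $H$ boundedly into $H$ with bounded inverse (consistently with the continuity statement \eqref{cont star} recorded in the preliminaries).

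Next I would recall that, for $v \in S$, the tangent space is $T_v S = \{\varphi \in H : \mathrm{Re}\int_{\R^N} v\bar\varphi = 0\}$, i.e. the kernel of $dG(v)$ with $G(u) = |u|_2^2 - a^2$. Given $\varphi \in T_u S$, the unitarity identity above yields $\mathrm{Re}\int_{\R^N}(s\star u)\overline{(s\star\varphi)} = \mathrm{Re}\int_{\R^N} u\bar\varphi = 0$, so $s\star\varphi \in T_{s\star u}S$; hence $\varphi \mapsto s\star\varphi$ is a well-defined (obviously linear) map $T_u S \to T_{s\star u}S$. Applying the same reasoning with $s$ replaced by $-s$ and $u$ replaced by $s\star u$ produces a well-defined linear map $T_{s\star u}S \to T_{(-s)\star(s\star u)}S = T_u S$, $\psi \mapsto (-s)\star\psi$. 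The group law then shows these two maps are inverse to each other, which is exactly the assertion.

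There is no genuine obstacle here: the lemma is a bookkeeping consequence of the fact that $s\star\cdot$ is an $L^2$-isometric topological automorphism of $H$ forming a one-parameter group, combined with the orthogonality characterization of the tangent spaces. The only point demanding (routine) care is the change-of-variables computation confirming that the dilation preserves the $L^2$ scalar product and maps $H$ boundedly into $H$ — a fact already used implicitly throughout Section \ref{sec: subcr-supcr}.
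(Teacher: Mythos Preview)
Your argument is correct: the group law for the dilations, $L^2$-unitarity, and the orthogonality description of $T_vS$ are exactly what is needed, and you verify each point cleanly. The paper itself does not prove this lemma but simply refers to \cite[Lemma 3.6]{BaSo2}; your self-contained verification is therefore a welcome addition rather than a deviation from any argument in the text.
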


For the proof, see \cite[Lemma 3.6]{BaSo2}. We can now proceed with the proof of the existence of a second positive normalized solution. In the following proof we write $E^c$ for the closed sublevel set $\{u \in S: E(u) \le c\}$.

\begin{proof}[Existence of a second critical point of mountain pass type for $E|_{S}$] We focus on the case $N \ge 2$, and we refer to Remark \ref{rmk: sub sup 1} for the necessary modification in dimension $1$.

We follow the strategy firstly introduced in \cite{Jea}, considering the augmented functional $\tilde E: \R \times H^1 \to \R$ defined by 
\begin{equation}\label{def aug}
\tilde E(s,u) := E(s \star u) = \frac{e^{2s}}{2} \int_{\R^N}|\nabla u|^2 - \mu \frac{e^{\gamma_{\bar p} \bar p s}}{\bar p} \int_{\R^N} |u|^q -  \frac{e^{2^*s}}{2^*} \int_{\R^N} |u|^{2^*},
\end{equation}
and look at the restriction $\tilde E|_{\R \times S}$. Notice that $\tilde E$ is of class $C^1$. Moreover, since $\tilde E$ is invariant under rotations applied to $u$, a Palais-Smale sequence for $\tilde E|_{\R \times S_r}$ is a Palais-Smale sequence for $E|_{\R \times S}$. 

Denoting by $E^c$ the closed sublevel set $\{u \in S: E(u) \le c\}$, we introduce the minimax class
\begin{equation}\label{def gamma}
\Gamma:= \left\{ \gamma = (\alpha,\beta) \in C([0,1], \R \times S_{r}): \ \gamma(0) \in (0,\cP_+), \ \gamma(1) \in (0,E^{2m(a,\mu)})\right\},
\end{equation}
with associated minimax level
\[
\sigma(a,\mu):= \inf_{\gamma \in \Gamma} \max_{(s,u) \in \gamma([0,1])} \tilde E(s,u).
\]
Let $u \in S_r$. By Lemma \ref{lem: prop fiber sub-sup}, there exists $s_1 \gg 1$ such that 
\begin{equation}\label{gam u}
\gamma_u: \tau \in [0,1] \mapsto (0,((1-\tau) s_u + \tau s_1) \star u) \in \R \times S_r
\end{equation}
is a path in $\Gamma$ (the continuity follows from \eqref{cont star}). Then $\sigma(a,\mu)$ is a real number.

We claim that 
\begin{equation}\label{link P-}
\text{for every $\gamma \in \Gamma$ there exists $\tau_\gamma \in (0,1)$ such that $\alpha(\tau_\gamma) \star \beta(\tau_\gamma) \in \cP_-$}.
\end{equation}
Indeed, since $\gamma(0)=(0, \beta(0)) \in (0,\cP_+)$, we have by Proposition \ref{prop: psi P} and Lemma \ref{lem: prop fiber sub-sup}
\[
t_{\alpha(0) \star \beta(0)} = t_{\beta(0)} > s_{\beta(0)} = 0.
\] 
Also, since $E(\beta(1)) = \tilde E(\gamma(1))\le 2m(a,\mu)$, we have 
\[
t_{\alpha(1) \star \beta(1)} = t_{\beta(1)} <0,
\] 
see Lemma \ref{lem: 15101}. And moreover the map $t_{\alpha(\tau) \star \beta(\tau)}$ is continuous in $\tau$, by \eqref{cont star} and Lemma \ref{lem: prop fiber sub-sup}. It follows that there exists $\tau_\gamma \in (0,1)$ such that $t_{\alpha(\tau_\gamma) \star \beta(\tau_\gamma)} =0$, that is, claim \eqref{link P-} holds. 

This implies that 
\[
\max_{\gamma([0,1])} \tilde E \ge \tilde E(\gamma(\tau_\gamma)) =E( \alpha(\tau_\gamma) \star \beta(\tau_\gamma)) \ge \inf_{\cP_-\cap S_r} E,
\]
and consequently $\sigma(a,\mu) \ge \inf_{\cP_- \cap S_r} E$. On the other hand, if $u \in \cP_- \cap S_r$, then $\gamma_u$ defined in \eqref{gam u} is a path in $\Gamma$ with 
\[
E(u) = \tilde E(0,u) = \max_{\gamma_u([0,1])} \tilde E \ge \sigma(a,\mu),
\]
whence the reverse inequality $\inf_{\cP_- \cap S_r} E \ge \sigma(a,\mu)$ follows. This, Corollary \ref{cor: norm P+} and Lemma \ref{lem: 15102} imply that
\begin{equation}\label{min max adv}
\sigma(a,\mu) = \inf_{\cP_- \cap S_r} E  >0 \ge \sup_{(\cP_+\cup E^{2m(a,\mu)}) \cap S_r}  E = \sup_{((0,\cP_+)\cup (0,E^{2m(a,\mu)})) \cap S_r}  \tilde E.
\end{equation}
Using the terminology in \cite[Section 5]{Gho}, this means that $\{\gamma([0,1]): \ \gamma \in \Gamma\}$ is a homotopy stable family of compact subsets of $\R \times S_r$ with extended closed boundary $(0,\cP_+) \cup (0,E^{0})$, and that the superlevel set $\{\tilde E \ge \sigma(a,\mu)\}$ is a dual set, in the sense that assumptions (F'1) and (F'2) in \cite[Theorem 5.2]{Gho} are satisfied. Therefore, taking any minimizing sequence $\{\gamma_n=(\alpha_n,\beta_n)\} \subset \Gamma$ for $\sigma(a,\mu)$ with the property that $\alpha_n \equiv 0$ and $\beta_n(\tau) \ge 0$ a.e. in $\R^N$ for every $\tau \in [0,1]$\footnote{Notice that, if $\{\gamma_n=(\alpha_n,\beta_n)\} \subset \Gamma$ is a minimizing sequence, then also $\{(0,\alpha_n \star |\beta_n|)\}$ has the same property.}, there exists a Palais-Smale sequence $\{(s_n,w_n\} \subset \R \times S_r$ for $\tilde E|_{\R \times S_r}$ at level $\sigma(a,\mu)$, that is 
\begin{equation}\label{1892}
\pa_s \tilde E(s_n,w_n) \to 0 \quad \text{and} \quad \|\pa_u \tilde E(s_n,w_n)\|_{(T_{w_n} S_{r})^*} \to 0 \quad \text{as $n \to \infty$},
\end{equation}
with the additional property that
\begin{equation}\label{1893}
|s_n|  + \dist_{H^1}(w_n, \beta_n([0,1])) \to 0 \qquad \text{as $n \to \infty$}.
\end{equation}
By \eqref{def aug}, the first condition in \eqref{1892} reads $P(s_n \star w_n) \to 0$, while the second condition gives
\[
e^{2s_n} \int_{\R^N} \nabla w_n \cdot \nabla \varphi - \mu e^{\gamma_q q s_n} \int_{\R^N} |w_n|^{q-2}w_n \varphi - e^{\gamma_p p s_n} \int_{\R^N} |w_n|^{p-2}w_n \varphi = o(1) \|\varphi\|
\]
for every $\varphi \in T_{w_n} S_r$, with $o(1) \to 0$ as $n \to \infty$. 
Since $\{s_n\}$ is bounded from above and from below, due to \eqref{1893}, this is equivalent to 
\begin{equation}\label{9105}
dE(s_n \star w_n)[s_n \star \varphi] = o(1) \|\varphi\| = o(1) \|s_n \star \varphi\| \quad \text{as $n \to \infty$, for every $\varphi \in T_{w_n} S_r$}.
\end{equation}
Let then $u_n:= s_n \star w_n$. By Lemma \ref{lem: tg}, equation \eqref{9105} establishes that $\{u_n\} \subset S_{r}$ is a Palais-Smale sequence for $E|_{S_{r}}$ (thus a PS sequence for $E|_S$, since the problem is invariant under rotations) at level $\sigma(a,\mu) >0$, with $P(u_n) \to 0$. By Lemma \ref{lem: conv PS subcr}, up to a subsequence $u_n \to \hat u$ strongly in $H^1$, with $\hat u \in S$ real-valued radial solution to \eqref{stat com} for some $\hat \lambda<0$. From \eqref{1893}, we have that $\hat u \ge 0$ a.e. in $\R^N$, and the strong maximum principle finally implies that $\hat u>0$ in $\R^N$. 
\end{proof}

\begin{remark}\label{rmk: sub sup 1}
In order to extend the previous proof to the $1$ dimensional case, it is natural to replace the minimizing sequence $\gamma_n = (\alpha_n, \beta_n): [0,1] \to \R \to S_r$ with $\gamma_n^* :=(0,\alpha_n \star |\beta_{n}|^*$). This is a natural candidate to be a minimizing sequence, and the second component of $\gamma_n^*$ is radially symmetric and decreasing for every $t \in [0,1]$, for every $n$. In order to check that $\gamma_n^* \in \Gamma$, we have to check that each $\gamma_n^*$ is continuous on $[0,1]$, and this issue boils down to the continuity of the symmetric decreasing rearrangement map from $H^1_+(\R^N)$ to $H^1_+(\R^N)$. Such continuity is true in $\R$, as proved in \cite{Cor}, and allows to complete the proof of the existence of $\hat u$ (using Lemma \ref{lem: conv PS subcr N=1} instead of Lemma \ref{lem: conv PS subcr}) also in dimension $N=1$. Remarkably, the symmetric decreasing rearrangement map \emph{is not continuous from $H^1_+(\R^N)$ to $H^1_+(\R^N)$ if $N \ge 2$}, see \cite{AlmLie1, AlmLie2}. This is why we treat $N=1$ and $N \ge 2$ separately.
\end{remark}


\begin{proof}[Conclusion of the proof of Theorem \ref{thm: subcr-supcr} and of Proposition \ref{prop: struct P}]
It only remains to prove that any ground state of $E|_{S}$ is a local minimizer of $E$ in $A_{R_0}$. Let then $u$ be a critical point of $E|_{S}$ with $E(u) = m(a,\mu) = \inf_{\cP} E$. Since $E(u) <0 < \inf_{\cP_-}E$, necessarily $u \in \cP_+$. Then, by Corollary \ref{cor: norm P+}, it results that $|\nabla u|_2<R_0$, and as a consequence $u$ is a local minimizer for $E$ on $A_{R_0}$.
\end{proof}

\section{Supercritical leading term with focusing critical perturbation}\label{sec: cr}

In this section we fix $N \ge 2$, $q=\bar p = 2+4/N<p<2^*$, $a, \mu >0$ satisfying \eqref{hp cr pos}, and prove Theorem \ref{thm: supcr2}. The $1$ dimensional case can be treated using the strategy described in Remark \ref{rmk: sub sup 1}. Since $a$ and $\mu$ will always be fixed, we omit the dependence on these quantities.

The change of the geometry of $E|_{S}$ with respect to the case $q<\bar p$ is enlightened by the following simple lemmas. We recall the decomposition $\cP= \cP_+ \cup \cP_0 \cup \cP_-$, see \eqref{split P}.
%

%

\begin{lemma}\label{lem: struct P cr}
We have $\cP_0 = \emptyset$, and $\cP$ is a smooth manifold of codimension $2$ in $H$.
\end{lemma}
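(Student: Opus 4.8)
The plan is to mirror the proof of Lemma~\ref{lem: struct P}, with the simplification that the $L^2$-critical exponent $q=\bar p$ makes the analysis of $\cP_0$ immediate: in this regime $\gamma_q q = N(\bar p-2)/2 = 2$, so that $q\gamma_q^2 = 2\gamma_q$.

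First I would show $\cP_0=\emptyset$. Taking $u\in\cP_0$ and combining $P_\mu(u)=0$, namely $|\nabla u|_2^2 = \gamma_p|u|_p^p + \mu\gamma_q|u|_q^q$ (see \eqref{Poh}), with the defining identity of $\cP_0$ in \eqref{split P}, which thanks to $q\gamma_q^2=2\gamma_q$ becomes $|\nabla u|_2^2 = \mu\gamma_q|u|_q^q + \tfrac{p\gamma_p^2}{2}|u|_p^p$, subtraction leaves $\gamma_p\bigl(1-\tfrac{p\gamma_p}{2}\bigr)|u|_p^p=0$. Because $p>\bar p$ forces $p\gamma_p/2 = N(p-2)/4>1$, the coefficient is strictly negative, so $|u|_p=0$ and hence $u\equiv 0$, contradicting $u\in S_a$. (Note that, unlike in the subcritical case, assumption \eqref{hp cr pos} is not needed at this step.)

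Next I would argue, exactly as in Lemma~\ref{lem: struct P}, that $\cP=\{u\in H:\ P_\mu(u)=0,\ G(u)=0\}$ with $G(u)=|u|_2^2-a^2$ is a smooth manifold of codimension $2$ in $H$: it suffices to check that $(dG(u),dP_\mu(u)):H\to\R^2$ is onto for each $u\in\cP$, and for that it is enough to produce $\varphi\in T_uS$ with $dP_\mu(u)[\varphi]\neq 0$, after which the $2\times 2$ linear system in $(\alpha,\beta)$ appearing in the proof of Lemma~\ref{lem: struct P} is solvable for every right-hand side. If such a $\varphi$ did not exist, $u$ would be a constrained critical point of $P_\mu$ on $S_a$, so that $-\Delta u=\nu u+\mu\tfrac{q\gamma_q}{2}|u|^{q-2}u+\tfrac{p\gamma_p}{2}|u|^{p-2}u$ for some $\nu\in\R$; the Pohozaev identity for this equation then yields $2|\nabla u|_2^2=\mu q\gamma_q^2|u|_q^q+p\gamma_p^2|u|_p^p$, i.e.\ $u\in\cP_0$, contradicting the first step.

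I do not expect a real obstacle here, since this is a lighter version of Lemma~\ref{lem: struct P}; the only point that deserves a comment is that the Pohozaev identity is applied to a solution of the Lagrange-multiplier equation that is a priori only in $H^1$, which is justified by the standard subcritical elliptic regularity theory. The sign inequality $p\gamma_p/2>1$ that makes the $\cP_0$ step collapse is precisely the $L^2$-supercriticality of the leading nonlinearity.
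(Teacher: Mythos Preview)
Your proof is correct and follows essentially the same approach as the paper: the paper also observes that $\Psi_u'(0)=\Psi_u''(0)=0$ forces $|u|_p=0$, a contradiction, and then refers back to the argument of Lemma~\ref{lem: struct P} for the manifold part. Your presentation is in fact more explicit than the paper's, which simply omits the details.
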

\begin{proof}
If $u \in \cP_0$, that is $\Psi_u'(0) = \Psi_u''(0) = 0$, then necessarily $|u|_p =0$, which is not possible since $u \in S$. The rest of the proof is very similar (actually simpler) to the one of Lemma \ref{lem: struct P}, and hence is omitted.
\end{proof}

\begin{lemma}\label{lem: fiber cr}
For every $u \in S$, there exists a unique $t_u \in \R$ such that $t_u \star u \in \cP$. $t_u$ is the unique critical point of the function $\Psi_u$, and is a strict maximum point at positive level. Moreover:
\begin{itemize}
\item[1)] $\cP= \cP_-$. 
\item[2)] $\Psi_u$ is strictly decreasing and concave on $(t_u, +\infty)$, and $t_u<0$ implies $P(u)<0$.
\item[3)] The map $u \in S \mapsto t_u \in \R$ is of class $C^1$.
\item[4)] If $P(u)<0$, then $t_u<0$.
\end{itemize}
\end{lemma}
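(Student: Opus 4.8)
The crucial simplification is that $q=\bar p$ forces $q\gamma_q=\bar p\,\gamma_{\bar p}=2$, so the quadratic term and the $q$-term of $\Psi_u$ scale in exactly the same way; thus for fixed $u\in S$ one can rewrite
\[
\Psi_u(s)=A_u\,e^{2s}-B_u\,e^{p\gamma_p s},\qquad A_u:=\tfrac12|\nabla u|_2^2-\tfrac{\mu}{\bar p}|u|_{\bar p}^{\bar p},\quad B_u:=\tfrac1p|u|_p^p>0,
\]
with $p\gamma_p>2$ since $p>\bar p$. The first step is to prove that $A_u>0$ for \emph{every} $u\in S$: the sharp Gagliardo--Nirenberg inequality \eqref{GN ineq}, together with $\gamma_{\bar p}\bar p=2$, $(1-\gamma_{\bar p})\bar p=4/N$ and $|u|_2=a$, gives $|u|_{\bar p}^{\bar p}\le C_{N,\bar p}^{\bar p}\,a^{4/N}\,|\nabla u|_2^2$, whence $A_u\ge\tfrac12\bigl(1-2\mu C_{N,\bar p}^{\bar p}a^{4/N}/\bar p\bigr)|\nabla u|_2^2$, which is strictly positive (recall $|\nabla u|_2>0$ on $S$) precisely by \eqref{hp cr pos} together with \eqref{best const}. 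This is the only place where \eqref{hp cr pos} enters, and it is exactly what makes the lemma true; I regard it as the heart of the argument, everything else being elementary one-variable calculus.

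Granting $A_u>0$, one studies $g(s):=A\,e^{2s}-B\,e^{p\gamma_p s}$ with $A,B>0$ and $p\gamma_p>2$. From $g'(s)=e^{2s}\bigl(2A-p\gamma_p B\,e^{(p\gamma_p-2)s}\bigr)$ one sees that the bracket is strictly decreasing from $2A>0$ (as $s\to-\infty$) to $-\infty$ (as $s\to+\infty$), hence vanishes at a unique point $t_u$; therefore $g$ is strictly increasing on $(-\infty,t_u)$ and strictly decreasing on $(t_u,+\infty)$, so $t_u$ is the unique critical point of $\Psi_u$ and a strict maximum. Using the identity $p\gamma_p B_u\,e^{(p\gamma_p-2)t_u}=2A_u$ one computes $\Psi_u(t_u)=A_u e^{2t_u}\,(p\gamma_p-2)/(p\gamma_p)>0$, so the maximum is at positive level. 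By Proposition \ref{prop: psi P}, $s\star u\in\cP$ iff $\Psi_u'(s)=0$, so $t_u\star u$ is the unique point of $\cP$ on the fiber through $u$.

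For part 1, one computes $\Psi_u''(t_u)=2A_u e^{2t_u}(2-p\gamma_p)<0$; hence for any $u\in\cP$ (so that $0=t_u$) we get $\Psi_u''(0)<0$, i.e. $\cP=\cP_-$ (and $\cP_0=\emptyset$, cf. Lemma \ref{lem: struct P cr}). For part 2: the bracket $2A-p\gamma_p B\,e^{(p\gamma_p-2)s}$ is negative for $s>t_u$, so $\Psi_u'<0$ there; moreover $\Psi_u''(s)=e^{2s}\bigl(4A-(p\gamma_p)^2 B\,e^{(p\gamma_p-2)s}\bigr)$ has a decreasing bracket which is already negative at $t_u$, so $\Psi_u$ is concave on $(t_u,+\infty)$; and if $t_u<0$ then $0$ belongs to the decreasing branch, whence $P(u)=\Psi_u'(0)<0$. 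Part 4 is the same sign pattern read backwards: $\Psi_u'(0)<0$ forces $0>t_u$. Finally, for part 3 one applies the implicit function theorem to $\Phi(s,u):=\Psi_u'(s)=P(s\star u)$, which is $C^1$ on $\R\times S$, using $\partial_s\Phi(t_u,u)=\Psi_u''(t_u)<0$ to conclude that $u\mapsto t_u$ is of class $C^1$.
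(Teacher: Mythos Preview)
Your proof is correct and follows essentially the same approach as the paper: rewrite $\Psi_u(s)=A_u e^{2s}-B_u e^{p\gamma_p s}$, use the Gagliardo--Nirenberg inequality together with \eqref{hp cr pos} to show $A_u>0$, and then reduce everything to elementary one-variable calculus and the implicit function theorem. You carry out the computations (of $\Psi_u(t_u)$, $\Psi_u''(t_u)$, and the concavity bracket) somewhat more explicitly than the paper does, but the strategy and the key step are identical.
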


\begin{proof}
Since $q=\bar p$ and $\gamma_{\bar p} \bar p = 2$, we have that
\[
\Psi_u(s) = \left( \frac12 |\nabla u|_2^2 - \frac{\mu}{\bar p} |u|_{\bar p}^{\bar p} \right) e^{2s} - \frac1p |u|_p^p e^{\gamma_p p s};
\]
then, by Proposition \ref{prop: psi P}, to prove existence and uniqueness of $t_u$, together with monotonicity and convexity of $\Psi_u$, we have only to show that the term inside the brackets is positive. This is clearly satisfied, since
\[
\frac12 |\nabla u|_2^2 - \frac{\mu}{\bar p} |u|_{\bar p}^{\bar p} \ge \left( \frac12 - \frac{\mu}{\bar p} C_{N, \bar p}^{\bar p} a^{ 4/N}\right) |\nabla u|_2^2 >0
\]
by the Gagliardo-Nirenberg inequality and assumption \eqref{hp cr pos}. 

Now, if $u \in \cP$, then $t_u = 0$, and being a maximum point we have $\Psi_u''(0) \le 0$. In fact, since $\cP_0= \emptyset$, necessarily $\Psi_u''(0) <0$, so that $\cP= \cP_-$.

For the smoothness of $u \mapsto t_u$ we can apply the implicit function theorem as in Lemma \ref{lem: prop fiber sub-sup}.

Finally, since $\Psi_u'(t)<0$ if and only if $t>t_u$, we have that $P(u) = \Psi_u'(0)<0$ if and only if $t_u<0$.
\end{proof}

\begin{lemma}\label{lem: E su P cr}
It results that
\[
m(a,\mu):= \inf_{u \in \cP} E(u) >0.
\]
\end{lemma}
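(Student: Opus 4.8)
The plan is to show that $E$ is bounded below away from zero on $\cP = \cP_-$ by using the Gagliardo--Nirenberg inequality together with assumption \eqref{hp cr pos} to extract a genuinely coercive term. First I would note that if $u \in \cP$, then $P_\mu(u) = 0$, i.e.
\[
|\nabla u|_2^2 = \mu \gamma_{\bar p} |u|_{\bar p}^{\bar p} + \gamma_p |u|_p^p = \mu \frac{2}{\bar p} |u|_{\bar p}^{\bar p} + \gamma_p |u|_p^p,
\]
using $\gamma_{\bar p}\bar p = 2$. Substituting this into the expression for the energy,
\[
E(u) = \frac12 |\nabla u|_2^2 - \frac{\mu}{\bar p}|u|_{\bar p}^{\bar p} - \frac1p |u|_p^p,
\]
one eliminates, say, the $|\nabla u|_2^2$ term (or the $|u|_p^p$ term) to get an expression involving only $|u|_{\bar p}^{\bar p}$ and $|u|_p^p$ with appropriate positive coefficients.

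The key point, exactly as in the proof of Lemma \ref{lem: fiber cr}, is that the ``$L^2$-critical'' block
\[
\frac12 |\nabla u|_2^2 - \frac{\mu}{\bar p}|u|_{\bar p}^{\bar p} \ge \left(\frac12 - \frac{\mu}{\bar p} C_{N,\bar p}^{\bar p} a^{4/N}\right)|\nabla u|_2^2 =: \delta\, |\nabla u|_2^2,
\]
with $\delta > 0$ strictly, thanks to \eqref{hp cr pos} and \eqref{best const}. Writing $E(u) = \left(\frac12 |\nabla u|_2^2 - \frac{\mu}{\bar p}|u|_{\bar p}^{\bar p}\right) - \frac1p|u|_p^p$ and using $P_\mu(u) = 0$ to replace $\frac1p|u|_p^p = \frac{1}{\gamma_p p}\big(|\nabla u|_2^2 - \frac{2\mu}{\bar p}|u|_{\bar p}^{\bar p}\big)$, I would combine terms so that the resulting bound reads, for some positive constants depending only on $N,p$ and on $\delta$,
\[
E(u) \ge \left(\frac12 - \frac{1}{\gamma_p p}\right)\delta'\, |\nabla u|_2^2
\]
for a suitable $\delta' > 0$ (here $\frac12 - \frac{1}{\gamma_p p} > 0$ because $\gamma_p p > 2$ since $p > \bar p$); one must be slightly careful that after eliminating $|u|_p^p$ the coefficient of $|u|_{\bar p}^{\bar p}$ keeps the right sign, which is where the factor $\frac12 - \frac{1}{\gamma_p p}$ multiplying the critical block again gives positivity. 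So $E(u) \gtrsim |\nabla u|_2^2$ on $\cP$.

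To conclude $m(a,\mu) > 0$ rather than just $m(a,\mu) \ge 0$, I would show that $|\nabla u|_2$ is bounded away from $0$ uniformly on $\cP$. This follows from $P_\mu(u) = 0$ and Gagliardo--Nirenberg:
\[
|\nabla u|_2^2 = \mu\frac{2}{\bar p}|u|_{\bar p}^{\bar p} + \gamma_p|u|_p^p \le \mu\frac{2}{\bar p}C_{N,\bar p}^{\bar p} a^{4/N}|\nabla u|_2^2 + \gamma_p C_{N,p}^p a^{(1-\gamma_p)p}|\nabla u|_2^{\gamma_p p},
\]
so $(1 - \mu\frac{2}{\bar p}C_{N,\bar p}^{\bar p}a^{4/N})|\nabla u|_2^2 \le \gamma_p C_{N,p}^p a^{(1-\gamma_p)p}|\nabla u|_2^{\gamma_p p}$, and since the coefficient on the left is positive by \eqref{hp cr pos} and $\gamma_p p > 2$, this forces $|\nabla u|_2 \ge c > 0$ for some $c$ depending on $a,\mu,N,p$. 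Plugging this lower bound into the energy estimate from the previous paragraph gives $m(a,\mu) \ge \delta' c^2 > 0$. I do not anticipate a real obstacle here; the only thing to be careful about is bookkeeping the coefficients so that every term retains the correct sign after the substitution — the mild subtlety being that one should eliminate $|u|_p^p$ (not $|u|_{\bar p}^{\bar p}$) so that the leftover critical block is again controlled via \eqref{hp cr pos}, exactly mirroring the computation already done in Lemma \ref{lem: fiber cr}.
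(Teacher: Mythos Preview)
Your proposal is correct and follows essentially the same approach as the paper's proof: eliminate $|u|_p^p$ via $P(u)=0$ to write $E(u)=(1-\tfrac{2}{\gamma_p p})\big(\tfrac12|\nabla u|_2^2-\tfrac{\mu}{\bar p}|u|_{\bar p}^{\bar p}\big)$, apply Gagliardo--Nirenberg with \eqref{hp cr pos} to bound this below by a positive multiple of $|\nabla u|_2^2$, and separately use $P(u)=0$ with Gagliardo--Nirenberg again to force $|\nabla u|_2\ge c>0$ on $\cP$. The only cosmetic difference is that the paper establishes the gradient lower bound first and the energy estimate second, whereas you reverse the order.
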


\begin{proof}
If $u \in \cP$, then $P(u) = 0$, and by the Gagliardo-Nirenberg inequality
\[
|\nabla u|_2^2 \le \gamma_p C_{N,p}^ p a^{(1-\gamma_p)p} |\nabla u|_2^{\gamma_p p} + \mu \frac{2}{\bar p} C_{N,\bar{p}}^{\bar p} a^{4/N} |\nabla u|_2^2. 
\]
As a consequence
\begin{equation}\label{lower b on P}
|\nabla u|_2^{\gamma_p p} \ge \frac{a^{-(1-\gamma_p)p}}{\gamma_p C_{N,p}^p}\left(1- \frac{2}{\bar p} C_{N,\bar{p}}^{\bar p} \mu a^{4/N}\right) |\nabla u|_2^2 \quad \implies \quad \inf_{\cP} |\nabla u|_2 >0,
\end{equation}
where we used assumption \eqref{hp cr pos}. Now, for any $u \in \cP$
\[
E(u) = \frac12\left(1-\frac2{\gamma_p p}\right) |\nabla u|_2^2 - \frac{\mu}{\bar p} \left( 1- \frac{2}{\gamma_p p}\right) |u|_{\bar p}^{\bar p} \ge \frac12\left(1-\frac2{\gamma_p p}\right)\left(1- \frac2{\bar p} C_{N,\bar p}^{\bar p} \mu a^{\frac4N}\right)|\nabla u|_2^2,
\]
and hence the thesis follows from \eqref{hp cr pos} and \eqref{lower b on P}.
\end{proof}

\begin{lemma}\label{lem: stima sup A cr}
There exists $k>0$ sufficiently small such that
\[
0<  \sup_{\overline{A_k}} E < m(a,\mu) \quad \text{and} \quad u \in \overline{A_k} \implies E(u), P(u) >0,
\]
where $A_k= \left\{u \in S: |\nabla u|_2^2 < k\right\}$. 
\end{lemma}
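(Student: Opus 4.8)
The plan is to bound $E$ and $P$ from below on $S$ by explicit one-variable functions of $|\nabla u|_2$ via the Gagliardo-Nirenberg inequality \eqref{GN ineq}, and then to use the strict inequality \eqref{hp cr pos} together with $\gamma_p p > 2$ (which holds since $p > \bar p$) to make those bounds positive for $|\nabla u|_2$ small. Since $\gamma_{\bar p}\bar p = 2$ and $(1-\gamma_{\bar p})\bar p = 4/N$, for every $u \in S$ one has
\[
E(u) \ge \left( \frac12 - \frac{\mu C_{N,\bar p}^{\bar p}}{\bar p}\, a^{4/N} \right) |\nabla u|_2^2 - \frac{C_{N,p}^p}{p}\, a^{(1-\gamma_p)p}\, |\nabla u|_2^{\gamma_p p},
\]
and, using also $\gamma_{\bar p} = 2/\bar p$,
\[
P(u) \ge \left( 1 - \frac{2\mu C_{N,\bar p}^{\bar p}}{\bar p}\, a^{4/N} \right) |\nabla u|_2^2 - \gamma_p C_{N,p}^p\, a^{(1-\gamma_p)p}\, |\nabla u|_2^{\gamma_p p}.
\]
Assumption \eqref{hp cr pos} says exactly that $\frac{2\mu C_{N,\bar p}^{\bar p}}{\bar p} a^{4/N} < 1$, so both bracketed coefficients are strictly positive; each right-hand side is therefore of the shape $t^2(\delta - C t^{\gamma_p p - 2})$ with $\delta, C > 0$ and $\gamma_p p - 2 > 0$, hence strictly positive for $0 < t < t_0$, where $t_0$ is an explicit positive number depending only on $a$ and $\mu$.

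Next I would observe that every $u \in S$ has $|\nabla u|_2 > 0$: a function with vanishing gradient is constant, and the only constant in $H^1(\R^N)$ is $0$, which is incompatible with $|u|_2 = a > 0$. Consequently, choosing $k$ with $0 < k < t_0^2$ forces $0 < |\nabla u|_2 \le \sqrt{k} < t_0$ for every $u \in \overline{A_k} = \{u \in S : |\nabla u|_2^2 \le k\}$, and the two displays above immediately give $E(u) > 0$ and $P(u) > 0$ for all such $u$.

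Finally, for the comparison with $m(a,\mu)$: since $\mu > 0$ both nonlinear terms in $E$ are nonnegative, so $E(u) \le \frac12 |\nabla u|_2^2 \le k/2$ for every $u \in \overline{A_k}$; as $\overline{A_k} \neq \emptyset$ and $E$ is positive on it, this already yields $\sup_{\overline{A_k}} E > 0$. Shrinking $k$ once more so that $k/2 < m(a,\mu)$ — which is allowed because $m(a,\mu) > 0$ by Lemma \ref{lem: E su P cr} — gives $\sup_{\overline{A_k}} E \le k/2 < m(a,\mu)$. Taking $k$ smaller than the finitely many smallness thresholds collected along the way produces the desired value. I do not expect any genuine obstacle here: the whole argument is a single smallness choice for $k$, and the only facts that must actually be invoked rather than computed are the strict inequality \eqref{hp cr pos}, which secures positivity of the $|\nabla u|_2^2$-coefficients, and the strict positivity $m(a,\mu) > 0$ supplied by Lemma \ref{lem: E su P cr}.
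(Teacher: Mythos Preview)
Your proposal is correct and follows essentially the same route as the paper: both use the Gagliardo--Nirenberg inequality together with \eqref{hp cr pos} to force $E(u),P(u)>0$ on $\overline{A_k}$, and then bound $E(u)\le \tfrac12|\nabla u|_2^2$ and shrink $k$ below $2m(a,\mu)$ using Lemma~\ref{lem: E su P cr}. Your only additions are the explicit remark that $|\nabla u|_2>0$ on $S$ and the nonemptiness of $\overline{A_k}$, both harmless.
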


\begin{proof}
By the Gagliardo-Nirenberg inequality and assumption \eqref{hp cr pos}
\[
\begin{split}
E(u) &\ge \left(\frac12- \frac{1}{\bar p} C_{N,\bar p}^{\bar p} \mu a^\frac4N\right) |\nabla u|_2^2 - \frac{C_{N,p}^p}{p} a^{(1-\gamma_p)p}|\nabla u|_2^{\gamma_p p}>0, \\
P(u) &\ge \left(1- \frac{2}{\bar p} C_{N,\bar p}^{\bar p} \mu a^\frac4N\right) |\nabla u|_2^2 - C_{N,p}^p a^{(1-\gamma_p)p}|\nabla u|_2^{\gamma_p p} > 0,
\end{split}
\]
if $u \in \overline{A_k}$ with $k$ small enough. If necessary replacing $k$ with a smaller quantity, recalling that $m(a,\mu) >0$ by Lemma \ref{lem: E su P cr} we also have
\[
E(u) \le \frac12 |\nabla u|_2^2 < m(a,\mu). \qedhere
\]
\end{proof}

In what follows we prove the existence of ground state of mountain pass type at level $m_r(a,\mu):= \inf_{\cP \cap S_r} E$.

\begin{proof}[Existence of a critical point of mountain pass-type]
Let $k>0$ be defined by Lemma \ref{lem: E su P cr}. As in the previous section, we consider the augmented functional $\tilde E: \R \times H^1 \to \R$ defined by \eqref{def aug}, and the minimax class
\begin{equation}\label{def gamma}
\Gamma:= \left\{ \gamma = (\alpha,\beta) \in C([0,1], \R \times S_{r}): \ \gamma(0) \in (0,\overline{A_k}), \ \gamma(1) \in (0,E^{0})\right\},
\end{equation}
with associated minimax level
\[
\sigma(a,\mu):= \inf_{\gamma \in \Gamma} \max_{(s,u) \in \gamma([0,1])} \tilde E(s,u).
\]
Let $u \in S_r$. Since $|\nabla (s \star u)|_2 \to 0^+$ as $ s \to -\infty$, and $\Psi_u(s) \to -\infty$ as $s \to +\infty$, there exist $s_0 \ll -1$ and $s_1 \gg 1$ such that 
\begin{equation}\label{gam u cr}
\gamma_u: \tau \in [0,1] \mapsto (0,((1-\tau) s_0 + \tau s_1) \star u) \in \R \times S_r
\end{equation}
is a path in $\Gamma$ (the continuity follows from \eqref{cont star}). Then $\sigma(a,\mu)$ is a real number.

Now, for any $\gamma =(\alpha,\beta) \in \Gamma$, let us consider the function
\[
P_\gamma:\tau \in [0,1] \mapsto P(\alpha(\tau) \star \beta(\tau)) \in \R.
\]
We have $P_\gamma(0) = P(\beta(0)) >0$, by Lemma \ref{lem: stima sup A cr}, and we claim that $P_\gamma(1) =P(\beta(1))<0$: indeed, since $\Psi_{\beta(1)}(s)>0$ for every $s \in (-\infty,t_{\beta(1)}]$, and $\Psi_{\beta(1)}(0) =E(\beta(1)) \le 0$, it is necessary that $t_{\beta(1)}<0$. By Lemma \ref{lem: fiber cr}, this implies the claim. Moreover, $P_\gamma$ is continuous by \eqref{cont star}, and hence we deduce that there exists $\tau_\gamma \in (0,1)$ such that $P_\gamma(\tau_\gamma) = 0$, namely $\alpha(\tau_\gamma) \star \beta(\tau_\gamma) \in \cP$; this implies that 
\[
\max_{\gamma([0,1])} \tilde E \ge \tilde E(\gamma(\tau_\gamma)) =E( \alpha(\tau_\gamma) \star \beta(\tau_\gamma)) \ge \inf_{\cP\cap S_r} E = m_r(a,\mu),
\]
and consequently $\sigma(a,\mu) \ge m_r(a,\mu)$. On the other hand, if $u \in \cP_- \cap S_r$, then $\gamma_u$ defined in \eqref{gam u cr} is a path in $\Gamma$ with 
\[
E(u) = \max_{\gamma_u([0,1])} \tilde E \ge \sigma(a,\mu),
\]
whence the reverse inequality $m_r(a,\mu) \ge \sigma(a,\mu)$ follows. Combining this with Lemmas \ref{lem: E su P cr}, we infer that
\begin{equation}\label{stima infmax}
\sigma(a,\mu) = m_r(a,\mu) > \sup_{(\overline{A_k}  \cup E^{0}) \cap S_r} E = \sup_{((0,\overline{A_k})  \cup (0,E^{0})) \cap (\R \times S_r)} \tilde E.
\end{equation}
Using the terminology in \cite[Section 5]{Gho}, this means that $\{\gamma([0,1]): \ \gamma \in \Gamma\}$ is a homotopy stable family of compact subsets of $\R \times S_r$ with extended closed boundary $(0,\overline{A_k}) \cup (0,E^{0})$, and that the superlevel set $\{\tilde E \ge \sigma(a,\mu)\}$ is a dual set for $\Gamma$, in the sense that assumptions (F'1) and (F'2) in \cite[Theorem 5.2]{Gho} are satisfied. The existence of a positive real valued $\tilde u \in S_a$ solving \eqref{stat com} follows now as in the proof of Theorem \ref{thm: subcr-supcr} (existence of $\hat u$).
\end{proof}

\begin{proof}[Conclusion of the proof of Theorem \ref{thm: supcr2} and Proposition \ref{prop: struct P}]
To verify that $\tilde u$ is a ground state, we show that $\tilde u$ achieves $\inf_{\cP} E =m(a,\mu)$. From our proof, we know that $\sigma(a,\mu) = E(\tilde u) = \inf_{\cP \cap S_r} E \ge m(a,\mu)$, and hence we have to show that also the reverse inequality holds. This amounts to verify that $\inf_{\cP \cap S_r} E \le \inf_{\cP} E$. Suppose by contradiction that there exists $u \in \cP \setminus S_r$ with $E(u) < \inf_{\cP \cap S_r} E$. Then we let $v:= |u|^*$, the symmetric decreasing rearrangement of the modulus of $u$, which lies in $S_r$. By standard properties $|\nabla v|_2 \le |\nabla u|_2$, $E(v) \le E(u)$, and $P(v) \le P(u) = 0$. If $P(v) = 0$ we immediately have a contradiction, and hence we can assume that $P(v) <0$. In this case, from Lemma \ref{lem: fiber cr} we know that $t_v<0$. But then we obtain a contradiction in the following way:
\begin{align*}
E(u) &< E(t_v \star v) = e^{2 t_v} \left( \frac12\left(1- \frac{2}{\gamma_p p}\right) |\nabla v|_2^2 - \frac{\mu}{\bar p}\left(1- \frac{2}{\gamma_p p}\right) |v|_{\bar p}^{\bar p}\right) \\
& \le e^{2 t_v} \left( \frac12\left(1- \frac{2}{\gamma_p p}\right) |\nabla u|_2^2 - \frac{\mu}{\bar p}\left(1- \frac{2}{\gamma_p p}\right) |u|_{\bar p}^{\bar p}\right)  = e^{2 t_v} E(u) < E(u),
\end{align*}
where we used the fact that $s_v \star v$ and $u$ lies in $\cP$. This proves that $\inf_{\cP \cap S_r} E=\inf_{\cP} E$, and hence $\tilde u$ is a ground state. 
\end{proof}

\section{Supercritical leading term with defocusing perturbation}\label{sec: mu<0}

In this section we prove Theorem \ref{thm: sup mu <0} for $N \ge 2$. Since $a$ and $\mu$ are fixed, we omit again the dependence on these quantities. We consider once again the Pohozaev manifold $\cP$, defined in \eqref{def P}, and the decomposition $\cP= \cP_+\cup \cP_0 \cup \cP_-$, see \eqref{split P}.
%

%

\begin{lemma}\label{lem: struct P mu <0}
We have $\cP_0 = \emptyset$, and $\cP$ is a smooth manifold of codimension $2$ in $H$.
\end{lemma}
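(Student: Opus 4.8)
The plan is to follow the template of Lemma \ref{lem: struct P}, with the simplifications that the defocusing sign of $\mu$ brings. First I would show $\cP_0^{a,\mu} = \emptyset$. Suppose $u \in \cP_0^{a,\mu}$, so that both $P_\mu(u) = 0$ and $2|\nabla u|_2^2 = \mu q \gamma_q^2 |u|_q^q + p\gamma_p^2|u|_p^p$ hold; equivalently $\Psi_u''(0) = 0 = \Psi_u'(0)$. From $P_\mu(u) = 0$ we get $|\nabla u|_2^2 = \mu\gamma_q|u|_q^q + \gamma_p|u|_p^p$. Substituting into the second-derivative relation and using $\gamma_q q < 2$ (since $q \le \bar p$) and $\gamma_p p > 2$ (since $p > \bar p$), one isolates
\[
(2 - \gamma_q q)\,\mu\gamma_q |u|_q^q = (\gamma_p p - 2)\,\gamma_p |u|_p^p.
\]
Since $\mu < 0$, the left-hand side is $\le 0$ while the right-hand side is $\ge 0$; hence both vanish, forcing $|u|_p = 0$, which contradicts $u \in S_a$ (in fact already $P_\mu(u)=0$ with $|u|_p=0$ and $\mu<0$ forces $|\nabla u|_2 = 0$ too). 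Therefore $\cP_0^{a,\mu} = \emptyset$. Note this argument does \emph{not} use \eqref{hp mu < 0 conv}; the sign of $\mu$ alone is what makes the critical/subcritical interplay collapse, which matches Proposition \ref{prop: struct P}, point 2), where $\cP_+^{a,\mu} = \cP_0^{a,\mu} = \emptyset$.

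Next I would verify that $\cP_{a,\mu}$ is a smooth manifold of codimension $2$ in $H$. Write $\cP_{a,\mu} = \{u \in H : P_\mu(u) = 0,\ G(u) = 0\}$ with $G(u) = |u|_2^2 - a^2$; both functionals are $C^1$ on $H$. It suffices to show that $dP_\mu(u)$ is not a multiple of $dG(u)$ at any $u \in \cP_{a,\mu}$, equivalently that for every such $u$ there is $\varphi \in T_u S_a$ with $dP_\mu(u)[\varphi] \ne 0$; the $2\times 2$ linear system in $(\alpha,\beta)$ displayed in the proof of Lemma \ref{lem: struct P} is then solvable and gives surjectivity of $(dG(u), dP_\mu(u))$. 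If no such $\varphi$ existed, $u$ would be a constrained critical point of $P_\mu$ on $S_a$, so by Lagrange multipliers there is $\nu \in \R$ with
\[
-\Delta u = \nu u + \mu \frac{q\gamma_q}{2}|u|^{q-2}u + \frac{p\gamma_p}{2}|u|^{p-2}u \qquad \text{in } \R^N,
\]
and the Pohozaev identity applied to this equation yields $2|\nabla u|_2^2 = \mu q\gamma_q^2|u|_q^q + p\gamma_p^2|u|_p^p$, i.e. $u \in \cP_0^{a,\mu}$ — impossible. Hence $\cP_{a,\mu}$ is a smooth codimension-$2$ manifold.

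I do not expect a genuine obstacle here: both halves are essentially verbatim the arguments of Lemmas \ref{lem: struct P} and \ref{lem: struct P cr}, and the defocusing sign makes the emptiness of $\cP_0^{a,\mu}$ even easier than in the focusing cases (no comparison of best constants is needed). The one point deserving care is the exponent bookkeeping: one must treat $q = \bar p$ (where $\gamma_q q = 2$) and $2 < q < \bar p$ (where $\gamma_q q < 2$) uniformly — but in either case $2 - \gamma_q q \ge 0$ and $\mu < 0$ give a sign-definite identity, so the conclusion $|u|_p = 0$ follows in both subcases; when $q=\bar p$ the relation $(2-\gamma_q q)\mu\gamma_q|u|_q^q = (\gamma_p p -2)\gamma_p|u|_p^p$ degenerates to $0 = (\gamma_p p - 2)\gamma_p |u|_p^p$, giving $|u|_p = 0$ directly. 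Thus the statement holds under the assumptions of Theorem \ref{thm: sup mu <0}, and the remaining verification (the Lagrange-multiplier/Pohozaev step) is identical word for word to the one in Lemma \ref{lem: struct P}, so I would simply refer to it rather than repeat it.
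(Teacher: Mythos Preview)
Your proposal is correct and matches the paper's own proof essentially line for line: the paper derives the same identity $\mu\gamma_q(2-\gamma_q q)|u|_q^q = \gamma_p(\gamma_p p -2)|u|_p^p$, uses $\mu<0$ and $\gamma_q q \le 2 < \gamma_p p$ to force $u\equiv 0$, and then refers to the argument of Lemma~\ref{lem: struct P} for the manifold part exactly as you do. Your extra care in separating the cases $q<\bar p$ and $q=\bar p$ is a harmless elaboration of the same reasoning.
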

\begin{proof}
If $u \in \cP_0$, then
\[
\mu \gamma_q(2- \gamma_q q)|u|_q^q = \gamma_p (p \gamma_p -2)|u|_p^p,
\]
which implies $u \equiv 0$ since $\mu<0$ and $\gamma_q q \le 2 < \gamma_p p$. This contradicts the fact that $u \in S_a$. The rest of the proof is very similar to the one of Lemma \ref{lem: struct P}, and hence is omitted.
\end{proof}

\begin{lemma}\label{lem: fiber mu < 0}
For every $u \in S$, there exists a unique $t_u \in \R$ such that $t_u \star u \in \cP$. $t_u$ is the unique critical point of the function $\Psi_u$, and is a strict maximum point at positive level. Moreover:
\begin{itemize}
\item[1)] $\cP= \cP_-$. 
\item[2)] $\Psi_u$ is strictly decreasing and concave on $(t_u, +\infty)$, and $t_u<0$ implies $P(u)<0$.
\item[3)] The map $u \in S \mapsto t_u \in \R$ is of class $C^1$.
\item[4)] If $P(u)<0$, then $t_u<0$.
\end{itemize}
\end{lemma}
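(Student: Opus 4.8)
The argument is the one‑variable analysis of the fiber map $\Psi_u$, entirely parallel to (and slightly simpler than) the proof of Lemma \ref{lem: fiber cr}. Fix $u\in S$. Since $\mu<0$ and $2<q\le\bar p<p<2^*$ we have $q\gamma_q\le 2<p\gamma_p$, and
\[
\Psi_u(s)=\frac{e^{2s}}{2}|\nabla u|_2^2+\frac{|\mu|}{q}e^{q\gamma_q s}|u|_q^q-\frac1p e^{p\gamma_p s}|u|_p^p,
\]
so the coefficients $\tfrac12|\nabla u|_2^2,\ \tfrac{|\mu|}{q}|u|_q^q,\ \tfrac1p|u|_p^p$ are all positive; in particular $\Psi_u(s)>0$ for $s\ll-1$ (a term with positive coefficient dominates as $s\to-\infty$), $\Psi_u(-\infty)=0$, and $\Psi_u(+\infty)=-\infty$ (the $p$-term dominates as $s\to+\infty$). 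First I would factor $\Psi_u'(s)=e^{2s}f_u(s)$, where
\[
f_u(s):=|\nabla u|_2^2+|\mu|\gamma_q e^{(q\gamma_q-2)s}|u|_q^q-\gamma_p e^{(p\gamma_p-2)s}|u|_p^p,
\]
and check that $f_u$ is strictly decreasing on $\R$: its first summand is constant, the second is non-increasing (exponent $q\gamma_q-2\le0$), and the third is strictly decreasing (exponent $p\gamma_p-2>0$, negative coefficient). Since $f_u(-\infty)\in(0,+\infty]$ and $f_u(+\infty)=-\infty$, $f_u$ has a unique zero $t_u$, with $f_u>0$ on $(-\infty,t_u)$ and $f_u<0$ on $(t_u,+\infty)$. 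As $\Psi_u'$ has the sign of $f_u$, it follows that $t_u$ is the unique critical point of $\Psi_u$, a strict global maximum, and $t_u\star u\in\cP$ by Proposition \ref{prop: psi P}; the maximum level is positive because $\Psi_u$ is increasing on $(-\infty,t_u)$ and $\Psi_u(s)>0$ for $s$ sufficiently negative.

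For the remaining assertions I would differentiate once more, writing $\Psi_u''(s)=e^{2s}\big(2f_u(s)+f_u'(s)\big)$ and noting that $f_u'(s)<0$ for every $s$ (the first term of $f_u'$ carries the non-positive factor $q\gamma_q-2$, the second is strictly negative). Evaluating at $t_u$, where $f_u(t_u)=0$, gives $\Psi_u''(t_u)=e^{2t_u}f_u'(t_u)<0$, hence $t_u\star u\in\cP_-$; together with $\cP_0=\emptyset$ (Lemma \ref{lem: struct P mu <0}) and the uniqueness of $t_u$ this forces $\cP_+=\emptyset$ as well, so $\cP=\cP_-$, which is point (1). On $(t_u,+\infty)$ we have both $f_u<0$ and $f_u'<0$, so $\Psi_u'=e^{2s}f_u<0$ and $\Psi_u''=e^{2s}(2f_u+f_u')<0$ there: $\Psi_u$ is strictly decreasing and concave on $(t_u,+\infty)$, and if $t_u<0$ then $0\in(t_u,+\infty)$ gives $P(u)=\Psi_u'(0)<0$, which is point (2). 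Point (4) is the converse: if $P(u)=\Psi_u'(0)<0$, then since $\Psi_u'>0$ on $(-\infty,t_u)$ we must have $t_u<0$.

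Finally, for the $C^1$ regularity in point (3) I would apply the implicit function theorem to $\Phi(s,u):=\Psi_u'(s)=P(s\star u)$, which is of class $C^1$ on $\R\times S$; since $\Phi(t_u,u)=0$ and $\partial_s\Phi(t_u,u)=\Psi_u''(t_u)<0\ne0$, the map $u\mapsto t_u$ is locally $C^1$, and it is globally well defined and $C^1$ by the uniqueness of $t_u$.

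I do not anticipate a serious obstacle: the whole proof is monotonicity bookkeeping for $f_u$ and $f_u'$. The only point needing a touch of care is the endpoint case $q=\bar p$, where $q\gamma_q=2$ exactly: then the middle term of $f_u$ is constant rather than strictly decreasing, $f_u(-\infty)=|\nabla u|_2^2+|\mu|\gamma_q|u|_q^q$ is finite rather than $+\infty$, and strict monotonicity of $f_u$ (hence uniqueness of $t_u$ and $\Psi_u''(t_u)<0$) is carried entirely by the $p$-term — none of which affects the conclusions.
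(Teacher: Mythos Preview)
Your proof is correct and follows essentially the same one-variable analysis of $\Psi_u$ as the paper, including the implicit function theorem for point (3). The only cosmetic difference is your factorization $\Psi_u'(s)=e^{2s}f_u(s)$ with $f_u$ strictly decreasing, whereas the paper divides through by $e^{q\gamma_q s}$ and argues that the resulting expression crosses the negative constant $-|\mu|\gamma_q|u|_q^q$ exactly once; your version is slightly cleaner and yields the concavity on $(t_u,+\infty)$ more directly.
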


%
%
%

\begin{proof}
Notice that, since $\mu<0$, we have $\Psi_u(s) \to 0^+$ as $s \to -\infty$, and $\Psi_u(s) \to -\infty$ as $s \to +\infty$, for every $u \in S_r$. Therefore, $\Psi_u$ has a global maximum point at positive level. To show that this is the unique critical point of $\Psi_u$, we observe that $\Psi_u'(s) = 0$ if and only if
\[
|\nabla u|_2^2 e^{(2-\gamma_q q) s}    - \gamma_p |u|_p^p e^{(\gamma_p p-2)s} = -|\mu| \gamma_q |u|_q^q < 0,
\]
and, since the right hand side is negative, this equation has only one solution. In the same way, one can also check that $\Psi_u$ has only one inflection point. Since $\Psi_u'(t)<0$ if and only if $t>t_u$, we have that $P(u) = \Psi_u'(0)<0$ if and only if $t_u<0$. Finally, for point (3) we argue as in Lemma \ref{lem: prop fiber sub-sup}.
\end{proof}

\begin{lemma}
It results that 
\[
m(a,\mu):= \inf_{u \in \cP} E(u) >0.
\]
\end{lemma}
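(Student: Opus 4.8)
The plan is to follow exactly the pattern of Lemma~\ref{lem: E su P cr} (and of Lemma~\ref{lem: loc inf su P}): first bound $|\nabla u|_2$ uniformly away from zero on $\cP$, then show that on $\cP$ the energy $E$ dominates a fixed positive multiple of $|\nabla u|_2^2$. Since here $\mu<0$, both steps are in fact \emph{simpler} than in the focusing critical case, and, in particular, assumption \eqref{hp mu < 0 conv} will play no role whatsoever in this lemma.

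First I would take $u\in\cP$, so that $P(u)=0$, i.e. $|\nabla u|_2^2=\gamma_p|u|_p^p+\mu\gamma_q|u|_q^q$. Because $\mu<0$ and $\gamma_q>0$, the last term is nonpositive, whence $|\nabla u|_2^2\le\gamma_p|u|_p^p$. Inserting the Gagliardo--Nirenberg inequality \eqref{GN ineq} with exponent $p$ and using $|u|_2=a$ gives $|\nabla u|_2^2\le\gamma_p C_{N,p}^p a^{(1-\gamma_p)p}|\nabla u|_2^{\gamma_p p}$. Since $p>\bar p$ forces $\gamma_p p>2$, and $|\nabla u|_2>0$ because $u\in S_a$ is nontrivial, this yields the uniform lower bound
\[
|\nabla u|_2\ \ge\ \Big(\gamma_p C_{N,p}^p a^{(1-\gamma_p)p}\Big)^{-\frac{1}{\gamma_p p-2}}\ =:\ \delta\ >\ 0 .
\]

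Next, still for $u\in\cP$, I would eliminate $|u|_p^p$ from $E$ using $P(u)=0$ in the form $|u|_p^p=\tfrac1{\gamma_p}\big(|\nabla u|_2^2-\mu\gamma_q|u|_q^q\big)$, which gives
\[
E(u)\ =\ \Big(\tfrac12-\tfrac{1}{\gamma_p p}\Big)|\nabla u|_2^2\ +\ \mu\,\frac{q\gamma_q-p\gamma_p}{p q\gamma_p}\,|u|_q^q .
\]
Here $\tfrac12-\tfrac{1}{\gamma_p p}>0$ since $\gamma_p p>2$, while $q\gamma_q-p\gamma_p\le 2-p\gamma_p<0$ (using $q\le\bar p$), so the second term is nonnegative, $\mu$ being negative. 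Therefore $E(u)\ge\big(\tfrac12-\tfrac1{\gamma_p p}\big)|\nabla u|_2^2\ge\big(\tfrac12-\tfrac1{\gamma_p p}\big)\delta^2>0$, and taking the infimum over $u\in\cP$ proves $m(a,\mu)>0$. There is no genuine obstacle; the only point needing care is bookkeeping of signs — in particular that the $|u|_q^q$ term drops out with the \emph{favourable} sign precisely because $\mu<0$ and $q\gamma_q\le2<p\gamma_p$, which is also why, unlike in Lemma~\ref{lem: E su P cr}, the smallness condition \eqref{hp mu < 0 conv} is not invoked here.
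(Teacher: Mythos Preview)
Your proof is correct and follows essentially the same approach as the paper: first use $P(u)=0$ with $\mu<0$ and Gagliardo--Nirenberg to bound $|\nabla u|_2$ away from zero on $\cP$, then rewrite $E(u)$ on $\cP$ by eliminating $|u|_p^p$ and observe that the $|u|_q^q$ term has the favourable sign. Your expression $\mu\,\frac{q\gamma_q-p\gamma_p}{pq\gamma_p}\,|u|_q^q$ is just an algebraic rewriting of the paper's $-\frac{\mu}{q}\big(1-\frac{\gamma_q q}{\gamma_p p}\big)|u|_q^q$, and your remark that \eqref{hp mu < 0 conv} plays no role here is also consistent with the paper.
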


\begin{proof}
If $u \in \cP$, then by \eqref{Poh}
\[
|\nabla u|_2^2 \le \gamma_p |u|_p^p \le \gamma_p C_{N,p}^p a^{(1-\gamma_p) p} |\nabla u|_2^{\gamma_p p},
\]
whence we deduce that $\inf_{\cP} |\nabla u|_2 \ge C_1 >0$. At this point it is sufficient to observe that, always by \eqref{Poh} 
\[
E(u) = \frac12\left(1-\frac2{\gamma_p p}\right) |\nabla u|_2^2 - \frac{\mu}q \left( 1- \frac{\gamma_q q}{\gamma_p p}\right) |u|_q^q \ge \frac12\left(1-\frac2{\gamma_p p}\right) |\nabla u|_2^2,
\]
and the thesis follows.
\end{proof}

\begin{lemma}\label{lem: stima sup A mu < 0}
There exists $k>0$ sufficiently small such that
\[
0<  \sup_{\overline{A_k}} E < m(a,\mu) \quad \text{and} \quad u \in \overline{A_k} \implies E(u), P(u) >0,
\]
where $A_k:= \left\{u \in S: |\nabla u|_2^2 < k\right\}$.
\end{lemma}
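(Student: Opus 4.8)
The plan is to follow the proof of Lemma~\ref{lem: stima sup A cr} almost verbatim; the case $\mu<0$ is in fact slightly easier, since the term $-\tfrac{\mu}{q}|u|_q^q=\tfrac{|\mu|}{q}|u|_q^q$ has the favorable sign and is never needed for the lower bounds. Note that assumption \eqref{hp mu < 0 conv} will play no role here: the statement concerns only the geometry of $E|_S$, and it will be established using just the Gagliardo--Nirenberg inequality \eqref{GN ineq} and the fact $m(a,\mu)>0$ from the preceding lemma.

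First I would prove positivity of $E$ and $P$ on $\overline{A_k}$. For $u\in\overline{A_k}$, applying \eqref{GN ineq} to the leading term gives $|u|_p^p\le C_{N,p}^p a^{(1-\gamma_p)p}|\nabla u|_2^{\gamma_p p}$, and $\gamma_p p-2>0$ since $p>\bar p$. Discarding the nonnegative $q$-term, one obtains
\[
E(u)\ \ge\ |\nabla u|_2^2\left(\frac12-\frac{C_{N,p}^p}{p}a^{(1-\gamma_p)p}|\nabla u|_2^{\gamma_p p-2}\right),\qquad
P(u)\ \ge\ |\nabla u|_2^2\left(1-C_{N,p}^p a^{(1-\gamma_p)p}|\nabla u|_2^{\gamma_p p-2}\right).
\]
Since $u\in S_a$ with $a>0$ forces $|\nabla u|_2>0$, and $|\nabla u|_2^{\gamma_p p-2}\le k^{(\gamma_p p-2)/2}$ on $\overline{A_k}$, both right-hand sides are strictly positive as soon as $k$ is chosen so small that $C_{N,p}^p a^{(1-\gamma_p)p}k^{(\gamma_p p-2)/2}<1$.

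For the upper bound on $\sup_{\overline{A_k}}E$ I would instead keep the unfavorable term and estimate the $q$-term by \eqref{GN ineq} as well: $|u|_q^q\le C_{N,q}^q a^{(1-\gamma_q)q}|\nabla u|_2^{\gamma_q q}$, with $\gamma_q q\le 2$ because $q\le\bar p$. Dropping $-\tfrac1p|u|_p^p$,
\[
E(u)\ \le\ \frac12|\nabla u|_2^2+\frac{|\mu|}{q}C_{N,q}^q a^{(1-\gamma_q)q}|\nabla u|_2^{\gamma_q q}\ \le\ \frac{k}{2}+\frac{|\mu|}{q}C_{N,q}^q a^{(1-\gamma_q)q}k^{\gamma_q q/2}
\]
for all $u\in\overline{A_k}$, and the last expression tends to $0$ as $k\to0^+$. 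Since $m(a,\mu)>0$ by the previous lemma, shrinking $k$ further makes this bound strictly smaller than $m(a,\mu)$, and combining with the first step gives $0<\sup_{\overline{A_k}}E<m(a,\mu)$ together with $E(u),P(u)>0$ on $\overline{A_k}$.

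There is no genuine obstacle in this lemma — it is a routine Gagliardo--Nirenberg estimate for small gradient norm. The only points requiring care are that \eqref{hp mu < 0 conv} must not be invoked, and that the $q$-term, which has the \emph{wrong} sign for the upper bound but the \emph{right} sign for positivity, is handled on the appropriate side of each inequality in the two steps.
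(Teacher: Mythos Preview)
Your proof is correct and follows essentially the same approach as the paper: both use the Gagliardo--Nirenberg inequality, discard the nonnegative $q$-term to get positivity of $E$ and $P$ for small $|\nabla u|_2$, and then estimate the $q$-term from above to bound $\sup_{\overline{A_k}}E$ by a quantity tending to $0$. Your added remarks that \eqref{hp mu < 0 conv} plays no role and that $|\nabla u|_2>0$ on $S_a$ are accurate and in keeping with the paper's treatment.
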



\begin{proof}
By the Gagliardo-Nirenberg inequality
\[
\begin{split}
E(u) & \ge \frac12 |\nabla u|_2^2 - \frac{C_{N,p}^p}{p} a^{(1-\gamma_p)p}|\nabla u|_2^{\gamma_p p} >0, \\
P(u) &\ge  |\nabla u|_2^2 - \frac{C_{N,p}^p}{p} a^{(1-\gamma_p)p}|\nabla u|_2^{\gamma_p p} > 0,
\end{split}
\]
if $u \in \overline{A_k}$ with $k$ small enough. If necessary replacing $k$ with a smaller quantity, we also have
\[
E(u) \le \frac12 |\nabla u|_2^2 + C |\mu| a^{(1-\gamma_q)q} |\nabla u|_2^{\gamma_q q} <m(a,\mu). \qedhere
\]
\end{proof}

%
%

\begin{proof}[Proof of Theorem \ref{thm: sup mu <0} and Proposition \ref{prop: struct P}]
We can proceed exactly as in the proof of Theorem \ref{thm: supcr2}, using Lemmas \ref{lem: fiber mu < 0} and \ref{lem: stima sup A mu < 0} instead of Lemmas \ref{lem: fiber cr} and \ref{lem: stima sup A cr}, respectively. In this way we prove the existence of a ground state $\tilde u$ for $E$ on $S_a$ at the positive level $\inf_{\cP} E$. We omit the details.
\end{proof}

\begin{remark}\label{rem: mu = 0}
The proofs in this section clearly work also for the homogeneous problem $\mu=0$. In particular, we recover the following facts which are essentially known (see \cite{Caz,Jea}):
\begin{itemize}
\item For any $u \in S_a$, there exists a unique $t_{u,0} \in \R$ such that $t_{u,0} \star u \in \cP_{a,0}$. $t_{u,0}$ is the unique critical point of the function $\Psi^0_u$, and is a strict maximum point at positive level. In particular, $\cP_{a,0}= \cP_-^{a,0}$.
\item For every $a>0$, there exists a ground state of $E_0$ on $S_a$ at a positive level $m(a,0) = \inf_{\cP} E_0 = \inf_{\cP_-} E_0$.
\end{itemize}
\end{remark}


\section{Properties of ground states I}\label{sec: prop I}

In this section we focus on the properties of ground states in the supercritical-subcritical setting with $\mu>0$. In Subsection \ref{sec: desc Z}, we prove the stability and the characterization of $Z_{a,\mu}$ in Theorem \ref{thm: Z supcr 1}. The strong instability of the standing wave $e^{-i \hat \lambda t} \hat u(x)$ is the content of Subsection \ref{sec: inst 1}. The asymptotic behavior of ground states, Theorems \ref{thm: mu to 0} and \ref{thm: q to bar p}, is addressed in Subsections \ref{sub: as 1} and \ref{sub: as 2}.

\subsection{Description of $Z_{a,\mu}$}\label{sec: desc Z}

Let now $a>0$ be fixed, and let $\mu>0$ satisfy \eqref{cond sub sup}. In order to prove the orbital stability of $Z_{a,\mu}$, a crucial intermediate step is the relative compactness of all the minimizing sequences for $m(a,\mu) = \inf_{A_{R_0}} E_\mu$, up to translations. In general minimizing sequences will not have the special properties of Lemma \ref{lem: conv PS subcr}. However, this obstruction can be overcome using a very nice idea of M. Shibata \cite{Shi}. 

As a preliminary observation, we note that for the ground state level $m(a,\mu)$, which was characterized as $\inf_{A_{R_0}} E_\mu$, the stronger characterization 
\begin{equation}\label{lem 8.2}
m(a,\mu) =\inf_{A_{R_1}} E = \inf \{E(u): \ u \in S,\ |\nabla u|_2 < R_1\}
\end{equation}
holds. Indeed, if $|\nabla u|_2 \in [R_0, R_1]$, then $E_\mu(u) \ge h(|\nabla u|_2) > 0 > \inf_{A_{R_0}} E_\mu$, see \eqref{E sub sup} and Lemma \ref{lem: struct h}. Notice that the values $R_0$ and $R_1$ depend on $a$ and $\mu$ by means of Lemma \ref{lem: struct h}. In this subsection we stress this dependence writing $R_0(a,\mu)$ and $R_1(a,\mu)$. Analogously, the definition of $A_{R_0}$ depends on $a$ and on $\mu$, and hence we explicitly write $A_{a,R_0(a,\mu)}$ in what follows.

\begin{lemma}\label{lem: subadd 1}
Let $\tilde a, \rho >0$. There exists $\tilde \mu=\tilde \mu(\tilde a+ \rho)>0$ such that, if $0<a\le \tilde a$ and $0< \mu < \tilde \mu$, then:
\begin{itemize}
\item[$i$)] $2R_0^2(\tilde a + \rho, \mu) < R_1^2(\tilde a,\mu)$.
\item[$ii$)] for any $a_1,a_2>0$ with $a_1^2 + a_2^2 = a^2$, we have
\[
R_0^2(a_1,\mu) + R_0^2(a_2,\mu) < R_1^2(a,\mu).
\]
\item[$iii$)] The functions $(a,\mu) \mapsto R_0(a,\mu)$ and $(a ,\mu)\mapsto R_1(a,\mu)$ are of class $C^1$ in $(0,\tilde a+\rho) \times (0,\tilde \mu)$, $R_0$ is monotone increasing in $a$, while $R_1(a,\mu)$ is monotone decreasing in $a$.
\end{itemize}
\end{lemma}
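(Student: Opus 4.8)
The plan is to reduce everything to the elementary analysis of the one-variable function $h$ appearing in Lemma \ref{lem: struct h}, making its dependence on $a$ and $\mu$ explicit; write $h_{a,\mu}$ for it. First one checks that for small $\mu$ the hypothesis \eqref{cond sub sup} holds on the whole relevant parameter range: the left-hand side of \eqref{cond sub sup} equals $\left(\mu a^{(1-\gamma_q)q}\right)^{\gamma_p p-2}\left(a^{(1-\gamma_p)p}\right)^{2-\gamma_q q}$, with $\gamma_p p-2>0$ and $2-\gamma_q q>0$, and the powers of $a$ are increasing in $a$; hence for $a\le \tilde a+\rho$ this quantity is at most $C(\tilde a+\rho)\,\mu^{\gamma_p p-2}$, which is smaller than the positive right-hand side of \eqref{cond sub sup} as soon as $\mu<\tilde\mu$ for a suitable $\tilde\mu=\tilde\mu(\tilde a+\rho)$. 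From now on $R_0=R_0(a,\mu)$ and $R_1=R_1(a,\mu)$ denote the two zeros of $h_{a,\mu}$ provided by Lemma \ref{lem: struct h}, so that $\{t>0:\ h_{a,\mu}(t)>0\}=(R_0,R_1)$.

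For part $iii$) I would get the $C^1$-regularity from the implicit function theorem applied to $h_{a,\mu}(t)=0$: the map $(t,a,\mu)\mapsto h_{a,\mu}(t)$ is smooth on $(0,\infty)^3$, and by the description of $h_{a,\mu}$ in the proof of Lemma \ref{lem: struct h} (exactly two critical points, a local minimum located to the left of both zeros and the global maximum strictly between them) the function $h_{a,\mu}$ is strictly increasing through $R_0$ and strictly decreasing through $R_1$; thus $\partial_t h_{a,\mu}(R_0)>0$, $\partial_t h_{a,\mu}(R_1)<0$, the two zeros are non-degenerate, and the implicit function theorem yields the $C^1$ dependence on $(a,\mu)$. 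For the monotonicity in $a$, note that for fixed $t>0$ the map $a\mapsto h_{a,\mu}(t)$ is strictly decreasing, since the two powers $a^{(1-\gamma_q)q}$ and $a^{(1-\gamma_p)p}$ carry negative coefficients and are increasing in $a$. Hence if $a<a'$ then $\{h_{a',\mu}>0\}\subset\{h_{a,\mu}>0\}$, i.e. $(R_0(a',\mu),R_1(a',\mu))\subset(R_0(a,\mu),R_1(a,\mu))$; the inclusion is strict at each endpoint because e.g. $h_{a',\mu}(R_0(a,\mu))<h_{a,\mu}(R_0(a,\mu))=0$, so $R_0(\cdot,\mu)$ is strictly increasing and $R_1(\cdot,\mu)$ strictly decreasing.

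The heart of the argument is the behaviour as $\mu\to0^+$. Here I would use the factorization $h_{a,\mu}(t)=t^{\gamma_q q}\big(\varphi_a(t)-\tfrac{C_{N,q}^q}{q}\mu a^{(1-\gamma_q)q}\big)$, with $\varphi_a$ the function from the proof of Lemma \ref{lem: struct h}: $\varphi_a(0^+)=0$, $\varphi_a$ is strictly increasing on $(0,\bar t_a)$ and strictly decreasing on $(\bar t_a,\infty)$ with $\varphi_a\to-\infty$, and its unique positive zero equals $R_1^0(a):=\big(p/(2C_{N,p}^p a^{(1-\gamma_p)p})\big)^{1/(\gamma_p p-2)}$. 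Since $R_0$ and $R_1$ are precisely the two solutions of $\varphi_a(t)=\tfrac{C_{N,q}^q}{q}\mu a^{(1-\gamma_q)q}$, one on each side of $\bar t_a$, and the right-hand side tends to $0^+$, continuity and strict monotonicity of $\varphi_a$ on the two sides of $\bar t_a$ give $R_0(a,\mu)\to0$ and $R_1(a,\mu)\to R_1^0(a)>0$ as $\mu\to0^+$. Applying this with $a=\tilde a+\rho$ and with $a=\tilde a$, we obtain $2R_0^2(\tilde a+\rho,\mu)\to0<\big(R_1^0(\tilde a)\big)^2=\lim_{\mu\to0^+}R_1^2(\tilde a,\mu)$, so after possibly shrinking $\tilde\mu$ we get $i$).

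Finally $ii$) is a formal consequence of $i$) and the monotonicity in $iii$): if $a_1^2+a_2^2=a^2\le\tilde a^2$ with $a_1,a_2>0$, then $a_1,a_2<a\le\tilde a<\tilde a+\rho$, hence $R_0(a_j,\mu)\le R_0(\tilde a+\rho,\mu)$ for $j=1,2$ and $R_1(\tilde a,\mu)\le R_1(a,\mu)$, so that $R_0^2(a_1,\mu)+R_0^2(a_2,\mu)\le 2R_0^2(\tilde a+\rho,\mu)<R_1^2(\tilde a,\mu)\le R_1^2(a,\mu)$. The only genuinely delicate point is the bookkeeping in $iii$): one must make sure that the qualitative picture of $h_{a,\mu}$ from Lemma \ref{lem: struct h} (two non-degenerate zeros, the global maximum lying strictly between them) is available for every parameter value where one differentiates, and this is exactly what is secured by having fixed $\tilde\mu$ so small that \eqref{cond sub sup} holds throughout $(0,\tilde a+\rho)\times(0,\tilde\mu)$; the remaining ingredients are routine one-variable calculus.
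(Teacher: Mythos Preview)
Your argument is correct and follows essentially the same route as the paper: reduce to the zeros of $h_{a,\mu}$ (equivalently of $\varphi_a(t)=\tfrac{C_{N,q}^q}{q}\mu a^{(1-\gamma_q)q}$), use the implicit function theorem at the non-degenerate zeros for $C^1$ dependence, let $\mu\to0^+$ to get $R_0\to0$ and $R_1\to R_1^0(a)>0$ for part~$(i)$, and then chain the monotonicity inequalities to obtain~$(ii)$. The only notable variation is in how you establish the monotonicity in~$(iii)$: the paper computes $\partial R_0/\partial a=-\partial_a g/\partial_t g>0$ directly from the implicit function theorem formula, whereas you argue by comparison of the positivity sets $\{h_{a',\mu}>0\}\subset\{h_{a,\mu}>0\}$ for $a<a'$; both are valid, and your version is arguably cleaner since it avoids tracking signs of partial derivatives.
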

\begin{proof}
We recall that, by Lemma \ref{lem: struct h}, $0<R_0=R_0(a,\mu)< R_1=R_1(a,\mu)$ are the roots of $g(t, a,\mu) = 0$, with 
\[
g(t, a,\mu) := \frac12 t^{2-\gamma_q q} - \frac{C_{N,p}^p}p a^{(1-\gamma_p)p} t^{\gamma_p p- \gamma_q q} - \frac{C_{N,q}^q}{q} \mu a^{(1-\gamma_q)q} = \varphi (t,a)-  \frac{C_{N,q}^q}{q} \mu a^{(1-\gamma_q)q},
\]
where we recall the definition of $\varphi = \varphi(\cdot\,,a)$ from Lemma \ref{lem: struct h}; the existence of $R_0$ and $R_1$ is guaranteed by assumption \eqref{cond sub sup}. Let then $\tilde a, \rho>0$, and consider the range of $\mu>0$ such that \eqref{cond sub sup} is satisfied with $a= \tilde a + \rho$. This range contains a right neighborhood of $0$. Taking the limit as $\mu \to 0^+$, by continuity we have that $R_0(\tilde a + \rho,\mu)$ and $R_1(\tilde a+ \rho,\mu)$ converge, respectively, to $0$ and to the only positive root of $\varphi (t,\tilde a +\rho) = 0$. In particular, for every $\tilde a, \rho>0$ fixed there exists $\tilde \mu=\tilde \mu(\tilde a+\rho)>0$ such that 
\begin{equation}\label{10102}
2R_0(\tilde a+\rho,\mu)^2 <   R_1^2(\tilde a+\rho,\mu) \quad \text{whenever $0< \mu<\tilde \mu$}.
\end{equation}

Let now $0<a \le \tilde a+\rho$ and $0<\mu<\tilde \mu$. Under assumption \eqref{cond sub sup}, we have that 
\[
\pa_t g(t,a, \mu) = \varphi'(t, a).
\]
We checked that $\varphi'(\cdot\,,a)$ has a unique critical point on $(0,+\infty)$, which is a strict maximum point, in $\bar t= \bar t(a)$, with $0<R_0< \bar t< R_1$, and hence in particular $\pa_t g(R_0(a,\mu), a,\mu) > 0$. Thus, the implicit function theorem implies that $R_0(a,\mu)$ is a locally unique $C^1$ function of $(a,\mu)$, with
\[
\frac{\pa R_0(a,\mu)}{\pa a} = - \frac{\pa_a g(R_0(a,\mu),a,\mu)}{\pa_t g(R_0(a,\mu),a,\mu)} >0.
\]
In a similar way, one can show that $R_1(a,\mu)$ is a locally unique $C^1$ function of $(a,\mu)$, with $\pa_a R_1(a,\mu) < 0$. In particular, $R_0$ is monotone increasing and $R_1$ is monotone decreasing in $a$, and using the monotonicity of $R_1$ in \eqref{10102}, point ($i$) of the lemma follows. Concerning point ($ii$), if $a_1^2 + a_2^2 = a^2 < \tilde a^2$, we deduce that
\[
R_0^2(a_1,\mu) + R_0^2(a_2,\mu) < 2R_0^2(a,\mu) <2 R_0^2(\tilde a+\rho,\mu)< R_1^2(\tilde a,\mu) < R_1^2(a,\mu),
\]
as desired. 
%
%
%
\end{proof}

\begin{remark}\label{rem: on tilde mu}
For any $a>0$, the positive value of $\tilde \mu$ appearing in Theorem \ref{thm: Z supcr 1} is the maximum $\tilde \mu >0$ such that \eqref{10102} holds. We believe that from this condition it is possible to obtain more explicit estimates, but we decided to not insist on this point.
\end{remark}

Using the coupled rearrangement introduced by M. Shibata \cite[Section 2.2]{Shi}, it is now possible to prove strict subadditivity for $m(a,\mu)$. 

In what follows, for a fixed $a>0$, we take an arbitrarily small $\rho>0$ and consider $\tilde \mu= \tilde \mu(a+\rho)$, defined in Lemma \ref{lem: subadd 1}

\begin{lemma}\label{lem: subadd 2}
If $a_1^2 + a_2^2 =a^2$ and $0<\mu<\tilde \mu$, then 
\[
m(a,\mu) < m(a_1,\mu) + m(a_2,\mu).
\]
\end{lemma}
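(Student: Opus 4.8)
The plan is to glue the ground states of mass $a_1$ and of mass $a_2$ into a single competitor of mass $a$ by means of the coupled rearrangement of Shibata \cite[Section 2.2]{Shi}, and then to extract the strict inequality from the fact that this operation does not increase the Dirichlet energy while it strictly increases all the $L^r$ norms with $r\in(2,2^*)$. First I would observe that, since $0<\mu<\tilde\mu=\tilde\mu(a+\rho)$ and $a_1^2+a_2^2=a^2<(a+\rho)^2$, and since the left hand side of \eqref{cond sub sup} is a strictly increasing function of the mass (it equals $\mu^{\gamma_p p-2}$ times a positive power of the mass), condition \eqref{cond sub sup} holds for the three pairs $(a,\mu)$, $(a_1,\mu)$ and $(a_2,\mu)$. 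Hence $m(a,\mu)$, $m(a_1,\mu)$ and $m(a_2,\mu)$ are well defined negative numbers, and by Theorem \ref{thm: subcr-supcr} each $m(a_i,\mu)$ is attained by a ground state $\tilde u_i$ which is positive, radially symmetric and radially decreasing; being an interior local minimizer of $E_\mu$ on $A_{a_i,R_0(a_i,\mu)}$, it satisfies $|\nabla\tilde u_i|_2<R_0(a_i,\mu)$, see Lemma \ref{lem: loc inf su P}.

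Next I would let $w$ be the coupled rearrangement of $\tilde u_1$ and $\tilde u_2$. By \cite{Shi}, $w$ is a nonnegative radial function with $|w|_2^2=|\tilde u_1|_2^2+|\tilde u_2|_2^2=a^2$, with $|\nabla w|_2^2\le|\nabla\tilde u_1|_2^2+|\nabla\tilde u_2|_2^2$, and with $|w|_r^r>|\tilde u_1|_r^r+|\tilde u_2|_r^r$ for every $r\in(2,2^*)$, the strictness coming from the fact that both $\tilde u_i$ are non-trivial; in particular $w\in S_a$. The decisive point is that, combining the gradient estimate with Lemma \ref{lem: subadd 1}-($ii$),
\[
|\nabla w|_2^2\le|\nabla\tilde u_1|_2^2+|\nabla\tilde u_2|_2^2<R_0^2(a_1,\mu)+R_0^2(a_2,\mu)<R_1^2(a,\mu),
\]
so that $w\in A_{a,R_1(a,\mu)}$. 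Then the characterization \eqref{lem 8.2}, which identifies $m(a,\mu)$ with $\inf_{A_{a,R_1(a,\mu)}}E_\mu$, gives $m(a,\mu)\le E_\mu(w)$; expanding $E_\mu(w)$ and using $\mu>0$ together with the strict superadditivity of the $L^p$ and $L^q$ terms (while the gradient term is only subadditive) yields
\[
m(a,\mu)\le E_\mu(w)<E_\mu(\tilde u_1)+E_\mu(\tilde u_2)=m(a_1,\mu)+m(a_2,\mu),
\]
which is the assertion.

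I expect the only genuinely delicate point to be structural rather than computational: the competitor produced by a Dirichlet-decreasing coupling need not remain in the set $A_{R_0}$ on which $m(a,\mu)$ was originally defined, since $A_{R_0}$ is not closed under such operations. This is exactly what the identity \eqref{lem 8.2} (the infimum computed over the larger sublevel $A_{R_1}$) and the sharp radius comparison of Lemma \ref{lem: subadd 1}-($ii$) are designed to fix, and it is through the latter that the smallness assumption $\mu<\tilde\mu$ is actually used. The remaining care is in quoting correctly the three properties of Shibata's coupled rearrangement and in checking that its inputs may be taken positive and radially symmetric, which here is automatic because the ground states of Theorem \ref{thm: subcr-supcr} already have these properties (otherwise one would first replace $\tilde u_i$ by its Schwarz symmetrization $|\tilde u_i|^*$, which lowers neither the energy nor $|\nabla\cdot|_2$).
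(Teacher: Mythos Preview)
Your overall strategy is exactly that of the paper: take the two ground states, apply Shibata's coupled rearrangement, place the resulting function in $A_{a,R_1(a,\mu)}$ via Lemma \ref{lem: subadd 1}-($ii$), and invoke \eqref{lem 8.2}. You have also correctly identified why the enlarged characterization \eqref{lem 8.2} and the smallness $\mu<\tilde\mu$ are needed.

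There is, however, a factual error in how you quote the properties of the coupled rearrangement. Shibata's construction (see \cite[Lemma 2.2 and Theorem 2.4]{Shi}) gives
\[
|w|_r^r = |\tilde u_1|_r^r + |\tilde u_2|_r^r \quad \text{for every } r\ge 1,
\qquad
|\nabla w|_2^2 < |\nabla \tilde u_1|_2^2 + |\nabla \tilde u_2|_2^2,
\]
i.e.\ \emph{equality} on all $L^r$ norms and \emph{strict} inequality on the gradient. You have this reversed. The equality on $L^r$ norms is built into the definition (the level sets of $w$ have measure equal to the sum of the measures of the level sets of the inputs), so the strict superadditivity of $|w|_q^q$ and $|w|_p^p$ that you invoke simply does not hold. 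The strictness in the gradient estimate is what actually drives the strict inequality $E_\mu(w)<E_\mu(\tilde u_1)+E_\mu(\tilde u_2)$, and it requires that the inputs be $C^2$ and strictly positive --- which is why the paper pauses to record that the ground states, being solutions of \eqref{stat com} with negative $\lambda$, are $C^2$ by regularity and strictly positive by the strong maximum principle. With this correction your argument coincides with the paper's proof.
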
 
\begin{proof}
Let $v$ and $w$ two real-valued, positive, radially symmetric and radially decreasing minimizers for $m(a_1,\mu)$ and $m(a_2,\mu)$, obtained by Theorem \ref{thm: subcr-supcr}. Then $v$ and $w$ are solutions to \eqref{stat com} for some $\lambda_1, \lambda_2<0$, and in particular are of class $C^2$ (by regularity) and are strictly positive in $\R^N$ (by the maximum principle). Therefore, by \cite[Lemma 2.2-Theorem 2.4]{Shi} there exists a function $u \in H^1$ (the coupled rearrangement of $v$ and $w$) such that
\[
|u|_r^r = |v|_r^r + |w|_r^r \quad \forall r \ge 1, \quad \text{and} \quad |\nabla u|_2^2 < |\nabla v|_2^2 + |\nabla w|_2^2.
\]
Notice that we have strict inequality for the norm of the gradients. As a consequence, we have that $u \in S_a$, 
\[
|\nabla u|_2^2 < R_0^2(a_1,\mu) + R_0^2(a_2,\mu) < R_1^2(a,\mu)
\]
by Lemma \ref{lem: subadd 1}, and hence recalling \eqref{lem 8.2}
\[
m(a,\mu) = \inf_{A_{a,R_1(a,\mu)}}E_\mu \le E_\mu(u) < E_\mu(v) + E_\mu(w) = m(a_1,\mu) + m(a_2,\mu),
\]
as desired.
\end{proof}

\begin{proposition}\label{prop: rel cpt sub 2}
In the previous setting, any sequence $\{u_n\} \subset H^1$ such that
\[
E_\mu(u_n) \to m(a,\mu), \quad |u_n|_2 \to a, \quad |\nabla u_n|_2 < R_0(a + \rho,\mu) 
\]
is relatively compact in $H^1$ up to translations.
\end{proposition}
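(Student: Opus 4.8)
The plan is to argue by concentration--compactness in the spirit of Lions, following closely the scheme already used in the proof of Proposition~\ref{prop: rel cpt sub}. The hypothesis $|\nabla u_n|_2 < R_0(a+\rho,\mu)$ together with $|u_n|_2 \to a$ gives at once that $\{u_n\}$ is bounded in $H^1$; setting $v_n := (a/|u_n|_2)\,u_n \in S_a$, boundedness yields $E_\mu(v_n) \to m(a,\mu)$ and $|\nabla v_n|_2 \le (1+o(1))\,R_0(a+\rho,\mu)$. I would then apply the concentration--compactness lemma (\cite[Lemma III.1]{Lions1}) to $\{v_n\}$ and rule out \emph{vanishing} and \emph{dichotomy}. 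Vanishing is excluded exactly as in Section~\ref{sec: cr subcr}: if it occurs then $v_n \to 0$ in $L^r(\R^N)$ for all $r\in(2,2^*)$ by \cite[Lemma I.1]{Lions2}, and since $2<q<p<2^*$ this forces $E_\mu(v_n) = \tfrac12|\nabla v_n|_2^2 + o(1) \ge o(1)$, contradicting $E_\mu(v_n)\to m(a,\mu)<0$ (Lemma~\ref{lem: loc inf su P}).

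The delicate point is excluding dichotomy, and this is precisely where Lemma~\ref{lem: subadd 1} is used. Assume dichotomy with parameter $a_1\in(0,a)$, pieces $\{v_n^1\},\{v_n^2\}$, and put $a_2:=\sqrt{a^2-a_1^2}$; so $|v_n^i|_2\to a_i$, $|v_n-(v_n^1+v_n^2)|_r\to0$ for $r\in[2,2^*)$, $\dist(\supp v_n^1,\supp v_n^2)\to\infty$, and $\liminf_n\bigl(|\nabla v_n|_2^2-|\nabla v_n^1|_2^2-|\nabla v_n^2|_2^2\bigr)\ge0$. In particular $|\nabla v_n^i|_2^2\le |\nabla v_n|_2^2+o(1)\le(1+o(1))\,R_0^2(a+\rho,\mu)$, so by Lemma~\ref{lem: subadd 1}($i$) (with $\tilde a=a$) and the monotonicity of $R_1(\cdot,\mu)$ one gets $|\nabla v_n^i|_2^2<\tfrac12 R_1^2(a,\mu)<R_1^2(a_i,\mu)$ for $n$ large. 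Rescaling $\hat v_n^i:=(a_i/|v_n^i|_2)\,v_n^i\in S_{a_i}$ changes the energy only by $o(1)$ and keeps $|\nabla \hat v_n^i|_2<R_1(a_i,\mu)$, so the characterization \eqref{lem 8.2} at mass $a_i$ gives $E_\mu(\hat v_n^i)\ge m(a_i,\mu)$. On the other hand the exact splitting of the $L^p$ and $L^q$ norms (disjoint supports plus $|v_n-(v_n^1+v_n^2)|_r\to0$), the superadditivity of the Dirichlet term, and the boundedness of $\{v_n\}$ give $E_\mu(v_n)\ge E_\mu(v_n^1)+E_\mu(v_n^2)+o(1)=E_\mu(\hat v_n^1)+E_\mu(\hat v_n^2)+o(1)$. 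Passing to the limit yields $m(a,\mu)\ge m(a_1,\mu)+m(a_2,\mu)$, contradicting the strict subadditivity of Lemma~\ref{lem: subadd 2}.

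Hence compactness holds: there exist $y_n\in\R^N$ with $\tilde v_n:=v_n(\cdot+y_n)\to\tilde u$ strongly in $L^2(\R^N)$ and weakly in $H^1$, where $\tilde u\in S_a$; since $|u_n|_2\to a$ and $\{u_n\}$ is bounded, the same holds for $\tilde u_n:=u_n(\cdot+y_n)$. Hölder--Sobolev interpolation together with $H^1$-boundedness upgrades this to $\tilde u_n\to\tilde u$ in $L^r(\R^N)$ for $r\in(2,2^*)$, and weak lower semicontinuity gives $|\nabla\tilde u|_2\le R_0(a+\rho,\mu)<R_1(a,\mu)$, hence $E_\mu(\tilde u)\ge m(a,\mu)$ by \eqref{lem 8.2}; combined with $E_\mu(\tilde u)\le\liminf_n E_\mu(\tilde u_n)=m(a,\mu)$ this forces equality, so $|\nabla\tilde u_n|_2\to|\nabla\tilde u|_2$ and the convergence is strong in $H^1$. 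I expect the main obstacle to be exactly the bookkeeping in the dichotomy case: one must verify that, after normalization, both pieces land in the region $\{|\nabla u|_2<R_1(a_i,\mu)\}$ on which $E_\mu$ dominates $m(a_i,\mu)$; this is the sole reason for the quantitative comparisons between $R_0$ and $R_1$ recorded in Lemma~\ref{lem: subadd 1}, and the argument would collapse to the purely subcritical one of Proposition~\ref{prop: rel cpt sub} if such a region were the whole of $S_{a_i}$.
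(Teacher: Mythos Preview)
Your proposal is correct and follows essentially the same concentration--compactness scheme as the paper's own proof, invoking Lemma~\ref{lem: subadd 1} to confine the dichotomy pieces within the region where the characterization \eqref{lem 8.2} applies, and then contradicting the strict subadditivity of Lemma~\ref{lem: subadd 2}. You are in fact slightly more explicit than the paper about the rescaling $\hat v_n^i = (a_i/|v_n^i|_2)\,v_n^i$ needed to compare $E_\mu(v_n^i)$ with $m(a_i,\mu)$, a point the paper leaves implicit.
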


\begin{proof}
As in the proof of Proposition \ref{prop: rel cpt sub}, by concentration-compactness we have three alternatives: either vanishing, or dichotomy, or else compactness holds for the scaled sequence $v_n = a u_n/|u_n|_2$. 

The occurrence of vanishing can be easily ruled out, observing that if vanishing holds, then $v_n \to 0$ in $L^r$ for $r \in (2,2^*)$, and hence we would obtain $\liminf_n E_\mu(u_n) \ge 0$, in contradiction with the fact that $m(a,\mu)<0$.

We show now that also dichotomy cannot hold. Otherwise, as in Proposition \ref{prop: rel cpt sub}, we deduce that 
\begin{equation}\label{cl1011}
m(a,\mu) =\lim_{n \to \infty} E_\mu(u_n) = \lim_{n \to \infty} E_\mu(v_n) \ge \limsup_{n \to \infty} \left(E_\mu(v_n^1) + E_\mu(v_n^2) \right).
\end{equation}
We claim that
\begin{equation}\label{cl1010}
|\nabla v_n^1|_2 \le R_1(a_1,\mu) \quad \text{and} \quad  |\nabla v_n^2|_2 \le R_1(a_2,\mu).
\end{equation}
Once that the claim is proved, estimate \eqref{cl1011} gives a contradiction with the strict subadditivity in Lemma \ref{lem: subadd 2} and \eqref{lem 8.2}, and rules out the occurrence of dichotomy. To prove claim \eqref{cl1010}, we observe at first that by concentration-compactness $|\nabla v_n^1|_2^2 + |\nabla v_n^2|_2^2 \le R_0^2(a +\rho,\mu)$. Therefore, if (up to a subsequence) $|\nabla v_n^1|_2^2 > R_1(a_1,\mu)^2$, by Lemma \ref{lem: subadd 1}
\[
R_1^2(a,\mu) < R_1^2(a_1,\mu)^2 < |\nabla v_n^1|_2^2 < |\nabla v_n^1|_2^2 + |\nabla v_n^2|_2^2 \le R_0(a + \rho,\mu)^2<R_1^2(a,\mu),
\]
a contradiction. Thus, claim \eqref{cl1010} holds, and we have compactness up to translations as in the proof of Proposition \ref{prop: rel cpt sub}.
\end{proof}

\begin{proof}[Stability of ground states]
Similarly as in Lemma \ref{lem: cont gsl} (and using the continuity and monotonicity of $R_0(a,\mu)$ with respect to $a$), it is not difficult to check $m(a,\mu)$ is a continuous function of $a$. Thus, arguing as in \cite[Theorem 3.1]{HajStu}, it is possible to use Proposition \ref{prop: rel cpt sub 2} to show that any sequence $\{u_n\} \subset A_{a,R_{0}(a + \rho,\mu)}$ (not necessarily of real-valued functions) such that
\[
E_\mu(u_n) \to m(a,\mu), \quad \text{and} \quad |u_n|_2 \to a
\]
is relatively compact in $H$ up to translations.

We can now complete the proof of the stability of $Z_{a,\mu}$. Recall that we fixed $a>0$, and for any small $\rho$ we considered $\tilde \mu= \tilde \mu(a+\rho)$ and $0< \mu <\tilde \mu$. Suppose that there exists $\eps>0$, a sequence of initial data $\{\psi_{n,0}\} \subset H$ and a sequence $\{t_n\} \subset (0,+\infty)$ such that the maximal solution $\psi_n$ with $\psi_n(0, \cdot) = \psi_{n,0}$ satisfies
\begin{equation}\label{10101}
\lim_{n \to \infty} \, \inf_{v \in Z_{a,\mu}} \|\psi_{n,0}- v\| = 0 \quad \text{and} \quad \inf_{v \in Z_{a,\mu}} \|\psi_n(t_n) - v\| \ge \eps
\end{equation}
(we refer to \cite[Section 3]{TaoVisZha} for the local well-posedness for the \eqref{com nls}). Clearly $|\psi_{n,0}|_2 =: a_n \to a$ and $E_\mu(\psi_{n,0}) \to m(a,\mu)$, by continuity. Furthermore, always by continuity and using point ($i$) of Lemma \ref{lem: subadd 1}, we deduce that $|\nabla \psi_{n,0}|_2 < R_0(a+\rho,\mu)< R_1(a_n,\mu)$ for every $n$ sufficiently large. Since $|\nabla \psi_{n,0}|_2 \in [R_0(a_n,\mu), R_1(a_n,\mu)]$ implies that $E_\mu(\psi_{n,0})  \ge 0$, we deduce that in fact $|\nabla \psi_{n,0}|_2 < R_0 (a_n,\mu)<R_0(a+\rho,\mu)$.

Let us consider now the solution $\psi_n(t, \cdot)$. Since $\psi_{n,0} \in A_{a_n, R_0(a_n,\mu)}$, if $\psi_n(t, \cdot)$ exits from $A_{a_n,R_0(a_n,\mu)}$ there exists $t \in (0,T_{\max})$ such that $|\nabla \psi_n(t,\cdot)|_2 = R_0(a_n,\mu)$; but then $E_\mu(\psi_n(t, \cdot)) \ge h(R_0) = 0$, against the conservation of energy. This shows that solutions starting in $A_{a_n, R_0(a_n,\mu)}$ are globally defined in time and satisfy $|\nabla \psi_{n}(t,\cdot)|_2 <R_0(a_n,\mu)<R_0(a+\rho,\mu)$ for every $t \in (0,+\infty)$. Moreover, by conservation of mass and of energy $|\psi_n(t, \cdot)|_2 \to a$, and $E_\mu(\psi_n(t_n, \cdot)) \to m(a,\mu)$ as $n \to \infty$. It follows that $\{\psi_{n}(t_n,\cdot)\}$ is relatively compact up to translations in $H$, and hence it converges, up to a translation, to a ground state in $Z_{a,\mu}$, in contradiction with \eqref{10101}.
\end{proof}

\begin{proof}[Structure of $Z_{a,\mu}$]
Let $u \in Z_{a,\mu}$ be a ground state for $E_\mu|_{S_a}$: $|\nabla u|_2 < R_0(a,\mu)$ and $E_\mu(u) = m(a,\mu)$. Then $|u|$ satisfies $|\nabla |u||_2 \le |\nabla u|_2 < R_0(a,\mu)$ and $E_\mu(|u|) \le E_\mu(u) = m(a,\mu)$. It follows that $|u|$ is a non-negative real-valued ground state as well, with $|\nabla |u||_2 = |\nabla u|_2$; in particular, it satisfies \eqref{stat com} and hence it is of class $C^2$ and is positive in $\R^N$. At this point it possible to argue as in \cite[Section 4]{HajStu}, completing the proof.
\end{proof}

This completes the proof of the first part of Theorem \ref{thm: Z supcr 1}.

\subsection{Strong instability of the $e^{-i \hat \lambda t} \hat u$}\label{sec: inst 1}

\begin{proof}[Conclusion of the proof of Theorem \ref{thm: Z supcr 1}]
We point out that we make use of Theorem \ref{thm: gwp}, which will be proved in Section \ref{sec: gwp}. For every $s>0$, let $u_s := s \star \hat u$, and let $\psi_s$ be the solution to \eqref{com nls} with initial datum $u_s$. We have $u_s \to u$ as $s \to 0^+$, and hence it is sufficient to prove that $\psi_s$ blows-up in finite time. Let $t_{u_s,\mu}$ be defined by Lemma \ref{lem: prop fiber sub-sup}. Clearly $t_{u_s,\mu} = -s <0$, and by definition 
\[
E_\mu(u_s) = E_\mu(s \star \hat u) < E_\mu(t_{\hat u,\mu} \star \hat u) = \inf_{\cP_-^{a,\mu}} E_\mu.
\] 
Moreover, since $\hat \lambda<0$ and $\hat u \in H^1_{\rad}$, we have that $\hat u$ decays exponentially at infinity (see \cite{BerLio}), and hence $|x| u_s \in L^2(\R^N)$. Therefore, by Theorem \ref{thm: gwp} the solution $\psi_s$ blows-up in finite time .
\end{proof}

\subsection{Asymptotic behavior as $\mu \to 0^+$: proof of Theorem \ref{thm: mu to 0}}\label{sub: as 1}

In this subsection it is convenient to stress the dependence of $\tilde u$ and $\hat u$ on $\mu$, writing $\tilde u_\mu$ and $\hat u_\mu$. 
The value $a>0$ will always be fixed.

\begin{proof}[Proof of Theorem \ref{thm: mu to 0}: convergence of $\tilde u_\mu$]
For $a>0$ fixed, we know that $R_0(a,\mu) \to 0$ for $\mu \to 0^+$, and hence $|\nabla \tilde u_\mu|_2 < R_0(a,\mu) \to 0$ as well. Moreover, 
\[
0> m(a,\mu) = E_\mu(\tilde u_\mu) \ge \frac{1}{2} |\nabla \tilde u_\mu|_2^2 - \mu \frac{C_{N,q}^q}{q} a^{(1-\gamma_q) q} |\nabla \tilde u_\mu|_2^{\gamma_q q} -  \frac{C_{N,p}^p}{p} a^{(1-\gamma_p) p} |\nabla \tilde u_\mu|_2^{\gamma_p p} \to 0,
\]
which implies that $m(a,\mu) \to 0$.
\end{proof}

We consider now the behavior or $\hat u_\mu$. Before proceeding, we recall the properties of the unperturbed problem $\mu=0$ listed in Remark \ref{rem: mu = 0}.

\begin{lemma}\label{lem: sig infmax}
For any $\mu>0$ satisfying \eqref{cond sub sup} we have
\[
\sigma(a,\mu) = \inf_{u \in S_{a,r}} \max_{s \in \R} E_\mu(s \star u), \quad \text{and} \quad m(a,0) = \inf_{u \in S_{a,r}} \max_{s \in \R} E_0(s \star u).
\]
\end{lemma}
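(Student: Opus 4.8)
The plan is to reduce both identities to facts already in hand: the minimax characterization $\sigma(a,\mu)=\inf_{\cP_-^{a,\mu}\cap S_{a,r}}E_\mu$ obtained in the construction of the mountain pass solution (see \eqref{min max adv}), and the description of $m(a,0)$ and of $\Psi_u^0$ recorded in Remark~\ref{rem: mu = 0}.

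First I would treat $\sigma(a,\mu)$. For any $u\in S_{a,r}$, Lemma~\ref{lem: prop fiber sub-sup} says $\Psi_u^\mu$ attains its global maximum precisely at $s=t_u$, with $t_u\star u\in\cP_-^{a,\mu}$; since the $\star$-action preserves $S_{a,r}$, in fact $t_u\star u\in\cP_-^{a,\mu}\cap S_{a,r}$, so $\max_{s\in\R}E_\mu(s\star u)=E_\mu(t_u\star u)\ge\sigma(a,\mu)$, and taking the infimum over $u\in S_{a,r}$ gives ``$\ge$''. For the reverse inequality, observe that if $u\in\cP_-^{a,\mu}\cap S_{a,r}$ then $0$ is a critical point of $\Psi_u^\mu$; by Lemma~\ref{lem: prop fiber sub-sup}(1) it equals either $s_u$ or $t_u$, and since $\cP_+^{a,\mu}$ and $\cP_-^{a,\mu}$ are disjoint it must be $t_u=0$, whence $\max_{s\in\R}E_\mu(s\star u)=E_\mu(u)$. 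Infimizing over this smaller class of functions yields $\inf_{u\in S_{a,r}}\max_{s}E_\mu(s\star u)\le\inf_{\cP_-^{a,\mu}\cap S_{a,r}}E_\mu=\sigma(a,\mu)$, which closes the first identity.

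The second identity is handled the same way, using Remark~\ref{rem: mu = 0}: since for $\mu=0$ the fiber $\Psi_u^0$ has a unique critical point, a strict global maximum, and $\cP_{a,0}=\cP_-^{a,0}$, the argument above gives $\inf_{u\in S_{a,r}}\max_{s}E_0(s\star u)=\inf_{\cP_{a,0}\cap S_{a,r}}E_0$. It then remains to identify this with $m(a,0)=\inf_{\cP_{a,0}}E_0$, and the only nontrivial inclusion is ``$\le$''. Given $u\in\cP_{a,0}$, I would pass to $v:=|u|^*$, so that $v\in S_{a,r}$, $|v|_p^p=|u|_p^p$ and $|\nabla v|_2\le|\nabla u|_2$; then for every fixed $s$ one has $E_0(s\star v)=\tfrac{e^{2s}}{2}|\nabla v|_2^2-\tfrac{e^{\gamma_p p s}}{p}|v|_p^p\le\tfrac{e^{2s}}{2}|\nabla u|_2^2-\tfrac{e^{\gamma_p p s}}{p}|u|_p^p=E_0(s\star u)$, so $\max_{s}E_0(s\star v)\le\max_{s}E_0(s\star u)=E_0(u)$, the last equality because $u\in\cP_{a,0}$ forces $t_{u,0}=0$. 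Hence $\inf_{u\in S_{a,r}}\max_{s}E_0(s\star u)\le E_0(u)$ for every $u\in\cP_{a,0}$, and therefore $\le m(a,0)$; the opposite inequality is immediate from $\cP_{a,0}\cap S_{a,r}\subset\cP_{a,0}$.

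The whole proof is essentially bookkeeping with the already-established behaviour of the fiber maps and with the definition of $\sigma(a,\mu)$, so I do not expect a genuine obstacle. The one place deserving a line of care is the rearrangement step for $\mu=0$: one must check that replacing $u$ by $|u|^*$ does not raise the maximum of the fiber map, which works because in $\Psi_v^0$ the gradient term has positive coefficient while the $L^p$ term has negative coefficient, so the pointwise-in-$s$ inequality persists after taking the supremum.
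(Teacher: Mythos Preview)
Your argument is correct and follows essentially the same route as the paper: both inequalities come from the fiber-map maximum characterization (Lemma~\ref{lem: prop fiber sub-sup} and Remark~\ref{rem: mu = 0}) together with $\sigma(a,\mu)=\inf_{\cP_-^{a,\mu}\cap S_{a,r}}E_\mu$. The only minor difference is that for the ``$\le$'' direction the paper uses the already-constructed minimizers $\hat u_\mu$ and $\tilde u_0$ (which are radial) directly, so your rearrangement step for $m(a,0)$ is not needed there; your version trades this for a slightly more self-contained argument.
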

\begin{proof}
Recall that $\sigma(a,\mu) = \inf_{\cP_-^{a,\mu} \cap S_{a,r}} E_\mu =E_\mu(\hat u_\mu)$ (see Proposition \ref{prop: struct P}). Then, by Lemma \ref{lem: prop fiber sub-sup},
\[
\sigma(a,\mu) = E_\mu(\hat u_\mu) = \max_{s \in \R} E_\mu(s \star \hat u_\mu) \ge \inf_{u \in S_r} \max_{s \in \R} E_\mu(s \star u). 
\]
On the other hand, for any $u \in S_{a,r}$ we have $t_{u,\mu} \star u \in \cP_-^{a,\mu}$, and hence
\[
\max_{s \in \R} E_\mu(s \star u) = E_\mu(t_{u,\mu} \star u) \ge \sigma(a,\mu).
\]
The proof for $m(a,0)$ is analogue.
\end{proof}

\begin{lemma}\label{lem: hat u bdd}
For any $0<\mu_1<\mu_2$, with $\mu_2$ satisfying \eqref{cond sub sup}, it results that $\sigma(a,\mu_2) \le  \sigma(a,\mu_1) \le m(a,0)$.
\end{lemma}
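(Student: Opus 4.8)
The plan is to exploit the minimax characterizations of $\sigma(a,\mu)$ and $m(a,0)$ provided by Lemma \ref{lem: sig infmax}, together with the elementary monotonicity of $\mu \mapsto E_\mu(v)$ for $v$ with $|v|_q>0$. First I would record that condition \eqref{cond sub sup} is monotone in $\mu$: since $p>\bar p$ we have $\gamma_p p - 2 > 0$, so the left-hand side of \eqref{cond sub sup} is strictly increasing in $\mu$. Hence from $0<\mu_1<\mu_2$ and the validity of \eqref{cond sub sup} for $\mu_2$ we deduce its validity for $\mu_1$, and Lemma \ref{lem: sig infmax} (as well as Lemma \ref{lem: prop fiber sub-sup}) applies to both $\mu_1$ and $\mu_2$; in particular $\sigma(a,\mu_1)$ is well defined.

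Next, fix an arbitrary $u \in S_{a,r}$. Using \eqref{def Psi intro} and the fact that $\int_{\R^N} |s \star u|^q = e^{q\gamma_q s}\,|u|_q^q > 0$ for every $s \in \R$, I would write, for all $s \in \R$,
\[
E_{\mu_2}(s \star u) = E_{\mu_1}(s \star u) - (\mu_2-\mu_1)\,\frac{e^{q\gamma_q s}}{q}\,|u|_q^q = E_0(s \star u) - \mu_2\,\frac{e^{q\gamma_q s}}{q}\,|u|_q^q,
\]
whence, since $0<\mu_1<\mu_2$,
\[
E_{\mu_2}(s \star u) < E_{\mu_1}(s \star u) < E_0(s \star u) \qquad \forall\, s \in \R.
\]
By Lemma \ref{lem: prop fiber sub-sup} (for $\mu_1,\mu_2$) and Remark \ref{rem: mu = 0} (for $\mu=0$), each of the three fiber maps has a unique global maximum at the point $t_{u,\mu}$, so the suprema over $s\in\R$ are attained and finite; passing to the supremum in the pointwise inequalities above therefore yields
\[
\max_{s\in\R} E_{\mu_2}(s \star u) \le \max_{s\in\R} E_{\mu_1}(s \star u) \le \max_{s\in\R} E_0(s \star u).
\]
Taking the infimum over $u \in S_{a,r}$ and invoking Lemma \ref{lem: sig infmax} gives exactly $\sigma(a,\mu_2) \le \sigma(a,\mu_1) \le m(a,0)$.

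There is no genuine obstacle in this argument; it is a soft comparison of minimax levels. The only two points deserving a word of care are (i) the applicability of Lemma \ref{lem: sig infmax} to $\mu_1$, which is guaranteed by the monotonicity of \eqref{cond sub sup} in $\mu$ noted above, and (ii) the (harmless) fact that a strict pointwise inequality between the fiber maps only transfers as a non-strict inequality between their maxima, which is all that the statement requires.
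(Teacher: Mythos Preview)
Your proof is correct and follows essentially the same route as the paper: both rely on the minimax characterization of Lemma \ref{lem: sig infmax} together with the pointwise monotonicity $E_{\mu_2}(s\star u)\le E_{\mu_1}(s\star u)\le E_0(s\star u)$. The only cosmetic difference is that the paper plugs in the optimal function $\hat u_{\mu_1}$ directly (obtaining $\sigma(a,\mu_2)\le \max_s E_{\mu_2}(s\star \hat u_{\mu_1})\le \max_s E_{\mu_1}(s\star \hat u_{\mu_1})=\sigma(a,\mu_1)$), whereas you compare the fiber maps for every $u\in S_{a,r}$ and then take the infimum; your explicit remark that \eqref{cond sub sup} is monotone in $\mu$ (so that the machinery applies to $\mu_1$) is a point the paper leaves implicit.
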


\begin{proof}
By Lemma \ref{lem: sig infmax} \[
\sigma(a,\mu_2) \le \max_{s \in \R} E_{\mu_2}(s \star \hat u_{\mu_1}) \le \max_{s \in \R} E_{\mu_1}(s \star \hat u_{\mu_1}) = E_{\mu_1}(\hat u_{\mu_1}) = \sigma(a,\mu_1). 
\]
In the same way, we can also check that $\sigma(a,\mu_1) < m(a,0)$.
\end{proof}

\begin{proof}[Proof of Theorem \ref{thm: mu to 0}: convergence of $\hat u_\mu$]
The proof is similar to the one of Lemma \ref{lem: conv PS subcr}. Let us consider $\{\hat u_\mu: 0<\mu < \bar \mu\}$, with $\bar \mu$ small enough. At first, we show that $\{\hat u_\mu\}$ is bounded in $H^1$. This follows by Lemma \ref{lem: hat u bdd}, observing that, since $\hat u_\mu \in \cP_{a,\mu}$,
\begin{align*}
m(a,0) &\ge \sigma(a,\mu) = E_\mu(\hat u_\mu) \ge \frac{1}2\left( 1- \frac{2}{\gamma_p p}\right) |\nabla \hat u_\mu|_2^2 - \frac{\mu}{q} \left(1-\frac{\gamma_q q}{\gamma_p p}\right) |\hat u_\mu|_q^q \\
& \ge \frac{1}2\left( 1- \frac{2}{\gamma_p p}\right) |\nabla \hat u_\mu|_2^2 - \frac{\mu}{q} \left(1-\frac{\gamma_q q}{\gamma_p p}\right)C_{N,q}^q a^{(1-\gamma_q)q} | \nabla \hat u_\mu|_2^{\gamma_q q}.
\end{align*}
Since each $\hat u_\mu$ is a positive real-valued radial function in $S_a$, we deduce that up to a subsequence $\hat u_\mu \weak \hat u$ weakly in $H^1$, strongly in $L^p \cap L^q$ and a.e. in $\R^N$, as $\mu \to 0^+$\footnote{If $N=1$, we proceed in the same way observing that each $\hat u_\mu$ is also radially decreasing, see Remark \ref{rmk: sub sup 1}.}. Since $\hat u_\mu$ solves \eqref{stat com} for $\hat \lambda_\mu<0$, from $P_\mu(\hat u_\mu) = 0$ we infer that
\[
\hat \lambda_\mu a^2 = |\nabla \hat u_\mu|_2^2 - \mu |\hat u_\mu|_q^q -|\hat u_\mu|_p^p = (\gamma_q-1)\mu |\hat u_\mu|_q^q + (\gamma_p-1)|\hat u_\mu|_p^p,
\]
and hence also $\hat \lambda_\mu$ converges (up to a subsequence) to some $\hat \lambda \le 0$, with $\hat \lambda =0$ if and only if the weak limit $\hat u \equiv 0$. We claim that $\hat \lambda<0$. Indeed, by weak convergence $\hat u$ is a non-negative real radial solution to 
\begin{equation}\label{eq w lim}
-\Delta \hat u = \hat \lambda \hat u +  |\hat u|^{p-2} \hat u \qquad \text{in $\R^N$},
\end{equation}
and in particular by the Pohozaev identity $|\nabla \hat u|_2^2 = \gamma_p |\hat u|_p^p$. But then, using the boundedness of $\{\hat u_\mu\}$ and Lemma \ref{lem: hat u bdd}, we deduce that
\begin{align*}
E_0(\hat u) &= \frac{1}{p}\left(\frac{\gamma_p p}{2} -1 \right) |\hat u|_p^p = \lim_{\mu \to 0^+} \left[\frac{1}{p}\left(\frac{\gamma_p p}{2} -1 \right) |\hat u|_p^p + \frac{\mu}q\left( \frac{\gamma_q q}{2}-1\right) |\hat u_\mu|_q^q\right] \\
& = \lim_{\mu \to 0^+} E_\mu(\hat u_\mu) = \lim_{\mu \to 0^+} \sigma(a,\mu) \ge \sigma(a,\bar \mu)>0
\end{align*} 
which implies that $\hat u \not \equiv 0$, and in turn yields $\hat \lambda<0$. At this point, exactly as in Lemma \ref{lem: conv PS subcr} we deduce that $\hat u_\mu \to \hat u$ strongly in $H$. By regularity and the strong maximum principle, $\hat u \in S_a$ is a positive real radial solution to \eqref{eq w lim}, thus a ground state $\tilde u_0= \hat u$ of $E_0|_{S_a}$. Since the positive radial ground state is unique, it is not difficult to infer that the convergence $\hat u_\mu \to \tilde u_0$ takes place for the all family $\{\hat u_\mu\}$ (and not only for a subsequence). Moreover $\sigma(a,\mu) \to m(a,0)$.
\end{proof}

\subsection{Asymptotic behavior as $q \to \bar p^-$: proof of Theorem \ref{thm: q to bar p}}\label{sub: as 2}

In this subsection it is convenient to stress the dependence of $\tilde u$ on $q$, writing $\tilde u_q$.

\begin{proof}[Proof of Theorem \ref{thm: q to bar p}]
We recall that $|\nabla \tilde u_q|_2 < R_0 =R_0(q)$, where $R_0(q)$ is defined in Lemma \ref{lem: struct h}. The thesis follows then directly recalling that $R_0(q) < \bar t(q)$, with $\bar t= \bar t(q)$ defined in \eqref{def C_1} (see Remark \ref{rem: st R_0}). Indeed, passing to the limit as $q \to \bar p^-$ in \eqref{def C_1}, we deduce that 
\[
\bar t (q) =   \left( \frac{p(2-\gamma_q q)}{2C_{N,p}^p (\gamma_p p-\gamma_q q)}   \right)^{\frac{1}{\gamma_p p -2}} a^{-  \frac{(1-\gamma_p)p}{\gamma_p p -2}} \to 0,
\]
since $\gamma_q q \to 2^-$ for $q \to \bar p^-$.
\end{proof}

\section{Properties of ground states II}\label{sec:10}

In this section we prove Proposition \ref{prop: natural}, and Theorems \ref{thm: Z supcr 2} and \ref{thm: Z supcr 3}. We point out that we will use Theorem \ref{thm: gwp}, whose proof is contained in the next section. Once again, we omit the dependence on functionals and sets on $a$ and on $\mu$, which are assumed to be fixed. 

\begin{proof}[Proof of Proposition \ref{prop: natural}]
We recall that, under the assumptions of Theorems \ref{thm: subcr-supcr}, \ref{thm: supcr2} or \ref{thm: sup mu <0}, $\cP$ is a smooth manifold of codimension $2$ in $H$, and its subset $\cP_0$ is empty. If $u \in \cP$ is critical point for $E|_{\cP}$, then by the Lagrange multipliers rule there exists $\lambda, \nu \in \R$ such that
\[
dE(u)[\varphi] - \lambda \int_{\R^N} u \bar \varphi - \nu dP(u)[\varphi] = 0
\]
for every $\varphi \in H$, that is
\[
(1-2\nu) (-\Delta u) = \lambda u + (1-\nu \gamma_p p) |u|^{p-2} u + \mu(1-\nu \gamma_q q)|u|^{q-2} u \qquad \text{in $\R^N$}.
\]
We have to prove that $\nu=0$, and to this end we observe that by the Pohozaev identity
\[
(1-2\nu)|\nabla u|_2^2 = \mu \gamma_q(1-\nu \gamma_q q) |u|_q^q + \gamma_p(1-\nu \gamma_p p) |u|_p^p.
\]
Since $u \in \cP$, this implies that
\[
\nu \left(2|\nabla u|_2^2 - \mu q \gamma_q^2|u|_q^q - p \gamma_p^2|u|_p^p\right) = 0.
\]
But the term inside the bracket cannot be $0$, since $u \not \in \cP_0$, and then necessarily $\nu=0$. 
\end{proof}


\begin{proof}[Proof of Theorem \ref{thm: Z supcr 2}]
We start by describing the structure of the $Z$ of ground states. If $u \in Z$, then $u \in \cP$ and $E(u) = m(a,\mu) = \inf_{\cP} E$. We claim that
\begin{equation}\label{claim Z}
u \in Z \quad \implies \quad |u| \in Z, \quad |\nabla |u||_2 = |\nabla u|_2.
\end{equation}
To prove the claim, we observe that $E(|u|) \le E(u)$ and $P(|u|) \le P(u) = 0$. Then, by Lemma \ref{lem: fiber cr}, there exists $t_{|u|} \le 0$ with $t_{|u|} \star |u| \in \cP$, and by definition of $t_{|u|}$ we have
\begin{align*}
m(a,\mu) &\le E(t_{|u|} \star |u|) = e^{2 t_{|u|}} \left( \frac12\left(1- \frac{2}{\gamma_p p}\right) |\nabla |u||_2^2 - \frac{\mu}{\bar p}\left(1- \frac{2}{\gamma_p p}\right) |u|_{\bar p}^{\bar p}\right) \\
& \le e^{2 t_{|u|}} \left( \frac12\left(1- \frac{2}{\gamma_p p}\right) |\nabla u|_2^2 - \frac{\mu}{\bar p}\left(1- \frac{2}{\gamma_p p}\right) |u|_{\bar p}^{\bar p}\right)  = e^{2 t_{|u|}} E(u) = e^{2 t_{|u|}} m(a,\mu),
\end{align*}
where we used the fact that $u, t_{|u|} \star |u| \in \cP$, and $E(u) = m(a,\mu)$. Since $t_{|u|} \le 0$, we deduce that necessarily $t_{|u|} = 0$, that is $P(|u|)= 0$, and since also $P(u) = 0$ it follows that
\[
|u| \in \cP, \quad |\nabla |u||_2 = |\nabla u|_2, \quad \text{and} \quad E(|u|) = m(a,\mu).
\]
This proves claim \eqref{claim Z}.

Having shown that $|u|$ minimizes $E$ on $\cP$, we have that $|u|$ is a non-negative real valued solution to \eqref{stat com} for some $\lambda \in \R$, by Proposition \ref{prop: natural}. By regularity and the strong maximum principle, it is a $C^2$ positive solution. Using also the fact that $|\nabla |u||_2 = |\nabla u|_2$, we can then proceed as in \cite[Theorem 4.1]{HajStu}, obtaining the characterization of $Z$. 

We prove now that if $u \in Z$, then the associated Lagrange multiplier $\lambda$ is negative. This follows easily by testing \eqref{stat com} with $u$, and using the fact that $u \in \cP$:
\[
\lambda a^2 = |\nabla u|_2^2 -\mu|u|_q^q - |u|_p^p = \mu(\gamma_q -1)|u|_q^q +(\gamma_p -1) |u|_p^p<0,
\]
since $\gamma_q, \gamma_p <1$ by definition.

It remains to show that, if $u \in Z$, then the standing wave $e^{-i \lambda t} u(x)$ is strongly unstable. In light of Theorem \ref{thm: gwp}, and using the fact that $\lambda<0$, we can repeat word by word the argument in Subsection \ref{sec: inst 1}.
\end{proof}

The proof of Theorem \ref{thm: Z supcr 3} is analogue. The only difference stays in the verification of the fact that any Lagrange multiplier associated to a ground state is negative. To this end, we have to proceed as in the proof of Lemma \ref{lem: conv PS subcr}, step 3, using assumption \eqref{hp mu < 0 conv}. We omit the details.

\section{Global existence and finite time blow-up}\label{sec: gwp}

In this section we prove Theorem \ref{thm: gwp}, giving a unified proof for the three cases considered in the theorem. Notice that, in all of them, the existence and uniqueness of a unique global maximum point $t_{u,\mu}$ for $\Psi^\mu_u$ was already established in Lemmas \ref{lem: prop fiber sub-sup}, \ref{lem: fiber cr} and \ref{lem: fiber mu < 0}. Under the assumptions of Theorem \ref{thm: subcr-supcr}, we actually need a further preliminary result. Since $a$ and $\mu$ are fixed, we omit the dependence on these quantities from now on. 

\begin{lemma}\label{lem: 1710}
Under the assumptions of Theorem \ref{thm: subcr-supcr}, there exists $M>0$ such that $t_u<0$ for every $u \in S$ with $P(u) < - M$. 
\end{lemma}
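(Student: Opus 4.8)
The plan is to split $S$ according to whether $|\nabla u|_2$ lies below or above the radius $R_0=R_0(a,\mu)$ provided by Lemma \ref{lem: struct h}, and to show that the hypothesis $P(u)<-M$ (for an appropriate $M$) forces $u$ into the second region, where $t_u<0$ can be read off directly from the geometry of the fiber map $\Psi_u$.

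First I would record a coercivity-type lower bound for $P$. By the Gagliardo--Nirenberg inequality \eqref{GN ineq} together with $|u|_2=a$, every $u\in S$ satisfies
\[
P(u)\;\ge\; g(|\nabla u|_2),\qquad g(t):= t^2-\gamma_p C_{N,p}^p a^{(1-\gamma_p)p}t^{\gamma_p p}-\mu\gamma_q C_{N,q}^q a^{(1-\gamma_q)q}t^{\gamma_q q}.
\]
The function $g$ is continuous on $[0,+\infty)$, hence it attains a finite minimum $-M_0$ on the compact interval $[0,R_0]$. Setting $M:=|M_0|+1>0$, any $u\in S$ with $P(u)<-M$ must satisfy $|\nabla u|_2>R_0$, since otherwise $P(u)\ge g(|\nabla u|_2)\ge -M_0>-M$, a contradiction.

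It then remains to prove the implication: if $u\in S$ with $|\nabla u|_2>R_0$ and $P(u)<0$, then $t_u<0$. Here I would invoke Lemma \ref{lem: prop fiber sub-sup}(1), which yields $s_u\star u\in\cP_+$, together with Corollary \ref{cor: norm P+}, which gives $\cP_+\subset A_{R_0}$; hence $e^{s_u}|\nabla u|_2=|\nabla(s_u\star u)|_2<R_0<|\nabla u|_2$, which forces $s_u<0$. On the other hand $\Psi_u'(0)=P(u)<0$, while by Lemma \ref{lem: prop fiber sub-sup} the function $\Psi_u$ has exactly the local minimum $s_u$ and the global maximum $t_u$ as critical points, so $\Psi_u$ is strictly increasing on $(s_u,t_u)$ with $\Psi_u'(t_u)=0$; thus $0$ cannot lie in $(s_u,t_u]$, and since $0>s_u$ this forces $t_u<0$. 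Combining the two steps with $M$ chosen as above proves the lemma.

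I do not expect a genuine obstacle here: the argument is soft once one observes that the only configuration obstructing $t_u<0$ under $P(u)<0$ is $0\le s_u$, i.e. the origin sitting in the ``pre-minimum'' branch of the fiber, and this can occur only when $|\nabla u|_2<R_0$; the small-gradient region is precisely where the bound $P\ge g(|\nabla\,\cdot\,|_2)$ keeps $P$ from being very negative, and this is exactly what dictates the choice of $M$.
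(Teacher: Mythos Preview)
Your proof is correct and follows essentially the same strategy as the paper: bound $P$ from below by $g(|\nabla u|_2)$ via Gagliardo--Nirenberg, use that $g$ is bounded below on a compact interval near the origin to rule out small gradients when $P(u)<-M$, and then read off $t_u<0$ from the shape of the fiber map. The only difference is cosmetic: the paper introduces new radii $R_2<R_3$ as the zeros of $g$ itself and bounds $\Psi_u'$ on $(-\infty,s_u]$ directly, whereas you work with the already available radius $R_0$ from Lemma~\ref{lem: struct h} and invoke Corollary~\ref{cor: norm P+} ($\cP_+\subset A_{R_0}$) to locate $s_u$; your route is slightly more economical since it reuses results already on record.
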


\begin{proof}
By the Gagliardo-Nirenberg inequality
\[
P(u) \ge |\nabla u|_2^2 - \mu \gamma_q C_{N,q}^q a^{(1-\gamma_q)q} |\nabla u|_2^{\gamma_q q} - \gamma_p C_{N,p}^p a^{(1-\gamma_p) p} |\nabla u|_2^{\gamma_p p}.
\]
This means that $P(u) \ge g(|\nabla u|_2)$ for the function $g: (0,+\infty) \to \R$ defined by 
\[
g(t) = t^2 - \mu \gamma_q C_{N,q}^q a^{(1-\gamma_q)q} t^{\gamma_q q} - \gamma_p C_{N,p}^p a^{(1-\gamma_p) p} t^{\gamma_p p}.
\]
Proceeding as in Lemma \ref{lem: struct h}, and using assumption \eqref{cond sub sup}, it is not difficult to check that $g$ is positive on an interval $(R_2,R_3)$ with $R_2>0$. In particular, since $g(0^+) = 0^-$ and $g$ is continuous, there exists $M>0$ such that $g \ge -M$ on $[0, R_2]$. 

From Lemma \ref{lem: prop fiber sub-sup}, we know that $s_u$ is the lowest zero of $\Psi_u'$, and that $\Psi_u'<0$ for $s<s_u$. Since $\Psi_u'(\log (R_2/|\nabla u|_2)) \ge g(R_2) = 0$, necessarily $s_u < \log (R_2/|\nabla u|_2)$, that is $|\nabla (s_u \star u)|_2 \le R_2$, and hence 
\[
\inf_{s \in (-\infty, s_u]} \Psi_u'(s) = \inf_{s \in (-\infty, s_u]} P(s \star u)  \ge \inf_{s \in (-\infty, s_u]} g(|\nabla (s \star u)|_2) \ge \inf_{t \in [0,R_2]} g(t) =-M.
\]

Let us suppose now by contradiction that $P(u) < - M$ but $t_u \ge 0$. If $0 \in [s_u,t_u]$, then $P(u) = \Psi_u'(0) \ge 0$ (by monotonicity of $\Psi_u$, Lemma \ref{lem: prop fiber sub-sup}), which is not possible. Then $0<s_u$, but in this case 
\[
-M > P(u) = \Psi_u'(0) \ge \inf_{s \in (-\infty,s_u]} \Psi_u'(s) \ge -M,
\]
a contradiction again. 
\end{proof}

\begin{proof}[Global existence]
We assume that $t_u>0$ with $E(u) < \inf_{\cP_-}E$, and we show that the solution $\psi$ with initial datum $u$ is globally defined for positive times. For negative time we can use the same argument. By \cite[Proposition 3.1]{TaoVisZha}, the problem is locally well posed, $\psi \in C( (-T_{\min}, T_{\max}), H)$ for suitable $T_{\min}, T_{\max}>0$, and we have that either $T_{\max}=+\infty$, or $|\nabla \psi(t)|_2 \to +\infty$ as $t \to T_{\max}^-$. Thus, if by contradiction $T_{\max} <+\infty$, we have $|\nabla \psi(t)|_2 \to +\infty$ as $t \to T_{\max}^-$. Moreover, by the Gagliardo-Nirenberg inequality
\begin{equation}\label{E-P}
\begin{split}
E(\psi(t))- \frac{1}{\gamma_p p} P(\psi(t)) &=  \frac12\left(1-\frac{2}{\gamma_p p}\right) |\nabla \psi(t)|_2^2 - \frac{\mu}{q} \left(1- \frac{\gamma_q q}{\gamma_p p}\right)|\psi(t)|_q^q\\
& \ge  \frac12\left(1-\frac{2}{\gamma_p p}\right) |\nabla \psi(t)|_2^2 - \frac{1}{q} \left(1- \frac{\gamma_q q}{\gamma_p p}\right) C_{N,q}^q \mu a^{(1-\gamma_q q)} |\nabla \psi(t)|_2^{\gamma_q q}.
\end{split}
\end{equation}
We claim that this implies that
\begin{equation}\label{cl P infty}
E(\psi(t))- \frac{1}{\gamma_p p} P(\psi(t)) \to +\infty \qquad \text{as $t \to T_{\max}^-$}.
\end{equation}
Indeed, if the assumptions of Theorem \ref{thm: subcr-supcr} holds, then \eqref{cl P infty} follows from the fact that $\gamma_q q<2$; if the assumptions of Theorem \ref{thm: sup mu <0} holds, then \eqref{cl P infty} follows from the fact that $\mu<0$; and finally, if the assumptions of Theorem \ref{thm: supcr2} hold, then \eqref{cl P infty} follows by \begin{multline*}
\frac12\left(1-\frac{2}{\gamma_p p}\right) |\nabla \psi(t)|_2^2 - \frac{1}{q} \left(1- \frac{\gamma_q q}{\gamma_p p}\right) C_{N,q}^q \mu a^{(1-\gamma_q q)} |\nabla \psi(t)|_2^{\gamma_q q} \\
= \left(1-\frac{2}{\gamma_p p}\right) \left(\frac12-\frac{1}{\bar p} C_{N,\bar p}^{\bar p} \mu a^{4/N}  \right) |\nabla \psi(t)|_2^{2},
\end{multline*}
where the coefficient of $|\nabla \psi(t)|_2^2$ is positive.

Now, by conservation of energy \eqref{cl P infty} implies that $P(\psi(t)) \to -\infty$ as $t \to T_{\max}^+$; in particular, by Lemma \ref{lem: 1710}, Lemmas \ref{lem: fiber cr} and \ref{lem: fiber mu < 0} we have that $t_{\psi(T_{\max}-\eps)} <0$ if $\eps$ is small enough. But $t_{\psi(0)}>0$ by assumption, $u \mapsto t_u$ is continuous in $H$, and hence there exists $\tau  \in (0, T_{\max})$ such that $t_{\psi(\tau)} = 0$, namely $\psi(\tau) \in \cP_-$. Using again the conservation of the energy and the assumption on $E(u)$, we obtain
\[
\inf_{\cP_-}E> E(u) = E(\psi(\tau)) \ge \inf_{\cP_-}E,
\]
a contradiction.
\end{proof}

The proof of the finite time blow-up is inspired by the classical method of R. Glassey \cite{Gla}, refined by H. Berestycki and T. Cazenave \cite{BerCaz}.

\begin{proof}[Finite time blow-up]
For any $u \in S$, we define $\Phi_u: (0,+\infty) \to \R$ by $\Phi_u(s):= \Psi_u(\log s)$. Clearly, by Lemmas \ref{lem: prop fiber sub-sup}, \ref{lem: fiber cr} and \ref{lem: fiber mu < 0}, for every $u \in S$ the function $\Phi_u$ has a unique global maximum point $\tilde t_u= e^{t_u}$, and $\Phi_u$ is strictly decreasing and concave in $(\tilde t_u, +\infty)$\footnote{Since $\Phi_u'(s) = \Psi_u'(s)/s$, monotonicity properties of $\Phi_u$ can be inferred by those of $\Psi_u$. For the convexity and concavity, it is not difficult to modify the argument in Lemmas \ref{lem: prop fiber sub-sup}, \ref{lem: fiber cr} and \ref{lem: fiber mu < 0}.}. We claim that
\begin{equation}\label{cl inst}
\text{if $u \in S$ and $\tilde t_u \in (0,1)$, then }P(u) \le E(u) - \inf_{\cP_-} E.
\end{equation}
This follows from the concavity of $\Phi_u$ in $(\tilde t_u,+\infty)$, and from the fact that $\tilde t_u \in (0,1)$ (and hence $P(u)<0$, by monotonicity): indeed
\begin{align*}
E(u) &= \Phi_u(1) \ge \Phi_u(\tilde t_u) - \Phi_u'(1) (\tilde t_u -1)= E(t_u \star u) - |P (u)| (1-\tilde t_u) \\
& \ge \inf_{\cP_-} E - |P(u)| =  \inf_{\cP_-} E + P(u),
\end{align*}
which proves \eqref{cl inst}. 


Now, let us consider the solution $\psi$ with initial datum $u$. Since by assumption $t_u<0$, and the map $u \mapsto t_u$ is continuous, we deduce that $t_{\psi(\tau)}<0$ as well for every $|\tau|$ small, say $|\tau|< \bar \tau$. That is, $\tilde t_{\psi(\tau)} \in (0,1)$ for $|\tau|< \bar \tau$. By \eqref{cl inst} and recalling the assumption $E(u) < \inf_{\cP_-} E$, we deduce that 
\[
P(\psi(\tau)) \le E(\psi(\tau))- \inf_{\cP_-} E = E(u) - \inf_{\cP_-} E =: - \delta<0.
\]
for every such $\tau$, and hence $t_{\psi(\pm\bar \tau)} < 0$ (if at some instant $\tau \in (-\bar \tau, \bar \tau)$ we have $t_{\psi(\tau)}=0$, then $P(\psi(\tau))=0$, and this is not the case). By continuity again, the above argument yields
\begin{equation*}
P(\psi(t)) \le - \delta \qquad \text{for every $t \in (-T_{\min},T_{\max})$}.
\end{equation*}
%
%
%
%
%
%
To obtain a contradiction we recall that, since $|x| u \in L^2$ by assumption, by the virial identity \cite[Proposition 6.5.1]{Caz} the function
\[
f(t):= \int_{\R^N} |x|^2 |\psi(t,x)|^2\,dx
\]
is of class $C^2$, with $f''(t) = 8P(\psi(t)) \le -8 \delta$ for every $t \in (-T_{\min},T_{\max})$. Therefore
\[
0 \le f(t) \le - 4 \delta t^2  + f'(0) t + f(0)  \quad \text{for every $t \in (-T_{\min},T_{\max})$}.
\]
Since the right hand side becomes negative for $t$ large, this yields an upper bound on $T_{\max}$, which in turn implies final time blow-up. 
\end{proof}

\begin{proof}[Proof of Corollary \ref{cor: gwp}]
1) By Lemmas \ref{lem: prop fiber sub-sup}, \ref{lem: fiber cr}, \ref{lem: fiber mu < 0}, we have that $E(s \star u)  < \inf_{\cP_-} E$ for every $s < s_1$, with $s_1 \le t_u$ sufficiently ``small". Analogously, if $s > s_2$ with $s_2 \ge t_u$ large enough, then $E(s \star u) < \inf_{\cP_-} E$.\\
2) Assumption $P(u) >0$ reads $\Psi_u'(0)>0$. By the monotonicity of the fiber maps $\Psi_u$ in Lemmas \ref{lem: fiber cr} and \ref{lem: fiber mu < 0}, this directly implies $t_u>0$.\\
3) By Lemmas \ref{lem: prop fiber sub-sup}, \ref{lem: stima sup A cr} and \ref{lem: stima sup A mu < 0}, if $|\nabla u|_2$ is small enough, then necessarily $t_u>0$.\\
4)  By Lemmas \ref{lem: fiber cr} and \ref{lem: fiber mu < 0}, $P(u) = \Psi_u'(0) <0$ implies $t_{u}<0$. \\
5) If $t_u  \ge 0$, then by Lemma \ref{lem: prop fiber sub-sup}
\[
E(u) \ge \inf_{s \in (-\infty,t_u]} E(s \star u) = E(s_u \star u) \ge m(a,\mu).
\] 
Therefore, $E(u) < m(a,\mu)$ implies that $t_u<0$.
\end{proof}


\end{document}